\documentclass[11pt, reqno]{amsart}
\usepackage[margin=3.6cm]{geometry}

\usepackage{array,booktabs,tabularx}
\usepackage{graphicx}
\usepackage{amssymb,amsmath, amsthm}
\usepackage{enumerate}
\usepackage{xfrac} 
\usepackage{nicefrac}
\usepackage{color}
\usepackage{hyperref}
\usepackage{esint}
\usepackage{mathtools}
\usepackage{dsfont}
\usepackage{ esint }
\usepackage[final]{ed}
\usepackage[final]{showlabels}
\usepackage{comment}
\usepackage{enumitem}
\usepackage{soul}
\usepackage{needspace}
\usepackage[normalem]{ulem}
\setcounter{tocdepth}{1}
\usepackage{tikz}
\usepackage{float}
\newcommand{\Hs}[1]{\text{\raisebox{.2cm}{${_{\!H_{\scl}^{#1}\!\!(M)}}$}}}
\newcommand{\sob}[1]{H_{\scl}^{#1}(M)}
\DeclareMathOperator{\scl}{scl}

\usepackage{multicol}
\usepackage{tabto}

\newcommand{\f}{f}
\newcommand{\TM}{T^*\!M}

\usepackage{xcolor}

\newtheorem{theorem}{Theorem}

\newtheorem{lemma}{Lemma}
\newtheorem{proposition}[lemma]{Proposition}

\theoremstyle{definition}
\newtheorem{definition}{Definition}
\newtheorem{remark}{Remark}

\newcommand{\ti}{\mathfrak{t}}
\newcommand{\FR}{\mathfrak{I}_{_{\!H}}}

\newcommand{\R}{{\mathbb R}}

\newcommand{\ep}{\varepsilon}

\newcommand{\Hl}{\Hs{\frac{k-3}{2}}}
\newcommand{\Hln}{\Hs{\!\!{\frac{n-3}{2}}}}

\newcommand{\Decay}{\mathbf{D}}

\newcommand{\Id}{\operatorname{Id}}
\newcommand{\mc}[1]{\mathcal{#1}}
\newcommand{\re}{\mathbb{R}}
\newcommand{\Span}{\operatorname{span}}

\newcommand{\ca}{\mathbf{B}}

\newcommand{\SNH}{S\!N^*\!H}

\newcommand{\sig}{\sigma_{_{\!\!S\!N^*\!H}}}

\newcommand{\SM}{S^*\!M}

\newcommand{\SigH}{\SNH}

\newcommand{\LambdaH}{\Lambda^\tau_{_{\!\SNH}}}

\newcommand{\LM}{{_{\!L^2(M)}}}

\newcommand{\class}{\delta}
\newcommand{\cm}{c_{_{\!M,p}}}
%e^{-\frac{i}{h}A(#2,#1,\tilde{x},hD_{\tilde{x}})}
%e^{\frac{i}{h}A(#2,#1,\tilde{x},hD_{\tilde{x}})}

\newcommand{\w}{{\bf w}}

\renewcommand{\c}{\mathfrak{c}_0}

\def\XXint#1#2#3{{\setbox0=\hbox{$#1{#2#3}{\int}$} \vcenter{\hbox{$#2#3$}}\kern-.5\wd0}}

\DeclareMathOperator{\vol}{vol}

\DeclareMathOperator{\diam}{diam}

\DeclareMathOperator{\supp}{supp}
\DeclareMathOperator{\inj}{inj}

\newcommand{\ran}{\operatorname{ran}}
\newcommand{\e}{\varepsilon}
\newcommand{\Tinj}{\tau_{_{\!\text{inj}H}}}

\numberwithin{equation}{section}
\numberwithin{lemma}{section}

\title[]{Improvements for Eigenfunction Averages:\\An application of geodesic beams}

\author{Yaiza Canzani}
\address{Department of Mathematics, University of North Carolina at Chapel Hill, Chapel Hill, NC, USA}
\email{canzani@email.unc.edu}

\author{Jeffrey Galkowski}
\address{Department of Mathematics, University College London, London, UK}
\email{j.galkowski@ucl.ac.uk}

\date{}

\begin{document}

\begin{abstract}
Let $(M,g)$ be a smooth, compact Riemannian manifold and $\{\phi_\lambda \}$ an $L^2$-normalized sequence of Laplace eigenfunctions, $-\Delta_g\phi_\lambda =\lambda^2 \phi_\lambda$. Given a smooth submanifold $H \subset M$  of codimension $k\geq 1$, we find conditions on the pair $(M,H)$, even when $H=\{x\}$, for which
$$
\Big|\int_H\phi_\lambda d\sigma_H\Big|=O\Big(\frac{\lambda^{\frac{k-1}{2}}}{\sqrt{\log \lambda}}\Big)\qquad \text{or}\qquad  |\phi_\lambda(x)|=O\Big(\frac{\lambda ^{\frac{n-1}{2}}}{\sqrt{\log \lambda}}\Big),
$$
as $\lambda\to \infty$.
These conditions require no global assumption on the manifold $M$ and instead relate to the structure of the set of recurrent directions in the unit normal bundle to $H$. Our results extend {all} previously known conditions guaranteeing improvements on averages, including those on sup-norms. For example, we show that if $(M,g)$ is a surface with Anosov geodesic flow, then there are logarithmically improved averages for any $H\subset M$. We also find weaker conditions than having no conjugate points which guarantee $\sqrt{\log \lambda}$ improvements for the $L^\infty$ norm of eigenfunctions. Our results are obtained using geodesic beam techniques, which {yield} a mechanism for {obtaining} general quantitative improvements for averages and sup-norms.
\end{abstract}

\maketitle

%%%%%%%%%%%%%%%%%%%%%%%%%%%%%%%%%%%%%%%%%%%%%%%%%%%%%%%%%%%%%%%%%%%%%%%%%%%%%%%%
%%%%%%%%%%%%%%%%%%%%%%%%%%%%%%%%%%%%%%%%%%%%%%%%%%%%%%%%%%%%%%%%%%%%%%%%%%%%%%%%
%%%%%%%%%%%%%%%%%%%%%%%%%%%%%%%%%%%%%%%%%%%%%%%%%%%%%%%%%%%%%%%%%%%%%%%%%%%%%%%%
%%%%%%%%%%%%%%%%%%%%%%%%%%%%%%%%%%%%%%%%%%%%%%%%%%%%%%%%%%%%%%%%%%%%%%%%%%%%%%%%
\section{Introduction}
%%%%%%%%%%%%%%%%%%%%%%%%%%%%%%%%%%%%%%%%%%%%%%%%%%%%%%%%%%%%%%%%%%%%%%%%%%%%%%%%
%%%%%%%%%%%%%%%%%%%%%%%%%%%%%%%%%%%%%%%%%%%%%%%%%%%%%%%%%%%%%%%%%%%%%%%%%%%%%%%%
%%%%%%%%%%%%%%%%%%%%%%%%%%%%%%%%%%%%%%%%%%%%%%%%%%%%%%%%%%%%%%%%%%%%%%%%%%%%%%%%
%%%%%%%%%%%%%%%%%%%%%%%%%%%%%%%%%%%%%%%%%%%%%%%%%%%%%%%%%%%%%%%%%%%%%%%%%%%%%%%%

On a smooth compact Riemannian manifold without boundary of dimension $n$, $(M,g)$, we consider sequences of Laplace eigenfunctions $\{\phi_\lambda\}$ solving 
\[
(-\Delta_g-\lambda^2)\phi_\lambda=0,\qquad{\|\phi_\lambda\|_{L^2(M)}=1.}
\]
We study the average oscillatory behavior of $\phi_\lambda$ when restricted to a submanifold $H\subset M$ without boundary. { In {particular}, we examine the behavior of the integral average $\int_H\phi_\lambda d\sigma_H$ as $\lambda \to \infty$, where $\sigma_H$ is the volume measure on $H$ induced by the Riemannian metric. {Since} we allow $H$ to consist of a single point, our results include the study of sup-norms $\|\phi_\lambda\|_{_{L^\infty(M)}}$.

The study of these quantities has a long history. In general
\begin{equation}
\label{e:zelBound}
\int_H\phi_\lambda d\sigma_H=O(\lambda^{\frac{k-1}{2}}) \qquad \text{and} \qquad \|\phi_\lambda\|_{_{L^\infty(M)}}=O(\lambda^{\frac{n-1}{2}}),
\end{equation}
where $k$ is the codimension of $H$, and $H$ is any smooth embedded submanifold. 
The sup-norm bound in~\eqref{e:zelBound} is a consequence of the well known works~\cite{Ava,Lev,Ho68}.
The bound on averages was first obtained in~\cite{Good} and~\cite{Hej}, for the case in which $H$ is a periodic geodesic in a compact hyperbolic surface. The general bound in \eqref{e:zelBound} for integral averages was proved by Zelditch in \cite[Corollary 3.3]{Zel}.}

Since it is easy to find examples on the round sphere which saturate the estimate~\eqref{e:zelBound}, it is natural to ask whether the bound is typically saturated, and to understand conditions under which the estimate may be improved.

In~\cite{CG17,Gdefect,CGT, GT}, the authors (together with Toth in the latter two cases) gave bounds on {integral averages} based on understanding  microlocal concentration as measured by defect measures (see~\cite[Chapter 5]{EZB} or~\cite{Gerard} for a description of defect measures). In particular,~\cite{CG17} gave a new proof of~\eqref{e:zelBound} and studied conditions on $({\{\phi_\lambda\}},H)$ guaranteeing
\begin{equation}
\label{e:averageImproved1}
\int_H\phi_\lambda d\sigma_H =o\big(\lambda^{\frac{k-1}{2}}\big).
\end{equation} 
These conditions generalized and weakened the assumptions in~\cite{SZ02,SoggeTothZelditch,CS,SXZ,Wym,Wym2,Wym3, GT,Gdefect,CGT,Berard77,SZ16I,SZ16II} which guarantee at least the improvement~\eqref{e:averageImproved1}. However, the results in~\cite{CG17} neither recovered the bound
\begin{equation}
\label{e:averageImproved2}
\int_H\phi_\lambda d\sigma_H =O\Bigg(\frac{\lambda^{\frac{k-1}{2}}}{\sqrt{\log \lambda}}\Bigg), 
\end{equation}
obtained in~\cite{SXZ,Wym2,Wym18} under various conditions on $H$ when $M$ has non-positive curvature, nor recovered the improvement on sup-norms given in~\cite{Berard77, Bo16,Randol} when $k=n$ and $M$ has no conjugate points. In the present article, we address such quantitative improvements.

To the authors' knowledge, this article improves and extends \emph{all} existing bounds on averages over submanifolds for eigenfunctions of the Laplacian, including those on $L^\infty$ norms (without additional assumptions on the eigenfunctions; see Remark~\ref{r:extra} for more detail on other types of assumptions).  The estimates from~\cite{CG18d} imply those of~\cite{CG17} and therefore can be used to obtain all previously known improvements of the form~\eqref{e:averageImproved1}. In this article, we make the geometric arguments necessary to apply geodesic beam techniques and improve upon the results of~\cite{Wym18,Wym2,SXZ,Berard77,Bo16,Randol}.

These improvements are possible because the geodesic beam techniques developed in~\cite{CG18d} give {an} explicit bound on averages over submanifolds, $H$, which depend{s} only on microlocal information about $\phi_\lambda$ near the {unit} conormal {bundle} to $H$, {$\SNH$}. {In particular, microlocally near the conormal bundle to $H$, the quasimodes are decomposed into what we call geodesic beams: $\phi_\lambda=\sum_{j\in \mc{J}}\chi_{_{\mathcal{T}_j}}\phi_\lambda$ near $H$. Each geodesic beam, $\chi_{_{\mathcal{T}_j}}\phi_\lambda$, is obtained by localizing $\phi_\lambda$ to a length $\sim 1$ geodesic tube $\mathcal{T}_j$ of radius  $R(\lambda)\sim \lambda^{-{1}/{2}+\delta}$ around a geodesic through $\SNH$. The contributions of these tubes are then estimated using an energy estimate due to Koch--Tataru--Zworski~\cite{KTZ}. After recombining, the estimate reads (for the case $H=\{x\}$)
$$
|\phi_{\lambda}(x)|\leq CR(\lambda)^{(n-1)/2}\lambda^{(n-1)/2}\sum_{j\in \mc{J}} \|\chi_{_{\mc{T}_j}} \phi_\lambda\|_{L^2(M)}.
$$
}
This estimate requires no assumptions on the geometry of $H$ or $M$ and is purely local. It is only with this bound in place that~\cite{CG18d} applies Egorov's theorem to {$\log \lambda$ time in order to} obtain a purely dynamical estimate (see also Theorem~\ref{t:coverToEstimate}) of the {form
\begin{equation}
\label{e:basicNonlooping}
|\phi_{\lambda}(x)|\leq CR(\lambda)^{(n-1)/2}\lambda^{(n-1)/2}\Big(|\mc{B}|^{1/2}+\frac{|\mc{G}|^{1/2}}{|\log \lambda|^{1/2}}\Big)\|\phi_{\lambda}\|_{L^2(M)},
\end{equation}
where $\cup_{j\in \mc{G}} \mc{T}_j$ is non-self looping for $\log \lambda$  time (see~\eqref{e:nonsl}) and $\mc{J}=\mc{G}\cup \mc{B}$. } {See Section~\ref{s:geodesicBeams} for a more detailed explanation of the techniques which includes estimates similar to~\eqref{e:basicNonlooping} which allow for multiple non-looping sets, and~\cite{CG18d} for the proofs of these analytic statements}.

In this article, we apply dynamical arguments to draw conclusions about the pairs $((M,g),H)$ supporting eigenfunctions with maximal averages. While previous works on eigenfunction averages rely on explicit parametrices for the kernel of the half wave-group for large times, the authors' techniques~\cite{GT,Gdefect,CGT,CG17,CG18d}, show that improvements can be effectively obtained by understanding the microlocalization properties of eigenfunctions. 

\begin{remark}
\label{r:extra}
Note that in this paper we study averages of relatively weak quasimodes for the Laplacian with no additional assumptions on the functions. This is in contrast with results which impose additional conditions on the functions such as: 
that they be Laplace eigenfunctions that simultaneously satisfy additional equations~\cite{I-S,GT18a,Ta18}; that they be eigenfunctions in the very rigid case of the flat torus~\cite{B93,Gros}; or that they form a density one subsequence of Laplace eigenfunctions~\cite{JZ}.
\end{remark}

We now state the main results of this article. In order to match the language of~\cite{CG18d}, we will semiclassically rescale, setting $h=\lambda^{-1}$ and sending $h\to 0^+$. Relabeling, $\phi_\lambda$ as $\phi_h$, {the eigenfunction equation becomes}
$$
(-h^2\Delta_g-1)\phi_h=0,\qquad \|\phi_h\|_{L^2}=1.
$$ 

We also recall the notation {for the semiclassical Sobolev norms: } 
\begin{equation}
\label{e:sobolev}
\|u\|_{_{\sob{s}}}^2:=\big\langle (-h^2\Delta_g+1)^{s}u,u\big\rangle_{_{\!L^2(M)}}.
\end{equation}

Let ${\Xi}$ denote the collection of maximal unit speed geodesics for $(M,g)$. {For $m$ a positive integer, $r>0$,  $t\in \R$, and $x \in M$}  define
$$
{\Xi}_x^{m,r,t}:=\big\{\gamma\in \Xi: \gamma(0)=x,\,\exists\text{ at least }m\text{ conjugate points to } x \text{ in }\gamma(t-r,t+r)\big\},
$$
where we count conjugate points with multiplicity. Next, for  a set $V \subset M$ write
$$
\mc{C}_{_{\!V}}^{m,r,t}:=\bigcup_{x\in V}\{\gamma(t): \gamma\in \Xi_x^{m,r,t}\}.%\qquad% \mc{C}_{_{\!V}}^{m}:=\bigcap_{r>0}\,\bigcap_{T>0}\,\overline{\bigcup_{t>T}\mc{C}_{_{\!V}}^{m,r,t}}.
$$
{Note that if $r_t \to 0^+$ as $|t|\to \infty$, then saying that $x \in \mc{C}_x^{n-1,r_t,t}$ for $t$ large indicates that $x$ behaves like  a point that is maximally self-conjugate. This is the case for every point on the sphere.  The following result applies under the assumption that this does not happen and obtains quantitative improvements in that setting. }

%%%%%%%%%%%%%%%%%%%%%%%%%%%%%%%%%%%%%%%%%%%%%%%%%%%%%%%%%%%%%%%%%%%%%%%%%%%%%%%%
\begin{theorem}
\label{t:noConj2}
Let $V \subset M$ and assume that there exist $t_0>0$ and $a>0$  so that 
$$
\inf_{x\in V}d\big(x, \mc{C}_{x}^{n-1,r_t,t}\big)\geq r_t,\qquad\text{ for } t\geq t_0
$$
with $r_t=\frac{1}{a}e^{-at}.$ 
Then, there exist $C>0$ and $h_0>0$ so that for $0<h<h_0$ and $u \in {\mc{D}'}(M)$
$$
\|u\|_{L^\infty(V)}\leq Ch^{\frac{1-n}{2}}\left(\frac{\|u\|_{\LM}}{\sqrt{\log h^{-1}}}\;+\; \frac{\sqrt{\log h^{-1}}}{h}\big\|(-h^2\Delta_g-1)u\big\|_{\Hln}\right).
$$
\end{theorem}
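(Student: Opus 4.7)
The plan is to deduce Theorem~\ref{t:noConj2} from the general covering-to-estimate machinery of Theorem~\ref{t:coverToEstimate}, which reduces pointwise control of $u$ at a point $x$ to producing a good covering of the unit cosphere fiber $S^*_xM$ by geodesic tubes whose projections under the flow avoid returning to $x$ for as long as possible. Since $H=\{x\}$ has codimension $n$, the conormal bundle fills all of $T^*_xM$, so the entire fiber must be covered. The guiding heuristic is that if one can propagate tubes of transverse scale $\asymp h^{1/2}$ for time $T=T(h)$ without their base projections looping back to $x$, then a $TT^*$/Cotlar--Stein argument yields a $1/\sqrt{T}$ gain over the trivial bound $h^{(1-n)/2}\|u\|_{\LM}$, and the Duhamel correction for non-exact eigenfunctions contributes the remainder $\tfrac{\sqrt{T}}{h}\|(-h^2\Delta_g-1)u\|_{\Hln}$.

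First, I would set the propagation time to $T(h):=\frac{1}{2a}\log h^{-1}$, matched to the exponential rate in the hypothesis so that $r_{T(h)}\asymp h^{1/2}$, and cover $S^*_xM$ by geodesic tubes $\{\mc{T}_j\}$ of transverse scale slightly smaller than $h^{1/2}$ (say $h^{1/2-\delta}$). To invoke Theorem~\ref{t:coverToEstimate} and obtain the claimed bound, the covering must enjoy a uniform non-self-looping property: for each $t\in[t_0,T(h)]$ and each $j$, the projection $\pi(\varphi_t(\mc{T}_j))\subset M$ must stay at distance $\gtrsim r_t$ from $x$, with constants uniform in $x\in V$.

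The core geometric step, and the one the hypothesis is tailored to, is to verify this non-looping property from the assumption $d(x,\mc{C}_x^{n-1,r_t,t})\geq r_t$. Contrapositively, if the projection of some tube $\mc{T}_j$ were within $r_t$ of $x$ at some time $t$, then a Jacobi-field analysis of the exponential map forces the differential $d\exp_x$ to degenerate in all $n-1$ transverse directions within a time window of width $\asymp r_t$ about $t$, producing a geodesic $\gamma$ from $x$ with at least $n-1$ conjugate points to $x$ in $(t-r_t,t+r_t)$ and $\gamma(t)$ at distance less than $r_t$ from $x$. By definition this places $\gamma(t)\in \mc{C}_x^{n-1,r_t,t}$ within $r_t$ of $x$, contradicting the hypothesis. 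Uniformity in $x\in V$ comes directly from the uniform infimum in the statement.

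The main obstacle is upgrading this qualitative mechanism to a quantitative non-concentration estimate for $\pi\circ\varphi_t$ at tube scale $h^{1/2-\delta}$, with constants independent of $x\in V$, and then packaging the resulting covering in a form that satisfies the hypotheses of Theorem~\ref{t:coverToEstimate}. The delicate point is showing that \emph{partial} conjugacy (fewer than $n-1$ collapsing directions) cannot force a tube of scale $h^{1/2-\delta}$ back onto $x$, so that the hypothesis on maximally conjugate sets is genuinely what is needed. Once this non-concentration lemma is in place, the theorem follows by applying Theorem~\ref{t:coverToEstimate} with $T\asymp\log h^{-1}$ and tube width $\asymp h^{1/2}$, the two terms in the conclusion arising exactly as the $1/\sqrt{T}$ gain and the quasimode correction described above.
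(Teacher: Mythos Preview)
Your proposal has the right overall architecture (cover $S^*_xM$ by tubes, propagate to time $T(h)\asymp\log h^{-1}$, feed into Theorem~\ref{t:coverToEstimate}), but the central geometric step is backwards, and the argument as written cannot be completed.

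You argue by contraposition: if a tube returns to within $r_t$ of $x$ at time $t$, then $d\exp_x$ must degenerate in all $n-1$ transverse directions, producing $n-1$ conjugate points and contradicting the hypothesis. This implication is false. A geodesic can return exactly to $x$ with no conjugate points whatsoever (think of closed geodesics on a flat torus, or more generally any closed geodesic on a manifold without conjugate points). Looping and conjugacy are independent phenomena, and the hypothesis does \emph{not} rule out looping. Consequently, one cannot take $\mc{B}=\emptyset$ in Theorem~\ref{t:coverToEstimate}; there will always be a set of bad tubes, and the whole content of the argument is to show that this bad set is \emph{small}.

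The paper uses the hypothesis in the opposite direction. If $\rho\in S^*_xM$ has $d(\varphi_{t}(\rho),S^*_xM)<r_t$, the hypothesis guarantees there are at most $n-2$ conjugate points to $x$ in $(t-r_t,t+r_t)$. By the Jacobi-field estimate of Proposition~\ref{l:panda}, this yields a one-dimensional subspace $\mathbf{V}_\rho\subset T_\rho S^*_xM$ on which $d\pi\circ d\varphi_t$ is invertible with quantitative control. Lemma~\ref{l:prelimNoConj} then packages this as a left inverse for the restricted differential $d\psi_{(t,\rho)}:\R\partial_t\times\R\w\to\R^{n+1}$. This transversality is fed into Proposition~\ref{p:ballCover}, which shows that near any such $\rho$ the set of returning directions lies within distance $O(r)$ of a smooth hypersurface in $S^*_xM$ (codimension one), so the number of bad tubes of radius $h^\epsilon$ is $O(h^{\epsilon/3}h^{-\epsilon(n-1)})$ rather than the trivial $O(h^{-\epsilon(n-1)})$. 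Theorem~\ref{t:coverToEstimate} is then applied with a single good family $\mc{G}$ that is $[t_1,T_0(h)]$ non-self-looping and a bad set $\mc{B}$ satisfying $h^{\epsilon(n-1)}|\mc{B}|\to 0$, and the $\sqrt{\log h^{-1}}$ gain comes from the good family alone.

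In short, the missing idea is that the conjugate-point hypothesis does not prevent returns, it provides a transversal direction at each return, and this transversality is what makes the looping set thin. Your proposal needs Proposition~\ref{p:ballCover} (or an equivalent) to turn transversality into a volume bound on $\mc{B}$; without it there is no mechanism to control the bad tubes.
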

%%%%%%%%%%%%%%%%%%%%%%%%%%%%%%%%%%%%%%%%%%%%%%%%%%%%%%%%%%%%%%%%%%%%%%%%%%%%%%%%

In fact a generalization of Theorem~\ref{t:noConj2} holds not just for $H=\{x\}$, but for any $H\subset M$ of large enough codimension.

%%%%%%%%%%%%%%%%%%%%%%%%%%%%%%%%%%%%%%%%%%%%%%%%%%%%%%%%%%%%%%%%%%%%%%%%%%%%%%%%
\begin{theorem}
\label{t:noConj1}
 Let {$H\subset M$ be a closed embedded submanifold of codimension $k>\frac{n+1}{2}$} and assume that there exist $t_0>0$ and $a>0$  such that
\begin{equation}
\label{e:dist}
d\big(H, \mc{C}_H^{2k-n-1,r_t,t}\big)\geq r_t,\,\qquad \text{ for }t\geq t_0
\end{equation}
with $r_t:=\frac{1}{a}e^{-at}.$
 Then, there exists $C>0$, so that for all $w\in C_c^\infty(H)$ the following holds. There exists $h_0>0$ such that for all $0<h<h_0$ and $u\in \mc{D}'(M)$,
\begin{equation}
\label{e:goalEst}
\Big|\int_H wu d\sigma_{_H}\Big|\leq Ch^{\frac{1-k}{2}}{\|w\|_{_{\!\infty}}}\left(\frac{\|u\|_{\LM}}{\sqrt{\log h^{-1}}}\;+\;\frac{\sqrt{\log h^{-1}}}{h}\big\|(-h^2\Delta_g-1)u\big\|_{\Hl}\right).
\end{equation}
\end{theorem}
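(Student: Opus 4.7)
The plan is to combine the explicit microlocal bound on averages from~\cite{CG18d} (recalled later as Theorem~\ref{t:coverToEstimate}) with a dynamical argument driven by hypothesis~\eqref{e:dist}. The cover-to-estimate inequality controls $|\int_H wu\,d\sigma_{_H}|$ by a sum, taken over a cover of $\SNH$ by geodesic tubes $\Lambda^\tau_{\rho_j}(R)$ of radius $R\in(h^{1/2},1)$ and half-length $\tau$, of the local microlocal $L^2$-mass of $u$ on each tube, plus a quasimode defect term. First I would invoke this bound in our setting, using the codimension assumption $k>\tfrac{n+1}{2}$ (equivalently $2k-n-1>0$) to make the pairing of the tube mass against the conormal fibre dimension favourable.

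Second, I would propagate the tubes by the geodesic flow $G^t$ via Egorov's theorem for times $t\in[t_0,T]$, with $T$ an Ehrenfest-type time chosen so that $R:=r_T=\tfrac{1}{a}e^{-aT}$ sits just above $h^{1/2}$, say $T=\tfrac{1-\delta}{2a}\log h^{-1}$. With $R=r_T$ matched to the scale $r_t$ in~\eqref{e:dist}, the analysis reduces to a multiplicity count: how many of the flowed tubes $G^t(\Lambda^\tau_{\rho_j}(R))$ can cluster within distance $r_t$ of a point $x\in H$ at time $t$? A Jacobi-field argument shows that such a cluster of multiplicity exceeding a fixed dimension-dictated threshold forces at least $2k-n-1$ conjugate points to $x$ (counted with multiplicity) along a geodesic in the window $(t-r_t,t+r_t)$, which is precisely what~\eqref{e:dist} excludes. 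The multiplicity therefore remains uniformly bounded in $t\in[t_0,T]$.

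Averaging the microlocal mass estimate over $t\in[t_0,T]$ then converts this bounded multiplicity into a gain of $T^{-1/2}\sim(\log h^{-1})^{-1/2}$ against the normalization $\|u\|_\LM$, producing the first term on the right-hand side of~\eqref{e:goalEst}. Tracking the quasimode defect $(-h^2\Delta_g-1)u$ through long-time Egorov propagation over $[0,T]$ yields the matching factor $\sqrt{\log h^{-1}}/h$ in front of $\|(-h^2\Delta_g-1)u\|_{\Hl}$, by the now-standard trade-off between propagation time and loss of regularity.

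The main obstacle is the multiplicity-counting step. Translating~\eqref{e:dist} into a sharp bound on the number of flowed conormal tubes that meet near a point of $H$ requires careful control of how the conormal exponential map $\pi\circ G^t|_{N^*\!H}$ degenerates: one must show that a coincidence of many centres $G^t(\rho_j)$ near $H$ at time $t$ forces the Jacobi endomorphism along the corresponding geodesics to have rank drop at least $2k-n-1$, and that the scale $r_t$ in~\eqref{e:dist} is exactly matched to the stability scale of such rank drops under $r_t$-perturbations of the initial conormal data. Arranging the geometric constants so that the tube radius $R=r_T$ entering the cover-to-estimate inequality and the window scale $r_t$ in~\eqref{e:dist} are simultaneously optimal is the most delicate bookkeeping in the argument.
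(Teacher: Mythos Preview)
Your overall architecture is correct: one feeds a good/bad splitting of a tube cover of $\SNH$ into Theorem~\ref{t:coverToEstimate}, and the hypothesis~\eqref{e:dist} is what produces the splitting. But the mechanism you describe for the dynamical step is not the one that works, and as written it is a genuine gap.

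You propose a \emph{multiplicity count}: many flowed tubes clustering near a point $x\in H$ should force $2k-n-1$ conjugate points. This is not how the paper proceeds, and it is not clear how to make your version rigorous. ``Many tubes cluster near $x$'' does not directly yield ``the Jacobi endomorphism has rank drop $\geq 2k-n-1$ in the window $(t-r_t,t+r_t)$'' with the quantitative control on scales that you would need. The paper instead argues in the forward direction at a \emph{single} returning point $\rho_0\in\SNH$. The assumption~\eqref{e:dist} says there are at most $2k-n-2$ conjugate points in the window, so a Jacobi/Ricatti estimate (Proposition~\ref{l:panda}) produces a subspace of $T_{\rho_0}S^*_xM$ of dimension $n-1-(2k-n-2)=2(n-k)+1$ on which $d\pi\circ d\varphi_t$ is invertible with norm $\lesssim r_t^{-1}\|d\varphi_{-t}\|$. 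Intersecting with $T_{\rho_0}SN^*_xH$ (dimension $k-1$) and with the range of a right inverse for a defining function of $H$ (dimension $k$) yields a single vector $\w_0\in T_{\rho_0}\SNH$ such that $d\psi_{(t_0,\rho_0)}$ restricted to $\re\partial_t\times\re\w_0$ has a left inverse of size $\lesssim e^{(a+\Lambda)|t_0|}$ (Lemma~\ref{l:prelimNoConj}). This is where the codimension restriction $k>\tfrac{n+1}{2}$ is actually used: it is the dimension count guaranteeing the triple intersection is nontrivial.

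With that left inverse in hand, a quantitative implicit function theorem (Lemma~\ref{l:tanSpace}, Proposition~\ref{p:ballCover}) shows that inside each ball of $\SNH$ the set of tube centers that loop back to $\SNH$ in a unit time window is contained in an $h^\epsilon$-thickened hypersurface, so the number of bad tubes is $O(h^{\epsilon/3}r_1^{1-n})$. This is the splitting one feeds into Theorem~\ref{t:coverToEstimate}. Note also that the time averaging you describe is already built into Theorem~\ref{t:coverToEstimate} via the $t_\ell,T_\ell$ parameters; you do not perform a separate average over $t\in[t_0,T]$. Finally, your scaling $T\sim\tfrac{1}{2a}\log h^{-1}$, $R=r_T$, is not what survives the bookkeeping: the tube radius is $r_1=h^\epsilon$ with $\epsilon<\tfrac14$, and $T_0=b\log h^{-1}$ with $b$ small depending on both $a$ and the maximal expansion rate $\Lambda$, not on $a$ alone.
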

%%%%%%%%%%%%%%%%%%%%%%%%%%%%%%%%%%%%%%%%%%%%%%%%%%%%%%%%%%%%%%%%%%%%%%%%%%%%%%%%

\begin{remark}
One should think of the assumption in Theorem~\ref{t:noConj2} as ruling out maximal self-conjugacy of a point with itself uniformly up to time $\infty$. In fact, in order to obtain an $L^\infty$ bound of $o(h^{\frac{1-n}{2}})$ on $u(x)$, it is enough to assume that there is not a positive measure set of directions $A\subset S^*_xM$ so that for each element $\xi\in A$ there is a sequence of geodesics starting at $x$ in the direction of $\xi$ with length tending to infinity along which $x$ is maximally conjugate to itself. 
%Moreover, the uniformity assumed in Theorems~\ref{t:noConj1} and~\ref{t:noConj2} is stronger than what is used in the proof. Instead, it is enough to assume in Theorem~\ref{t:noConj1} that  there exist $\beta>0$ and $t_0>0$ so that if 
%$
%r_t:=\tfrac{1}{\beta}e^{-\beta t},
%$
%then 
%\begin{equation}
%\label{e:dist}
%d\big(H, \mc{C}_H^{2k-n-1,r_t,t}\big)\geq r_t,
%\end{equation}
%for $t>t_0$.
%Similarly,  the sufficient assumption in Theorem~\ref{t:noConj2} is that 
%$$
%\inf_{x\in V}d\big(x, \mc{C}_{x}^{n-1,r_t,t}\big)\geq r_t,
%$$
%for $t>t_0$.
\end{remark}

Before stating our next theorem, we recall that if $(M,g)$ has strictly negative sectional curvature, then it also has  Anosov geodesic flow~\cite{Anosov}.  Also, both Anosov geodesic flow and non-positive sectional curvature imply that $(M,g)$ has no conjugate points ~\cite{Kling}. 

When $(M,g)$ is non-positively curved (indeed when it has no focal points), if every geodesic encounters a point of negative curvature, then $(M,g)$ has Anosov geodesic flow~\cite[Corollary 3.4]{Eberlein73}. In particular, there are manifolds for which the curvature is positive in some places {while} the geodesic flow is Anosov. However, even in non-positive curvature some geodesics may fail to encounter negative curvature and thus the geodesic flow may not be Anosov. To study this situation, we introduce an integrated curvature condition inspired by that in~\cite{SXZ}: There are  $T>0$, and $c_{_{\!K}}>0$ so that for {every geodesic $\gamma$} of length $t\geq T$ in the universal cover $(\tilde{M}, \tilde g)$  {of $(M,g)$}, {and for all} $0\leq s\leq 1$, 
\begin{equation}
\label{e:intCurve}
\int_{\Omega_{\gamma}(s)}\!\!Kdv_{\tilde g}\leq -c_{_{\!K}}{e^{-\frac{1}{c_{_{\!K}}\sqrt{s}}}}
\end{equation}
where 
$
{\Omega_{\gamma}(s)}:=\{x\in \tilde{M}:\; d(x,\gamma)\leq s\},
$
{and $K$ is the scalar curvature for {$(\tilde M,\tilde g)$}.}
{Note that, unlike the {curvature} conditions in~\cite{SXZ}, {the assumption in} ~\eqref{e:intCurve} allows the curvature to vanish in open sets so long as no geodesic lies entirely in such an open set. Moreover, it allows the curvature to vanish to infinite order at the geodesic.}

\begin{theorem}\label{t:surfaces}
Let $(M,g)$ be a smooth, compact Riemannian surface. Let $H\subset M$ be a closed embedded {curve or a point}. Suppose one of the following assumptions holds{:}
\begin{enumerate}[label=\textbf{\Alph*.},ref=\ref{t:surfaces}.\Alph*]
\item  \label{a3}$(M,g)$ has Anosov geodesic flow.  \smallskip %A
\item  \label{a7} $(M,g)$ {has non-positive curvature and satisfies the integrated curvature condition~\eqref{e:intCurve}, and $H$ is a geodesic.} %G
\end{enumerate}
Then, there exists $C>0$ so that for all $w\in C_c^\infty(H)$ the following holds. There is $h_0>0$ so that for $0<h<h_0$ and $u\in \mc{D}'(M)$
\begin{equation}
\label{e:subEstSurface}
\Big|\int_Hwud\sigma_H\Big|\leq Ch^{\frac{1-k}{2}}\|w\|_{\infty}\Big(\frac{\|u\|_{\LM}}{\sqrt{\log h^{-1}}}+\frac{\sqrt{\log h^{-1}}}{h}\|(-h^2\Delta_g-1)u\|_{\Hl}\Big).
\end{equation}
\end{theorem}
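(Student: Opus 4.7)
The plan is to verify, for each assumption, the non-conjugacy hypothesis required by Theorem~\ref{t:noConj2} or Theorem~\ref{t:noConj1}, and, when the codimension hypothesis of Theorem~\ref{t:noConj1} fails, to reproduce its proof in the low-codimension setting by direct appeal to the geodesic beam machinery behind~\cite{CG18d}. Consider first the point case $H=\{x\}$, which by Assumption~\ref{a7}'s restriction that $H$ be a geodesic only arises under Assumption~\ref{a3}. There $n=k=2$, and Anosov geodesic flow implies that $(M,g)$ has no conjugate points. Consequently $\mc{C}_x^{n-1,r_t,t}=\emptyset$ for every $t,r_t>0$, and Theorem~\ref{t:noConj2} applies vacuously to give~\eqref{e:subEstSurface}.

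For the curve case $k=1$ one is below the codimension bound $k>\tfrac{n+1}{2}=\tfrac{3}{2}$ of Theorem~\ref{t:noConj1}, so the plan is to rerun its proof strategy by hand in two dimensions using the geodesic beam framework of~\cite{CG18d}. First, cover a neighborhood of the conormal bundle $\SNH \subset S^*M$ by geodesic beam tubes of width $\sim h^{1/2-\delta}$ and apply the local geodesic beam bound to express $|\int_H wu\,d\sigma_H|$ in terms of the microlocal mass that $u$ carries on these tubes. Propagating this mass by Egorov's theorem up to the Ehrenfest time $t\sim c\log h^{-1}$ and summing over $t$, the $(\log h^{-1})^{-1/2}$ gain in~\eqref{e:subEstSurface} comes out provided that the mass on tubes that return to $\SNH$ within distance $r_t$ at time $t$ is summable, which is guaranteed by exponential decay in $t$ of the Liouville measure of recurrent conormal directions.

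Under Assumption~\ref{a3} this last input is immediate from the uniform hyperbolicity of the Anosov flow: for the unstable Lyapunov exponent $\lambda_u>0$, the measure of conormal directions to $H$ whose time-$t$ flow lands within $r_t=a^{-1}e^{-at}$ of $\SNH$ is $O(e^{-\lambda_u t})$ for $a<\lambda_u$. Under Assumption~\ref{a7}, since $H$ is itself a geodesic, one lifts to the universal cover $(\tilde M,\tilde g)$ and studies Jacobi fields along geodesics leaving $\tilde H$ perpendicularly; in non-positive curvature these Jacobi fields are convex and their growth is controlled below by a curvature integral over a tubular region around the underlying geodesic, which is precisely the quantity bounded in~\eqref{e:intCurve}. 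The main obstacle is this last step: since~\eqref{e:intCurve} allows the scalar curvature to vanish to infinite order along $\gamma$, no pointwise hyperbolic estimate is available, and one must balance the $e^{-c_{_{\!K}}^{-1}s^{-1/2}}$ lower bound against the tube width $s$ in a sharpened Jacobi comparison in order to extract the clean $e^{-at}$ non-recurrence rate demanded by the covering argument.
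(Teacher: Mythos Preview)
Your treatment of the point case under Assumption~\ref{a3} is correct: Anosov flow implies no conjugate points, so $\mc{C}_x^{n-1,r_t,t}=\emptyset$ and Theorem~\ref{t:noConj2} applies. (The paper instead routes this through Theorem~\ref{T:tangentSpace} and the observation $\mc{A}_H=\emptyset$, but your argument is valid and shorter.)

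The curve case, however, has a genuine gap. You cannot ``rerun'' the proof of Theorem~\ref{t:noConj1} when $k=1$: that proof, via Lemma~\ref{l:prelimNoConj}, produces the vector $\w_0$ by intersecting a $(2(n-k)+1)$-dimensional subspace $\mathbf{V}_{\!\rho_0}$ with the $(k-1)$-dimensional fiber $T_{\rho_0}(SN^*_{x_0}H)$ inside the $(n-1)$-dimensional sphere $T_{\rho_0}S^*_{x_0}M$, and this intersection is forced to be nontrivial precisely when $k>\tfrac{n+1}{2}$. For $n=2$, $k=1$ the fiber is zero-dimensional and the construction collapses. Your substitute dynamical input---that the measure of $\rho\in\SNH$ with $d(\varphi_t(\rho),\SNH)<r_t$ is $O(e^{-\lambda_u t})$---is asserted, not proved; it is not an automatic consequence of hyperbolicity, since at points where $T_\rho\SNH$ is nearly tangent to $E_+$ the forward flow \emph{contracts} $\SNH$ and a naive transversality count fails. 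The paper handles Assumption~\ref{a3} by a completely different mechanism: on a surface $\dim T_\rho\SNH=1$ and $E_+\cap E_-=\{0\}$ force $\mc{A}_H=\emptyset$, and then Theorem~\ref{T:tangentSpace} applies; its proof (Lemma~\ref{l:anosov}) deals with the nearly-stable directions by an iterated contraction/covering scheme (Lemmas~\ref{l:nonlooping},~\ref{l:nonlooping2}), not by a single exponential bound.

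For Assumption~\ref{a7} your sketch is in the right spirit but is not a proof and differs from the paper's argument. The paper does not run a Jacobi comparison to extract an $e^{-at}$ rate; instead it applies Gauss--Bonnet to geodesic quadrilaterals in $\tilde M$. If $\rho_0,\rho_s\in\SNH$ with $d(\rho_0,\rho_s)=s$ both return $r$-close to $\SNH$ at comparable times, the angle defect at the far vertices equals $-\int_{Q_s}K$, and a short Jacobi estimate shows $Q_s\supset\Omega_{\tilde\gamma}(s/4)$ for some geodesic $\tilde\gamma$ of length $\geq T$, so~\eqref{e:intCurve} gives $-\int_{Q_s}K\geq c_1e^{-c_2/\sqrt s}$. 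This forces either $s>c_3$ or $s\lesssim(\log r^{-1})^{-2}$, which directly bounds the number of bad tubes by $O\big((\log h^{-1})^{-1}r_1^{-1}\big)$ and feeds into Theorem~\ref{t:coverToEstimate} without further iteration. The ``balancing'' you describe as the main obstacle is thus replaced by a clean dichotomy; the decay in the bad-tube count is only polynomial in $\log h^{-1}$, not exponential in $t$, but that is already enough.
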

%%%%%%%%%%%%%%%%%%%%%%%%%%%%%%%%%%%%%%%%%%%%%%%%%%%%%%%%%%%%%%%%%%%%%%%%%%%%%%%%
\begin{remark}
In fact, the proof Theorem~\ref{a7} shows that it is enough to have~\eqref{e:intCurve} for every geodesic $\gamma$ normal to $H$.
\end{remark}

For manifolds of arbitrary dimensions, we also obtain quantitative improvements for averages in a variety of situations.

\begin{theorem}\label{T:applications}
 Let $(M,g)$ be a smooth, compact Riemannian manifold of dimension $n$ and $H\subset M$ be a closed embedded submanifold of codimension $k$. Suppose one of the following assumptions holds{:}
\begin{enumerate}[label=\textbf{\Alph*.},ref=\ref{T:applications}.\Alph*]
\item  \label{a1} $(M,g)$ has no conjugate points and $H$ has codimension $k>\frac{n +1}{2}$. \smallskip %C
\item  \label{a2}$(M,g)$ has no conjugate points and $H$ is a geodesic sphere.\smallskip %F
%\item  \label{a3}$(M,g)$ is a surface with Anosov geodesic flow.  \smallskip %A
\item  \label{a6}$(M,g)$ is non-positively curved and has Anosov geodesic flow, and $H$ has codimension $k>1$.  \smallskip %E
\item  \label{a4}$(M,g)$ is non-positively curved and has Anosov geodesic flow, and $H$ is totally geodesic.  \smallskip %E
\item  \label{a5} $(M,g)$ has {Anosov geodesic flow} and $H$ is a subset of $M$ that lifts to a horosphere in the universal cover. %G
%\item  \label{a7} $(M,g)$ {is a non-positively curved surface satisfying the integrated curvature condition~\eqref{e:intCurve} and $H$ is a geodesic.} %G
\end{enumerate}
Then, there exists $C>0$ so that for all $w\in C_c^\infty(H)$ the following holds. There is $h_0>0$ so that for $0<h<h_0$ and $u\in \mc{D}'(M)$
\begin{equation}
\label{e:subEst}
\Big|\int_Hwud\sigma_H\Big|\leq Ch^{\frac{1-k}{2}}\|w\|_{\infty}\Big(\frac{\|u\|_{\LM}}{\sqrt{\log h^{-1}}}+\frac{\sqrt{\log h^{-1}}}{h}\|(-h^2\Delta_g-1)u\|_{\Hl}\Big).
\end{equation}
\end{theorem}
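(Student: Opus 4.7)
The plan is to reduce each of \textbf{A}--\textbf{E} to a non-self-conjugacy or non-recurrence condition on the unit conormal bundle $SN^*H$ of the type appearing in Theorem~\ref{t:noConj1}, after which the conclusion follows either from that theorem or from the closely related codimension-$1$ bound supplied by the geodesic-beam framework of~\cite{CG18d} via Theorem~\ref{t:coverToEstimate}. In every case, the goal is to show that two $H$-normal geodesics starting at points of distance $\delta$ in $H$ separate at rate at least $\tfrac{1}{a}e^{-at}$ for an appropriate $a>0$; this controls the return dynamics of $SN^*H$ at scale $r_t$ and produces the $\sqrt{\log h^{-1}}$ gain.

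Case~\textbf{A} is immediate from Theorem~\ref{t:noConj1}: the hypothesis $k>\tfrac{n+1}{2}$ yields $2k-n-1\geq 1$, and the absence of conjugate points forces $\mc{C}_H^{2k-n-1,r_t,t}=\emptyset$, so~\eqref{e:dist} holds vacuously. Case~\textbf{B} falls outside the range of Theorem~\ref{t:noConj1} since $k=1$. Here I would work in the universal cover and exploit the fact that normals to a geodesic sphere $S(x_0,r_0)$ are precisely the radial geodesics through $x_0$: each lift is a straight ray from a lift of $x_0$, and a Jacobi-field computation (valid by the no-conjugate-points hypothesis) shows that the $H$-normal exponential map is injective at scale $r_t$. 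One then invokes the codimension-$1$ version of the geodesic-beam estimate from~\cite{CG18d} to conclude.

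For cases~\textbf{C}, \textbf{D}, and~\textbf{E}, the Anosov hypothesis provides exponential divergence of nearby geodesics at a rate $c>0$ coming from the Anosov splitting. In case~\textbf{D}, $H$ being totally geodesic means parallel transport along $H$-normal geodesics preserves $N^*H$, and Anosov expansion forces two such geodesics at points of distance $\delta$ in $H$ to remain separated at rate $e^{ct}$; picking $a<c$ in $r_t=\tfrac{1}{a}e^{-at}$ yields~\eqref{e:dist}. Case~\textbf{E} is analogous: any $H$-normal geodesic lifted to the universal cover is a stable or unstable orbit of the geodesic flow, so exponential divergence is immediate. For case~\textbf{C}, the extra codimension $k>1$ ensures enough transverse directions in $SN^*H$ that after a uniformly bounded geodesic time every conormal direction lies in the strong (un)stable cone; combined with the absence of conjugate points (a consequence of non-positive curvature) this gives~\eqref{e:dist}.

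The main obstacle, in my view, is case~\textbf{B}: because it lies outside the codimension range of Theorem~\ref{t:noConj1}, it must be handled via the codimension-$1$ geodesic-beam bound, and the required non-recurrence estimate demands a delicate analysis of the radial structure of normals to the geodesic sphere together with a defect-measure argument analogous to~\cite{CG17}. The Anosov cases~\textbf{C}--\textbf{E} I expect to be comparatively mechanical, provided one sets up the cone-field machinery and transfers the universal-cover estimates back to $M$ while maintaining uniform control on $r_t$.
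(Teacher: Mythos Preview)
Your handling of case~\textbf{A} is correct and matches the paper. The remaining cases do not go through as you describe.

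The central gap is in cases~\textbf{C}, \textbf{D}, \textbf{E}. You propose to verify condition~\eqref{e:dist} and invoke Theorem~\ref{t:noConj1}, but that theorem carries the hypothesis $k>\tfrac{n+1}{2}$, and this restriction is essential to its proof: the dimension count in Lemma~\ref{l:prelimNoConj} and Proposition~\ref{l:panda} collapses otherwise. Case~\textbf{E} has $k=1$, case~\textbf{D} has no codimension restriction, and case~\textbf{C} only assumes $k>1$; none of them falls under Theorem~\ref{t:noConj1} in general. Moreover, Anosov flow already implies no conjugate points, so $\mc{C}_H^{2k-n-1,r_t,t}=\emptyset$ and~\eqref{e:dist} is vacuously true --- verifying it contributes nothing. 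The paper's route is entirely different: it applies Theorem~\ref{T:tangentSpace}, which has no codimension hypothesis but instead requires $\mc{A}_H=\emptyset$ (Anosov case) or $\mc{S}_H=\emptyset$ (no-focal-points case), conditions on how $T_\rho(SN^*H)$ sits relative to the stable/unstable splitting. For~\textbf{D} and~\textbf{E} these are quoted from~\cite[Section~5.4]{CG17}. For~\textbf{C} the paper supplies a genuine geometric argument: assuming $\rho\in\mc{A}_H$, one uses $k>1$ to produce, for any principal-curvature direction $v\in T_{\pi(\rho)}H$, both a stable and an unstable lift $\w_\pm\in T_{\rho^\sharp}(SNH)$ with $d\pi\w_\pm=v$; the signed second fundamental forms $\pm\Pi_{\mathcal{H}_\pm}\geq c>0$ of the stable/unstable horospheres (established via the Riccati equation from Anosov plus non-positive curvature) then force contradictory signs on $\langle\rho^\sharp,\Pi_H(v,v)\rangle$. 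Your cone-field sketch does not touch this mechanism.

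For case~\textbf{B} your instinct about radial geodesics is right, but you overcomplicate it and misidentify the difficulty. The paper's argument is essentially one line: a geodesic sphere satisfies $H=\pi(\varphi_s(S_x^*M))$ for some $x\in M$, $s>0$, so $SN^*H=\varphi_s(S_x^*M)$, and the good cover of $S_x^*M$ built in the proof of Theorem~\ref{t:noConj2} flows forward under $\varphi_s$ to a good cover of $SN^*H$ with the same non-self-looping properties (radii dilated by an $h$-independent factor). One then applies Theorem~\ref{t:coverToEstimate} directly --- no Jacobi-field injectivity analysis or defect-measure argument is needed. Contrary to your assessment, \textbf{B} is the easiest of the remaining cases, and~\textbf{C} is where the real work lies.
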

%%%%%%%%%%%%%%%%%%%%%%%%%%%%%%%%%%%%%%%%%%%%%%%%%%%%%%%%%%%%%%%%%%%%%%%%%%%%%%%%
We note here that {Theorem} {\ref{a7}} includes the bounds of~\cite{SXZ} as a special case {(see Remark \ref{r:SXZ16} for an explanation)}.  The bounds in~\cite{Wym2,Wym18} are special cases of {Theorem} {\ref{a3}}, {Theorem} {\ref{a6}}, and the results of Theorem~\ref{T:tangentSpace} {below (see the discussion that follows Theorem~\ref{T:tangentSpace}}). {We also note {that} for any smooth compact embedded submanifold, $H_0 \subset M$, satisfying one of the conditions in Theorem~\ref{T:applications}, there is a neighborhood $U$ of $H_0$, in the $C^\infty$ topology, so that the constants $C$ and $h_0$ in Theorem 
\ref{T:applications} are uniform over $H\in U$ and $w$ taken in a bounded subset of {$C_c^\infty(H)$}.} In particular, the sup-norm bounds from~\cite{Berard77,Bo16,Randol} are a special case of {Theorem}~ \ref{a1}. Similar to the $o(h^{\frac{1-k}{2}})$ bounds in~\cite{CG17}, we conjecture that~\eqref{e:subEst} holds whenever $(M,g)$ is a manifold with Anosov geodesic flow, regardless of the geometry of $H$.

{Geodesic beam techniques can also be used to study $L^p$ norms of eigenfunctions~\cite{CG18b} and to give quantitatively improved remainder estimates for the kernel of the spectral projector and for Kuznecov sum type formulae~\cite{CG18c}. The authors are currently studying how to give polynomial improvements for $L^\infty$ norms on certain manifolds with integrable geodesic flow. To our knowledge the only other case where polynomial improvements are available is in~\cite{I-S} for Hecke--Maase forms on arithmetic surfaces or when $(M,g)$ is the flat torus~\cite{B93,Gros}. }

%%%%%%%%%%%%%%%%%%%%%%%%%%%%%%%%%%%%%%%%%%%%%%%%%%%%%%%%%%%%%%%%%%%%%%%%%%%%%%%%
%%%%%%%%%%%%%%%%%%%%%%%%%%%%%%%%%%%%%%%%%%%%%%%%%%%%%%%%%%%%%%%%%%%%%%%%%%%%%%%%
\subsection{Results on geodesic beams}\label{s:geodesicBeams}
%%%%%%%%%%%%%%%%%%%%%%%%%%%%%%%%%%%%%%%%%%%%%%%%%%%%%%%%%%%%%%%%%%%%%%%%%%%%%%%%
%%%%%%%%%%%%%%%%%%%%%%%%%%%%%%%%%%%%%%%%%%%%%%%%%%%%%%%%%%%%%%%%%%%%%%%%%%%%%%%%

The main estimate from~\cite{CG18d} gives control on eigenfunction averages in terms of microlocal data. We now review the necessary notation to state that result.
%We now introduce the necessary objects to state this estimate and, since it entails little extra difficulty, we work in the general setup of semiclassical pseudodifferential operators (see e.g.~\cite{EZB} or ~\cite[Appendix E]{ZwScat} for a treatment of semiclassical analysis.
% 
%For $p\in S^m(\TM )$, we say that $p$ is \emph{classically elliptic} if there exists {$K_p>0$} so that 
%\begin{equation}\label{e:classellipt}
%|p(x,\xi)|\geq |\xi|^m/{K_p},\qquad |\xi|\geq {K_p},\quad x\in M.
%\end{equation}
%In addition, for $p\in S^\infty(\TM ;\R)$, we say that a submanifold $H\subset M$ of codimension $k$ is \emph{conormally transverse for $p$} if given $f_1,\dots, f_{k}\in C_c^\infty(M;\R)$ with
%\[
%H= \bigcap_{i=1}^k\{f_i=0\}\quad\text{and}\quad \{df_i\}\text{ linearly independent on }H,
%\] 
%we have
%\begin{equation}
%\label{e:transverse} N^*\!H\; \subset \; \{p\neq0\} \cup \bigcup_{i=1}^k\{ H_pf_i \neq 0\},
%\end{equation}
%where $H_p$ is the Hamiltonian vector field associated to $p$. 

Let $p(x,\xi)=|\xi|_{g(x)}$ defined on $T^*M$
and {consider the geodesic flow} on $T^*M$,
\begin{equation}\label{e:varphi}
\varphi_t:=\exp(tH_p).
\end{equation}
Next, fix a hypersurface
\begin{equation}
\label{e:Hsig}
\mc{H}_{\Sigma}\subset \TM \text{  transverse to }H_p\;\text{ with }\SNH\subset \mc{H}_{\Sigma},
\end{equation}
define $\Psi:\re\times \mc{H}_{\Sigma}\to \TM $ by $\Psi(t,q)=\varphi_t(q)$, and let
\begin{equation}\label{e:Tinj}
\Tinj:=\sup\{{\tau \leq 1}:\; \Psi|_{(-\tau,\tau)\times\mc{H}_{\Sigma}}\text{ is injective}\}.
\end{equation}
Given $A \subset \TM $ define
$$\Lambda_{_{\!A}}^\tau:=\bigcup_{|t|\leq \tau}\varphi_t(A).$$
For $r>0$ and $A\subset \SigH$ we define
\begin{equation}\label{e:tube}
{\Lambda_{_{\!A}}^\tau(r):=\Lambda_{A_r}^{\tau+r},\qquad A_r:=\{\rho\in \mc{H}_{\Sigma}:d(\rho,A)<r\}.}
\end{equation}
where $d$ denotes the distance induced by the Sasaki metric on $TM$ (see e.g. Appendix~\ref{s:jacobi} or~\cite[Chapter 9]{BlairSasaki} for an explanation of the Sasaki metric).

Throughout the paper we adopt the notation
\begin{equation}\label{e:KH}
{K}_{_{\!H}}>0
\end{equation}
for a constant so that all sectional curvatures of $H$ are bounded by $K_{_{\!H}}$ {and the second fundamental form of $H$ is bounded by $K_{_{\!H}}$}. Note that when $H$ is a point, we may take $K_{_{\!H}}$ to be arbitrarily close to $0$.

{We next recall~\cite[{Theorem 11}]{CG18d} which controls eigenfunction averages by covers of $\Lambda^\tau_{\SNH}(h^\delta)$ by ``good" tubes that are non self-looping and ``bad" tubes whose {number} is controlled. In fact, Theorems~\ref{t:noConj2},~\ref{t:noConj1}, and~\ref{T:applications}  are reduced to a purely dynamical argument together with an application of Theorem~\ref{t:coverToEstimate}.}

For $0<t_0<T_0$, we say that $A\subset T^*\!M$ is \emph{$[t_0,T_0]$ non-self looping} if 
\begin{equation}\label{e:nonsl}
\bigcup_{t=t_0}^{T_0}\varphi_t(A)\cap A=\emptyset\qquad \text{ or }\qquad  \bigcup_{t=-T_0}^{-t_0}\varphi_t(A)\cap A=\emptyset.
\end{equation}
We define the \emph{maximal expansion rate }
\begin{equation}\label{e:Lmax}
\Lambda_{\max}:=\limsup_{|t|\to \infty}\frac{1}{|t|}{\log} \sup_{{\SM}}\|d\varphi_t(x,\xi)\|.
\end{equation}
Then, the Ehrenfest time at frequency $h^{-1}$ is 
\begin{equation}\label{e:Tehr}
T_e(h):=\frac{\log h^{-1}}{2\Lambda_{\max}}.
\end{equation}
Note that $\Lambda_{\max}\in[0,\infty)$ and if $\Lambda_{\max}=0$, we may replace it by an arbitrarily small positive constant.

%%%%%%%%%%%%%%%%%%%%%%%%%%
\begin{definition} \label{d: cover} Let $A\subset \SigH$, $r>0$, {$\tau>0$}, and  $\{\rho_j\}_{j=1}^{N_r} \subset A$. We say that the collection of tubes $\{\Lambda_{\rho_j}^\tau(r)\}_{j=1}^{N_r}$ is a \emph{$(\tau, r)$-cover} of a set $A\subset  \SigH$ provided
 $$\Lambda_A^\tau(\tfrac{1}{2}r) \subset\bigcup_{j=1}^{N_r}\Lambda_{\rho_j}^{\tau}(r).$$
\end{definition}
%%%%%%%%%%%%%%%%%%%%%%%%%%
{
{It will often be useful to have a notion of $(\tau,r)$ cover of $\SNH$ without too many overlapping tubes. To that end, we make the following definition.}
\begin{definition}
\label{d:good cover}
Let $A\subset \SigH$, {$r>0$}, $\mathfrak{D}>0$, and  $\{\rho_j\}_{j=1}^{N_r} \subset A$. We say that the collection of tubes $\{\Lambda_{\rho_j}^\tau(r)\}_{j=1}^{N_r}$ is a \emph{$(\mathfrak{D},\tau, r)$-good cover} of a set $A\subset  \SigH$ provided that it is a $(\tau,r)$-cover for $A$ and there exists a partition $\{\mathcal{J}_\ell\}_{\ell=1}^{\mathfrak{D}}$ of $\{1, \dots, N_r\}$ so that for every $\ell\in \{1, \dots, \mathfrak{D}\}$
    \[
    \Lambda_{\rho_j}^\tau (3r)\cap \Lambda_{\rho_i}^\tau(3r)=\emptyset\qquad i,j\in \mathcal{J}_\ell, \quad  i\neq j.
    \]
\end{definition}
\noindent We recall that~\cite[Proposition 3.3]{CG18d} shows the existence of $\mathfrak{D}_n>0$, depending only on $n$, so that for all sufficiently small $(\tau,r)$ there are of $(\mathfrak{D}_n,\tau, r)$ good covers of $\SNH$. We will use this fact freely throughout this article.}

%%%%%%%%%%%%%%%%%%%%%%%%%%

{For convenience we state~\cite[{Theorem 11}]{CG18d}}.
The theorem involves many parameters. These provide flexibility when applying the theorem, but make the statement involved. We refer the reader to the comments after the statement of the theorem for a heuristic explanation of its contents.
%%%%%%%%%%%%%%%%%%%%%%%%%%%%%%%%%%%%%%%%%%%%%%%%%%%%%%%%%%%%%%%%%%%%%%%%%%%%%%%%

\begin{theorem}[{\cite[{Theorem 11}]{CG18d}}]
\label{t:coverToEstimate}
{Let $H\subset M$ be a submanifold of codimension $k$.}
 Let $0<\delta<\frac{1}{2}$, $N>0$ and ${\{w_h\}_h}$ with {$w_h\in S_\class \cap C_c^\infty( H)$}. There exist positive constants $\tau_0=\tau_0(M,g,\Tinj ,H)$, {$R_0=R_0(M,g, K_H,{k,\Tinj })$,} $C_{n,k}$ depending only on $n$ and $k$, and $h_0=h_0(M,g,{\delta, {H}})$, and for each $0<\tau\leq \tau_0$  there exist $C=C(M,g,\tau,\delta,, H)>0$ and $C_{_{\!N}}=C_{_{\!N}}(M,g, N,\tau,\delta,{\{w_h\}_h},H)>0$,  so that the following holds.

Let $8h^\delta\leq R(h)\leq R_0$,{ $0\leq\alpha< 1-2{\limsup_{h\to 0}\frac{\log R(h)}{\log h}}$,} and suppose $\{\Lambda_{_{\rho_j}}^\tau(R(h))\}_{j=1}^{N_h}$  is a {$(\mathfrak{D},\tau, R(h))$ cover of $\SNH$ for some $\mathfrak{D}>0$}.

In addition, suppose there exist $\mc{B}\subset \{1,\dots, N_h\}$ and a finite collection  $\{\mc{G}_\ell\}_{\ell \in \mathcal L} \subset \{1,\dots, N_h\}$ with 
$$
\mathcal J_h(w_h)\;\subset\;  \mc{B} \cup \bigcup_{\ell \in \mathcal L}\mc{G}_\ell,
$$
where
\begin{equation}\label{e:i}
\mathcal J_h(w_h):=\{j:\; \Lambda_{_{\!\rho_j}}^\tau(2R(h))\cap \pi^{-1}(\supp w_h)\neq \emptyset\},
\end{equation}
 and so that for every $\ell \in \mathcal L$ there exist  $t_\ell=t_\ell(h)>0$ and ${T_\ell=T_\ell(h)}\leq {2} \alpha T_e(h)$  so that
$$
\bigcup_{j\in \mc{G}_\ell}\Lambda_{_{\rho_j}}^\tau(R(h))\;\;\text{ is }\;\;[t_\ell,T_{\ell}]\text{ non-self looping for }\varphi_t:=\exp(tH_{|\xi|_g}).
$$
Then, for $u\in \mc{D}'(M)$ and $0<h<h_0$,
\begin{align*}
h^{\frac{k-1}{2}}\Big|\int_{H} w_h u\, d\sigma_{H}\Big|
&\leq\frac{C_{n,k}{\mathfrak{D}}\|w_h\|_{_{\!\infty}}R(h)^{\frac{n-1}{2}}}{\tau^{\frac{1}{2}}}
\Bigg(|\mc{B}|^{\frac{1}{2}}+\sum_{\ell \in \mathcal L }\frac{(|\mc{G}_\ell|t_\ell)^{\frac{1}{2}}}{T^{\frac{1}{2}}_\ell}\Bigg)\|u\|_{\LM} \\
&+\frac{C_{n,k}{\mathfrak{D}}\|w_h\|_{_{\!\infty}}R(h)^{\frac{n-1}{2}}}{\tau^{\frac{1}{2}}} \sum_{\ell \in \mathcal L}\frac{(|\mc{G}_\ell|t_\ell T_\ell)^{\frac{1}{2}}}{h}\;\|(-h^2\Delta_g-1)u\|_{\LM}\! \\
&+Ch^{-1}{\|w_h\|_\infty}\|(-h^2\Delta_g-1)u\|_{\Hl}\\
&+C_{_{\!N}}h^N\big(\|u\|_{\LM}+{\|(-h^2\Delta_g-1)u\|_{\Hl}}\big).
\end{align*}
Here, the constant $C_{_{\!N}}$ depends on $\{w_h\}_h$  only through finitely many $S_\delta$ seminorms of $w_h$. {The constants ${\tau_0},C,C_{_{\!N}},h_0$ depend on $H$ only through finitely many derivatives of its curvature and second fundamental form.}
\end{theorem}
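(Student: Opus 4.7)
The plan is to reduce the submanifold average to an $L^2$-type estimate of $u$ against a microlocally built partition of unity adapted to the given cover of $\SNH$, and then to exploit the non-self-looping property via propagation to gain an improved $L^2$ bound on the good tubes. First I would write $\int_H w_h u\, d\sigma_H$ as an oscillatory integral over $T^*M$ by inserting a ``conormal cutoff'' near $N^*H$: using an $h$-dependent partition of the Fourier variables dual to the normal directions of $H$, one sees that the integral equals $h^{(1-k)/2}\langle A_h u, u\rangle^{1/2}$-type expressions up to harmless errors, and that only the portion of $u$ microlocalized in a small neighborhood of $\SNH$ contributes (other contributions give $O(h^\infty)$ errors absorbed into the last term through the $\|(-h^2\Delta_g-1)u\|_{\Hl}$ factor). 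The basic local restriction estimate supplies, for a single tube $\Lambda_{\rho_j}^\tau(R(h))$ with pseudodifferential cutoff $\chi_j$ whose symbol is essentially supported there, the bound
\[
h^{\frac{k-1}{2}}\Big|\int_H w_h \chi_j u\, d\sigma_H\Big|\;\lesssim\; \frac{\|w_h\|_\infty R(h)^{\frac{n-1}{2}}}{\tau^{1/2}}\,\|\Oph(\chi_j)u\|_{L^2(M)},
\]
where the volume factor $R(h)^{(n-1)/2}$ comes from the $(n-1)$-dimensional fiber of $\SNH$ and $\tau^{-1/2}$ from the length of the tube along the flow.

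Next I would use the $(\mathfrak{D},\tau,R(h))$-good cover to write a microlocal partition $\sum_j \Oph(\chi_j)$ in a neighborhood of $\SNH\cap \pi^{-1}(\supp w_h)$, split $j\in\mathcal J_h(w_h)$ into $\mc B\cup\bigcup_\ell \mc G_\ell$, and estimate each group. For the bad indices I sum the basic estimate using Cauchy--Schwarz over $\mc B$; bounded overlap (at most $\mathfrak D$-fold by the good-cover property) turns $\sum_{j\in\mc B}\|\Oph(\chi_j)u\|^2_{L^2}$ into $\mathfrak D\|u\|_{L^2}^2$, producing the $|\mc B|^{1/2}$ term. For each good group $\mc G_\ell$, I would exploit the $[t_\ell,T_\ell]$ non-self-looping property by writing, for any $s$ in that window,
\[
\Oph(\chi_j)u \;=\; U(-s)\,\Oph(\chi_j\circ\varphi_{-s})\,U(s)u\,+\,O_{L^2}(s\,h^{-1}\|(-h^2\Delta_g-1)u\|_{\Hl})\,+\,O(h^\infty),
\]
where $U(s)=\exp(-is\sqrt{-h^2\Delta_g}/h)$, and then average over $s\in[t_\ell,T_\ell]$. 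Because $\chi_j\circ\varphi_{-s}$ for $j\in\mc G_\ell$ and $s$ in the non-looping interval are supported in pieces of the flow-out that are disjoint from the original $\bigcup_{j\in\mc G_\ell}\Lambda_{\rho_j}^\tau(R(h))$, a $TT^*$ computation shows
\[
\Big\|\frac{1}{T_\ell-t_\ell}\int_{t_\ell}^{T_\ell}\sum_{j\in\mc G_\ell}U(-s)\,\Oph(\chi_j\circ\varphi_{-s})^*\Oph(\chi_j\circ\varphi_{-s})U(s)\,ds\Big\|_{L^2\to L^2}\;\lesssim\;\frac{\mathfrak{D}\,t_\ell}{T_\ell-t_\ell},
\]
so that $\sum_{j\in\mc G_\ell}\|\Oph(\chi_j)u\|^2_{L^2}\lesssim \frac{\mathfrak D\,t_\ell}{T_\ell}\|u\|_{L^2}^2$ modulo quasimode errors of size $\tfrac{T_\ell}{h}\|(-h^2\Delta_g-1)u\|_{\Hl}$. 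Cauchy--Schwarz over the $|\mc G_\ell|$ indices inside each group then produces exactly the $(|\mc G_\ell|\,t_\ell)^{1/2}/T_\ell^{1/2}$ and $(|\mc G_\ell|\,t_\ell T_\ell)^{1/2}/h$ contributions in the claimed inequality.

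The main obstacle is controlling the Egorov propagation up to logarithmically long times. The hypothesis $T_\ell\le 2\alpha T_e(h)$ with $\alpha<1-2\limsup\frac{\log R(h)}{\log h}$ is precisely what is needed so that conjugation of $\Oph(\chi_j)$ by $U(s)$ stays in a sensible $S_\delta$-class symbol calculus and so that the remainder $h^\infty$ terms, when multiplied by the exponential volume growth of $\varphi_s$, are still $o(1)$: this forces all the careful bookkeeping of how $S_\delta$ seminorms of the propagated symbols depend on $s$ and $R(h)$. A secondary technical point is extracting the sharp geometric constant $R(h)^{(n-1)/2}\tau^{-1/2}$ from the single-tube restriction estimate, which requires an approximate parametrix for the restriction operator on a scale $R(h)$ transverse to $\SNH$ and a scale $\tau$ along $H_p$; this is where the curvature bound $K_H$ enters, governing how the Jacobi fields distort the tubes and so determining $R_0$ and the dependence of $\tau_0$ on $\mc H_\Sigma$. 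Once these two ingredients (the single-tube restriction estimate and the flow-averaged $TT^*$ bound) are in place, the rest of the proof is summation, Cauchy--Schwarz, and keeping the $O(h^\infty)$ and quasimode errors in the form required.
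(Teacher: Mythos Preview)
This theorem is not proved in the present paper at all: it is quoted verbatim from the companion paper \cite{CG18d} (introduced with ``For convenience we state~\cite[Theorem 11]{CG18d}'') and used purely as a black box in Sections~\ref{s:dynNoConj} and~\ref{s:Anosov}. There is therefore no proof here to compare your sketch against.

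That said, your outline matches the architecture of the argument in \cite{CG18d}: microlocalize to a neighborhood of $\SNH$, establish a single-tube restriction bound carrying the geometric factor $R(h)^{(n-1)/2}\tau^{-1/2}$, sum over $\mc B$ by Cauchy--Schwarz using the $\mathfrak D$-bounded overlap, and on each $\mc G_\ell$ trade a time average against the propagator for the $(t_\ell/T_\ell)^{1/2}$ gain via Egorov and almost-orthogonality of the propagated cutoffs. Your identification of the two delicate points---controlling Egorov up to $2\alpha T_e(h)$ within the $S_\delta$ calculus, and extracting the sharp single-tube constant---is accurate.

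One small correction to your propagation paragraph: as written, the displayed $TT^*$ operator mixes $U(-s)$ on the left and $U(s)$ on the right and is not self-adjoint, so the claimed $L^2\to L^2$ norm bound is not the right object. The mechanism in \cite{CG18d} is rather to insert the smoothed time average $u\approx \frac{1}{T_\ell}\int \psi(s/T_\ell)\,U(s)u\,ds$ (valid for quasimodes with error $\sim T_\ell h^{-1}\|(-h^2\Delta_g-1)u\|$), push $\Oph(\chi_j)$ through $U(s)$ by Egorov, and then use that the propagated symbols $\chi_j\circ\varphi_s$, $j\in\mc G_\ell$, have supports that are disjoint for $s$ ranging over subintervals of $[t_\ell,T_\ell]$ of length $\sim t_\ell$ (this is exactly the content of the $[t_\ell,T_\ell]$ non-self-looping hypothesis). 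Almost orthogonality then gives $\sum_{j\in\mc G_\ell}\|\Oph(\chi_j)u\|_{L^2}^2\lesssim \mathfrak D\,\tfrac{t_\ell}{T_\ell}\|u\|_{L^2}^2$ plus the quasimode error, which after Cauchy--Schwarz over $|\mc G_\ell|$ indices produces the stated terms.
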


{
\begin{remark}
The estimates in Theorem \ref{t:coverToEstimate} are uniform in $H$. For a precise description see \cite[{Theorem 11}]{CG18d}. In particular, when $H=\{x\}$ and $w=1$, then $k=0$ and $|\int_{H} w_h u\, d\sigma_{H}|$ is replaced with $\|u\|_{L^\infty(B(x, h^\delta))}$.
\end{remark}
}
%%%%%%%%%%%%%%%%%%%%%%%%%%%%%%%%%%%%%%%%%%%%%%%%%%%%%%%%%%%%%%%%%%%%%%%%%%%%%%%%
\medskip

Theorem~\ref{t:coverToEstimate} reduces estimates on averages to construction of covers of $\LambdaH(h^\delta)$ by sets with appropriate structure. To understand the statement, we first ignore the extra structure requirement and assume $(-h^2\Delta_g-1)u=0$. With these simplifications, and ignoring {an $h^\infty\|u\|_{\LM}$ term,} if there is a cover of $\LambdaH(h^\delta)$ by ``good" sets $\{G_\ell(h)\}_{\ell\in L}$ and a ``bad" set $B(h)$ with $G_\ell$, $[t_\ell(h),T_\ell(h)]$ non-self looping, the estimate reads
\begin{multline*}
h^{\frac{k-1}{2}}\Big|\int_H w ud\sigma_H\Big|
\leq \frac{C_{n,k}\|w\|_{_{\!\infty}}}{\tau^{\frac{1}{2}}}
\left(
\![\sig(B)]^{\frac{1}{2}}
+\sum_{\ell \in \mathcal L }\frac{[\sig(G_\ell)]^{\frac{1}{2}}t_\ell^{\frac{1}{2}}}{T^{\frac{1}{2}}_\ell(h)}\right)\!\!\|u\|_{\LM},\\
%\left.\qquad\qquad\qquad\qquad\qquad\qquad\!+\!\sum_{\ell \in L}\frac{[\Sig(G_\ell)]^{\frac{1}{2}}t_\ell^{\frac{1}{2}}(h)T_\ell^{\frac{1}{2}}(h)}{h}\;\|Pu\|_{\LM}\!\right).
\end{multline*}
where $\sig$ denotes the volume induced on $\SigH$ by the Sasaki metric on $T^*\!M$ and for $A\subset T^*\!M$, we write $\sig(A)=\sig(A\cap \SigH)$. The additional structure required on the sets $G_\ell$ and $B$ is that they consist of a union tubes $\Lambda_{{\rho_i}}^\tau(h^\delta)$ for some $0\leq \delta<\frac{1}{2}$ and that $T_\ell(h)<2(1-2\delta)T_e(h)$. 
With this in mind, Theorem~\ref{t:coverToEstimate} should be thought of as giving non-recurrent condition on $\SigH$ which guarantees quantitative improvements over~\eqref{e:zelBound}. {This type of non-recurrence was exploited in~\cite{GT18a} to understand $L^\infty$ norms for eigenfunctions at the umbillic points of the tri-axial ellipsoid, a \emph{quantum-completely integrable} situation.}  Taking $t_\ell$, $T_\ell$, $G_\ell$ and $B$ to be $h$-independent can be used to recover the dynamical consequences in~\cite{CG17,Gdefect} (see~\cite{GJEDP}).

\begin{remark}
Note that it is possible to use Theorem~\ref{t:coverToEstimate} to obtain quantitative estimates which are strictly between $O(h^{\frac{1-k}{2}})$ and $O(h^{\frac{1-k}{2}}/\sqrt{\log h^{-1}})$. For example, this happens if $r_t$ is replaced by e.g. $a^{-1}e^{-a t^2}$ in~\eqref{e:dist}. We expect that the construction in~\cite{BuPa} can be used to generate examples where this type of behavior is optimal.
\end{remark}

%%%%%%%%%%%%%%%%%%%%%%%%%%%%%%%%%%%%%%%%%%%%%%%%%%%%%%%%%%%%%%%%%%%%%%%%%%%%%%%%
%%%%%%%%%%%%%%%%%%%%%%%%%%%%%%%%%%%%%%%%%%%%%%%%%%%%%%%%%%%%%%%%%%%%%%%%%%%%%%%%
\subsection{Manifolds with {no focal points} or Anosov geodesic flow}
%%%%%%%%%%%%%%%%%%%%%%%%%%%%%%%%%%%%%%%%%%%%%%%%%%%%%%%%%%%%%%%%%%%%%%%%%%%%%%%%
%%%%%%%%%%%%%%%%%%%%%%%%%%%%%%%%%%%%%%%%%%%%%%%%%%%%%%%%%%%%%%%%%%%%%%%%%%%%%%%%

{In parts {\ref{a3}}, {\ref{a6}}, {\ref{a4}} and {\ref{a5}} of Theorem \ref{T:applications} we assume either that $(M,g)$ has no focal points or that it has Anosov geodesic flow.} We show that these structures allow us to construct non-self looping covers away from the points $\mc{S}_H \subset \SNH$ at which the tangent space to $\SNH$ splits into a sum of  stable and unstable directions. To make this sentence precise we introduce some notation.

If $(M,g)$ has no conjugate points, then for any $\rho \in \SM$ there exist a {weak} stable subspace $E^{{w}}_{+}(\rho)\subset T_{\rho}\SM$ and a {weak} unstable subspace $E^{{w}}_{-}(\rho)\subset T_\rho \SM$ so that 
\[
d\varphi_t :E^{{w}}_\pm(\rho) \to E^{{w}}_\pm(\varphi_t(\rho)),
\]
and 
\[
 |d\varphi_t({\bf{v}})|\leq C|{\bf{v}}| \;\;  \text{for}\;  {\bf{v}}\in E^{{w}}_\pm\;\; \text{and}\; t\to  \pm\infty.
\]
{(see e.g.~\cite[Proposition 2.13]{Eberlein73} which is based on~\cite{Green})}
{We also define the stable ($+$) and unstable ($-$) subspaces as $E_{\pm}(\rho)=E_{\pm}^w(\rho)\cap (\mathbb{R}H_p)^\perp$ where the orthogonal complement is taken with respect to the Sasaki metric.
These subspaces then} have the property that 
$$
T_\rho \SM=(E_+(\rho)+E_-(\rho))\oplus \re H_p(\rho).
$$
{While this particular decomposition happens to be an orthogonal sum, throughout the article we will use $A=A_1\oplus A_2$ to mean direct sum i.e. that $A=A_1+A_2$ and $A_1\cap A_2=\{0\}$.}

{We recall that a manifold has no focal points if for every geodesic $\gamma$, and every Jacobi field $Y(t)$ along $\gamma$ {with $Y(0)=0$ and $Y'(0)\neq 0$}, $Y(t)$ satisfies   $\tfrac{d}{dt}\| Y(t)\|^2>0$ for $t>0$, where $\|\cdot \|$ denotes the norm with respect to the Riemannian metric. {In particular, if $(M,g)$ has non-positive curvature, then it has no focal points (see e.g. \cite[page 440]{Eberlein73})}. It is also known that if $(M,g)$ has no focal points then {$(M,g)$ has no conjugate points and that}
 $E_\pm(\rho)$ vary continuously with $\rho$. (See for example \cite[Proposition 2.13 {and remarks thereafter}]{Eberlein73}.) {See e.g.~\cite{Ruggiero,Eberlein73b,Pesin} for further discussions of manifolds without focal points. }}
 
 {The geodesic flow is said to be Anosov~\cite{Anosov} if there exist $E_{\pm}(\rho)\subset T_\rho\SM$ and} $\ca>0$ so that for all $\rho\in \SM$,
\begin{equation}\label{e:Bdef}
 |d\varphi_t({\bf{v}})|\leq \ca e^{\mp \frac{t}{\ca}}|{\bf{v}}|,\qquad{ {\bf{v}}\in E_\pm(\rho),\quad t\to  \pm\infty,}
\end{equation}
{and 
\begin{equation}
\label{e:directSplitting}
T_\rho \SM=E_+(\rho)\oplus E_-(\rho)\oplus \mathbb{R} H_p.
\end{equation}}
{Recall that a manifold with Anosov geodesic flow does not have conjugate points~\cite{Kling} and hence we use the same notation $E_{\pm}(\rho)$ as in that case. In fact, a manifold has Anosov geodesic flow if and only if it has no conjugate points and~\eqref{e:directSplitting} holds~\cite[Theorem 3.2]{Eberlein73}.} {One consequence of having} Anosov geodesic flow is that the spaces $E_\pm(\rho)$ are H\"older continuous in $\rho$~\cite[Theorem 19.1.6]{KatokHasselblatt}. 

{In order to find examples of manfiolds with Anosov geodesic flow, we recall that any manifold with no focal points in which every geodesic encounters a point of negative curvature has Anosov geodesic flow \cite[Corollary 3.4]{Eberlein73}. In particular, the class of manifolds with Anosov geodesic flows includes those with negative curvature~\cite{Anosov}.}

Below we write  
\begin{equation}\label{e:N pm}
N_{\pm}(\rho):=T_{\rho}(\SNH)\cap E_\pm(\rho),
\end{equation} 
and define {the \emph{mixed} and \emph{split} subsets of $\SNH$ respectively by}
\begin{align}
\mc{M}_H&:=\Big\{\rho \in \SNH:\:\, N_-(\rho)\neq \{0\}\text{ and }N_+(\rho)\neq \{0\}\Big\}, \label{e:MH}\\
\mc{S}_H&:= \Big\{\rho\in \SNH:\;\, T_\rho (\SNH)=N_-(\rho)+N_+(\rho)\Big\}.\label{e:SH}
\end{align}
Then we write
\begin{equation}\label{e:AH}
\mc{A}_H:={\mc{M}_H\cap \mc{S}_H}%\Big\{\rho\in \SNH:\; \, T_{\rho}(\SNH)= N_-(\rho)+N_+(\rho)\Big\}
%,\qquad
%\mc{N}_H:={\mc{M}_H\cup\mc{S}_H},%\Big\{\rho\in \SNH:\; \,N_{+}(\rho)\neq\{0\}, N_{-}(\rho)\neq\{0\}\Big\}.
\end{equation}
 where we will use $\mc{A}_H$ when considering manifolds with Anosov geodesic flow and {$\mc{S}_H$} when considering those with no focal points.
 
{In what follows, $\pi$ continues to be the canonical projection $\pi:\SNH \to H$.}
%%%%%%%%%%%%%%%%%%%%%%%%%%%%%%%%%%%%%%%%%%%%%%%%%%%%%%%%%%%%%%%%%%%%%%%%%%%%%%%%
\begin{theorem}\label{T:tangentSpace}
Let $H\subset M$ be a closed embedded submanifold of codimension $k$. Suppose that $A\subset H$ and one of the following two conditions holds:
\begin{itemize}
\item  $(M,g)$ has {no focal points} and~$\pi^{-1}(A)\cap{\mc{S}}_H=\emptyset$.
\item $(M,g)$ has Anosov geodesic flow and $\pi^{-1}(A)\cap \mc{A}_H=\emptyset$.
\end{itemize}
Then, there exists $C>0$ so that for all $w\in C_c^\infty(H)$ with $\supp w\subset A$ the following holds. There exists $h_0>0$ so that for $0<h<h_0$ and $u\in \mc{D}'(M)$
\begin{equation*}
\Big|\int_Hwud\sigma_H\Big|\leq C h^{\frac{1-k}{2}}\|w\|_\infty\Bigg(\frac{\|u\|_{\LM}}{\sqrt{\log h^{-1}}}+\frac{\sqrt{\log h^{-1}}}{h}\|(-h^2\Delta_g-1)u\|_{\Hl}\Bigg).
\end{equation*}

\end{theorem}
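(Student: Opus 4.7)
The plan is to apply Theorem \ref{t:coverToEstimate} by producing, for each test function $w$ with $\supp w\subset A$, a $(\mathfrak{D}_n,\tau,h^\delta)$-good cover of $SN^*H\cap \pi^{-1}(\supp w)$ consisting \emph{entirely} of tubes that are $[t_0,\alpha T_e(h)]$ non-self-looping for uniform constants $\tau, t_0, \alpha > 0$. With an empty bad set $\mc{B}=\emptyset$, a uniformly bounded $t_\ell$, and $T_\ell(h)\asymp \log h^{-1}$, the contributions $R(h)^{(n-1)/2}|\mc{G}_\ell|^{1/2}\lesssim 1$, $(t_\ell/T_\ell(h))^{1/2}\asymp (\log h^{-1})^{-1/2}$, and $(t_\ell T_\ell(h))^{1/2}/h\asymp \sqrt{\log h^{-1}}/h$ assemble into the estimate claimed by the theorem, once $\delta$ is chosen small enough that $\alpha < 1-2\delta$.

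The dynamical heart of the argument is the following non-looping lemma, which I would prove first: every $\rho_0\in SN^*H\cap \pi^{-1}(\supp w)$ admits a neighborhood $U_{\rho_0}\subset \mc{H}_\Sigma$ and constants $\tau_0, t_0, \alpha > 0$ (uniform on $U_{\rho_0}$) such that the tube $\Lambda_{U_{\rho_0}}^{\tau_0}$ is $[t_0, \alpha T_e(h)]$ non-self-looping for all sufficiently small $h$. The strategy is by contradiction: a self-return at time $t$ produces points $\rho',\rho''\in SN^*H$ with $\varphi_t(\rho')=\rho''$ and $\rho',\rho''\in U_{\rho_0}$. The Anosov estimate \eqref{e:Bdef} forces $d\varphi_t$ to align any vector $v\in T_{\rho'}(SN^*H)$ not lying in $E_+(\rho')$ with $E_-(\rho'')$ up to an error of order $e^{-t/\ca}$. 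On the other hand, the hypothesis $\rho_0\notin \mc{A}_H$ (or $\rho_0\notin \mc{S}_H$), together with Hölder continuity of $E_\pm$ in the Anosov case (or plain continuity in the no-focal-points case), guarantees that $T_{\rho''}(SN^*H)$ is transverse to $E_-(\rho'')$ (respectively to $N_+(\rho'')+N_-(\rho'')$) at a scale depending only on local geometry. Once $t$ is large enough that the alignment error $e^{-t/\ca}$ drops below this transversality scale, the two tangent spaces cannot match and a contradiction ensues; choosing $\alpha$ small relative to $\ca^{-1}$ ensures this is the case for every $t\in [t_0,\alpha T_e(h)]$. In the no-focal-points setting, the monotonicity $\tfrac{d}{dt}\|Y(t)\|^2 > 0$ for perpendicular Jacobi fields substitutes for the Anosov contraction estimate.

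Given the non-looping lemma, compactness of $SN^*H\cap \pi^{-1}(\supp w)$ yields a finite subcover $\{U_{\rho_0^i}\}_{i=1}^N$; each $U_{\rho_0^i}$ is then refined to a $(\mathfrak{D}_n,\tau,h^\delta)$-good cover via \cite[Proposition 3.3]{CG18d}, producing $|\mc{G}_i|\lesssim h^{-\delta(n-1)}$ tubes, and the union gives the global cover with $\mc{B}=\emptyset$. The main obstacle is establishing the non-looping lemma in the no-focal-points setting, where $E_\pm$ are only continuous and the quantitative separation rate between $T_\rho(SN^*H)$ and $N_+(\rho)+N_-(\rho)$ is not automatically exponential; a direct analysis of perpendicular Jacobi fields along geodesics emanating from $SN^*H$ is required to extract a rate of divergence that is uniform on the finite subcover, so that a single pair $(t_0,\alpha)$ works over all $U_{\rho_0^i}$. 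Tracking this uniformity in $\rho_0$ is the most delicate step and is what ultimately allows one to bypass the Hölder regularity available only under the stronger Anosov hypothesis.
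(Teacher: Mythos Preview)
Your non-looping lemma is false as stated, and this is the main gap.  The claim that a fixed neighborhood $U_{\rho_0}$ of $\rho_0\in SN^*H$ has $\Lambda_{U_{\rho_0}}^{\tau_0}$ non-self-looping on $[t_0,\alpha T_e(h)]$ cannot hold: on any compact hyperbolic manifold (Anosov, so $\mc{A}_H=\emptyset$ when $H=\{x\}$) there are geodesic loops from $x$ to $x$ of arbitrarily large length, so $S_x^*M$ self-loops at infinitely many times regardless of how small a neighborhood you take.  Your contradiction argument confuses a \emph{pointwise} return $\varphi_t(\rho')=\rho''$ with invariance of the submanifold: there is no reason the tangent spaces $d\varphi_t(T_{\rho'}SN^*H)$ and $T_{\rho''}SN^*H$ should coincide just because one point of $SN^*H$ lands on another.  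Transversality of $d\varphi_t(T_{\rho'}SN^*H)$ to $T_{\rho''}SN^*H$ does not prevent the return; it only says that $\varphi_t(SN^*H)\cap SN^*H$ is locally a codimension-one subset of $SN^*H$.

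This transversality is exactly what the paper exploits, but in a quantitative counting argument rather than to exclude loops.  Lemma~\ref{P:1} records the transversality (producing $\mathbf{w}\in T_\rho SN^*H$ with $d\varphi_t\mathbf{w}$ uniformly transverse to $T_{\varphi_t(\rho)}\mc{F}_{\varphi_t(\rho)}\oplus\mathbb{R}H_p$), and Proposition~\ref{p:ballCover} turns it, via a quantitative implicit function theorem, into the statement that the set of $h^\delta$-tubes looping back in a unit time window lies in a thin slab, hence has cardinality $O(h^\delta\cdot h^{-\delta(n-1)})$ rather than zero.  Summing over $O(T_0)$ time windows gives a nonempty bad set $\mc{B}$ with $|\mc{B}|r_1^{n-1}\lesssim T_0 e^{CT_0}r_1$.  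Near $\mc{S}_H$ (where this transversality degenerates) the paper uses instead that $\mc{A}_H=\emptyset$ forces $m_+=0$ or $m_-=0$, giving exponential contraction of $|\det J_t|$ and an iterated volume argument (Lemmas~\ref{l:nonlooping}--\ref{l:nonlooping2}) that produces a hierarchy $\{\mc{G}_\ell\}$ with $|\mc{G}_\ell|\lesssim 5^{-\ell}r_1^{1-n}$ and $|\mc{B}|\lesssim e^{-cT_0}r_1^{1-n}$.  Both mechanisms feed into Theorem~\ref{t:coverToEstimate}, but neither yields $\mc{B}=\emptyset$.
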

%%%%%%%%%%%%%%%%%%%%%%%%%%%%%%%%%%%%%%%%%%%%%%%%%%%%%%%%%%%%%%%%%%%%%%%%%%%%%%%%

\noindent {Theorem~\ref{T:tangentSpace} also comes with some uniformity over the constants $(C,h_0)$. In particular, for $(A_0,H_0)$ satisfying one of the conditions in Theorem~\ref{T:tangentSpace}, there is a neighborhood {$U$} of $(A_0,H_0)$ in the $C^\infty$ topology so that the constants $(C,h_0)$ are uniform for $(A,H)\in U$ and $w$ in a bounded subset of $C_c^\infty$. {Here and below when we refer to the $C^\infty$ topology on $(A,H)$ we mean the following. Fix coordinate charts $\{U_j\}_j$ near $H_0$ such that $H_0\subset \cup_j U_j$ and in each $U_j$, $H_0$ is given by $\{(x',x'')\mid x'=0\}$. We define a neighborhood basis near $(A_0,H_0)$ by saying for $\e,k$ that $(A,H)$ is $\e$ close to $H_0$ if $H$ is given by $\{(x',x'')\mid x'=f(x'')\}$ for some $f\in C^k$ with $\|f\|_{C^k}\leq \e$ and $$\sup_{x\in A}\inf_{y\in A_0}d(x,y)+\sup_{x\in A_0}\inf_{y\in A}d(x,y)<\e.$$ Note in particular that since $E_{\pm}(\rho)$ are continuous in $\rho$, if $(A_0,H_0)$ satisfies the assumptions of Theorem~\ref{T:tangentSpace}, then for $\e>0$ small enough, $k$ large enough, and $(A,H)$, $\e,k$ close to $(A_0,H_0)$, the pair $(A,H)$ satisfies the assumptions of Theorem~\ref{T:tangentSpace}. }

We note that  the conclusion of Theorem \ref{T:tangentSpace} holds when $(M,g)$ is a surface with Anosov geodesic flow, since in this case $\mc{A}_H=\emptyset$ {regardless of $H$}. To see this note that if $\dim M=2$, then $\mc{S}_H=\mc{A}_H$ since $\dim T_{\rho}(\SNH)=1$. {Indeed}, it is not possible to have both $N_{+}(\rho)\neq\{0\}$ and $N_{-}(\rho)\neq\{0\}$ {unless $N_+(\rho)=N_-(\rho)=T_\rho(\SNH)$} and hence $\mc{S}_H\subset \mc{A}_H$. Moreover, in the Anosov case, since $E_+(\rho)\cap E_-(\rho)=\{0\}$, $\mc{A}_H=\emptyset.$ 

In~\cite{Wym,Wym2} Wyman works with $(M,g)$ non-positively curved (and hence having no focal points), $\dim M=2$ and $H=\{\gamma(s)\}$ a curve.  He then imposes the condition that for all $s$ the curvature of $\gamma$, $\kappa_\gamma(s)$, avoids two special values  $\mathbf{k}_\pm(\gamma'(s))$ determined by the tangent vector to $\gamma(s)$. He shows that under this condition, when $\phi_h$ is an eigenfunction of the Laplacian,
\[
\int_\gamma \phi_hd\sigma_\gamma=O\Big(\frac{1}{\sqrt{\log h^{-1}}}\Big).
\] 
We note that if $\kappa_\gamma(s)=\mathbf{k}_{\pm}(\gamma'(s))$, then the lift of $\gamma$ to the universal cover of $M$ is tangent to a stable or unstable horosphere at $\gamma(s)$, and $\kappa_\gamma(s)$ is equal to the curvature of that horosphere. Since this implies that $T_{(\gamma(s),\gamma'(s))}SN^*\gamma$ is stable or unstable, the condition there is that $\mc{S}_\gamma=\emptyset.$ Thus, the condition $\mc{S}_H=\emptyset$ is the generalization to higher codimensions and more general geometries of that in~\cite{Wym,Wym2}. 

We also point out that through a small improvement in a dynamical argument, we have replaced the set 
$$
\mc{N}_H:=\mc{S}_H\cup \mc{M}_H
$$
in~\cite[Theorem 8]{CG17} with $\mc{S}_H$ when considering manifolds without focal points.

\subsection{Outline of the paper}
Sections~\ref{s:controlLooping ncp} and~\ref{s:controlLooping anosov} build technical tools for constructing non-self looping covers. Then, Sections~\ref{s:dynNoConj}, and~\ref{s:Anosov} apply these tools to build non-self looping covers under certain geometric assumptions. In particular, Theorems~\ref{t:noConj2} and~\ref{t:noConj1} are proved in Section~\ref{s:dynNoConj}. In Section~\ref{s:Anosov}, we  prove Theorem~\ref{T:tangentSpace} and the remaining cases in Theorem~\ref{T:applications}. {The reader will find below that there are \emph{many} parameters explicitly named in the propositions. We understand that keeping track of these may be painful (and encourage the reader to treat them as some positive constant in most cases). However,  it is important to keep of track of the dependence of our estimates on many of these constants e.g. in the proof of Theorem~\ref{t:noConj2}.}
%%%%%%%%%%%%%%%%%%%%%%%%%%%%%%%%%%%%%%%%%%%%%%%%%%%%%%%%%%%%%%%%%%%%%%%%%%%%%%%%
\subsection{Index of Notation}\ \smallskip
In general we denote points in $\TM$ by $\rho$, and vectors in $T_\rho(\TM)$ in boldface (e.g. $\mathbf{v} \in T_\rho(\TM)$).  Sets of indices are denoted in calligraphic font (e.g $\mathcal I$). When position and momentum need to be distinguished we write $\rho=(x,\xi)$ for $x\in M$ and $\xi \in T_x^*M$. Next, we list symbols that are used repeatedly in the text along with the location where they  are first defined.
\begin{multicols}{3}
 $\varphi_t $            \tabto{1.6cm}    \eqref{e:varphi}\\
 $\mc{H}_{\Sigma}$       \tabto{1.6cm}    \eqref{e:Hsig}\\
 $\Tinj$                 \tabto{1.6cm}    \eqref{e:Tinj}\\
 $\Lambda_{_{\!A}}^\tau(r)$ \tabto{1.6cm}  \eqref{e:tube}\\
 ${K}_{_{\!H}}$            \tabto{1.6cm}  \eqref{e:KH}\\
 $\ca$                    \tabto{1.6cm}    \eqref{e:Bdef}\\
 $\sob{m}$                   \tabto{1.6cm}    \eqref{e:sobolev}
 \vfill\null
\columnbreak
\noindent  $\Lambda_{\max}$          \tabto{1.6cm}    \eqref{e:Lmax}\\
 $T_e(h)$               \tabto{1.6cm}    \eqref{e:Tehr}\\
$N_{\pm}(\rho)$          \tabto{1.6cm}    \eqref{e:N pm}\\
$\mc{M}_H$                \tabto{1.6cm}   \eqref{e:MH}\\
$\mc{S}_H$               \tabto{1.6cm}    \eqref{e:SH}\\
$\mc{A}_H$                \tabto{1.6cm}   \eqref{e:AH}
\vfill\null
\columnbreak
\noindent $F$, $\delta_F$     \tabto{1.6cm}   \eqref{e:defFunction}  \\
  $\psi$          \tabto{1.6cm}   \eqref{e:psi}  \\
 $J_t$                    \tabto{1.6cm}     \eqref{e:Jdef}\\
$\mathbf{D}$              \tabto{1.6cm}   \eqref{e:D} \\
$C_{\varphi}$           \tabto{1.6cm}      \eqref{e:cphi}\\
$\Theta_{\pm}$           \tabto{1.6cm}     \eqref{e:Theta}
\end{multicols}

%%%%%%%%%%%%%%%%%%%%%%%%%%%%%%%%%%%%%%%%%%%%%%%%%%%%%%%%%%%%%%%%%%%%%%%%%%%%%%%%

% \begin{comment}

% \subsection{Ideas from the proof:}

% \subsubsection{Local Analysis}

% The key observation in our analysis is that if a function $u$ is localized $R(h)$ close to a bicharacteristic segment of length 1 transverse to $H$, then
% $$
% \|u\|_{L^2(H)} \leq Ch^{\frac{1-k}{2}}R(h)^{\frac{n-1}{2}}(\|u\|_{L^2(M)}+h^{-1}\|Pu\|_{L^2(M)}).
% $$
% In particular, localization near a bicharacteristics translates into quantitative gains in restriction bounds. This estimate in turn follows from the fact that such localization implies that, after conjugation by a factor with absolute value 1, $u$ can be shown to oscillate at frequency $h^{-1}R(h)$. In order to apply this observation, we decompose $\SigH$ into tubes of radius $R(h)$ around geodesics, apply our estimate to each piece, and use the triangle inequality.

% \subsubsection{Dynamics}

% To use Theorem~\ref{t:porcupine} in concrete geometries, we need to give effective estimates on the $L^2$ norm of $u$ when microlocalized to small tubes around geodesics. We approach these estimates by constructing covers by sets $A_h$ which are non-self-looping for large times and $B_h$ which have small volume. In particular, the sets $A$ have the property that $\varphi_t(A_h)\cap A_h=\emptyset$ for $t\in[T_+,\e \log h^{-1}]$. After doing this, we are able to control the energy of an eigenfunction $\phi$ contained in $A$ by $(\e\log h^{-1})^{-1}$ and we are able to control the contribution of $B_h$ by its volume. 
% \end{comment}

\medskip
\noindent {\sc Acknowledgements.} Thanks to Pat Eberlein, John Toth, Andras Vasy, and Maciej Zworski for many helpful conversations and comments on the manuscript. {Thanks also to the anonymous referees for their careful reading and many comments which improved the exposition.}
J.G. is grateful to the National Science Foundation for support under the Mathematical Sciences Postdoctoral Research Fellowship  DMS-1502661. {Y.C. is grateful to the Alfred P. Sloan Foundation. }

%%%%%%%%%%%%%%%%%%%%%%%%%%%%%%%%%%%%%%%%%%%%%%%%%%%%%%%%%%%%%%%%%%%%%%%%%%%%%%%% 

%%%%%%%%%%%%%%%%%%%%%%%%%%%%%%%%%%%%%%%%%%%%%%%%%%%%%%%%%%%%%%%%%%%%%%%%%%%%%%%%
%%%%%%%%%%%%%%%%%%%%%%%%%%%%
\addcontentsline{toc}{section}{\quad\;\,\bf{Dynamical Analysis}}
%%%%%%%%%%%%%%%%%%%%%%%%%%%%%%%%%%%%%%%%%%%%%%%%%%%%%%%%%%%%%%%%%%%%%%%%%%%%%%%%
\section{Partial invertibility of $d\varphi_t|_{T\SigH}$ and looping sets}
\label{s:controlLooping ncp}
%%%%%%%%%%%%%%%%%%%%%%%%%%%%%%%%%%%%%%%%%%%%%%%%%%%%%%%%%%%%%%%%%%%%%%%%%%%%%%%%
%%%%%%%%%%%%%%%%%%%%%%%%%%%%%%%%%%%%%%%%%%%%%%%%%%%%%%%%%%%%%%%%%%%%%%%%%%%%%%%%
\renewcommand{\SigH}{\Sigma_{_{\!H,p}}}
\renewcommand{\LambdaH}{\Lambda^\tau_{\!\Sigma_{\!H,p}}}

The aim of this section is to study the set of geodesic loops in $\SNH$ under conditions on the structure of the set of conjugate points of $(M,g)$. However, we work in the general setting in which the Hamiltonian flow is not necessarily the geodesic one. {We do this since we plan to use the results for general Hamiltonian flows in future work.} In particular, let $p\in S^m$ be real valued with 
$$
|p|\geq |\xi|^m/C,\qquad |\xi|\geq C
$$ 
and define $\varphi_t:=\exp(tH_p)$ and $\Sigma_{_{\!H,p}}:=\{p=0\}\cap N^*\!H$ so that in the case $p=|\xi|_g-1$, $\Sigma_{_{\!H,p}}=\SNH$. 
{We assume that $H$ is \emph{conormally transverse} for $p$ in the sense that for any defining functions $f_1,\dots f_k$ for $H$, i.e. $f_i\in C^\infty(M;\mathbb{R})$ with $H=\{x\in M\mid f_i(x)=0,\,i=1,\dots,k\}$ and $df_i|_{H}$ are linearly independent, we have 
\begin{equation}\label{e:conormallyTransverse}
 N^*H\subset  \{p\neq0 \}\cup \bigcup_{i=1}^k \{H_pf_i\neq 0\}.
\end{equation}
Note that with this definition the $\Tinj$ as in~\eqref{e:Tinj} continues to make sense for general $p$ and conormally transvers $H$.}
For such $H$, we define $r_H:\TM\to \re$ by $r_H(\rho)=d(\pi(\rho),H)$, and let 
$$
\FR:=\inf_{\rho\in \SigH} \lim_{t\to 0^+}|H_pr_H(\varphi_t(\rho))|
$$
We now fix once and for all a defining function $F:\TM \to \re^{n+1}$ for $\SigH$ and $\delta_F>0$ so that:\\ \ \\
\qquad For $q \in \TM $ with $d(q,\SigH)<\delta_F$, 
%\vspace{-0.5cm}
 \begin{align}
 \label{e:defFunction}
 &\bullet\; \SigH=F^{-1}(0) \notag\\
 &\bullet\; \tfrac{1}{2}d(q,\SigH)\leq |F(q)|\leq 2d(q,\SigH), \notag\\
 & \bullet\; dF(q)\text{ has a {right} inverse }{R}_{_{\!F}}(q)\text{ with }\|{R}_{_{\!F}}(q)\|\leq 2,\\
 &\bullet\; \max_{|\alpha|\leq 2}(|\partial^\alpha F(q)|)\leq 2. \notag
 \end{align}
Define also   $\psi:\re\times \TM \to \re^{n+1}$
\begin{equation}\label{e:psi}
\psi(t,\rho)=F\circ \varphi_t(\rho).
\end{equation} 

Working under the assumption that the set of conjugate points can be controlled {and that the dimension of $\dim H<\frac{n-1}{2}$} will allow us to say that if $ \varphi_{t_0}(\rho_0)$ is exponentially close to $\SigH=\SNH$ for some time $t_0$ and some $\rho_0 \in \SNH$, then there exists a tangent vector $ \w \in T_{\rho_0}\SNH$ for which the restriction 
\begin{equation}\label{e:li}
d\psi_{(t_0,\rho_0)}: \R \partial_t \times \re \w\to  T_{\psi(t_0,\rho_0)}\re^{n+1} 
\end{equation}
 has a left inverse whose norm we control. This is proved in Lemma \ref{l:prelimNoConj} and is the cornerstone in the proof of  Theorems \ref{t:noConj1} and~\ref{t:noConj2}. Note, however, that asking  \eqref{e:li} to hold is a very general condition that may not need the control of the structure of the set of conjugate points. We will use this in Section~\ref{s:Anosov}.

The goal of this section is to prove Proposition \ref{p:ballCover} below,
whose purpose is to control the number of tubes that emanate from a subset of $\SigH$
and loop back to $\SigH$. This is done under the assumption that the restriction of $d\psi_{(t_0,\rho_0)}$ in \eqref{e:li} has a left inverse. To state this proposition we first need a lemma  that describes a convenient system of coordinates near $\SigH$.  The statement of this lemma is illustrated in Figure \ref{fig:2lemmas}.

Observe that by~\cite[(C.3)]{DyGu14} for any $\Lambda>\Lambda_{\max}$ and $\alpha$ multiindex, there exists $ C_{_{\!M,p,\alpha}}>0$ depending only on $M,p, \alpha$  so that 
\begin{equation}
\label{e:derFlow}
|\partial^\alpha \varphi_t|\leq C_{_{\!M,p,\alpha}} e^{|\alpha| \Lambda t}.
\end{equation}

%%%%%%%%%%%%%%%%%%%%%%%%%%%%%%%%%%%%%%%%%%%%%%%%%%%%%%%%%%%%%%%%%%%%%%%%%%%%%%%%

\begin{lemma}[Coordinates near $\SigH$]
\label{l:tanSpace}
There exists $\tau_1=\tau_1(M,p,\FR)>0$ and $\mathfrak{c}_0=\mathfrak{c}_0(M,p,\FR)$ so that for $\Lambda>\Lambda_{\text{max}}$ the following holds. 
Let $\rho_0\in \SigH$, $t_0\in \re$ be so that 

\begin{itemize}
\item there exists $\w=\w(t_0,\rho_0)\in T_{\rho_0}\SigH$ so that  the restriction
$$
d\psi_{(t_0,\rho_0)}: \R \partial_t \times \re \w\to  T_{\psi(t_0,\rho_0)}\re^{n+1} $$ 
has left inverse $L_{(t_0,\rho_0)}$ with $\|L_{(t_0,\rho_0)}\|\leq A$ for some $A\geq 1$,\medskip
\item  $d(\varphi_{t_0}(\rho_0), \SigH)\leq \min\big\{\frac{ {e^{-2\Lambda|t_0|}}}{16 \mathfrak{c}_0^2 A^2.},\delta_F\big\}$
\end{itemize}
Then, points $\rho$  in a neighborhood of $\rho_0$ can be written in coordinates $\rho=\rho(y_1,\dots, y_{2n})$, with 
$\rho_0=\rho(0,\dots ,0)$ and $\SigH=\{y_{n}=\dots =y_{2n}=0\}$,
 so that 
{$$
\frac{1}{2}d(\rho(y),\rho(y'))\leq |y-y'| \leq 2d(\rho(y),\rho(y')).
$$}
In addition, there exists a smooth real valued function $f$ defined in a neighborhood of $0\in {\re^{2n-1}}$ so that letting  $r_{t_0}:=\frac{8 {e^{-3\Lambda|t_0|}}}{\mathfrak{c}_0^2 A^2}$  and ${0<}r< \tfrac{1}{128}e^{\Lambda |t_0|}r_{t_0}$,
if
\[
|y|<r_{t_0} \qquad \text{and} \qquad  d(\varphi_t(\rho(y)),\SigH)<r\;\;\text{
 for some}\; t\in [t_0-\tau_1,t_0+\tau_1],
 \]
then 
\[
|y_1-f(y_2,\dots y_{2n})|<{2(1+\c)}Ar \qquad \text{and}\qquad 
|\partial_{y_j}f|<\c Ae^{\Lambda|t_0|}.
\]
 \end{lemma}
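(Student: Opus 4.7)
The plan is to apply a quantitative implicit function theorem to a reduced map $G := L\circ \Psi$ in coordinates adapted to both $\SigH$ and $\w$, producing $f$ as the $y_1$-component of the solution of $G=0$, and then to bound $y_1$ by perturbation once $|\Psi|<r$. First I would construct coordinates $y=(y_1,\dots,y_{2n})$ on a neighborhood of $\rho_0$ in $\TM$ so that $\partial_{y_1}|_{\rho_0}=\w$, the submanifold $\SigH$ corresponds to the linear subspace $\{y_n=\cdots=y_{2n}=0\}$, and the chart is $2$-bilipschitz with respect to the Sasaki distance. Such coordinates come from an orthonormal frame at $\rho_0$ adapted to the splitting $T_{\rho_0}\SigH\oplus(T_{\rho_0}\SigH)^\perp$ (with $\w$ as the first basis vector), followed by a suitably rescaled exponential-type chart. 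Writing $y=(y_1,y')$ with $y'=(y_2,\dots,y_{2n})$ and $\Psi(s,y):=\psi(t_0+s,\rho(y))$, define
\[
G(s,y_1,y') := L\,\Psi(s,y_1,y') \in \R^2.
\]
By construction $\partial_{(s,y_1)}G(0,0,0) = L\circ d\psi_{(t_0,\rho_0)}|_{\R\partial_t\times\R\w} = I_2$, and $|G(0,0,0)|\leq A|\Psi(0,0)|$, which is small by the second bullet of the hypothesis. The flow derivative bound \eqref{e:derFlow} together with $|\partial^\alpha F|\leq 2$ from \eqref{e:defFunction} give $|\partial^\alpha \Psi|\leq Ce^{|\alpha|\Lambda|t_0|}$ on a fixed neighborhood of the origin, hence $\|d^2 G\|\leq CA e^{2\Lambda|t_0|}$.

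Next I would apply a quantitative implicit function (Newton) argument to $G$. The three ingredients $|G(0,0,0)|\leq Ce^{-2\Lambda|t_0|}/(\c^2 A)$, $\partial_{(s,y_1)}G(0,0,0)=I_2$, and $\|d^2 G\|\leq CA e^{2\Lambda|t_0|}$ allow Newton iteration to converge on the ball $|y'|<r_{t_0}$, producing smooth functions $g$, $f$ with $G(g(y'),f(y'),y')\equiv 0$. The specific radius $r_{t_0} = 8e^{-3\Lambda|t_0|}/(\c^2 A^2)$ is dictated by balancing the Newton basin of size $\sim A^{-1}e^{-2\Lambda|t_0|}$ against $\|\partial_{y'}G\|\leq CA e^{\Lambda|t_0|}$, so that the perturbed equation $G(\cdot,\cdot,y')=0$ remains solvable throughout $|y'|<r_{t_0}$. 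Implicit differentiation then gives $\partial_{y_j}f = -(\partial_{y_1}G)^{-1}\partial_{y_j}G|_{(g,f,y')}$; since $\partial_{(s,y_1)}G$ stays within $\c$ of $I_2$ on this ball while $\|\partial_{y_j}G\|\leq A\|\partial_{y_j}\Psi\|\leq CA e^{\Lambda|t_0|}$, we obtain $|\partial_{y_j}f| < \c A e^{\Lambda|t_0|}$ provided $\c$ is fixed large enough in terms of the constants in \eqref{e:derFlow}--\eqref{e:defFunction}.

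For the stability bound, if $|y|<r_{t_0}$ and $|\Psi(t_0+s,y)|<r$ for some $|s|\leq\tau_1$, then $|G(s,y_1,y')|\leq Ar$. Inverting $\partial_{(s,y_1)}G$ (which equals $I_2$ plus a perturbation of size at most $\c$ throughout the ball) and absorbing the quadratic Taylor remainder of size $CA e^{2\Lambda|t_0|}r^2$ via the smallness assumption $r\leq e^{\Lambda|t_0|}r_{t_0}/128$ yields
\[
|(s-g(y'),y_1-f(y'))| \leq 2(1+\c)A r,
\]
and projecting onto the second component gives the advertised bound on $y_1$. The parameter $\tau_1 = \tau_1(M,p,\FR)$ is fixed small enough that these mean-value estimates hold uniformly in $(t_0,\rho_0)$; the dependence on $\FR$ enters through the transversality of $H_p$ to $\SigH$, which is precisely what guarantees nondegeneracy of the $\partial_t$-component of $d\psi$.

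The main obstacle is the careful bookkeeping of constants so that the prefactors $2(1+\c)$ and $\c$, together with the competing exponentials $e^{\pm\Lambda|t_0|}$, $e^{\pm 2\Lambda|t_0|}$, and $e^{-3\Lambda|t_0|}$, balance out exactly as in the statement. This forces $\c$ to be fixed upfront purely in terms of the background flow and defining-function constants in \eqref{e:derFlow}--\eqref{e:defFunction}, and every subsequent error must be controlled by $\c$ times a linear term rather than by a larger absolute constant; this kind of accounting is standard for quantitative IFT arguments but becomes delicate here because of the many competing exponential scales.
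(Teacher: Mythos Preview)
Your overall strategy—apply a quantitative implicit function theorem to a map built from $\psi$, extract $f$ as the $y_1$-component of the solution, then perturb to get the stability bound—is correct in spirit, and most of the bookkeeping you sketch is sound. But there is a genuine gap in how you obtain $\tau_1=\tau_1(M,p,\FR)$ independent of $A$ and $t_0$.

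In your single-step scheme $G=L\Psi$, the block $\partial_{(s,y_1)}G$ equals $I_2$ at the origin, but its variation is governed by $L$ times the second derivatives of $\Psi$. The mixed terms $\partial_s\partial_{y_1}\Psi$ and $\partial_{y_1}^2\Psi$ grow like $e^{\Lambda|t_0|}$ and $e^{2\Lambda|t_0|}$, so $\partial_{(s,y_1)}G$ stays within $\c$ of $I_2$ only for $|s|\lesssim (Ae^{\Lambda|t_0|})^{-1}$. Your final inversion ``$|G|\le Ar\Rightarrow |(s-g,y_1-f)|\le 2(1+\c)Ar$'' is only valid for $s$ inside that window, so the $\tau_1$ you actually produce depends on $A$ and $t_0$, contrary to the statement.

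The paper avoids this by decoupling the two directions. It first uses only the $H_p$-transversality: after choosing the first target coordinate so that $|\partial_t\Psi_1|\ge \FR/2$, and rescaling $\tilde y=e^{\Lambda|t_0|}y$ so that all second derivatives of $\Psi_1$ in $(t,\tilde y)$ are $O_{M,p}(1)$, it applies the quantitative IFT to $\Psi_1$ alone and solves $\Psi_1=x_2$ for $t=\mathfrak t(\tilde y,x_2)$ on a $t$-interval of length $8/(c_{M,p}A_0)$ with $A_0=2\FR^{-1}$. This is precisely what makes $\tau_1$ depend only on $(M,p,\FR)$. Only after substituting $\mathfrak t$ into $\Psi_2$ does the paper apply a second IFT to solve for $y_1$; the left inverse bounded by $A$ enters at this second step, which is where the factors $A$ and $e^{\Lambda|t_0|}$ in $r_{t_0}$ and in $|\partial_{y_j}f|$ come from. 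If you want to keep a one-shot argument you would need an anisotropic IFT that tracks the distinct sizes of $\partial_s^2\Psi$, $\partial_s\partial_y\Psi$, $\partial_y^2\Psi$ and uses the $\FR$-lower bound on the $\partial_t$-block separately from the $A$-bound on the full $2\times2$ block; carrying that out is essentially the paper's two-step decomposition in disguise.
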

 
 %%%%%%%%%%%%%%%%%%%%%%%%%%%%%%%%%%%%%%%%%%%%%%%%%%%%%%%%%%%%%%%%%%%%%%%%%%%%%%%%

\begin{figure}[H]
    \centering
    \includegraphics[width=8cm]{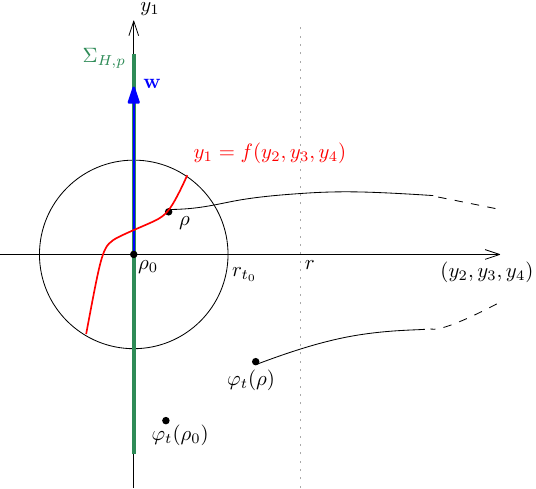}
    \caption{Illustration of the statement in Lemma \ref{l:tanSpace} {when $H$ is a curve and $M$ is a surface}. 
    }
    \label{fig:2lemmas}
\end{figure}

\begin{proof}
 Since $d\psi_{(t_0,\rho_0)}:\re\partial_t\times \re  \w \to \re^{n+1}$ has a left inverse, we {may find an orthogonal matrix $O$ such that $O\circ F=(f_1,\dots,f_{n+1})$ and} with $\tilde F=(f_1, f_2)$, 
 \[
\Psi: \R \times T^*\!M \to \R^2, \qquad \Psi(t,  \rho):=\tilde F \circ \varphi_t(\rho),
 \]
 the restriction $d\Psi:\re\partial_t\times \re \w\to \re^2$ is invertible with inverse $L$ having $\|L\|\leq A$. {Note that since $O$ is orthogonal, $O\circ F$ is a defining function satisfying~\eqref{e:defFunction} with the same 
 $\delta_F$.}
Moreover, since
$$
d\psi_{(t_0,\rho_0)}:\re \partial_t \to T_{\psi(t_0,\rho_0)}\re^{n+1}
$$
has a left inverse, $L_1\in \re$ with $|L_1|< 2\FR^{-1}:=A_0$ we may {choose $O$} so that with {$\Psi(t,\rho)=(\Psi_1(t,\rho),\Psi_2(t,\rho))$,
we have $|\partial_{t}\Psi_1(t_0,\rho_0)|\geq A_0^{-1}$ and 
$
\partial_t\Psi_2(t_0,\rho_0)=0.
$}

 Let ${(t,y)}=(t,{y_1},y_2,\dots, {y_{n-1}, y_{n}},\dots y_{2n})$ be coordinates on $\re \times T^*\!M$ near {$(t_0,\rho_0)$} so that ${(t_0,0)}\mapsto (t_0,\rho_0)$,   $\partial_{y_1}\mapsto \w/\|\w\|$ at ${(t_0,0)}$, and $({y_n},{y_{n+1},\dots, {y_{2n}}})$ define ${\SigH}$. Finally, let $\tilde{y}=e^{\Lambda|t_0|}y.$ We will work with these coordinates on $\re\times T^*\!M$ for the remainder of the proof.

Applying the implicit function theorem (see Lemma~\ref{l:quantImplicit}) with $x_0=t$, $x_1=\tilde{y}$ and $\tilde f:\R \times \R^{2n} \times \R \to \R $ with $\tilde f(x_0,x_1,x_2)=\Psi_1(x_0,x_1)-x_2$ gives that there exists a neighborhood  $U \subset \re^{2n}\times \re$ of {${(0,x_2^0)}$, where $x_2^0:=\Psi_1{(t_0,0)}$}, and a function $x_0=\mathfrak{t}:U \to \re$, so that for $( \tilde{y},x_2)\in U$,
$$
x_2=\Psi_1 \big(\mathfrak{t}(\tilde{y},x_2) , \tilde{y}\big)
$$
with 
\[
|\partial_{x_2} \mathfrak{t}|\leq  A_0,\qquad \qquad  \max_{1\leq j \leq 2n}|\partial_{\tilde y_j}\mathfrak{t}|\leq {\tfrac{c_{_{\!M,p}}}{64 n}}A_0,
\]
where $c_{_{\!M,p}}$ is a positive constant depending only on $(M,p)$. {Here, the $t_0$ independent bounds follow from the chain rule.} {Moreover, we have}
$|\partial_{t,\tilde{y}}^2 \tilde f|\leq {\tfrac{c_{_{\!M,p}}}{64 n}}$, {$|\partial_{t}^2 \tilde f|\leq {\tfrac{c_{_{\!M,p}}}{64 n}}$,} and $|\partial_{\tilde{y}_j} \tilde f|\leq \tfrac{c_{_{\!M,p}}}{64 n}$ for all $j=1, \dots, 2n$.  Then, working with 
\[r_0=\tfrac{8}{ c_{_{\!M,p}}A_0}, \qquad r_1=\min\Big\{\tfrac{32}{c_{_{\!M,p}}^2A_0^2 },\;{\tfrac{8}{c_{_{\!M,p}}A_0}}\Big\}, \qquad r_2=\tfrac{2}{ c_{_{\!M,p}}A_0^2},
\]
\[{B_0={ \tfrac{c_{_{\!M,p}}}{32}},\qquad B_1=  \tfrac{c_{_{\!M,p}}}{64 n}} , \qquad B_2=0, \qquad \tilde B_1=\tfrac{c_{_{\!M,p}}}{64 n} , \qquad \tilde B_2=1,
\]
for $r_0, r_1, r_2$ and $B_0, B_1, B_2, \tilde B_1, \tilde B_2$ as in Lemma \ref{l:quantImplicit}, we obtain that
$U$ can be chosen so that $B(0,r_1)\times {B(x_2^0, r_2)}\subset U$. 
In particular,   it follows that if  
\begin{equation}
\label{e:condBoston}|\mathfrak{t}-t_0|<\tfrac{8}{ c_{_{\!M,p}}A_0},\qquad |\tilde{y}|\leq \min\Big\{\tfrac{32}{c_{_{\!M,p}}^2A_0^2 },{\tfrac{8}{c_{_{\!M,p}}A_0}}\Big\} ,\qquad {|x_2-x_2^0|}<\tfrac{2}{ c_{_{\!M,p}}A_0^2},
\end{equation}
then 
\[
|\mathfrak{t}(\tilde{y},x_2)-\mathfrak{t}(\tilde{y},0) | \leq A_0 {|x_2|}.
\]

 Next, since $d\Psi:\re \partial_t\times \re \w\to \re^2$ is invertible with inverse $L$ satisfying ${\|L\|\leq A}$, we have $|\partial_{\tilde{y}_1}\tilde{f}|^{-1}{\leq} Ae^{\Lambda|t_0|}$ where now we write $\tilde f$ for
 $$
{\tilde{f}(\tilde{y},x_2,x_3)=\Psi_2(\mathfrak{t}(\tilde{y},x_2),\tilde{y})-x_3}.
 $$

Next, we write $\tilde{y}=(\tilde{y}_1,\tilde{y}')$ and  once again apply the implicit function theorem (Lemma~\ref{l:quantImplicit}) with $x_0=\tilde{y}_1$, $x_1=(x_2,\tilde{y}')$, $x_3\in \re$,  to see that there exists $U \subset \re^{2n}\times \re$ {of ${(0,x_3^0)}$, with $x_3^0=\Psi_2({t_0},0)$,} and a function $x_0=\tilde{\mathbf{y}}_1:U \to \re$, so that for $(\tilde{y}',x_3)\in U$,
$$
x_3=\Psi_2 \Big(\mathfrak{t}\big(\tilde{\mathbf{y}}_1(\tilde{y}',x_2,x_3),\tilde{y}',{x_2}\big),\tilde{\mathbf{y}}_1(\tilde{y}',x_2,x_3),\tilde{y}' \Big)
$$
with 
\[
|\partial_{x_3} \tilde{\mathbf{y}}_1|\leq{A e^{\Lambda|t_0|}},\qquad |\partial_{x_2} \tilde{\mathbf{y}}_1|<\c Ae^{\Lambda|t_0|}, \qquad  \max_{2\leq j\leq 2n}|\partial_{\tilde{y}_j} \tilde{\mathbf{y}}_1|\leq \c\,A  e^{\Lambda|t_0|}\]
where $\c$ is a positive constant depending only on $(M,p,A_0)$,  so that 
$|\partial_{(x_2,\tilde{y})}^2 \tilde{f}|\leq \tfrac{\c}{64n}$ and $|\partial_{x_2}\tilde{f}|,\,|\partial_{\tilde{y}_j} \tilde{f}|\leq \tfrac{\c}{64n}$ for all $j=2, \dots, 2n$. {Without loss of generality we assume that $\c \geq  c_{_{\!M,p}}A_0$ {and that $\c>1$}.}
Then, working with 
\[r_0=\tfrac{8 e^{-\Lambda|t_0|}}{ \c A}, \qquad r_1=\min\Big\{\tfrac{32e ^{-2\Lambda|t_0|}}{\c^2A^2 },\tfrac{8 e^{-\Lambda|t_0|}}{\c A}\Big\}, \qquad r_2=\tfrac{2e^{-2\Lambda|t_0|}}{ \c A^2},
\]
\[B_0= {\tfrac{\c}{32}},\qquad   B_1=  \tfrac{\c}{64n} , \qquad B_2=0, \qquad \tilde B_1=\tfrac{\c}{64n} , \qquad \tilde B_2=1,
\]
for $r_0, r_1, r_2$ and $B_0, B_1, B_2, \tilde B_1, \tilde B_2$ as in Lemma \ref{l:quantImplicit}, we obtain that
$U$ can be chosen so that $B({(x_2^0,0)},r_1)\times B({x_3^0}, r_2)\subset U$. 
In particular,   it follows that if 
\begin{equation} 
\label{e:ChapelHill}
\begin{gathered}
|\tilde{\mathbf{y}}_1|<\tfrac{8 e^{-\Lambda|t_0|}}{ \c A},\quad |(\tilde{y}',x_2-{x_2^0})|\leq \min\Big\{\tfrac{32e ^{-2\Lambda|t_0|}}{\c^2A^2 },\tfrac{8 e^{-\Lambda|t_0|}}{\c A}\Big\},\quad
|x_3-x_3^0|<\tfrac{2e^{-\Lambda|t_0|}}{ \c A^2},
\end{gathered}
\end{equation}
 then 
\[
|\tilde{\mathbf{y}}_1(\tilde{y}',x_2,x_3)-\tilde{\mathbf{y}}_1(\tilde{y}',x_2,0) | \leq  A e^{\Lambda|t_0|}{|x_3| }.
\]
Note that this can be done since by assumption {$\c>1$ and }
\begin{equation}\label{e:radiusx3}
|0-x_3^0|=|\Psi_2(t_0, \rho_0)| \leq 2d(\varphi_{t_0}(\rho_0), \SigH)<\tfrac{2e^{-2\Lambda|t_0|}}{ \c A^2}.
\end{equation}
It follows, {after undoing the change $\tilde y=e^{\Lambda|t_0|}y$,}  that if
\begin{align*}
\bullet\;& \max\{|x_2-x_2^0|,|x_3-x_3^0|\}< 
{ \min\Big\{\tfrac{2}{ c_{_{\!M,p}}A_0^2},\; \tfrac{32e ^{-2\Lambda|t_0|}}{\c^2A^2 },\;\tfrac{8 e^{-\Lambda|t_0|}}{\c A},\; \tfrac{2e^{-\Lambda|t_0|}}{ \c A^2}\Big\}},\\
\bullet\;&|y|<
{ \min\Big\{\tfrac{8 e^{-2\Lambda|t_0|}}{ \c A},\;
\tfrac{32e ^{-3\Lambda|t_0|}}{\c^2A^2 },\;
\tfrac{8 e^{-2\Lambda|t_0|}}{\c A},\;
\tfrac{32 e^{-\Lambda|t_0|}}{c_{_{\!M,p}}^2A_0^2 },
\;{\tfrac{8 e^{-\Lambda|t_0|}}{c_{_{\!M,p}}A_0}}
\Big\}},\\
\bullet\;&|t-t_0|
<\tfrac{8}{c_{_{\!M,p}}A_0},
\end{align*}
then
\[
|{\mathbf{y}}_1({y}',x_2,x_3)-{\mathbf{y}}_1({y}',0,0) | \leq (1+\c)A\, {|(x_2,x_3)|}.
\]
{Next, note that since $d(\varphi_t(\rho(y)),\SigH)\leq r$ and $r<\frac{e^{-2\Lambda|t_0|}}{16\c^2 A^2}$, 
then
\[
{|x_2-x_2^0| \leq |x_2| + |x_2^0| \leq 2d(\varphi_t(\rho(y)),\SigH)+2d(\varphi_{t_0}(\rho_0),\SigH)} \leq \tfrac{2e^{-2\Lambda|t_0|}}{ \c A^2},
\]
and similarly, $|x_3-x_3^0|\leq \tfrac{2e^{-2\Lambda|t_0|}}{ \c A^2}$.
In addition, we can assume $c_{_{\!M,p}}>1$. Since  $\c \geq  c_{_{\!M,p}}A_0$, with the above definition of $r_{t_0}$, we obtain that  if $r<\frac{1}{128}e^{\Lambda |t_0|}r_{t_0}$ and $|y|<r_{t_0}$, then
\[
|{\mathbf{y}}_1({y}',x_2,x_3)-{\mathbf{y}}_1({y}',0,0) | \leq 2(1+\c)A r.
\]}

To finish the argument, we note that we may define
$f(y'):={\mathbf{y}}_1({y}',0,0)$ satisfying
$
|\partial_{y'}f|\leq \c Ae^{\Lambda|t_0|}
$
as claimed. Where, as argued in \eqref{e:radiusx3}, this can be done since 
$|0-x_2^0|<\tfrac{2e^{-2\Lambda|t_0|}}{ \c A^2}$  and using that $A \geq 1$, $\c \geq  c_{_{\!M,p}}A_0$.

\end{proof}

\begin{remark}\label{R:c}{ We  proceed to study the number of looping directions and prove the main result of this section.
In what follows $\c$ denotes the constant from Lemma \ref{l:tanSpace}.}
\end{remark}
%%%%%%%%%%%%%%%%%%%%%%%%%%%%%%%%%%%%%%%%%%%%%%%%%%%%%%%%%%%%%%%%%%%%%%%%%%%%%%%%

\begin{proposition} 
\label{p:ballCover} 
 Let $0\leq t_0<T_0$, $0<\tilde c<{\delta_F} $, {$a>0$}, {$\Lambda>\Lambda_{\text{max}}$}, {$c>0$, $\beta \in \re$, $A\subset \SigH$, and $B\subset A$ a ball of radius $R>0$} satisfy the following assumption:  {for all} $(t, \rho) \in [t_0, T_0]\times B$ such that $d(\varphi_t(\rho),{A})\leq \tilde c\, e^{-{a}|t|}$,  there exists $\w \in T_{\rho}\SigH$ for which the restriction
$$ 
d\psi_{(t,\rho)}: \R \partial_t \times \R \w \to  T_{\psi(t,\rho)}\re^{n+1} 
$$ 
has left inverse $L_{(t, \rho)}$ with  $\|L_{(t, \rho)}\|\leq ce^{\beta |t|}$. %for all {$(t, \rho) \in [t_0, T_0]\times B$}, where $\beta \in \R$ and $c>0$. 
\smallskip

There exist $\alpha_1=\alpha_1(M,p)>0$ and $ \alpha_2=\alpha_2(M,p,c, \tilde c, \delta_F, \FR)$ so that the following holds.\\

Let $r_0,r_1,r_2 >0$ satisfy
\[
r_0 < r_1, \qquad r_1< \alpha_1\, r_2,  \qquad  r_2 \leq \min\{R,{1},  \alpha_2\, e^{-\gamma T_0}\}, \qquad r_0 < {\tfrac{1}{3}}\, e^{-\Lambda T_0} r_2,
\] 
where { $\gamma=\max\{a, 3\Lambda+2\beta\}$}.
Let $0<\tau_0<\frac{\Tinj}{2}$, $0<\tau\leq \tau_0$, and $\{\rho_j \}_{j=1}^{N} \subset {\SigH}$ be a family of points so that 
\vspace{-0.3cm}
\[{\Lambda_{\rho_j}^\tau(r_1)\cap \Lambda_B^\tau(r_0)\neq \emptyset},\qquad \Lambda_B^\tau(r_0) \subset \bigcup_{j=1}^{N} \Lambda_{\rho_j}^\tau( r_1 ),\]
and $\big\{\Lambda_{\rho_j}^\tau(r_1)\big\}_{j=1}^N$ can be divided into ${\mathfrak{D}}$ sets of disjoint tubes.

Then,
there exist a partition of the indices
$\mathcal G \cup \mathcal B= \{1,\dots, N\}$  and a constant  ${{\bf{C}_0}={\bf{C}_0}(M,p,k,c,\beta,\FR)}>0$ so that \smallskip 
\begin{itemize}
    \item $\bigcup_{j\in \mc{G}}\Lambda_{_{\rho_j}}^\tau({r_1})\quad \text{ is }\text{ non-self looping for times in }\;\; [t_0,T_0].$ Moreover, 
    $$d\Big({\Lambda_{A}^\tau(r_0)}\;,\;\bigcup_{t\in[t_0,T_0]}\bigcup_{j\in \mc{G}}\varphi_t(\Lambda_{_{\rho_j}}^\tau(r_1))\Big)>2r_1.$$
    \medskip
    \item $|\mathcal B|\leq {{\bf{C}_0}}{\mathfrak{D}} \;r_2 \;\frac{R^{n-1}}{r_1^{n-1}}\;  {T_0}\,e^{4(\Lambda+\beta)T_0}.$
\end{itemize}

\end{proposition}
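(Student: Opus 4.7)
The plan is to introduce a partition $\{1,\dots,N\}=\mc{B}\sqcup\mc{G}$ via an explicit looping criterion---from which both the non-self-looping and the distance bullets follow immediately---and then bound $|\mc{B}|$ by a two-scale discretization in time and space, using Lemma~\ref{l:tanSpace} to realize the set of looping centers as a codimension-one graph in $\Sigma_{H,p}$.

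For the partition, I would declare $j\in\mc{B}$ precisely when there exists $t\in[t_0,T_0]$ with $d(\Lambda_A^\tau(r_0+2r_1),\varphi_t(\Lambda_{\rho_j}^\tau(r_1)))\leq 3r_1$. Using the injectivity of $\Psi$ on $(-\Tinj,\Tinj)\times\mc{H}_\Sigma$ together with the intersection hypothesis $\Lambda_{\rho_j}^\tau(r_1)\cap\Lambda_B^\tau(r_0)\neq\emptyset$, one sees that $d(\rho_j,B)<r_0+r_1$, and hence $G:=\bigcup_{j\in\mc{G}}\Lambda_{\rho_j}^\tau(r_1)\subset\Lambda_A^\tau(r_0+2r_1)$. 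The definition of $\mc{G}$ then guarantees $\varphi_t(G)$ stays at distance exceeding $3r_1$ from $\Lambda_A^\tau(r_0+2r_1)$ for every $t\in[t_0,T_0]$, which yields the non-self-looping of $G$ (since $G\subset\Lambda_A^\tau(r_0+2r_1)$) and the distance bullet (since $\Lambda_A^\tau(r_0)\subset\Lambda_A^\tau(r_0+2r_1)$).

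For the count $|\mc{B}|$, I would first chop $[t_0,T_0]$ into $O(T_0/\tau)$ slices and use the flow-derivative bound \eqref{e:derFlow} to pass to discrete witness times $t_m$ with a slightly inflated tolerance, thereby reducing matters to estimating $|\mc{B}_m|$ for each fixed $t_m$. For each $t_m$ I would cover $B$ by $O((R/r_2)^{n-1})$ Sasaki balls $B(\rho_0^{(i)},r_2)$ with centers $\rho_0^{(i)}\in B$. Any patch containing a bad center must lie near the looping set, so the proposition's hypothesis, applied at a carefully chosen $(t,\rho)\in[t_0,T_0]\times B$ close to the bad tube, yields a vector $\w\in T_\rho\Sigma_{H,p}$ with $\|L_{(t,\rho)}\|\leq c\,e^{\beta t_m}$. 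The scale constraints $r_2\leq \alpha_2 e^{-\gamma T_0}$ (with $\gamma\geq 3\Lambda+2\beta$) and $r_0<r_1<\alpha_1 r_2$ are calibrated precisely so that Lemma~\ref{l:tanSpace} applies and produces coordinates in which the contributing centers sit on a graph $y_1=f(y_2,\dots,y_{n-1})$ of slope at most $\c\, c\, e^{(\Lambda+\beta)t_m}$ and graph-transverse thickness at most $\c\, c\, e^{(\Lambda+\beta)t_m} r_1$.

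Covering this thin slab by $r_1$-balls in $\Sigma_{H,p}$ (accounting for slope-driven vertical expansion) and invoking the $\mathfrak{D}$-fold overlap bound guaranteed by the good-cover hypothesis produces, up to constants, at most $\mathfrak{D}\, r_2^{n-1} e^{(\Lambda+\beta)t_m}/r_1^{n-1}$ bad centers per patch. Summing over the $O((R/r_2)^{n-1})$ patches covering $B$ and then over the $O(T_0/\tau)$ time slices yields the claimed estimate $\mathbf{C}_0\,\mathfrak{D}\, r_2\, R^{n-1}\, T_0\, e^{4(\Lambda+\beta)T_0}/r_1^{n-1}$, with the exponent $4(\Lambda+\beta)$ absorbing the compounded exponential losses from the flow smoothness, the growth of $\|L\|$, the slope of $f$, and the cushion needed when transferring the hypothesis from the chosen base points in $B$ to the nearby bad centers $\rho_j$. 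The main obstacle is this final piece of bookkeeping: choosing $\alpha_1,\alpha_2,\tilde c$ (and restricting $\tau_0$) so that the smallness hypotheses of Lemma~\ref{l:tanSpace} are met uniformly for every $t_m\in[t_0,T_0]$, and verifying that the accumulated exponential factors indeed fit inside $e^{4(\Lambda+\beta)T_0}$.
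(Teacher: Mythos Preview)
Your overall strategy---define $\mc{B}$ by an explicit looping criterion, discretize in time, cover $B$ spatially, and invoke Lemma~\ref{l:tanSpace} to trap the bad centers in a codimension-one slab---matches the paper's, and your derivation of the non-self-looping and distance bullets from the chosen partition is correct.

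The gap is in the count, and it is already visible in your own arithmetic: summing your per-patch bound $\mathfrak{D}\,r_2^{n-1}e^{(\Lambda+\beta)t_m}/r_1^{n-1}$ over $O((R/r_2)^{n-1})$ patches gives $\mathfrak{D}\,R^{n-1}e^{(\Lambda+\beta)t_m}/r_1^{n-1}$, so the $r_2$'s cancel and no factor of $r_2$ survives. But the factor $r_2$ in the proposition's bound is not cosmetic---it is exactly what the applications exploit (with $r_2\sim h^\e$) to make $r_1^{n-1}|\mc{B}|$ small. The bound you actually produce, of order $R^{n-1}T_0e^{c(\Lambda+\beta)T_0}/r_1^{n-1}$, does not imply the stated one and is useless downstream.

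The substantive issue is your choice of spatial patch radius $r_2$. The paper instead covers $B$ by balls of the much larger, \emph{time-dependent} radius $\mathbf{R}_s=r_{\tilde s}/8$ with $r_{\tilde s}=8\,e^{-(3\Lambda+2\beta)|s|}/(\c^2c^2)$, i.e.\ the maximal scale at which the coordinates of Lemma~\ref{l:tanSpace} remain valid. Since the per-time-slice count scales like $(\text{thickness})(\text{slope})R^{n-1}/\big((\text{patch radius})\,r_1^{n-1}\big)$, taking the patch as large as possible is essential. With the paper's choices---looping tolerance $r_2$ (so the slab thickness from Lemma~\ref{l:tanSpace} is $\sim ce^{\beta s}r_2$) and patch radius $\sim r_{\tilde s}$---one obtains $\sim r_2\,e^{4(\Lambda+\beta)s}R^{n-1}/r_1^{n-1}$, with the $r_2$ from the thickness now uncancelled. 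Two further points your sketch misses: the time step must be the fixed $\tau_1=\tau_1(M,p,\FR)$ of Lemma~\ref{l:tanSpace}, not $\tau$, or else the $T_0/\tau$ slices destroy the $\tau$-independence of $\mathbf{C}_0$; and to get thickness $\sim ce^{\beta s}r_2$ rather than your $ce^{(\Lambda+\beta)t_m}r_1$, the paper works with bad \emph{points} $\rho_0$ at distance $<r_0$ from $\Sigma_{H,p}$ (not with tube centers at distance $r_1$), so that the slope correction $|\nabla f|\cdot|\bar y|\lesssim ce^{(\Lambda+\beta)s}r_0$ is absorbed via the hypothesis $r_0<\tfrac{1}{3}e^{-\Lambda T_0}r_2$.
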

%%%%%%%%%%%%%%%%%%%%%%%%%%%%%%%%%%%%%%%%%%%%%%%%%%%%%%%%%%%%%%%%%%%%%%%%%%%%%%%%

\begin{remark}\label{r:D depends on n}
{Note that we will typically apply Proposition~\ref{p:ballCover} with $\{\Lambda_{\rho_j}^\tau(r_1)\}_j$ a subset of a $(\mathfrak{D}_n,\tau,r)$ good cover for $\SigH$. In this case the constant $\mathfrak{D}$ can be absorbed into ${\bf{C}_0}$ since it depends only on $n$.}
\end{remark}

\begin{proof}
 Let {$\tau_1=\tau_1(M,p,\FR)$ be the minimum of $1$ and the constant from Lemma~\ref{l:tanSpace}}, and let
$L$ be the largest integer with $L\leq \frac{1}{{\tau_1}}(T_0-t_0)+1$. Cover $[t_0,T_0]$ by
$$
[t_0,T_0]\subset \bigcup_{\ell=0}^{L} \big[s_\ell - \tfrac{{\tau_1}}{2} ,s_\ell + \tfrac{{\tau_1}}{2}\big],
$$
{where $s_\ell:= t_0 +(\ell +\frac{1}{2}){\tau_1}$.}
We claim that for each $\ell=0, \dots, L$ there exists a partition of indices $ \mathcal G_\ell \cup \mathcal B_\ell = \{1, \dots, N\}$  so that 
\begin{equation}\label{e:mainclaim0}
|\mathcal B_\ell| \leq {{\bf{C}_0}}{\mathfrak{D}}\frac{r_2 R^{n-1}}{r_1^{n-1}} e^{4(\Lambda+\beta)|s_\ell |}
\end{equation}
 and
{
\begin{equation}\label{e:mainclaim}
d\left(\Lambda_{{A}}^\tau(r_0)\;, \bigcup_{s=s_\ell-\frac{{\tau_1}}{2}}^{s_\ell+\frac{{\tau_1}}{2}}\varphi_t\big(\Lambda_{_{\!\rho_k}}^\tau(r_1)\big)\right)\geq {\frac{1}{C_{_{\!S}}}}r_2 -C_{_{\!S}}r_0 \;\;\; \;\;\;\forall k \in \mathcal G_\ell.
\end{equation}
}
Here,  
\begin{gather*}
%C_{_{\!I}}:=\inf\{ \|d\varphi_t(q)\|:\; q\in {\Lambda^1_{\{p=0\}}(\ep_0),\,|t|\leq 1}\},\\
C_{_{\!S}}:=\sup\big\{ \|d\varphi_t(q)\|:\; q\in \Lambda^1_{\{p=0\}}(\ep_0),\,|t|\leq {\tfrac{4}{3}}\big\},
\end{gather*}
 where $\ep_0>R$ is a constant independent of $r_0, r_1, r_2, R$. 
The result then follows from setting 
\[
\mathcal B:= \bigcup_{\ell=0}^L \mathcal B_\ell\qquad \text{and} \qquad \mathcal G:=\{1, \dots, N\}\backslash \mathcal B,
\]
together with  asking for  $\alpha_1<{\tfrac{1}{2C_{_{\!S}}+C^2_{_{\!S}}}}$ so that {${\frac{1}{C_{_{\!S}}}}r_2 -C_{_{\!S}}r_0 >2r_1$}. {Note that the adjustment depends only on $(M,p)$. }

We  have reduced the proof of the lemma to establishing the claims in \eqref{e:mainclaim0} and \eqref{e:mainclaim}. We next explain that it suffices to prove  \eqref{e:mainclaim}  with $\Lambda_{{A}}^\tau(r_0)$ replaced by ${A}$. To see this,   let $\{t_j\}$ be so that 
\[
[-(3\tau+{\tau_1}{+r_0}),3\tau+{\tau_1}+{r_0}]=\bigcup^{J}_{j=1} [t_j-\tfrac{{\tau_1}}{2},t_j+\tfrac{{\tau_1}}{2}],
\]
where $J$ is the largest integer  with  $J \leq (6\tau+{2r_0})/{\tau_1}+2.$ {Note that since $\tau<\tau_0<1$, ${r_0<\frac{1}{3}}$ and ${\tau_1}$ depends only on $(M,p,\FR)$, the same is true for $J$.}   
Fix $\ell \in \{1, \dots, L\}$. We claim that for each $j\in \{1, \dots, J\}$ there exists a partition $\mathfrak{g}_j^\ell\cup \mathfrak{b}_j^\ell=\{1,\dots,N\}$
with 
\begin{equation}
\label{e:claim-grass0}
|\mathfrak b_j^\ell |\leq {{\bf{C}_0}}{\mathfrak{D}}\frac{r_2 R^{n-1}}{r_1^{n-1}} e^{4(\Lambda+\beta)|s_\ell |},
\end{equation}
and 
\begin{equation}
\label{e:claim-grass}
d\Big({A}, \bigcup_{t=s_\ell+t_j-\frac{{\tau_1}}{2}}^{s_\ell+t_j+\frac{{\tau_1}}{2}}\varphi_t\big(\rho\big)\Big)\geq  r_2 \qquad \text{for all}\;\; \rho\in \bigcup_{k\in \mathfrak g_j^\ell}\Lambda_{\rho_k}^\tau(r_1).
\end{equation}
Suppose the claims in \eqref{e:claim-grass0} and \eqref{e:claim-grass} hold and let 
\[
\mathcal B_\ell:=\bigcup_{j=1}^J \mathfrak b_j^\ell\qquad \text{ and }\qquad \mathcal G_\ell=\{1, \dots, N\}\backslash \mathcal B_\ell.
\]
 Then, by construction, {after possibly adjusting $ {\bf{C}_0}$ to take into account the bound on $J$ (which only depends on $(M,p,\FR)$),} we obtain that \eqref{e:mainclaim0} also holds. To derive \eqref{e:mainclaim} 
suppose $\rho\in \Lambda_{\rho_k}^\tau(r_1)$ for some $k \in \mathcal G_\ell$. In particular,  since $k \in \mathfrak g_j^\ell$ for all $j=1, \dots, J$, relations \eqref{e:claim-grass} yield that
$$
d\Big({A}, \bigcup_{t=s_\ell-3\tau-{\tau_1}-{r_0}}^{s_\ell+3\tau+{\tau_1}+{r_0}}\varphi_t(\rho)\Big)\geq  r_2.
$$
In particular, {using the definition of $ {C_{_{\!S}}}$, { that $\tau<\Tinj\leq 1$}, {and $r_0<\frac{1}{3}$}}
{
$$
d\Big(\Lambda^{\tau+{r_0}}_{{A}}, \bigcup_{t=s_\ell-2\tau-{{\tau_1}}}^{s_\ell+2\tau+{{\tau_1}}}\varphi_t(\rho)\Big)\geq  {\frac{r_2}{C_{_{\!S}}}},
$$
and this proves \eqref{e:mainclaim} after using the definition of $C_S$ once again.}

We have then reduced the proof of the proposition to establishing the claims in \eqref{e:claim-grass0} and \eqref{e:claim-grass}. Fix $\ell \in \{1, \dots, L\}$, $j \in \{1, \dots, J\}$, and set
\[
s:=s_\ell +t_j.
\]
To prove these claims we start by covering $B$ by  balls $B_\alpha^s \subset \TM  $ of radius ${\bf R_s}>0$ (to be determined later) and centers in $B$,
\[
B \subset \bigcup_{\alpha=1}^{I_s} B_\alpha^s,
\]
so that $I_s \leq C_{n} R^{n-1} {\bf R}^{-(n-1)}_s $ for some $C_{n}>0$.
 Fix $B_\alpha^s$ and suppose there exists $\rho_0\in B_\alpha^s$ such that
\begin{equation}
\label{e:badness}
d(\SigH,\rho_0)<r_0\qquad \text{and}\qquad  d\Big({A}, \bigcup_{t=s-\frac{{\tau_1}}{2}}^{s+\frac{{\tau_1}}{2}}\varphi_t(\rho_0)\Big)<  r_2.
\end{equation}
Then there exists $\tilde{s}\in [s-\frac{{\tau_1}}{2},s+\frac{{\tau_1}}{2}]$ with $d(\varphi_{\tilde{s}}(\rho_0),A)<r_2$.  
{Next, since $d(\rho_0,\SigH)<r_0$, there exists $\rho_\alpha \in \SigH$ with 
\[
\varphi_{\tilde{s}}(\rho_\alpha)\in  B(\varphi_{\tilde{s}}(\rho_0), c_{_{\!M,p}}e^{\Lambda|\tilde{s}|}r_0),\qquad d(\rho_0,\rho_\alpha)<r_0,
\]}
{for some $c_{_{\!M,p}}>0$.}
In addition, {letting ${\bf{\bar r}}_{{s}}=c_{_{\!M,p}}e^{\Lambda|\tilde{s}|}r_0$,}
\begin{equation*}
\begin{gathered}
d(\SigH,\varphi_{\tilde{s}}(\rho_\alpha))\leq d(A,\varphi_{\tilde{s}}(\rho_\alpha)) \leq d(A, \varphi_{\tilde{s}}(\rho_0)) + d( \varphi_{\tilde{s}}(\rho_0),\varphi_{\tilde{s}}(\rho))< r_2+ {\bf \bar r_s}.
\end{gathered}
\end{equation*}
We then assume that $\alpha_2<{\frac{3}{3+c_{_{\!M,p}}}\min\{\tfrac{\tilde c}{2},\,{\tfrac{\delta_F}{2}},\,\tfrac{1}{{32}\c ^2c^2}\}}$  so that  
$$ r_2+ {\bf \bar r_s}<{\min\Bigg\{\tilde c e^{-a|\tilde{s}|},\, \frac{e^{-2(\Lambda+\beta)|\tilde{s}|}}{16\c ^2c^2},\,\delta_F\Bigg\}}$$
{where $\c$ is from Lemma~\ref{l:tanSpace}.}
Then, by assumption there exists $\w=\w(\tilde s, \rho_\alpha)\in T_{\rho_\alpha}\SigH$ so that 
the restriction $d\psi_{(\tilde{s},\rho_\alpha)}: \R \partial_t \times \re \w \to  T_{\psi(\tilde{s} ,\rho_\alpha)}\re^{n+1} $ 
has left inverse $L_{(\tilde{s},\rho_\alpha)}$ with $\|L_{(\tilde{s},\rho_\alpha)}\|\leq ce^{\beta|\tilde s|}$. By Lemma \ref{l:tanSpace} the {points $\rho$  in a neighborhood of $\rho_\alpha$ can be written in coordinates $\rho=\rho(y_1,\dots, y_{2n})$ with 
$\rho_\alpha=\rho(0, \dots, 0)$ and $\SigH=\{y_{n}=\dots =y_{2n}=0\}$}
 so that 
$
\frac{1}{2}d(\rho(y),\rho(y'))<|y-y'|<2d(\rho(y),\rho(y')).
$
Let 
\[
r_{\tilde s}:=\frac{ {8}e^{-(3\Lambda+2\beta) |\tilde s|}}{c^2\c^2 }.
\]
These coordinates are built with the property that there exists a smooth real valued function $f$ defined in a neighborhood of $0 \in \R^{{2n-1}}$ so that if {$0<r<\frac{1}{128}{e^{\Lambda |\tilde s |}r_{\tilde s}}$}, 
\[ |y|<r_{\tilde s} \qquad \text{and} \qquad  d(\varphi_t(\rho(y)),\SigH)<r\;\;\text{
 for some}\; t\in \big[\tilde s-{\tau_1},\tilde s+{\tau_1}\big],\]
then 
\[|y_1-f(y_2,\dots y_{2n})|<{2(1+\c)ce^{\beta |\tilde s|} r} \qquad \text{and}\qquad 
|\partial_{y_j}f|<{\c\,}ce^{\beta |\tilde s|}e^{\Lambda|\tilde s|}
\]  
Assume {$\alpha_2<\tfrac{1}{128}$} so that  $r_2<{\frac{1}{128}e^{\Lambda |\tilde{s} |}r_{\tilde{s}}}$. Since  $\tilde{s}\in[s-\frac{{\tau_1}}{2},s+\frac{{\tau_1}}{2}]$, we may choose  $r:= r_2$ to get that, if $\rho=\rho(y)\in B(\SigH,r_0) $  satisfies $d(\rho,\rho_\alpha)<\frac{r_{\tilde s}}{2}$ and
\vspace{-0.2cm}
\begin{equation}\label{e:nonloop}
d\Big(\SigH, \bigcup_{t=s-\frac{{\tau_1}}{2}}^{s+\frac{{\tau_1}}{2}}\varphi_t(\rho)\Big)<  r_2, 
 \end{equation}
 \vspace{-0.2cm}
 then {with $\bar{y}=(y_n,\dots y_{2n})$}
\begin{align*}
|y_1-f(y_2,\dots, y_{n-1},0)|
&\leq |y_1-f(y_2,\dots, y_{n-1},{\bar{y}})|+ |\partial_{y_j}f(y_2,\dots, y_{n-1},0)| |{\bar{y}}| \\
&<{2(1+\c)}ce^{\beta |\tilde s|}  r_2+ {\c}c e^{\beta |\tilde s|}e^{\Lambda|\tilde s|}2r_0\\
&<C_0e^{\beta |\tilde s|}r_2.
\end{align*}
Here, we have used that the assumption $r_0 < \tfrac{1}{3}\, e^{-\Lambda T_0} r_2$ implies  {$e^{\Lambda|\tilde s|}2r_0< r_2$}, and we have written $C_0={(2+3\c) c}$. Also, we used that $|\bar y| \leq 2 d(\rho(y), \rho(y_2,\dots, y_{n-1},0))=2 d(\rho(y),\SigH){\leq 2 r_0}$.

\begin{figure}
    \centering
    \includegraphics[width=10cm]{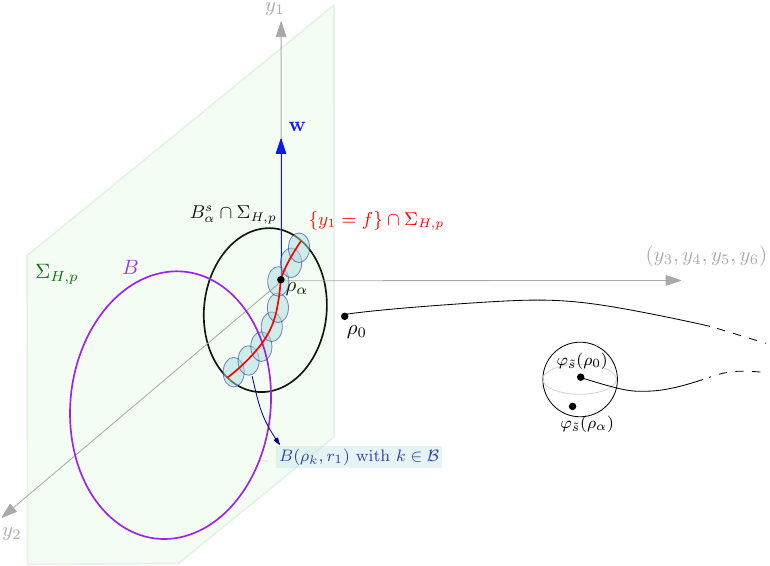}
    \caption{Illustration, when $n=3$, of  the covering balls that intersect $B_\alpha^s$ and loop back for times $\tilde s$ near $s$. }
    \label{fig:1lemma}
\end{figure}

Next, we let ${\bf R_s}=\frac{r_{\tilde s}}{8}$ {and use that $\alpha_2<\frac{1}{16 c^2\c^2}$} to obtain that since {$\rho_0\in B_\alpha^s$, for $\rho \in B_\alpha^s$,}
\begin{equation}\label{E:Rs}
d(\rho,\rho_\alpha )\leq d(\rho_0,\rho_\alpha)+d(\rho,\rho_0)<{r_0}+2{\bf R_s}<{\frac{r_{\tilde s}}{2}}.
\end{equation}
In particular,  \eqref{E:Rs} implies 
$$B_\alpha^s \subset \{\rho \in \TM :\; d(\rho,\rho_\alpha)<{\frac{r_{\tilde s}}{2}}\}.$$ 
Therefore, we have showed that if $\rho \in B_\alpha^s \cap B(\SigH,r_0)$ satisfies \eqref{e:nonloop}, then $\rho \in \mathcal U_{\rho_\alpha}^s \cap B(\SigH,r_0)$ where
$$
\mathcal U_{\rho_\alpha}^s=\left\{\rho:\; |y_1-f(y_2,\dots, y_{n-1},0)|<C_0e^{\beta |\tilde s|}r_2,\quad  d(\rho,\rho_\alpha)<\tfrac{r_{\tilde s}}{2}\right\}.
$$
This is illustrated in Figure \ref{fig:1lemma}.
Next, note that, the number of disjoint tubes in $\{ \Lambda_{\rho_j}^\tau( r_1 )\}_{j=1}^{N}$ that intersect $\mathcal U_{\rho_\alpha}^s \cap B(\SigH,r_0)$ is controlled by the number of disjoint balls in the collection $\{B(\rho_j, r_1)\}_{j=1}^N$ that intersect ${\mathcal U}_{\rho_\alpha}^s \cap \SigH$.  In addition, for each $j\in \{1, \dots, N\}$ the intersection $B(\rho_j, r_1)\cap \SigH$ is entirely contained in $\tilde{\mc{U}}_{\rho_\alpha}^s\cap \SigH$ where 
$$
\tilde{\mc{U}}_{\rho_\alpha}^s\!\!=\!\!\left\{\!\rho: |y_1-f(y_2,\dots, y_{n-1},0)|<C_0e^{\beta |\tilde s|}r_2\!+\!4r_1,\quad\, d(\rho,\rho_\alpha)<\frac{r_{\tilde s}}{2}\!+\!4r_1\right\}\!.
$$
 In particular, 
\begin{align*}
\vol  (\tilde{\mathcal U}_{\rho_\alpha}^s \cap \SigH)
 &\leq(C_0e^{\beta |\tilde s|}r_2+4r_1)\int_{B(0,\frac{r_{\tilde s}}{2}+4r_1)}\sqrt{1+|\nabla f|^2}\,dy_2\dots dy_{n-1}.
%&\leq \tilde C (Ce^{\beta|s|}r_2+4r_1)e^{(\beta+\Lambda)|s|}(\frac{r_s}{2}+4r_1)^{n-2},
\end{align*}
Hence,  the number of disjoint balls in the collection $\{B(\rho_j, r_1)\}_{j=1}^N$ that intersect ${\mathcal U}_{\rho_\alpha}^s \cap \SigH$ is controlled by
 $$
 {2\sqrt{n-1}\, \c c}(C_0e^{\beta(|s|+{\tau_1})}r_2+4r_1)\, {e^{(\beta+\Lambda)(|s|+{\tau_1})}}\big(\frac{r_{\tilde s}}{2}+4r_1\big)^{n-2}r_1^{-(n-1)}.
 $$ 
 Here, we used the bound $|\partial_{y_j}f|<{\c} \,ce^{(\beta+\Lambda) |\tilde s|}$ and that $e^{\beta|\tilde s|} \leq e^{\beta(|s|+{\tau_1})}$.

Finally,  note that since $\alpha_2< \tfrac{1}{c^2 \c^2}$ and $\gamma \geq 3\Lambda +2\beta$, by choosing $\alpha_1<1$, we have $r_1<\min \{r_2,r_{\tilde s}\}$. 
Hence,  the number of disjoint balls in the collection $\{B(\rho_j, r_1)\}_{j=1}^N$ that intersect ${\mathcal U}_{\rho_\alpha}^s \cap \SigH$ is controlled by
 $
{e^{2\beta {\tau_1}}} e^{(2\beta+\Lambda)|s|}r_2 \tilde r_s^{n-2}r_1^{-(n-1)}
 $ 
 up to a constant that depends only on $(M,p,k,c, {\FR  })$.
In addition, note that in the collection $\{ \Lambda_{\rho_j}^\tau( r_1 )\}_{j=1}^{N}$ there are ${\mathfrak{D}}$ sets of disjoint tubes of radius $r_1$. Therefore, since there are $ I_s \leq C_{n} R^{n-1} {\bf R_s}^{-(n-1)} $ balls $B_\alpha^s$, for $s=s_\ell+t_j$ we can build $\mathfrak b^\ell_j$ so that
\[
\rho \notin \bigcup_{k \in \mathfrak b^\ell_j} \Lambda_{\rho_k}^\tau(r_1)\quad \Longrightarrow  \quad  d\Big(A, \bigcup_{t=s_\ell+t_j-\frac{{\tau_1}}{2}}^{s_\ell+t_j+\frac{{\tau_1}}{2}}\varphi_t(\rho)\Big)\geq r_2,
\]
and so that for some ${{\bf{C}_0}={\bf{C}_0}(M,p,k,c,\beta,\FR  )}>0$ 
$$
|\mathfrak b^\ell_j|
%\leq I_s\frac{C_{n,k}Ce^{(2\beta+\Lambda)|s|}r_2r_s^{n-2}}{r_1^{n-1}}
\leq {{\bf{C}_0}}{\mathfrak{D}}\frac{e^{(2\beta+\Lambda)|s|}r_2r_{\tilde s}^{n-2}R^{n-1}}{r_1^{n-1}\mathbf{R}_s^{n-1}}.
$$ 
{Here, we have used that $e^{2\beta {\tau_1}} \leq e^{2\beta}$ since ${\tau_1}\leq 1$.}
Using that $\frac{r_{\tilde s}^{n-2}}{\mathbf{R}_s^{n-1}}=\frac{8^{n-1}}{r_{\tilde s}}$ and adjusting ${{\bf{C}_0}}$, we obtain  \eqref{e:claim-grass0}.  This concludes the proofs of the  claims in \eqref{e:claim-grass0} and \eqref{e:claim-grass}.

\end{proof}
%%%%%%%%%%%%%%%%%%%%%%%%%%%%%%%%%%%%%%%%%%%%%%%%%%%%%%%%%%%%%%%%%%%%%%%%%%%%%%%%

%%%%%%%%%%%%%%%%%%%%%%%%%%%%%%%%%%%%%%%%%%%%%%%%%%%%%%%%%%%%%%%
%%%%%%%%%%%%%%%%%%%%%%%%%%%%%%%%%%%%%%%%%%%%%%%%%%%%%%%%%%%%%%%
%%%%%%%%%%%%%%%%%%%%%%%%%%%%%%%%%%%%%%%%%%%%%%%%%%%%%%%%%%%%%%%
\section{Contraction of $\varphi_t$ and non-self looping sets}
%%%%%%%%%%%%%%%%%%%%%%%%%%%%%%%%%%%%%%%%%%%%%%%%%%%%%%%%%%%%%%%
%%%%%%%%%%%%%%%%%%%%%%%%%%%%%%%%%%%%%%%%%%%%%%%%%%%%%%%%%%%%%%%
%%%%%%%%%%%%%%%%%%%%%%%%%%%%%%%%%%%%%%%%%%%%%%%%%%%%%%%%%%%%%%%
\label{s:controlLooping anosov}
The proofs of  Theorems \ref{T:applications} and \ref{T:tangentSpace}  hinge on controlling how the geodesic flow changes the volume of sets contained in $\SNH$. {As in the previous section, we work with a general Hamiltonian $p$ such that $H$ is conormally transverse for $p$.}
Let
\begin{equation}\label{e:Jdef}
J_t:=d\varphi_t|_{T_\rho\SigH}:T_\rho \SigH\to d\varphi_t(T_\rho \SigH).
\end{equation}

{When the Hamiltonian flow is assumed to be Anosov, we have that for $A_0\subset \mc{S}_H\setminus \mc{M}_H$, we can split $A_0$ into pieces $A_{\pm,0}$ such that there is $C_0\geq 1$ satisfying
\begin{equation}
\label{e:forward}
\sup_{\rho \in {A_{\pm,0}} }|\det J_t|\leq C_0e^{-|t|/C_{_{0}}},\qquad \pm t \geq 0.
\end{equation}
The analysis in this section will be used in Section~\ref{s:Anosov} to prove Theorem~\ref{T:tangentSpace} and in particular, to handle $\mc{S}_H\setminus \mc{M}_H$. This, for instance, is the step which allows us to show that averages over subsets of horospheres have improvements.}

%Assuming that the geodesic flow is Anosov will allow us to prove in Section \ref{T:tangentSpace} that for certain $A_0 \subset \SigH$ there is $C_0\geq 1$ so that
%\begin{equation}
%\label{e:forward}
%\sup_{\rho \in {A_0} }|\det J_t|\leq C_0e^{-|t|/C_{_{0}}}.
%\end{equation}
 { Note, however, that the condition in \eqref{e:forward} is very general and that it may hold in situations where the Hamiltonian flow is not Anosov. For example, such an estimate holds for the geodesic flow at the umbillic points of the triaxial ellipsoid (see e.g.~\cite{GT18a}). }This section is dedicated to study the structure of the set of looping tubes under the assumption that \eqref{e:forward} holds.

By~\eqref{e:derFlow}, there exists $C_{_{\!\varphi}}>0$ depending only on $(M,p)$, so that  {for all $\Lambda>\Lambda_{\text{max}}$}
\begin{equation}
\label{e:cphi}
 \|d\varphi_t\| \leq C_{_{\!\varphi}}e^{\Lambda |t|},\qquad t\in \R.
\end{equation}
Let $\Decay>1$ be so that 
{
\begin{equation}\label{e:D}
e^{-\Lambda \Decay}< \min\Big\{\frac{e^{-\Lambda{(1+\Tinj)}}}{C_{_{\!\varphi}}}, \frac{\alpha_1}{4}, \frac{1}{4} 
\Big\},
\end{equation}
{where $\alpha_1=\alpha_1(M,p)$} is the constant introduced in Proposition \ref{p:ballCover}. 
}
\begin{definition}\label{d:control}
Let $ A_0\subset\SigH$, $\e_0>0$, $\digamma>0$, {$\ti_0:[\e_0, \infty) \to [1, \infty)$}, and $T_0>1$ . If the following conditions are satisfied, we say that 
\[
A_0 \; \text{ can be}\;  {(\e_0, \ti_0, \digamma) \text{-controlled up to time}\; T_0.}
\]
Let $\ep\geq\e_0$, {$\Lambda>\Lambda_{\text{max}}$},
  \begin{equation*}\label{e:parameters}
0<R_0\leq \tfrac{1}{\digamma}e^{-\digamma\Lambda|T_0|},\qquad0<r_0<R_0,
\end{equation*} 
and balls $\{B_{0,i}\}_{i=1}^N\subset \SigH$ centered in $A_0$ with radii $\{R_{0,i}\}_{i=1}^N \subset [r_0, R_0]$. Then,  for {$0<\tau<\tfrac{1}{2}\Tinj$} and  all
$$
A_1\subset  \bigcup_{i=1}^NB_{0,i}\subset A_0
\qquad
\text{and} 
\qquad
0<r<\tfrac{1}{\digamma}e^{-\digamma\Lambda T_0}r_0,
$$ there are balls $\{\tilde B_{1,k}\}_{k}\subset \SigH$ with radii $\{R_{1,k}\}_k \subset [0, {\tfrac{1}{4}R_0}]$  so that  
\begin{enumerate}
\item $\Lambda^\tau_{A_1 \backslash \cup_k \tilde B_{1,k}}(r)$ is  non self-looping for times in $[\ti_0(\e), T_0]$, \\
\item $\sum_k R_{1,k}^{n-1}\leq \e \sum_i R_{0,i}^{n-1}$,\\
\item  ${\inf_k}R_{1,k} \geq  e^{-\Decay\Lambda T_0}\, {\inf_iR_{0,i}}$.%\\
%\item{$\bigcup_{t=t_0}^{T_0} \varphi_t(\Lambda^\tau_{A_1\backslash\cup_k\tilde{B}_{1,k}}(r))\cap \Lambda_{\SigH \backslash\cup_k\tilde{B}_{1,k}}^\tau(r)=\emptyset.$}
\end{enumerate}
{We observe that when we write $A_1 \backslash \cup_k \tilde B_{1,k}$ we mean $A_1 \cap (\SigH \backslash \cup_k \tilde B_{1,k})$.}
\end{definition}
{Note that Definition~\ref{d:control} is vacuous if $T_0\leq \ti_0(\e_0)$.}
%%%%%%%%%%%%%%%%%%%%%%%%%%%%%%%%%%%%%%%%%%%%%%%%%%%%%%%%%%%%%%%%%%%%%%%%%%%%%%%%
\begin{lemma}\label{l:nonlooping}
{There exists $\digamma>0$ depending only on $(M,p, {K_{_{\!H}}})$ so that for every monotone decreasing function $\f:[0,\infty) \to [0,\infty)$ with $\f\in L^1([0,\infty))$ {and $\Lambda>\Lambda_{\max}$}, there exists a function $\ti_0:(0,\infty)\to[1,\infty)$ with the following properties.\\
If $A_0\subset\SigH$ is so that
\begin{equation}
\label{e:forward1}
\sup_{\rho \in {A_0} }|\det J_t|\leq \f(|t|)
\end{equation}
 for all $t \in (0, T_0)$ or for all $t \in (-T_0, 0)$, then, for all $\e_0>0$,
 \[
A_0 \; \text{can be}\; (\e_0, \ti_0, \digamma)\text{-controlled up to time}\; T_0
\]
 in the sense of Definition~\ref{d:control}. Furthermore,  in addition to conditions (1), (2) and (3) in Definition~\ref{d:control} being satisfied, either
\[\begin{gathered}
\bigcup_{t={\ti_0(\e)}}^{T_0} \varphi_t(\Lambda^\tau_{A_1\backslash\cup_k\tilde{B}_{1,k}}(r))\cap \Lambda_{\SigH \backslash\cup_k\tilde{B}_{1,k}}^\tau(r)=\emptyset,\\
\text{or}
\\
{\bigcup_{t=-T_0}^{-{\ti_0(\e)}} \varphi_t(\Lambda^\tau_{A_1\backslash\cup_k\tilde{B}_{1,k}}(r))\cap \Lambda_{\SigH \backslash\cup_k\tilde{B}_{1,k}}^\tau(r)=\emptyset.}
\end{gathered}
\]}
\end{lemma}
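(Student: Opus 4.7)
The plan is to construct the function $\ti_0(\e)$ from the $L^1$-integrability of $\f$, and to build the bad balls $\{\tilde{B}_{1,k}\}$ by analyzing the return structure of orbits to $\SigH$ one discrete time-slice at a time. The $L^1$ condition is what converts a per-time volume bound into a summable total.

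First I would choose $\ti_0(\e)\geq 1$ so that $\int_{\ti_0(\e)}^{\infty}\f(t)\,dt\leq \e/C_\star$, where $C_\star=C_\star(M,p,K_{_{\!H}})$ is a geometric constant to be fixed below; this is possible because $\f\in L^1([0,\infty))$. Then I would discretize $[\ti_0(\e),T_0]$ by times $t_j=\ti_0(\e)+j\tau_1$ for $j=0,\dots,J$, using the injectivity scale $\tau_1=\tau_1(M,p,\FR)$ from Lemma~\ref{l:tanSpace} (with $J\tau_1\asymp T_0-\ti_0(\e)$). For each $j$, define the discrete looping set
\[
L_j:=\big\{\rho\in A_1:\exists\, s\in[t_j-\tfrac{\tau_1}{2},t_j+\tfrac{\tau_1}{2}]\text{ with }d(\varphi_s(\rho),\SigH)<r\big\}.
\]
By construction, any $\rho\in A_1$ whose forward orbit meets $\SigH_{r}$ during $[\ti_0(\e),T_0]$ lies in some $L_j$.

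Next I would cover each $L_j$ by balls $\{\tilde{b}_{j,k}\}$ centered in $L_j$ with radii $R_{j,k}\in[e^{-\Decay\Lambda T_0}\inf_i R_{0,i},\,R_0/4]$, satisfying the key technical estimate
\begin{equation*}
\sum_k R_{j,k}^{n-1}\;\leq\; C'\,\f(t_j)\,\sum_i R_{0,i}^{n-1}.
\end{equation*}
The strategy for this estimate is to push forward to the image side: $\varphi_{t_j}(L_j\cap B_{0,i})\subset \SigH_{r}\cap \varphi_{t_j}(B_{0,i})$, and the Jacobian hypothesis gives $\mathrm{vol}_{n-1}(\varphi_{t_j}(B_{0,i}))\leq \f(t_j)\mathrm{vol}_{n-1}(B_{0,i})\leq C\f(t_j)R_{0,i}^{n-1}$. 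One then uses the flow-box coordinates near $\SigH$ from Lemma~\ref{l:tanSpace} and injectivity of $\varphi_{t_j}$ on scales up to $R_0$ (guaranteed by the parameter constraint $R_0\leq\digamma^{-1}e^{-\digamma\Lambda T_0}$ and the definition of $\Decay$ in \eqref{e:D}) to convert the image-side measure bound into a covering estimate on the preimage side at the prescribed radius scale, which determines the constant $\digamma$. The forward/backward dichotomy in the hypothesis is handled symmetrically by replacing $\varphi_t$ by $\varphi_{-t}$.

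Setting $\{\tilde{B}_{1,k}\}:=\bigcup_j\{\tilde{b}_{j,k}\}$, condition (3) of Definition~\ref{d:control} holds by the choice of radii, and condition (2) follows by summation together with monotonicity of $\f$:
\[
\sum_k R_{1,k}^{n-1}\leq C'\sum_j \f(t_j)\sum_i R_{0,i}^{n-1}\leq \tfrac{C'}{\tau_1}\int_{\ti_0(\e)}^{\infty}\f(t)\,dt\,\sum_i R_{0,i}^{n-1}\leq \e\sum_i R_{0,i}^{n-1},
\]
after fixing $C_\star=C'/\tau_1$. Condition (1) and the strengthened non-intersection property $\bigcup_{t=\ti_0(\e)}^{T_0}\varphi_t(\Lambda^\tau_{A_1\setminus\cup_k\tilde{B}_{1,k}}(r))\cap\Lambda_{\SigH\setminus\cup_k\tilde{B}_{1,k}}^\tau(r)=\emptyset$ are then immediate: removing $\bigcup_j L_j$ eliminates precisely the orbits that would meet $\SigH_{r}$ at any discretized time in $[\ti_0(\e),T_0]$, and the slight overlap between the discretization intervals and tube radii $\tau$ is absorbed by the choice of discretization spacing $\tau_1$. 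The main obstacle is the technical image-to-preimage conversion in the key estimate, which requires careful bookkeeping in tubular coordinates around $\SigH$ and using injectivity of $\varphi_t$ on the $R_0$-scale to ensure that disjoint preimage balls produce disjoint image regions whose total volume is genuinely controlled by the Jacobian bound.
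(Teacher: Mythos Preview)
Your high-level strategy is right: discretize $[\ti_0(\e),T_0]$ into $O(T_0)$ windows, define $\ti_0(\e)$ via the tail condition $\int_{\ti_0(\e)}^\infty \f\leq c\,\e$, and sum over windows using monotonicity of $\f$. That part matches the paper.

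The genuine gap is where you place the bad balls. You cover the \emph{source-side} looping set $L_j\subset A_1$ and claim $\sum_k R_{j,k}^{n-1}\leq C'\f(t_j)\sum_i R_{0,i}^{n-1}$. But the hypothesis $|\det J_t|\leq \f(|t|)$ controls only the \emph{image} volume $\mathrm{vol}_{n-1}(\varphi_{t_j}(B_{0,i}))$, not $\mathrm{vol}_{n-1}(L_j\cap B_{0,i})$. In the extreme case where $\varphi_{t_j}(B_{0,i})$ contracts into $\Lambda^0_{\SigH}(r)$ entirely, one has $L_j\cap B_{0,i}=B_{0,i}$, and any cover of $L_j$ by balls of radius $\leq R_0/4$ must have $\sum_k R_{j,k}^{n-1}\gtrsim R_{0,i}^{n-1}$, not $\f(t_j)R_{0,i}^{n-1}$. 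Pulling back an image-side cover via $\varphi_{-t_j}$ inflates radii by $e^{\Lambda t_j}$, so the ``image-to-preimage conversion'' you flag as bookkeeping actually costs a factor $e^{(n-1)\Lambda t_j}$ and destroys the estimate. Injectivity of $\varphi_{t_j}$ does not help here: it only guarantees that disjoint source balls have disjoint images, which gives a bound in the wrong direction.

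The paper's remedy is to put the bad balls on the \emph{image} side. For each $(i,j)$ one covers the landing set
\[
D_{0,i,j}:=\bigcup_{t\in I_{0,i,j}}\varphi_t\big(\Lambda^0_{B_{0,i}}(r)\big)\cap \Lambda^0_{\SigH}(r)
\]
by balls $\{B_{0,i,j,\ell}\}_\ell\subset\SigH$ of radius $\sim e^{-\Decay\Lambda t_{0,i,j}}R_{0,i}$. Since $\mathrm{vol}_{n-1}(\varphi_{t_{0,i,j}}(B_{0,i}))\lesssim \f(t_{0,i,j})R_{0,i}^{n-1}$ (here the Jacobian bound is used directly), the number of such balls times their $(n-1)$-volume is $\lesssim \f(t_{0,i,j})R_{0,i}^{n-1}$, which is exactly the estimate you want. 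Non-self-looping (and the stronger conclusion with $\SigH$ in place of $A_1$ on the target side) then follows because any orbit from $\Lambda^0_{A_1}(r)$ returning to $\Lambda^0_{\SigH}(r)$ must land in some $D_{0,i,j}\subset\cup_k\Lambda^0_{\tilde{B}_{1,k}}(r)$, hence not in $\Lambda^0_{\SigH\setminus\cup_k\tilde{B}_{1,k}}(r)$ once the radii are chosen with the right margins. A minor point: the discretization scale should come from $\Tinj$, not from $\tau_1$ of Lemma~\ref{l:tanSpace}, which is tied to the left-inverse hypothesis of Section~\ref{s:controlLooping ncp} rather than to the Jacobian decay used here.
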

%%%%%%%%%%%%%%%%%%%%%%%%%%%%%%%%%%%%%%%%%%%%%%%%%%%%%%%%%%%%%%%%%%%%%%%%%%%%%%%%
{Note that the last conclusion of Lemma~\ref{l:nonlooping} differs from condition (1) in Definition~\ref{d:control} since we insist that, after flowing, not only does $\Lambda^\tau_{A_1\backslash\cup_k\tilde{B}_{1,k}}(r)$ not self-intersect (as in (1) of Definition~\ref{d:control}, but it does not even intersect $\SigH\setminus \cup_k\tilde{B}_{1,k}.$}
\begin{proof}
We prove the case in which \eqref{e:forward1} holds for all $t \in (0, T_0)$ (the case in which it holds for all $t \in (-T_0, 0)$ is identical after sending $t\to -t$).  {Let $\Lambda>\Lambda_{\max}$ and
 $t_0$ be large enough so that  {$t_0>\Tinj+2$} and 
\begin{equation}\label{e:Tplus}
{C_{_{\!\varphi}}e^{\Lambda}e^{-\Decay\Lambda (t_0-\Tinj-1)}} \leq 1,%\qquad \qquad \qquad  \frac{4 \alpha}{\Tinj}\int_{t_0}^\infty f(t)dt \leq \e
 \end{equation}
  where {$C_{_{\!\varphi}}$ is as in \eqref{e:D}}. {We will assume, without loss of generality, that $f(|t|)\geq \frac{1}{C_\varphi}e^{-\Lambda t}$.}
Define 
\[
\ti_0:(0,\infty) \to [1, \infty) \qquad  \ti_0(\e)=\inf\Bigg\{s\geq t_0:\,\, \int_s^\infty \!\! f(s)ds\leq \frac{\e\Tinj}{4\alpha}\Bigg\},
\]
where 
\[\alpha:={2^{3n-1}}\gamma^{n-1}\qquad \text{and} \qquad \gamma:=\tfrac{1}{4}C_{\varphi}e^{\Lambda}.\]  }
{Here, $\ti_0(\e)\geq 2$ since $t_0>\Tinj+2>2$.}

{Fix $\e_0>0$ and let $\e\geq \e_0$.} Let $0<\tau<\frac{1}{2}\Tinj$, $R_0>0$, $0<r_0<R_0$  and let $\{B_{0,i}\}_{i=1}^N\subset \SigH$ be a collection of balls centered in $A_0$ with radii $\{R_{0,i}\}_{i=1}^N \subset [r_0, R_0]$. Let $A_1\subset  \bigcup_{i=1}^NB_{0,i}$ and $0<r<1$.
   For each $i \in \{1, \dots, N\}$ let $\{I_{0,i,j}\}_{j=1}^{N_i}$ be a collection of {disjoint} intervals  $I_{0,i,j} \subset [{\ti_0(\e)}-2\tau-r, T_0+2\tau+r]$ {so that $\tfrac{\Tinj}{4}\leq |I_{0,i,j}|<\tfrac{\Tinj}{2}$ and}  
\begin{equation}\label{e:intervals}
\begin{gathered}
\{t \in [{\ti_0(\e)}-2\tau-r, T_0+2\tau+r]:\; \varphi_t(\Lambda^0_{B_{0,i}}(r)) \cap \Lambda^0_{_{\!\SigH}}(r) \neq \emptyset\big\} \subset \bigcup_{j=1}^{N_i} I_{0,i,j},\\
\text{and}\\
{\bigcup_{t\in I_{0,i,j}}\varphi_t(\Lambda_{B_{0,i}}^0(r){)}\cap \Lambda^0_{_{\!\SigH}}(r)\neq \emptyset.}
\end{gathered}
\end{equation}
 For $i \in \{1, \dots, N\}$ and $j \in \{1, \dots, N_i\}$ define
\begin{equation}\label{e:D def}
D_{0,i,j}:= \bigcup_{t \in I_{0,i,j}} \varphi_t(\Lambda^0_{B_{0,i}}(r)) \cap \Lambda^0_{_{\!\SigH}}(r).
\end{equation}
We claim that {for each pair $(i,j)$}
\begin{equation}\label{e:Disk}
D_{0,i,j}\subset \bigcup_{\ell=1}^{{L_{i,j}}} \Lambda_{B_{0,i,j,\ell}}^0(r)
\end{equation}
where $\{B_{0,i,j,\ell}\}_{\ell=1}^{{L_{i,j}}}$ are balls centered in $\SigH$ with radii  {$R_{0,i,j,\ell}:=\gamma e^{-\Decay\Lambda  t_{0,i,j}}R_{0,i}$} satisfying
\begin{equation}\label{e:radii}
 {L_{i,j}}R_{0,i,j,\ell}^{n-1}\leq \alpha {\f( t_{0,i,j})}R_{0,i}^{n-1}
\end{equation}
(see Figure~\ref{f:contract} for an illustration of this covering),
where $t_{0,i,j}:=\min\{t:\,t \in I_{0,i,j}\}$. 
Note that $t_{0,i,j}>1$ for all $(i,j)$ since $r<1$ and ${\ti_0(\e)\geq t_0}>\Tinj+2$, and so ${\ti_0(\e)}-2\tau-r >{\ti_0(\e)}-\Tinj-1>1$.  
\begin{figure}
\includegraphics[height=8cm]{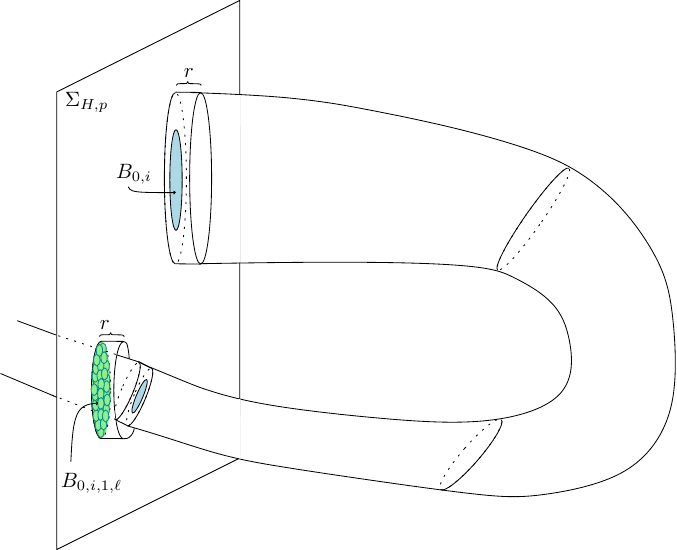}
\caption{\label{f:contract} Illustration of a contracting ball and the cover by much smaller balls for the proof of Lemma~\ref{l:nonlooping}.}
\end{figure}

Note that, since {we take $0<r<R_0<\digamma^{-1}{e^{-\digamma \Lambda T_0}}$, if we let $\digamma_{_{\!\!0}}=\digamma_{_{\!\!0}}(M,p,K_{_{\!H}})$ large enough and assume $\digamma\geq \digamma_{_{\!\!0}}$}, then $\SigH$ is almost flat as a submanifold {of $\TM$ at scale $R_0$}. In particular,  we have
\[
\mathcal B (\rho, \tfrac{1}{2} R)  \cap \Lambda^0_{_{\! \SigH}}(r) \; \subset\;  \Lambda_{ B(\rho, R)}^0(r),
\]
for all $\rho \in \SigH$ and $0\leq R\leq R_0$. Here we are using $\mathcal B$ to denote a ball in $\TM $ and $B$ to denote a ball in $\SigH$.
Therefore, it suffices to show that 
\begin{equation}\label{e:goalD}
D_{0,i,j}\subset \bigcup_{\ell=1}^{L_{i,j}} \mathcal B_{0,i,j,\ell}.
\end{equation}
where $\{\mathcal B_{0,i,j,\ell}\}_{\ell=1}^{L_{i,j}} \subset \TM $ are balls with radii 
$\mathcal R_{0,i,j,\ell}=\tfrac{1}{2}R_{0,i,j,\ell}$ with $R_{0,i,j,\ell}$ as in \eqref{e:radii}.

 {Let $\rho_{0,i}\in A_0$ be the center of $B_{0,i}$} and fix $j\in \{1, \dots, N_i\}$. To prove the claim in \eqref{e:goalD} fix $t_{\!_{\rho_{0,i}}} \in I_{0,i,j}$ so that $\varphi_{t_{\!_{\rho_{0,i}}}}(\rho_{0,i})\in \Lambda^0_{\SigH}(r)$. Observe that choosing coordinates near $\rho_{0,i}$ and $\varphi_{_{{t_{\!_{\rho_{0,i}}}}}}(\rho_{0,i})$, we have for $t$ near $t_{\rho_{0,i}}$ and $\rho$ near $\rho_{0,i}$, 
$$
\varphi_{t}(\rho)=\varphi_t(\rho_{0,i})+d\varphi_t(\rho-\rho_{0,i})+O(|\rho-\rho_{0,i}|^2{e^{2\Lambda|t|}}). 
$$
 {If $|\rho-\rho_{0,i}|\leq R_{0,i}$} and $\rho \in \SigH$, this gives 
$$
\varphi_{t}(\rho)=\varphi_t(\rho_{0,i})+J_t(\rho-\rho_{0,i})+O(R_{{0,i}}^2e^{2\Lambda|t|}). 
$$
Now, let $\{\lambda_i(t)\}_{i=1}^{n-1}$ be the {singular values} of $J_t$ ordered so that $\lambda_i(t)\leq \lambda_{i+1}(t)$. Then, modulo perturbations controlled by $R_0^2e^{2\Lambda |t|}$, the set $\varphi_t(B_{0,i})$ is an $n-1$ dimensional ellipsoid with axes of length $\lambda_i(t)R_{0,i}$. 
Also, observe that
$$
\frac{e^{-\Lambda t}}{C_{_{\!\varphi}}}\leq \lambda_1(t)\leq \lambda_{n-1}(t)\leq C_{_{\!\varphi}} e^{\Lambda t},
$$
{where $C_{_{\!\varphi}}$  is as in \eqref{e:cphi}.}
Since ${\ti_0(\e)}\geq 1$, we note that  $e^{-\Lambda {\ti_0(\e)} (\Decay -1)}<\frac{1}{C_{_{\!\varphi}}} $. This ensures that $e^{-\Decay \Lambda t} < \frac{e^{-\Lambda t}}{C_{_{\!\varphi}}}$ for all $t \geq {\ti_0(\e)}$.

Also, note that there exists a constant $\alpha_{_{\!M,p}}>0$ so that for all $i \in \{1, \dots, N\}$ and $\rho \in  \varphi_{_{t_{\rho_{0,i}}}}(\Lambda^0_{B_{0,i}}(r))$ we have $d(\rho, \varphi_{t_{\rho_{0,i}}}(B_{0,i}) )\leq \alpha_{_{\!M,p}} e^{\Lambda t_{\rho_{0,i}}}r$. {Define $\digamma$ by }
$$
\digamma:= \max\{8\alpha_{_{\!M,p}}\,,\,{\Decay+1}\,,\,\digamma_{_{\!\!0}}\},
$$ 
and {from now on work with $R_0\leq \tfrac{1}{\digamma}e^{-\digamma\Lambda|T_0|}$.}
Then, {if $0<r<\tfrac{1}{\digamma}e^{-\digamma\Lambda T_0}r_0$,} we have that  $r$ is small enough so that $\alpha_{_{\!M,p}}e^{\Lambda T_0}r \leq \frac{1}{8}e^{-\Decay \Lambda T_0}r_0$. In particular,  $\alpha_{_{\!M,p}} e^{\Lambda t_{\rho_{0,i}}}r<\frac{1}{8}e^{-\Decay \Lambda t_{0,i,j}}R_{0,i}$ {for all $i \in \{1, \dots, N\}$} and there are points $\{q_\ell\}_{\ell=1}^{{L_{i,j}}}\subset  \varphi_{_{t_{\rho_{0,i}}}}(B_{0,i})$ so that 
\begin{equation}\label{e:D union}
 \varphi_{_{t_{\rho_{0,i}}}}(\Lambda^0_{B_{0,i}}(r))\subset \bigcup_{\ell=1}^{{L_{i,j}}} \mc{B}(q_{\ell}, \tfrac{1}{8}e^{- \Decay \Lambda t_{0,i,j}}R_{0,i}),
\end{equation}
where the balls in the right hand side are balls in $\TM $.
Furthermore, 
\begin{align*}
\vol(\varphi_{t_{\rho_{0,i}}}(B_{0,i}))&\leq \vol(B_{0,i})  ( |\det (J_{t_{\rho_0,i}})|+ C_{_{\!M,p}}R_0^2e^{2\Lambda t_{\rho_{0,i}}}) \\
 &\leq C_{n} R_{0,i}^{n-1}( {\f(t_{\rho_{0,i}})}+ C_{_{\!M,p}}R_0^2e^{2\Lambda t_{\rho_0,i}})  
\end{align*}
 for some $C_{n}>0$ and $C_{_{\!M,p}}>0$.
 {Next, adjust $\digamma$ so that $\digamma^2>C_\varphi C_{_{\!M,p}}$. Then, since $f(|t|) \geq \frac{1}{C_\varphi} e^{-\Lambda t}$, 
 \begin{align*}
\vol(\varphi_{t_{\rho_{0,i}}}(B_{0,i}))\leq {2} C_{n} R_{0,i}^{n-1} {\f(t_{\rho_{0,i}})}.
\end{align*}}
{Observe that by~\eqref{e:D} and 
$t_{\rho_{0,i}}-\frac{\Tinj}{2}\leq t_{0,i,j}\leq t_{\rho_{0,i}}$, we have $e^{-\Decay \Lambda t_{0,i,j}}< \lambda_1(t_{\rho_0,i})$.}
{Therefore, using that $t_{0,i,j}\leq t_{\rho_{0,i}}$ again,} the points $\{q_\ell\}_{\ell=1}^{L_{i,j}}$ can be chosen so that  
{\begin{align}\label{e:volumebound}
 {L_{i,j}C_{n} ({\tfrac{1}{8}}e^{-\Decay \Lambda  t_{0,i,j}}R_{0,i})^{n-1}}
 &\leq 2  \vol \Big(\varphi_{t_{\rho_{0,i}}}(B_{0,i}) \;  \bigcap \; \cup_{\ell=1}^{{L_{i,j}}} \mc{B}(q_{\ell}, {\tfrac{1}{8}}e^{- \Decay \Lambda t_{0,i,j}}R_{0,i}) \Big) \notag\\
 & \leq  {4}{C_{n} R_{0,i}^{n-1}{\f(t_{0,i,j})}} .
 \end{align}}
 {Note that this yields $L_{i,j} ({\tfrac{1}{8}}e^{-\Decay \Lambda  t_{0,i,j}})^{n-1} \leq {4}\f(t_{0,i,j})$.}
 
 Since $|I_{0,i,j}|<1$,
 it follows that {for every choice of indices $\ell$, $(i,j)$ we have}
\begin{align}\label{e:diam}
\text{diam}\Big(\bigcup_{{t\in I_{0,i,j}}}\varphi_{_{t-t_{\rho_0,i}}}({\mc{B}}({q_{\ell},{\tfrac{1}{8}}e^{-\Decay\Lambda  t_{0,i,j}}R_{0,i}))}\cap \Lambda^0_{_{\!\SigH}}(r)\Big)
&\leq {\frac{1}{8}C_{_{\!\varphi}}e^{\Lambda}} e^{-\Decay\Lambda   t_{0,i,j} }R_{0,i}{\leq \frac{1}{8}R_{0,i} }
\end{align}
where in the last inequality, we use the definition of $\Decay$. Without loss of generality, we may assume that $C_{_{\!\varphi}}\geq 4$ (redefining $\Decay$ in the process) and hence that $\gamma=\frac{1}{4}C_{_{\!\varphi}}e^{\Lambda}\geq 1$ (see \eqref{e:radii}). This implies that we can find  a point $\rho_{0,i,j,\ell}\in \SigH$ so that the  ball $\mathcal B_{0,i,j,\ell}\subset \TM $ of center $\rho_{0,i,j,\ell}$ and radius $\mathcal R_{0,i,j,\ell}=\tfrac{1}{2}\gamma e^{-\Decay\Lambda t_{0,i,j }}R_{0,i}=\tfrac{1}{2}R_{0,i,j,\ell}$ contains the set {in \eqref{e:diam}} whose diameter is being bounded.  Thus, {by the definition \eqref{e:D def} of $D_{0,i,j}$ together with \eqref{e:D union}, we conclude that  \eqref{e:goalD} and   \eqref{e:Disk} hold}.  Also, {by {the definition of $R_{0,i,j,\ell}$, the definition of $\alpha$,} and \eqref{e:volumebound}, for each choice of $(i,j)$ }
$$
\sum_{\ell=1}^{L_{i,j}} R_{0,i,j,\ell}^{n-1}
%= {L_{i,j}}\, 2^{n-1} {\mathcal R_{0,i,j,1}^{n-1}}
= {L_{i,j}} \gamma^{n-1} (e^{-\Decay\Lambda t_{0,i,j }}R_{0,i})^{n-1}     \leq\alpha {\f(t_{0,i,j})}R_{0,i}^{n-1},
$$
and hence~\eqref{e:radii} holds.
Therefore, {from the definition of $\ti_0(\e)$ it follows that }
\begin{equation}\label{e:sum of radii}
\sum_{i,j, \ell} R_{0,i,j,\ell}^{n-1} \leq \alpha \sum_{i,j}{\f(t_{0,i,j})} R_{0, i}^{n-1} \leq   \frac{4 \alpha }{\Tinj}\,{\int_{{\ti_0(\e)}}^\infty\f(s)ds}  \sum_i R_{0, i}^{n-1}\leq {\e \sum_i R_{0, i}^{n-1}},
\end{equation}
where to get the second inequality we used that   $t_{0,i,j+1}-t_{0,i,j}\geq   \Tinj/4$ implies
\[
\sum_j  \tfrac{\Tinj}{4}   {f(t_{0,i,j})}  \leq {\int_{{\ti_0(\e)}}^\infty f(s)ds}  .
\] 

Let $k=k(i,j,\ell)$ be an index reassignment and write $\tilde B_{1,k}=B_{0,i,j,\ell}$ {and $R_{1,k}=R_{0,i,j,\ell}$. Note that by the definition of $R_{0,i,j,\ell}$ in \eqref{e:radii} and the first inequality in  \eqref{e:Tplus} we know $R_{1,k}\leq \tfrac{1}{4}R_0$.} {In addition, $\cup_{i,j}D_{0,i,j} \subset \cup_k \tilde B_{1,k}$}. {According to~\eqref{e:intervals} and~\eqref{e:D def} we proved that
\begin{equation}\label{e:a}
\bigcup_{t={\ti_0(\e)}-2\tau-r}^{T_0+2\tau+r} \varphi_t(\Lambda^0_{{A_1}\backslash\cup_k\tilde{B}_{1,k}}(r))\cap \Lambda_{{\SigH}\backslash\cup_k\tilde{B}_{1,k}}^0(r)=\emptyset.
\end{equation}
We claim that this implies
\begin{equation}\label{e:b}
\bigcup_{t={\ti_0(\e)}}^{T_0} \varphi_t(\Lambda^\tau_{{A_1}\backslash\cup_k\tilde{B}_{1,k}}(r))\cap \Lambda_{{\SigH}\backslash\cup_k\tilde{B}_{1,k}}^\tau(r)=\emptyset.
\end{equation}
Indeed, if $\rho$ belongs to the set in \eqref{e:b}, then there exist times $t\in [{\ti_0(\e)}-\tau-r,T_0+\tau+r]$, $s\in [-\tau-r, \tau+r]$,  and points $q_0,q_1{\in \mc{H}_{\Sigma}}$ {(see \eqref{e:Hsig})} with 
$$
{d(q_0, A_1\backslash\cup_k\tilde{B}_{1,k})<r},\qquad {d(q_1, {\SigH}\backslash\cup_k\tilde{B}_{1,k})<r}
$$ 
so that $\rho=\varphi_t(q_0)=\varphi_s(q_1)$. Let $\tau' \in [-\tau, \tau]$ be so that  $|s-\tau'|<r$. Then, $\varphi_{-\tau'}(\rho)=\varphi_{s-\tau'}(q_1)=\varphi_{t-\tau'}(q_0)$ belongs to the set in \eqref{e:a} since $|s-\tau'|<r$ and $t-\tau'\in [{\ti_0(\e)}-2\tau-r,T_0+2\tau +r]$. This means that if the set in \eqref{e:a} is empty, then so is the set in \eqref{e:b}.} Finally, \eqref{e:b} implies that
\[{\Lambda_{A_1}^\tau(r)} \backslash \bigcup_k\Lambda_{\tilde B_{1,k}}^\tau(r)\]
is non self looping for times in $[{\ti_0(\e)}, T_0]$. {Furthermore, \eqref{e:sum of radii} now reads
\[
\sum_{k} R_{1,k}^{n-1} \leq  \e \sum_i R_{0, i}^{n-1}.
\]}
\end{proof}
%%%%%%%%%%%%%%%%%%%%%%%%%%%%%%%%%%%%%%%%%%%%%%%%%%%%%%%%%%%%%%%%%%%%%%%%%%%%%%%%

%%%%%%%%%%%%%%%%%%%%%%%%%%%%%%%%%%%%%%%%%%%%%%%%%%%%%%%%%%%%%%%%%%%%%%%%%%%%%%%%
\begin{lemma}\label{l:nonlooping2}
Let $E\subset \SigH$ be a ball of radius $\delta>0$. Let {$\e_0>0$, $\ti_0:[\e_0, +\infty) \to [1, +\infty)$,} $T_0>0$, and $\digamma>0$, have the property {that $E$ can be $(\e_0, {\ti_0}, \digamma)$-controlled up to time $T_0$ in the sense of Definition \ref{d:control}.}
Let $0<m<\frac{\log T_0-\log {\ti_0(\e)}}{\log 2}$ be a positive integer,
\[
0\leq R_0\leq {\min}\Big\{{\tfrac{1}{\digamma}}e^{-{\digamma}\Lambda T_0}, {\tfrac{\delta}{10}}\Big\},
\qquad 
0<r_1<{\tfrac{1}{5\digamma}}e^{-({\digamma}+2\Decay)\Lambda T_0}R_0,
\]
and $E_0\subset E$ with $d(E_0, E^c)>R_0$. {{Let $0<\tau<\tfrac{1}{2}\Tinj$} and } suppose that $\Lambda_{_{\rho_j}}^\tau(r_1)$ is a $({\mathfrak{D}},\tau, r_1)$ good cover of $\SigH$ and set
$$
\mc{E}:=\{j \in \{1, \dots, {N_{r_1}}\}: \Lambda_{\rho_j}^\tau(r_1)\cap \Lambda^\tau_{E_0}(\tfrac{r_1}{5})\neq \emptyset\}.
$$ 

 Then, there exist $C_{_{\!M,p}}>0$ depending only on $(M,p)$  and  sets $\{\mc{G}_\ell\}_{\ell =0}^m\subset \{1,\dots N_{r_1}\}$, $\mc{B}\subset \{1,\dots N_{r_1}\}$ so that 
 \[\mc{E}\;\subset\;  \mc{B}\cup \displaystyle\bigcup_{\ell=0}^m\mc{G}_\ell,\]
\begin{align}
%&\bullet\; \mc{E}\;\subset\;  \mc{B}\cup \displaystyle\bigcup_{\ell=0}^m\mc{G}_\ell,\label{e:union}\\
&\bullet \;\bigcup_{i\in \mc{G}_\ell}\Lambda_{\rho_i}^\tau(r_1)\text{\; is \;} [\ti_0,2^{-\ell}T_0]\text{\; non-self looping {for every $\ell \in\{0, \dots, m\}$},} \label{e:nsl} \\
&\bullet\;  |\mc{G}_\ell|\leq C_{_{\!M,p}}{\mathfrak{D}}\e_0^\ell {\delta^{n-1}} r_1^{1-n} \;\;\; {\text{for every}\;\; \ell \in\{0, \dots, m\}}, \label{e:count good}\\ \  \medskip
&\bullet\;  |\mc{B}|\leq C_{_{\!M,p}}{\mathfrak{D}} \e_0^{m+1}{\delta^{n-1}} r_1^{1-n}\Big.. \label{e:count bad}
 \end{align}
\end{lemma}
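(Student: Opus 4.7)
The plan is to iterate the $(\e_0,\ti_0,\digamma)$-control of $E$ at the geometrically shrinking time scales $T_0,T_0/2,\dots,T_0/2^m$. Each iteration will carve out a group of tubes whose union is non-self-looping on the corresponding time window, while contracting the total $(n-1)$-volume of the remaining ``bad'' balls by a factor of $\e_0$.

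Concretely, I would first cover $E_0$ by a collection $\{B_{0,i}\}$ of balls of radius $R_0$ centered in $E_0$ with $\sum_i R_{0,i}^{n-1}\le C\,\delta^{n-1}$; the hypotheses $d(E_0,E^c)>R_0$ and $R_0\le \tfrac{1}{\digamma}e^{-\digamma\Lambda T_0}$ make this admissible for Definition~\ref{d:control}. Applying $(\e_0,\ti_0,\digamma)$-control with time parameter $T_0$ and tube radius $r=r_1$ to $A^{(0)}:=\bigcup_i B_{0,i}$ produces balls $\{\tilde B_{1,k}\}$ with $\sum_k R_{1,k}^{n-1}\le\e_0\sum_i R_{0,i}^{n-1}$ such that $\Lambda^\tau_{A^{(0)}\setminus\cup_k\tilde B_{1,k}}(r_1)$ is $[\ti_0(\e_0),T_0]$ non-self-looping. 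Set $\mc{G}_0$ to consist of those $j\in\mc{E}$ whose tubes meet $\Lambda^\tau_{A^{(0)}}(r_1)$ but miss $\Lambda^\tau_{\cup_k\tilde B_{1,k}}(r_1)$. Inductively, given $\{\tilde B_{\ell,k}\}$ with $\sum_k R_{\ell,k}^{n-1}\le C\e_0^\ell\delta^{n-1}$, after re-covering $A^{(\ell)}:=\bigcup_k\tilde B_{\ell,k}$ by balls of radius at most $R_0$ centered in $E$, apply the control with time parameter $T_0/2^\ell$ to obtain $\{\tilde B_{\ell+1,k}\}$ with total volume at most $C\e_0^{\ell+1}\delta^{n-1}$; define $\mc{G}_\ell$ as the indices whose tubes meet $A^{(\ell)}$ but miss $A^{(\ell+1)}$, which are automatically $[\ti_0(\e_0),T_0/2^\ell]$ non-self-looping. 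Finally, $\mc{B}$ collects the indices whose tubes still meet $A^{(m+1)}$. The good-cover multiplicity $\mathfrak{D}$ together with the standard covering-number estimate $\#\{j:\Lambda_{\rho_j}^\tau(r_1)\cap S\ne\emptyset\}\le C_{M,p}\,\mathfrak{D}\,r_1^{1-n}\vol(S)$ for $S\subset\SigH$ then yields both (\ref{e:count good}) and (\ref{e:count bad}).

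The main obstacle is parameter bookkeeping. At every step one must verify that the radii $R_{\ell,k}$ stay within the admissible window and that the tube radius $r_1$ remains below the threshold dictated by Definition~\ref{d:control}. Condition (3) of the definition gives $\inf_k R_{\ell+1,k}\ge e^{-\Decay\Lambda T_0/2^\ell}\inf_k R_{\ell,k}$, so the accumulated shrinkage telescopes via the geometric series $\sum_{j\ge 0}2^{-j}\le 2$, leaving $\inf_k R_{\ell,k}\ge R_0 e^{-2\Decay\Lambda T_0}$ for all $\ell\le m$. This is precisely where the hypothesis $r_1<\tfrac{1}{5\digamma}e^{-(\digamma+2\Decay)\Lambda T_0}R_0$ is used: it absorbs both the smallness factor $e^{-\digamma\Lambda T_0}$ required at each application of the control and the worst-case radii shrinkage $e^{-2\Decay\Lambda T_0}$. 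The constraint $m<(\log T_0-\log\ti_0(\e_0))/\log 2$ ensures that the final non-self-looping interval $[\ti_0(\e_0),T_0/2^m]$ remains non-empty.
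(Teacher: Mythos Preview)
Your approach matches the paper's: iterate the $(\e_0,\ti_0,\digamma)$-control at times $T_0, T_0/2, \dots, T_0/2^m$, peel off $\mc{G}_\ell$ at each stage, and track both the $\e_0^\ell$ shrinkage of total ball volume and the telescoped lower bound $\inf_k R_{\ell,k} \ge e^{-2\Decay\Lambda T_0} R_0$. Two bookkeeping details need adjustment. First, apply the control with tube parameter $r = 5r_1$ rather than $r_1$: a tube $\Lambda_{\rho_j}^\tau(r_1)$ that meets $A^{(\ell)}$ and misses $A^{(\ell+1)}$ is only guaranteed to lie in $\Lambda^\tau_{A^{(\ell)} \setminus A^{(\ell+1)}}(c r_1)$ for some $c>1$, so with $r=r_1$ your $\mc{G}_\ell$ need not sit inside the set that Definition~\ref{d:control} declares non-self-looping. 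The paper takes $r=5r_1$, defines $\mc{G}_\ell=\{i\in\mc{E}:\Lambda_{\rho_i}^\tau(r_1)\subset G_\ell\}$ by containment, and then proves the dichotomy that any $i\in\mc{E}$ with $\Lambda_{\rho_i}^\tau(r_1)$ meeting $\Lambda^\tau_{E_\ell}(r_1/5)$ but not in $\mc{G}_\ell$ must have its tube meet $\Lambda^\tau_{\tilde E_{\ell+1}}(r_1/5)$; combined with $r_1< r_0/5\le \inf_k R_{\ell+1,k}/5$ this traps such tubes inside a slight enlargement of $\tilde E_{\ell+1}$, which is precisely the inductive step behind the bound on $|\mc{B}|$. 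Second, ``re-covering $A^{(\ell)}$ by balls of radius at most $R_0$'' must not mean a fresh cover by radius-$R_0$ balls, which would destroy the volume estimate $\sum_k R_{\ell,k}^{n-1}\le C\e_0^\ell\delta^{n-1}$; the paper feeds the output balls $\tilde B_{\ell,k}$ themselves (doubled to radius $2R_{\ell,k}$, with a check that their centers remain in $E$ via $R_{\ell,k}\le 4^{-\ell}R_0$) directly as input to the next control step.
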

%%%%%%%%%%%%%%%%%%%%%%%%%%%%%%%%%%%%%%%%%%%%%%%%%%%%%%%%%%%%%%%%%%%%%%%%%%%%%%%%

\begin{proof}
Choose balls $\{B_{0,i}\}_{i=1}^N$ centered in $E_0$ so that $E_0\subset  \bigcup_{i=1}^NB_{0,i}$ where  $B_{0,i}$ has radius $R_{0,i}=R_0$ built so that {$NR_0^{n-1}\leq C_{n}\delta^{n-1}$}. {This can be done since $R_0<\frac{\delta}{10}$.}  {Let $ r_0:=e^{-{2\Decay \Lambda T_0} }R_0$.} Since $E$ can be $(\e_0, \ti_0, \digamma)$-controlled up to time $T_0$,   for \[0< r<\tfrac{1}{\digamma}e^{-{\digamma}\Lambda T_0}r_0{=}\tfrac{1}{\digamma}e^{-(\digamma+2\Decay)\Lambda T_0}R_0\] there are balls $\{\tilde{B}_{1,k}\}_k\subset \SigH $ of radii {$\{R_{1,k}\}_k\subset[0, \tfrac{1}{4}R_0]$}, so that 
 \[
 {\inf_k}R_{1,k}\geq  e^{-\Decay\Lambda  T_0} R_0 \geq r_0,
 \qquad  \qquad
  \sum_{k}R_{1,k}^{n-1}\leq \e_0 N{R_{0}^{n-1}},
\qquad \qquad 
  \]
and with 
 $G_0:=   \Lambda^\tau_{E_0\backslash \tilde E_1}(r)$  non-self-looping for times in $[{\ti_0(\e)}, T_0],$
 where we have set $\tilde E_1=\cup_k \tilde B_{1,k}$.  {Note that we may assume that $E_0\cap \tilde B_{1,k}\neq \emptyset$  for all $k$.} Now, since $R_{1,k}\leq \frac{1}{4}R_0$, the ball $\tilde{B}_{1,k}$ is centered at a distance no more than $\frac{1}{4}R_0$ from $E_0$. So, letting $E_1:=\cup_k B_{1,k}$ with $B_{1,k}$ the ball of radius $2R_{1,k}$ with the same center as $\tilde{B}_{1,k}$, we have
 $$
 d(E_1,E^c)\geq {d(E_0,E^c)}-\tfrac{3}{4}R_0>(1-\tfrac{3}{4})R_0.
 $$ 
 {Next, we set $T_1:=2^{-1}T_0$ and use that $E_0$ can be {$(\e_0, \ti_0, \digamma)$}-controlled up to time $T_1$ (indeed up to time $2T_1$). By definition $E_1\subset   \bigcup_k  B_{1,k}$ and  $R_0\leq  {\digamma^{-1}}e^{-{\digamma}\Lambda T_0}\leq {\digamma^{-1}}e^{-{\digamma}\Lambda T_1}$.
Therefore, since $0<r<{\digamma^{-1}}e^{-{\digamma}\Lambda T_0}r_0<{\digamma}^{-1}e^{-{\digamma}\Lambda T_1}r_0$, there are balls $\{\tilde B_{2,k}\}_k \subset {\SigH}$ of radii {$0<R_{2,k}\leq \tfrac{1}{4^2}R_0$} with
\begin{equation}\label{e:inf}
 {\inf_k}R_{2,k}\geq  e^{-{\Decay}\Lambda  T_1} {\inf_i}R_{1,i}
\qquad \text{and}\qquad 
  \sum_{k}R_{2,k}^{n-1}\leq \e_0\sum_{k}R_{1,k}^{n-1} \leq \e_0^2 N{R_{0}^{n-1}},
  \end{equation}
 so that  $G_1:=   \Lambda^\tau_{E_1\backslash \tilde E_2}(r)$
is non-self-looping for times in $[{\ti_0(\e)}, T_1],$ where we have set $\tilde E_2=\cup_k \tilde B_{2,k}$. {Since we may assume that $E_1\cap \tilde{B}_{2,k}\neq \emptyset$ for all $k$, the balls $\tilde{B}_{2,k}$ are centered at a distance smaller than $\tfrac{1}{4^2}R_0$ from $E_1$. In particular, letting $E_2=\cup_{k}B_{2,k}$ where $B_{2,k}$ is the ball of radius $2R_{2,k}$ centered at the same point as $\tilde{R}_{2,k}$, we have} 
$$
d(E_2, E^c)\geq d(E_1,E^c)-\tfrac{3}{4^{2}}R_0>R_0\big(1-\tfrac{3}{4}-\tfrac{3}{4^2}\big).
$$
}
 Continuing this way we claim that one can construct a collection of sets $\{G_\ell\}_{\ell=1}^m \subset \Lambda_{E}^\tau(r)$ so that 
%%%%
 \begin{enumerate}  
   \item[A)]  $G_\ell$ is  non-self-looping for times in $[{\ti_0(\e)}, T_\ell]$ with $T_\ell=2^{-\ell}T_0$.
   \item[B)] There are balls $B_{\ell, k}, \tilde B_{\ell, k} \subset \SigH$ {{centered at $\rho_{\ell,k}\in{E}$}} of radii $2R_{\ell,k}$, $ R_{\ell,k}$ respectively so that 
   \[G_\ell= \Lambda_{E_\ell\backslash\tilde E_{\ell+1}}^\tau(r),\]
where 
$E_\ell= \bigcup_k B_{\ell, k}$ and $\tilde E_{\ell} = \bigcup_k \tilde B_{\ell,k}$.
 \item[C)] For all $\ell \geq 1$, the radii satisfy  {$\sup_\ell R_{\ell,k}\leq \tfrac{1}{4^\ell}R_0$, } 
\begin{equation}\label{e:upperbound}
{\inf_k}R_{\ell, k} \geq  e^{-{2{\Decay} \Lambda T_0} }R_0=r_0 \qquad \text{and} \qquad \sum_k R_{\ell,k}^{n-1} \leq \e_0^\ell N R_0^{n-1}.
\end{equation}
\end{enumerate}
%%%%
The claim in (A) {follows by construction of $G_{\ell}$. For the claim in (B), we only need to check that the balls $B_{\ell,k}$ are centered in $E$. For this, note} that since $R_{\ell,k} \leq \frac{1}{4^\ell}R_0$, by induction
$$
d(E_\ell, E^c)>d(E_{\ell-1}, E^c)-\tfrac{3}{4^\ell}R_0> R_0\Big(1-{\sum_{j=1}^\ell} \tfrac{3}{4^j}\Big)\geq \frac{1}{{4^\ell}}R_0.
$$
\begin{remark}
{Note that this actually gives $E_\ell \subset E$ and so all of $B_{\ell,k}$ is inside $E$ (not just its center).}
\end{remark}
We proceed to justify the first inequality in \eqref{e:upperbound}.
Note that the construction yields that 
$\inf_k R_{\ell,k}\geq  e^{-{\Decay}\Lambda T_\ell}\inf_{i}R_{\ell-1,i}$
for every $\ell$.
Therefore, {since $T_\ell=2^{-\ell}T_0$  and $\inf_k R_{\ell, k} \geq e^{-\Decay \Lambda T_\ell} \inf_i R_{\ell-1, i}$ (see \eqref{e:inf})}, we obtain
\[
\inf_k R_{\ell, k} \geq \prod_{{j=0}}^\ell e^{-{\Decay}\Lambda \frac{T_0}{{2^{j}}}} R_0 = e^{-{{\Decay} \Lambda T_0(2-{\frac{1}{2^{\ell}}})} }R_0\geq   e^{-{2{\Decay} \Lambda T_0} }R_0.
\]
The construction also yields that $\sum_{k} R_{\ell,k}^{n-1}\leq \e_0 \sum_k R_{\ell-1,k}^{n-1}$
for all $\ell$. Therefore, the upper bound \eqref{e:upperbound} on the sum of the radii follows by induction. Indeed, 
$$
\sum_k R_{\ell,k}^{n-1}\leq \e_0^\ell \sum_k R_{0,k}^{n-1}=\e_0^\ell N R_0^{n-1}.
$$
  {Set $r:=5r_1$ in the  above argument, and define }
%Let {$r_1<\frac{r}{5}\leq \frac{1}{5\digamma}e^{-({\digamma+2\Decay})\Lambda T_0}R_0 $ and}
$$
\mc{G}_\ell:=\{{i \in \mathcal E} : \Lambda_{\rho_i}^\tau(r_1)\subset G_\ell\},\qquad \mc{B}:=\mc{E}\setminus \bigcup_{{\ell=0}}^m\mc{G}_\ell.
$$
Then, since $G_\ell$ is $[{\ti_0(\e_0)},2^{-\ell}T_0]$ non-self looping,~\eqref{e:nsl} holds. Furthermore, $\mc{E}\;\subset\;  \mc{B}\cup \bigcup_{\ell=0}^m\mc{G}_\ell$  by construction.

 We proceed to prove~\eqref{e:count good}.
Since the cover by tubes can be decomposed into ${\mathfrak{D}}$ sets of disjoint tubes, 
$$
{|\mc{G}_\ell|\leq  {\mathfrak{D}}\frac{\vol(G_\ell \cap \Lambda^\tau_{E_0}(r_1))}{\min_{i}\vol(\Lambda_{\rho_i}^\tau(r_1))} }\leq C_{_{\!M,p}}{\mathfrak{D}}r_1^{1-n}\sum_k{R_{\ell,k}^{n-1}}\leq C_{_{\!M,p}}{\mathfrak{D}}r_1^{1-n}\e_0^\ell N R_0^{n-1},
$$
{for some $C_{_{\!M,p}}>0$ that depends only on $(M,p)$}. {Then, \eqref{e:count good} follows since $NR_0^{n-1}\leq C_n \delta^{n-1}$.}

{The rest of the proof is dedicated to obtaining ~\eqref{e:count bad}}. 
{For each $\ell$ note that  $E_\ell\subset (G_\ell\cup \tilde{E}_{\ell+1})$ and $\Lambda_{E_\ell}^\tau(\frac{r_1}{5})\subset \Lambda^\tau_{_{\!\SigH}}(\frac{r_1}{5}) \subset \cup_i\Lambda_{\rho_i}^\tau(r_1)$. {We claim that} for every pair of indices $(\ell,i)$ with $ \Lambda_{E_\ell}^\tau(\frac{r_1}{5}) \cap \Lambda_{\rho_i}^\tau(r_1)\neq \emptyset$, either 
$$
\Lambda_{\rho_i}^\tau(r_1)\subset \Lambda_{{E_\ell\setminus \tilde{E}_{\ell+1}}}^\tau ({5r_1})\,%(\text{i.e.}i\in \mc{G}_\ell),
 \qquad\text{ or }\qquad \Lambda_{\rho_i}^\tau(r_1)\cap { \Lambda_{{\tilde{E}_{\ell+1}}}^\tau(\tfrac{r_1}{5})}\neq \emptyset.
$$
Indeed, suppose that $\Lambda_{\rho_i}^\tau(r_1)\cap{ \Lambda_{{\tilde{E}_{\ell+1}}}^\tau(\tfrac{r_1}{5})}=\emptyset$. Then, there exists $q\in\mc{H}_{\Sigma}\cap \Lambda_{\rho_i}^\tau(r_1)$ so that $d(q,\rho_i)<r_1$, $d(q,E_\ell)<\frac{r_1}{5}$, $d(q,\tilde{E}_{\ell+1})\geq \frac{r_1}{5}$.  In particular, $d(q,E_\ell\setminus\tilde{E}_{\ell+1})<\frac{r_1}{5}$. Now, suppose that $q_1\in \mc{H}_{\Sigma}\cap \Lambda_{\rho_i}^\tau(r_1)$. Then, 
\[
d(q_1,E_\ell\setminus \tilde{E}_{\ell+1})\leq d(q_1,\rho_i)+d(\rho_i,q)+d(q,E_\ell\setminus {\tilde{E}_{\ell+1}})<\tfrac{11}{5}r_1<{5r_1}.
\]
In particular, $\Lambda_{\rho_i}^\tau(r_1)\subset \Lambda_{E_\ell \setminus \tilde{E}_{\ell+1}}^\tau({5r_1})$ as claimed.

{Now, suppose that $\Lambda_{\rho_i}^\tau(r_1)\cap {\Lambda_{\tilde{E}_{\ell+1}}^\tau}(\tfrac{r_1}{5})\neq \emptyset$. Then, since} $r_1<\frac{r_0}{5}$ {and $R_{\ell,k}\geq r_0$}, we have
  $$\Lambda_{\rho_i}^\tau(r_1)\cap\mc{H}_\Sigma \; \subset E'_{\ell+1}$$ 
  where $E'_{\ell+1}=\cup_{j}\frac{3}{2}\tilde{B}_{\ell+1,j}$.}
Observe then that {for all $\ell$}
{
\begin{equation}
\label{e:indStep}
\Lambda_{E_\ell}^\tau(\tfrac{r_1}{5})\cap \Big( \bigcup_{i\in \mc{G}_\ell}\Lambda_{\rho_i}^\tau(r_1)\Big)^c\;\;\subset \;\;\Lambda_{E'_{\ell+1}}^\tau(\tfrac{r_1}{5}).
\end{equation}}
{By induction {in $k\geq 2$} we assume  that
$
\Lambda_{E_0}^\tau(\tfrac{r_1}{5})\cap \Big( \bigcup_{\ell=0}^{{k}-1}\bigcup_{i\in \mc{G}_\ell}\Lambda_{\rho_i}^\tau(r_1)\Big)^c\subset \Lambda_{E'_{{k}}}^\tau(\tfrac{r_1}{5}).
$
{Note that the base case $k=1$ is covered by setting $\ell=0$ in \eqref{e:indStep}.}
Then, using \eqref{e:indStep} with $\ell=k$ together with the inclusion $\tilde{E}_{k}\subset E'_{k}\subset E_{k}$ (in fact the balls defining each set have the same center and radii given respectively by $R_{\ell,k}$, $\frac{3}{2}R_{l,k}$ and $2R_{l,k}$) we obtain 
$$
\Lambda_{E_0}^\tau(\tfrac{r_1}{5})\cap \Big( \bigcup_{\ell=0}^{{k}}\bigcup_{i\in \mc{G}_\ell}\Lambda_{\rho_i}^\tau(r_1)\Big)^c\;\;\subset \;\;\Lambda_{E'_{{k}+1}}^\tau(\tfrac{r_1}{5}).
$$}
In particular, if $i\in \mc{B}$, then 
$
{\Lambda_{E_0}^\tau(\tfrac{r_1}{5})\cap}\Lambda_{\rho_i}^\tau(r_1)\subset \Lambda_{{{E}}_{m+1}}^\tau({\tfrac{r_1}{5}}). 
$

Therefore,
$$
|\mc{B}|\leq C_{_{\!M,p}}{\mathfrak{D}}r_1^{1-n}\sum_i{R_{m+1,i}^{n-1}}\leq C_{_{\!M,p}}r_1^{1-n}\e_0^{m+1} N R_0^{n-1},
$$
for some $C_{_{\!M,p}}$ that depends only on $(M,p)$. {This proves \eqref{e:count bad} since $NR_0^{n-1}\leq C_n \delta^{n-1}$.}
\end{proof}

%%%%%%%%%%%%%%%%%%%%%%%%%%%%%%%%%%%%%%%%%%%%%%%%%%%%%%%%%%%%%%%%%%%%%%%%%%%%%%%%

\renewcommand{\SigH}{\SNH}
\renewcommand{\LambdaH}{\Lambda^\tau_{\!\SNH}}

\addcontentsline{toc}{section}{\quad\;\,\bf{Construction of covers in concrete settings}}

%%%%%%%%%%%%%%%%%%%%%%%%%%%%%%%%%%%%%%%%%%%%%%%%%%%%%%%%%%%%%%%%%%%%%%%%%%%%%%%%
%%%%%%%%%%%%%%%%%%%%%%%%%%%%%%%%%%%%%%%%%%%%%%%%%%%%%%%%%%%%%%%%%%%%%%%%%%%%%%%%
%%%%%%%%%%%%%%%%%%%%%%%%%%%%%%%%%%%%%%%%%%%%%%%%%%%%%%%%%%%%%%%%%%%%%%%%%%%%%%%%
\section{No Conjugate points: Proof of Theorems \ref{t:noConj2} and~\ref{t:noConj1}}
%%%%%%%%%%%%%%%%%%%%%%%%%%%%%%%%%%%%%%%%%%%%%%%%%%%%%%%%%%%%%%%%%%%%%%%%%%%%%%%%
%%%%%%%%%%%%%%%%%%%%%%%%%%%%%%%%%%%%%%%%%%%%%%%%%%%%%%%%%%%%%%%%%%%%%%%%%%%%%%%%
%%%%%%%%%%%%%%%%%%%%%%%%%%%%%%%%%%%%%%%%%%%%%%%%%%%%%%%%%%%%%%%%%%%%%%%%%%%%%%%%
\label{s:dynNoConj}

%%%%%%%%%%%%%%%%%%%%%%%%%%%%%%%%%%%%%%%%%%%%%%%%%%%%%%%%%%%%%%%%%%%%%%%%%%%%%%%%
{We dedicate this section to the proofs of  Theorems \ref{t:noConj2} and~\ref{t:noConj1}. We work with the Hamiltonian $p:\TM  \to \R$ given by $p(x,\xi)=|\xi|_{g(x,\xi)}-1$.  The Hamiltonian flow  $\varphi_t$ associated to it is the geodesic flow, and for any $H \subset M$ we have ${\Sigma_{_{\!H,p}}}=\SNH$.

{Let $\Lambda>\Lambda_{\max}$}, $t_0 \in \R$, $\ep>0$, and $x\in M$. The study of the behavior of the geodesic flow near $\SNH$ under the no conjugate points assumption hinges on the fact that if there are no more than $m$ conjugate points (counted with multiplicity) along $\varphi_t$ for $t\in (t_0-2\e,t_0+2\e)$,  then for every $\rho\in S^*_xM$ there is a subspace  $\mathbf{V_{\!\rho}} \subset T_\rho S^*_xM$   of dimension $n-1-m$  so that for all ${\bf v}\in\mathbf{V_{\!\rho}}$,
$$
 {\|{\bf v}\|}\leq {C\e^{-{1}}e^{\Lambda|t_0|}}\|(d\pi \circ d \varphi_t)_\rho {\bf v}\|,\qquad t\in (t_0-\e, t_0+\e).
$$
 In particular, this yields that the restriction   $(d\pi \circ d\varphi_t)_\rho: \mathbf{V_{\!\rho}}\to T_{\pi \varphi_t(\rho)}M$ is invertible onto its image with 
\begin{equation}\label{e:normbound}
\|(d\pi \circ d\varphi_t)_\rho^{-1}\|\leq {C \e^{-{1}}e^{\Lambda|t_0|}}.
\end{equation}
{The proof of this result {is included in} Section~\ref{s:prelim} as Proposition \ref{l:panda} and it holds as long as 
\begin{equation}\label{e:epsilonC}
    0<\e<e^{-C\Lambda|t_0|}/C
\end{equation}
for $C>0$, depending only on $(M,g)$ as defined in as in Proposition \ref{l:panda}.}

In what follows we continue to write  $F:\TM \to \re^{n+1}$ for the defining function of $\SNH$ satisfying \eqref{e:defFunction} and we continue to work with 
$$
\psi:\re\times \TM \to \re^{n+1}, \qquad \qquad \psi(t,\rho)=F\circ \varphi_t(\rho).
$$ 
The following lemma is dedicated to finding a suitable left inverse for $d\psi$.
}

%%%%%%%%%%%%%%%%%%%%%%%%%%%%%%%%%%%%%%%%%%%%%%%%%%%%%%%%%%%%%%%%%%%%%%%%%%%%%%%%
\begin{lemma}
\label{l:prelimNoConj}
{Suppose $k>\frac{n+1}{2}$, $\Lambda>\Lambda_{\max}$. {There exists $c_{_H}>0$ depending only on $K_{_H}$ (as defined in~\eqref{e:KH}) such that the following holds}. Let $t_0 \in \R$ and $a > 0$ satisfy
{
$$
d(H, \mc{C}_{_{\!H}}^{2k-n-1,r_{t_0},t_0})>r_{t_0},
$$}
where $r_t=\tfrac{1}{a}e^{-a|t|}$.
Then, if $\rho_0\in \SNH$ and
$$
d(\SNH, \varphi_{t_0}(\rho_0))< \min(r_{t_0},{c_{_H}}),
$$
there exists $ \w_0 \in T_{\rho_0}\SNH$ so that the restriction}
$$
d\psi_{(t_0,\rho_0)}: \R \partial_t \times \re \w_0\to  T_{\psi(t_0,\rho_0)}\re^{n+1} 
$$ 
has left inverse $L_{(t_0,\rho_0)}$ with 
\[
\|L_{(t_0,\rho_0)}\|\leq  {C_{_{\!M,g}}(1+a) e^{C_{_{\!M,g}}(a+\Lambda)|t_0|}}
\]
 where $C_{_{\! M,g}}>0$ is a constant depending only on $(M,g)$.
\end{lemma}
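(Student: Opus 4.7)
My approach has four steps: translate the hypothesis into a bound on the number of conjugate points, apply Proposition~\ref{l:panda} to produce a ``non-conjugate'' subspace, intersect with the fiber tangent space to pick $\w_0 \in T_{\rho_0}(\SNH)$, and finally bound the left inverse of $d\psi$ using a Jacobi-field description of $d\varphi_{t_0}\w_0$.

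The first step is to bound the number of conjugate points. Since $d(\SNH,\varphi_{t_0}(\rho_0)) < r_{t_0}$ in the Sasaki metric implies $d(H,\pi\varphi_{t_0}(\rho_0)) < r_{t_0}$, the geodesic $\gamma_{\rho_0}$ returns to within $r_{t_0}$ of $H$ at time $t_0$. The hypothesis $d(H,\mc{C}_H^{2k-n-1,r_{t_0},t_0}) > r_{t_0}$ then forces $\gamma_{\rho_0}$ to carry at most $m := 2k-n-2$ conjugate points to $x_0 := \pi(\rho_0)$ in the window $(t_0-r_{t_0},t_0+r_{t_0})$. Applying Proposition~\ref{l:panda} with $\e$ a fixed multiple of $r_{t_0}$ yields a subspace $\mathbf V_{\rho_0} \subset T_{\rho_0} S^*_{x_0}M$ of dimension $n-1-m = 2n-2k+1$ on which $d\pi \circ d\varphi_{t_0}$ is injective, with
\[
\|(d\pi\circ d\varphi_{t_0})|_{\mathbf V_{\rho_0}}^{-1}\| \leq (1+Cr_{t_0}^{-2})^{1/2}\|d\varphi_{-t_0}\| \leq C_{M,g}\, a\, e^{(a+\Lambda)|t_0|}.
\]

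The second step is to select $\w_0$. The vertical subspace $T_{\rho_0}(\snx)$ has dimension $k-1$ inside the $(n-1)$-dimensional ambient $T_{\rho_0}S^*_{x_0}M$, so
\[
\dim(\mathbf V_{\rho_0}\cap T_{\rho_0}(\snx)) \geq (2n-2k+1)+(k-1)-(n-1) = n-k+1 \geq 1,
\]
and I can pick a unit $\w_0$ in this intersection. Since $T_{\rho_0}(\snx) \subset T_{\rho_0}(\SNH)$, this $\w_0$ is automatically tangent to $\SNH$. Interpreting $\w_0$ via the Jacobi field $J$ along $\gamma_{\rho_0}$, the vertical nature of $\w_0$ gives $J(0)=0$ and tangency to $\snx$ at $\xi_0$ gives $J'(0)\perp\dot\gamma_{\rho_0}(0)$; the standard identity then yields $J(t)\perp\dot\gamma_{\rho_0}(t)$ for all $t$, and the previous step supplies
\[
|J(t_0)| = |d\pi(d\varphi_{t_0}\w_0)| \geq C_{M,g}^{-1}\, a^{-1}\, e^{-(a+\Lambda)|t_0|}.
\]

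The last step is to bound the left inverse. Writing $y=\varphi_{t_0}(\rho_0)$, set $v_1 := dF_y(H_p(y))$ and $v_2 := dF_y(d\varphi_{t_0}\w_0)$, which are the images under $d\psi_{(t_0,\rho_0)}$ of $\partial_t$ and $\w_0$. The defining-function bounds in~\eqref{e:defFunction} give $|v_2| \leq 2\|d\varphi_{t_0}\| \lesssim e^{\Lambda|t_0|}$, while the uniform transversality of $H_p$ to $\mc{H}_\Sigma\supset\SNH$ (controlled by $\FR$), together with $y$ lying within $r_{t_0}$ of $\SNH$, gives a lower bound $|v_1|\geq C_{M,g}^{-1}$. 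The quantitative linear independence of $(v_1,v_2)$ comes from the fact that, modulo $\ker dF_y$ (which lies within $O(r_{t_0})$ of $T_{y^*}\SNH$ for the nearest $y^*\in\SNH$), the horizontal projections $d\pi(H_p)=\dot\gamma_{\rho_0}(t_0)$ and $d\pi(d\varphi_{t_0}\w_0)=J(t_0)$ are orthogonal with $|J(t_0)|$ bounded below. The right-inverse bound $\|R_F\|\leq 2$ then yields a lower bound of order $a^{-1}e^{-(a+\Lambda)|t_0|}$ on the smallest singular value of $[v_1\ v_2]$, which inverts to the claimed norm bound on $L_{(t_0,\rho_0)}$. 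I expect the main obstacle to be precisely this last link: converting the intrinsic Riemannian orthogonality $J(t_0)\perp\dot\gamma_{\rho_0}(t_0)$ into a condition-number estimate for the linear map $[v_1\ v_2]:\R^2\to\R^{n+1}$ built from an abstract defining function $F$, which requires carefully comparing $\ker dF_y$ with $T_{y^*}\SNH$ and tracking the $O(r_{t_0})$ error from $y\ne y^*$.
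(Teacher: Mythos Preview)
Your first two steps are essentially the paper's, but there is a genuine gap in the final step, and it is not merely the perturbative issue you flag.

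Orthogonality $J(t_0)\perp\dot\gamma_{\rho_0}(t_0)$ does not control the smallest singular value of $[v_1\ v_2]$. The kernel of $dF_y$ is (up to $O(r_{t_0})$) the tangent space $T_{y^*}\SNH$, whose horizontal projection $d\pi(T_{y^*}\SNH)$ contains $T_{\pi(y^*)}H$, an $(n-k)$-dimensional subspace of $\dot\gamma_{\rho_0}(t_0)^\perp$. Nothing in your choice of $\w_0$ prevents $J(t_0)$ from lying in (or arbitrarily close to) $T_{\pi(y^*)}H$: in that case $d\varphi_{t_0}\w_0$ can be nearly tangent to $\SNH$ and $v_2=dF_y(d\varphi_{t_0}\w_0)$ can be arbitrarily small even though $|J(t_0)|$ is bounded below. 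So the lower bound on $|J(t_0)|$ does not transfer to a lower bound on $|v_2|$, let alone on the condition number of $[v_1\ v_2]$. (When $H$ is a point your argument does go through, since then $T_{\pi(y^*)}H=\{0\}$; the obstruction is specific to $k<n$.)

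The paper resolves this by an additional selection step you are missing. It splits $F=(\tilde F\circ\pi,\dots)$ with $\tilde F:M\to\R^k$ defining $H$, and works with $\tilde\psi=\tilde F\circ\pi\circ\varphi_t$. The right inverse $R_{\pi(y)}$ of $d\tilde F_{\pi(y)}$ has range a $k$-dimensional complement to $T_{\pi(y^*)}H$. One then forms the image $V:=d(\pi\circ\varphi)_{(t_0,\rho_0)}\big(\R\partial_t\times(\mathbf V_{\rho_0}\cap T_{\rho_0}(\snx))\big)$, which has $\dim V\geq n-k+2$, and intersects with $\operatorname{ran}R_{\pi(y)}$ inside $T_{\pi(y)}M$; the intersection has dimension $\geq 2$, and since $d\pi H_p$ already lies in $\operatorname{ran}R_{\pi(y)}$ one can choose $\w_0$ so that $J(t_0)\in\operatorname{ran}R_{\pi(y)}$. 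With this extra constraint the bound $\|R\|\leq 2$ immediately yields $\|d\tilde F(J(t_0))\|\geq\tfrac12|J(t_0)|$, which is the missing quantitative transversality. Note that requiring $\dim\geq 2$ (not $\geq 1$) in the intersection is essential precisely because the $\partial_t$ direction already occupies one slot; this is why the dimension count in your second step, giving only $n-k+1\geq 1$, is one short of what is actually needed.
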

%%%%%%%%%%%%%%%%%%%%%%%%%%%%%%%%%%%%%%%%%%%%%%%%%%%%%%%%%%%%%%%%%%%%%%%%%%%%%%%%
{Note that the assumption $k>\frac{n+1}{2}$ is needed for $\mc{C}_{_{\!H}}^{2k-n-1,r_{t_0},t_0}$ to be defined. The reason why $2k-n-1$ appears in the exponent of $\mc{C}_{_{H}}$ is explained in Remark \ref{r:explainingNumbers}.}

\begin{proof}
{Let $\tilde{F}:=(f_1,\dots, f_k)\in C^\infty(M;\mathbb{R}^k)$ be a defining function for $H\subset M$ such  that $d\tilde F_y$ has {right} inverse ${R}_{_{\! \tilde F,y}}$ with $\|{R}_{_{\! \tilde F,y}}\|\leq 2$ for all $y$ such that $d(y,H)<c_{_H}$. Note that $c_{_H}$ can be chosen uniformly depending only on $K_{_H}$ as in~\eqref{e:KH}.}
%Without loss of generality, we may assume that  $F=(f_1, \dots, f_k, f_{k+1}, \dots, f_{n+1})$ with $(f_1, \dots, f_k)=\tilde F \circ \pi$ where $\tilde F: M \to \R^{k}$ defines $H$ and  $\pi:\TM  \to M$ is the canonical projection.
%In addition, 
 Next, define 
\[
\tilde \psi:\R \times \TM  \to \R^k, \qquad \qquad   \tilde \psi(t, \rho):=  \tilde F \circ \pi \circ \varphi_t (\rho).
\]

We claim that there exists $ \w_0 \in T_{\rho_0} \SNH$ so that 
$$
d\tilde \psi_{(t_0,\rho_0)}: \re\partial_t\times \re  \w_0  \to \re^{{k}}
$$
is injective and has a left inverse bounded by ${C_{_{\!M,g}}(1+a) e^{C_{_{\!M,g}}(a+\Lambda)|t_0|} }$. Note that this is sufficient as this produces a left inverse for $\psi$ itself.

Observe that for $s\in \re$, $\rho\in \SNH$, and $ \w \in T_\rho\SNH$,
\begin{equation}\label{e:split}
d\tilde \psi_{(t,\rho)}(s\partial_t, \w )
=d(\tilde F\circ \pi)_{\varphi_t(\rho)} \big( s\, H_p+ ({d\varphi_t})_\rho\, {\w}\big).
\end{equation}
Note also that since $H$ is conormally transverse for $p$, there exists a neighborhood $W\subset \TM $ of $\SNH$ and $c>0$ so that for $\tilde{\rho}\in W$, 
\begin{equation}\label{e:split1}
\|d(\tilde F\circ \pi)_{\varphi_t(\tilde \rho)} H_p\|\geq \frac{1}{2}.
\end{equation}
In particular, the restriction
$$
d\tilde \psi_{(t_0,\rho_0)}:\re\partial_t\to \re^k
$$
has a left inverse bounded by $2$.

We proceed to find $ \w_0 \in T_{\rho_0}\SNH$ as claimed. 

{Suppose} $d(H, \mc{C}_H^{2k-n-1,r_{t_0},t_0})>r_{t_0}$. {Then, by definition,} for all $x \in H$, and every unit speed geodesic $\gamma$ with $\gamma(0)=x$, there the number of conjugate points to $x$ (counted with multiplicity) along $\gamma(t_0-r_{t_0}, t_0+r_{t_0})$ is { smaller than or equal to $m:=2k-n-2$} whenever $d(\gamma(t_0),H)<r_{t_0}$. In particular, since $d(\varphi_{t_0}(\rho_0),\SNH)<r_{t_0}$, we have $d(\pi(\varphi_{t_0}(\rho_0)),H)<r_{t_0}$. Therefore,  by setting $\ep={\min(r_{t_0}/2, e^{-C\Lambda|t_0|}/C)}$ in \eqref{e:normbound} {with $C$ as in \eqref{e:epsilonC}}, 
we have that there is a ${n-1-m}$ dimensional subspace $\mathbf{V}_{\!\rho_0}\subset T_{\rho_0} S^*_{x_0}M$ so that $d\pi\circ d\varphi_{t_0}|_{\mathbf{V}_{\!\rho_0}}$ is invertible onto its image with
\begin{equation}\label{e:ub on norm}
\|(d\pi \circ d\varphi_{t_0}|_{\mathbf{V}_{\!\rho_0}})^{-1}\|\leq { C \e^{-1} e^{\Lambda|t_0|}}\leq  {C_{_{\!M,g}}(1+a) e^{C_{_{\!M,g}}(a+\Lambda) |t_0|}},
\end{equation}
%\begin{equation}\label{e:ub on norm}
%\|(d\pi \circ d\varphi_{t_0}|_{\mathbf{V}_{\!\rho_0}})^{-1}\|\leq {(1+\tilde C_{_{\!M,g}} a^2 e^{2a|t_0|})^{\frac{1}{2}}}\|d\varphi_{-t_0}\|\leq  C_{_{\!M,g}}a e^{(a+\Lambda) |t_0|},
%\end{equation}
for some $C_{_{\!M,g}}>0$ depending only on $(M,g)$, and where $x_0:=\pi( \rho_0)$.
%Note that to apply Proposition~\ref{l:panda} {it is crucial that} $m \geq 0$ which is equivalent to asking  $k> \frac{n+1}{2}.$ 

Let 
\[
V
=d(\pi \circ \varphi)_{(t_0, \rho_0)}\big(\R\partial_t  \times (T_{\rho_0} (SN_{x_0}^*H)\cap{\mathbf{V}_{\!\rho_0}})\big).
\]
{Note that since $\dim \mathbf{V}_{\!\rho_0}={n-1-m}$, $\dim T_{\rho_0} SN_{x_0}^*H= k-1$, $\dim S^*_{x_0}M=n-1$,  we know that $\dim (T_{\rho_0} SN_{x_0}^*H\cap{\mathbf{V}_{\!\rho_0}}) \geq {k-1-m}$, and so  $\dim V\geq  {k-m}$. }Also,  the restriction
 \[
 d(\pi \circ \varphi)_{(t_0, \rho_0)}
 : \re\partial_t \times (T_{\rho_0} (SN_x^*H){\cap\mathbf{V}_{\!\rho_0}})\to V
 \]
  is invertible with inverse $\tilde L_{(t_0,\rho_0)}$ satisfying
$$
\| \tilde L_{(t_0,\rho_0)}\|\leq  {C_{_{\!M,g}}(1+a) e^{C_{_{\!M,g}}(a+\Lambda)|t_0|}}.
$$
Next, there exists a neighborhood  {$U\subset M$ of $H$ so that for $y\in U$}, $d\tilde F_y:T_{y}M\to \re^k$ is surjective with right inverse $R_y$. {By assumption, $R_y$ is bounded by $2$}. Furthermore, we may assume without loss of generality that for $\rho\in T^*U\cap W$, $d\pi_\rho H_p$ lies in the range of $R_{\pi(\rho)}$. 
Since $\dim (\ran R_{\pi(\varphi_{t_0}(\rho_0))})=k$, $\dim V\geq {k-m}$, and  both $V$ and $\ran R_{\pi(\varphi_{t_0}(\rho_0))}$ are contained in $T_{\pi(\varphi_{t_0}(\rho_0))}M$, we know that 
$$
\dim (\ran R_{\pi(\varphi_{t_0}(\rho_0))} \cap V)\geq {2k-m-n=2}.
$$

{Then, this guarantees the existence of }{$ \w_0 \in T_{\rho_0}(SN_{x_0}^*H)\cap \mathbf{V}_{\!\rho_0} \backslash \{0\}$}, so that 
$$ 
(d\pi \circ d\varphi_{t_0})_{\rho_0} \w_0 \in \ran R_{\pi(\varphi_{t_0}(\rho_0))}.
$$
%{The assumption $2k-m-n\geq 2$, implies $m\leq 2k-n-2$ which, together with $m\geq0$ means we must take $m=2k-n-2$ and $k>\frac{n+1}{2}$.} 
\begin{remark}\label{r:explainingNumbers} 
{ Note that having $\dim (\ran R_{\pi(\varphi_{t_0}(\rho_0))} \cap V)\geq 1$ would  not have been sufficient as $\partial_t$ is a component we cannot ignore.} {It is here where we need that $2k-m-n=2$. In particular, this step explains why the assumption in the lemma is written for the space $\mc{C}_{_{H}}^{m+1,r_{t_0},t_0}$ with $m=2k-n-2$.} %{It is here, that the assumption $k<\frac{n+1}{2}$ is crucially used.  %In particular, \emph{even} when there are no conjugate points at all, Proposition~\ref{l:panda} only produces a space of dimension $n$ on which $d\varphi_t$ has a left inverse. }
\end{remark}
Then, there exists $\mathbf{x}\in \re^k$ so that 
$$
(d\pi \circ d\varphi_{t_0})_{\rho_0} \w_0 =R_{\pi(\varphi_{t_0}(\rho_0))}\mathbf{x}.
$$
Since $\sup_{y\in U}\|R_y\|\leq 2$, 
$$
\|(d\pi \circ d\varphi_{t_0})_{\rho_0} \w_0 \|\leq 2\|\mathbf{x}\|
$$
and{ by \eqref{e:ub on norm}} we have 
$$
\| \w_0 \|\leq C_{_{\!M,g}}a e^{(a+\Lambda )|t_0|}\|\mathbf{x}\|.
$$
which implies the desired claim since $(d\tilde F\circ  d\pi \circ d\varphi_{t_0})_{\rho_0} \w_0 =\mathbf{x}$  and so 
\begin{equation}\label{e:split2}
   \|d(\tilde F\circ \pi)_{\varphi_{t_0}(\rho_0)} ( (d\varphi_{t_0})_{\rho_0} \w_0)\|\geq  (C_{_{\!M,g}}a)^{-1} e^{-(a+\Lambda )|t_0|}{\|\w_0\|}.
\end{equation}
Combining \eqref{e:split1} and \eqref{e:split2} with \eqref{e:split} gives the desired bound on the left inverse for $d\tilde \psi$ restricted to $\re\partial_t \times \re\w_0$ {provided we impose $C_{_{\!M,g}} \geq 2$}. 
\end{proof}

%%%%%%%%%%%%%%%%%%%%%%%%%%%%%%%%%%%%%%%%%%%%%%%%%%%%%%%%%%%%%%%%%%%%%%%%%%%%%%%%
\noindent {\bf Proof of Theorem~\ref{t:noConj1}.}
Let $t_0>0$, $a>{\delta_F^{-1}}$ so that for $t\geq t_0$,
{\begin{equation}\label{e:toprove}
d\Big(H, \mc{C}_H^{2k-n-1,r_t,t}\Big)> r_t,
\end{equation}} 
where $r_t=\tfrac{1}{a}e^{-at}.$
By Lemma~\ref{l:prelimNoConj}, for $t\geq t_0$, if $\rho\in \SNH$ and $d(\varphi_t(\rho),\SNH)<{\min(\tfrac{1}{a}e^{-at},c_{_H})}$, then there exists a $\w=\w(t, \rho)\in T_{\rho}\SNH$ so that  $d\psi$ restricted to $\re\partial_t \times \re\w$ has left inverse $L_{(t,\rho)}$ with 
\[
\|L_{(t,\rho)}\|\leq { C_{_{\! M,g}} (1+a) e^{C_{_{\! M,g}}(a+\Lambda) |t|}},
\]
for some $C_{_{\! M,g}}>0$ and any $\Lambda>\Lambda_{\max}$. 
{For the purposes of the proof of Theorem~\ref{t:noConj1} fix  $\Lambda=2\Lambda_{\max}+1$. %and set $a_{_{\!M,p}}:=\max\{2\Lambda, \frac{1}{c_{_{M,p}}}\}$. 
%Let $a>a_{_{\!M,p}}$ and s
Let $c:= {(1+a) C_{_{\! M,g}}}$, $\beta:= {C_{_{\!M,g}}(a+\Lambda)}$, and  let $t_1=t_1(a,t_0)\geq t_0$ be so that 
\[
\|L_{(t,\rho)}\|\leq  c e^{\beta|t|} \qquad t\geq t_1.
\]
}In particular, we may cover $\SNH$ by finitely many balls $\{B_i\}_{i=1}^N$ of radius $R>0$ (independent of $h$) so that
$NR^{n-1}<C_{n}\vol(\SNH),$ and the hypotheses of Proposition~\ref{p:ballCover} hold for each $B_i$ choosing $\tilde c={a}^{-1}$. 

Let  {$\alpha_1=\alpha_1(M,g)$ and $\alpha_2=\alpha_2(M,g,a ,\delta_F)$} be as in Proposition~\ref{p:ballCover}.
Fix $0<\e<\frac{1}{4}$ and set
 \[r_0:=h^{2\e}, \qquad r_1:=h^\e, \qquad r_2:=\tfrac{2}{\alpha_1}h^\e.\]
Let {$$T_0(h)=b \log h^{-1}$$ with $b>0$ to be chosen later}. Then, the assumptions in Proposition~\ref{p:ballCover} hold provided
$$
h^{\e}< \min \Big\{{\tfrac{2}{3\alpha_1}} e^{-\Lambda T_0}\;,\;\tfrac{\alpha_1\alpha_2}{2} e^{- \gamma T_0}, {\tfrac{\alpha_1 R}{2}}\Big\}
$$
where {$\gamma = \max\{a, 3\Lambda+2\beta\}=5\Lambda+2a$}. In particular, {if we set $\alpha_3:=\min\{\tfrac{2}{3\alpha_1} ,\tfrac{\alpha_1\alpha_2}{2} \}$, the assumptions in Proposition~\ref{p:ballCover} hold provided {$h<\big(\frac{\alpha_1 R}{2}\big)^{\frac{1}{\e}}$} and
\begin{equation}
\label{e:t0Temp}
T_0(h)< \frac{ \e}{ \gamma} \log h^{-1} + \frac{\log \alpha_3}{ \gamma}.
\end{equation}}
We will choose $T_0$ satisfying~\eqref{e:t0Temp} later.

{Let $0<\tau_0<\Tinj$, ${R_0=R_0(n,k,g,K_{_{\!H}})}>0$ be as in  Theorem~\ref{t:coverToEstimate}.   Note that $\tau_0=\tau_0(M,g,\Tinj)$. Also let $h_0=h_0(M,g)>0$ be the constant given by  Theorem~\ref{t:coverToEstimate} and possibly shrink it so that $h_0<\big(\frac{\alpha_1 R}{2}\big)^{\frac{1}{\e}}$. }
Let {$\{\rho_j\}_j \subset \SNH$ be so that $\{\Lambda^\tau_{_{\rho_j}}(h^\e)\}_j$} is a $({\mathfrak{D}_n},\tau_0,h^\e)$ good cover of $\SNH$ ({existence of such a cover follows from~\cite[Proposition 3.3]{CG18d} - see Remark \ref{r:D depends on n}}).}
Then, for each $i \in \{1, \dots, K\}$ we apply Proposition~\ref{p:ballCover} to obtain a cover of  $\Lambda_{B_i}^{\tau_0}(h^{2\e})$  by tubes   $\{\Lambda_{\rho_{{i_j}}}^{\tau_0}(h^\e)\}_{j=1}^{N_i}$ with $\rho_{{i_j}}\in B_i$ and  so that $\{1, \dots, N_i\}=\mc{G}_i\cup\mc{B}_i$,
$$
\bigcup_{j\in \mc{G}_i}\Lambda_{\rho_j}^{\tau_0}(h^\e) \quad  \text{is }\;\; [t_0,T_0(h)]\;\; \text{ non-self looping,} 
$$
$$
h^{\e(n-1)}|\mc{B}_i|\leq {\bf{C}_0}\tfrac{2}{\alpha_1} \;h^\e \;R^{n-1}\;   T_0{e^{4({\Lambda+\beta})T_0}},
$$
where  {${\bf{C}}_0={\bf{C}}_0(M,g,k,a)>0$}. 
We choose {$b>0$} so that 
{$
b < \frac{\e}{12({\Lambda +\beta})}
$}
{and  \eqref{e:t0Temp} is satisfied for all $h<h_0$.} {Note that this implies that $b=b(M,g,a,\delta_F)$.}
In particular, there exists $h_0=h_0(\tau_0, {\bf{C}}_0)$, so that for all $0<h<h_0$, 
\begin{equation}\label{e:badsets}
h^{\e(n-1)}|\mc{B}_i|< h^{\frac{\e}{3}}R^{n-1}.
\end{equation}

We next apply Theorem~\ref{t:coverToEstimate} $\delta:=2\e$, and $R(h):=h^\e$ (not to be confused with $R$). If needed, we shrink $h_0$ so that $5h^{2\e}\leq R(h)<R_0 $ for all $0<h<h_0$. {We  let $\alpha<1-2\e$ and let $b$ be small enough so that $T_0(h) \leq 2\alpha T_e(h)$} for all $0<h<h_0$.  We also let  $\mc{B}=\cup_{i=1}^K\mc{B}_i$, and work with only one set of good indices $\mathcal G:=\mathcal I_h(w)\backslash \mathcal B$. We choose  {$t_\ell(h)=t_1$} and $T_\ell(h)=T_0(h)$. Note that \eqref{e:badsets} gives 
\[R(h)^{\frac{n-1}{2}}|\mc{B}|^{\frac{1}{2}}\leq  h^{\frac{\e}{6}} (K R^{n-1})^{\frac{1}{2}} \leq  h^{\frac{\e}{6}} { {C_{n}}^{\!\frac{1}{2}}} \vol(\SNH)^{\frac{1}{2}}.\]
Since in addition
\[|\mc{G}| \leq |\mathcal I_h(w)|\leq K (\max_{1\leq i \leq K}N_i) \leq \vol(\SNH)C_{n}{h^{-\e(n-1)}},\]
{Let $N>0$. Theorem~\ref{t:coverToEstimate}  yields the existence of constants $C_{n,k}>0$, $\tilde{C}=\tilde{C}({M,g,\tau_0,\e})>0$ and $C_{_{\!N}}>0$  so that for all $0<h<h_0$}
\begin{align}
&h^{\frac{k-1}{2}}\Big|\int_H w u\,d\sigma_H\Big| \notag\\
&\leq \frac{C_{n,k}{\vol(\SNH)^{\frac{1}{2}}}\|w\|_{_{\!\infty}}C_{_n}^{\tfrac{1}{2}}}{\tau_0^{\frac{1}{2}}}
\!\Bigg(\Bigg[h^{\frac{\e}{6}}+ \frac{t_1^{\frac{1}{2}}}{T_0^{\frac{1}{2}}(h)}\Bigg]\!\!\|u\|_{\LM}\!+\frac{T_0^{\frac{1}{2}}(h)t_1^{\frac{1}{2}}}{h}\|(-h^2\Delta_g-I)u\|_{_{{\sob{-2}}}}\Bigg)\notag \\
\label{e:absorbing} &+\frac{\tilde{C}}{h}{\|w\|_\infty}\|(-h^2\Delta_g-I)u\|_{\Hl}
\!+C_{_{\!N}}h^N\big(\|u\|_{\LM}\!+{\|(-h^2\Delta_g-I)u\|_{{\Hl}}}\big)\\
&\leq C{\|w\|_{_{\!\infty}}}\left(\frac{\|u\|_{\LM}}{\sqrt{\log h^{-1}}}+\frac{\sqrt{\log h^{-1}}}{h}\|(-h^2\Delta_g-I)u\|_{\Hl}\right)\label{e:submarine}
\end{align}
where $C=C(M,g,k,t_0,a,\delta_F, \vol(\SNH), \Tinj)>0$ is some positive constant {and $h_0=h_0(\delta, M,g,\tau_0,k,a,w,R_0)$ is chosen small enough so that the last term on the right of~\eqref{e:absorbing} can be absorbed}. Note that the $\e$ dependence of $C$ and $h_0$ is resolved by fixing any $\e<\tfrac{1}{4}$.
\qed
%%%%%%%%%%%%%%%%%%%%%%%%%%%%%%%%%%%%%%%%%%%%%%%%%%%%%%%%%%%%%%%%%%%%%%%%%%%%%%%%

\ \\
\noindent {\bf Proof of Theorem~\ref{t:noConj2}.}  
{Note that if $H=\{x\}$ then $\SNH=S_x^*M$ and  $\vol(S^*_xM)=c_n$ for some $c_n>0$ that depends only on $n$. Next, note that $\Tinj(\{x\})$ and $\delta_F$ can be chosen uniform on $M$ and that $H_pr_H=2$. {Moreover, in this case, $w=1$ and $K_{_{\!H}}$ can be taken arbitrarily small so {$R_0=R_0(n,k,p,K_{_{\!H}} )$} can be taken to be uniform on $M$.}

Therefore, since the constant in \eqref{e:submarine} and $h_0$ depends only on $$M,\;g,\;k,\;t_0,\;a,\; \delta_F,\; \vol(\SNH),\; \Tinj,$$ all of the terms on the right hand side of~\eqref{e:submarine} are uniform for $x\in M$ completing the proof of Theorem~\ref{t:noConj2}.}
\qed

%%%%%%%%%%%%%%%%%%%%%%%%%%%%%%%%%%%%%%%%%%%%%%%%%%%%%%%%%%%%%%%%%%%%%

%%%%%%%%%%%%%%%%%%%%%%%%%%%%%%%%%%%%%%%%%%%%%%%%%%%%%%%%%%%%%%%%%%%%%%%%%%%%%%%%
%%%%%%%%%%%%%%%%%%%%%%%%%%%%%%%%%%%%%%%%%%%%%%%%%%%%%%%%%%%%%%%%%%%%%%%%%%%%%%%%
%%%%%%%%%%%%%%%%%%%%%%%%%%%%%%%%%%%%%%%%%%%%%%%%%%%%%%%%%%%%%%%%%%%%%%%%%%%%%%%%
\section{No focal points or Anosov geodesic flow: Proof of Theorems \ref{T:applications} and \ref{T:tangentSpace}}
%%%%%%%%%%%%%%%%%%%%%%%%%%%%%%%%%%%%%%%%%%%%%%%%%%%%%%%%%%%%%%%%%%%%%%%%%%%%%%%%
%%%%%%%%%%%%%%%%%%%%%%%%%%%%%%%%%%%%%%%%%%%%%%%%%%%%%%%%%%%%%%%%%%%%%%%%%%%%%%%%
%%%%%%%%%%%%%%%%%%%%%%%%%%%%%%%%%%%%%%%%%%%%%%%%%%%%%%%%%%%%%%%%%%%%%%%%%%%%%%%%
\label{s:Anosov}

Next we analyze the cases in which $(M,g)$ has no focal points or Anosov geodesic flow. {For $\rho \in \SNH$ we continue to write $N_{\pm}(\rho)=T_{\rho}(\SNH)\cap E_\pm(\rho)$} and define the  functions $m,m_{\pm}:\SNH\to \{0,\dots,n-1\}$ 
\begin{equation}
\label{e:dim}
\begin{gathered}
m(\rho):=\dim (N_+(\rho)+N_-(\rho)),\qquad m_{\pm}(\rho):=\dim N_{\pm}(\rho),
\end{gathered}
\end{equation}
and note that the continuity of $E_{\pm}(\rho)$ implies that $m,\,m_{\pm}$ are upper semicontinuous ({see e.g. \cite[Lemma 20]{CG17}}).
We will need extensions of $N_\pm(\rho)$, $m_\pm(\rho)$ to neighborhoods of $\SNH$ for our next lemma. To have this,  for each $\rho$ in a neighborhood of $\SNH$ define the set 
$$
\mc{F}_{\!\rho}:=\{q\in \TM :\; F(q)=F(\rho)\},
$$ 
where $F$ is the defining function for $\SNH$ introduced in \eqref{e:defFunction}.
Since for $\rho\in \SNH$, $\mc{F}_{\!\rho}=\SNH$,  $\mc{F}_{\!\rho}$ can be thought of as a family of `translates' of $\SNH$. We then define 
\[
\tilde{N}_{\pm}(\rho):=T_{\rho}\mc{F}_{\!\rho} \cap E_{\pm}(\rho)\qquad \text{and}\qquad \tilde{m}_{\pm}(\rho):=\dim \tilde{N}_{\pm}(\rho).
\]
Note that since $T_{\rho}\mc{F}_{\!\rho}$ is smooth in $\rho$ and agrees with $T_{\rho}(\SNH)$ for $\rho\in \SNH$, $\tilde{m}_{\pm}(\rho)$ is upper semicontinuous with $\tilde{m}_{\pm}|_{\SNH}=m_{\pm}.$ 
In what follows we continue to write  $\mc{S}_H= \{\rho\in \SNH:\;\, T_\rho (\SNH)=N_-(\rho)+N_+(\rho)\}$.

{The following lemma shows that if $\rho \in \SNH$ does not belong to $\mathcal S_H$ and $\varphi_t(\rho)$ is close enough to $\rho$ for $t$ sufficiently large, then $(d\varphi_t)_\rho\w$ leaves ${T_{{\varphi_{t}(\rho)}}}\mc{F}_{\!{\varphi_{t}}(\rho)}$ for some $\w \in T_\rho \SNH$. }

%%%%%%%%%%%%%%%%%%%%%%%%%%%%%%%%%%%%%%%%%%%%%%%%%%%%%%%%%%%%%%%%%%%%%%%%%%%%%%%%
\begin{lemma}\label{P:1} 

Suppose $(M,g)$ has Anosov geodesic flow {or no focal points} and let $K \subset (\SNH \backslash {\mc{S}_H})$ be a compact set. Then there exist  positive constants $c_{_{\! K}},t_{_{\! K}}, \delta_{_{\! K}}>0$ so that if ${d(\rho, K)\leq \delta_{_{\!K}}}$, $|t| \geq t_{_{\! K}}$, and 
\[
{\varphi_{t}}(\rho) \in \; \overline{B(\rho, \delta_{_{\! K}})},
\]
then there is $\mathbf{w}=\mathbf{w}(t, \rho)\in T_{\rho}(\SNH){\setminus \{0\}}$ with
\begin{equation}
\label{e:noTangent1}
\inf\{\|d{\varphi_{t}} (\mathbf{w})+{\bf v}\|:\; {\bf v} \in  T_{\varphi_{t}(\rho)}\mc{F}_{\!{\varphi_{t}}(\rho)} {+} \R H_p\}\geq c_{_{\! K}} \|\mathbf{w}\|.
\end{equation}
\end{lemma}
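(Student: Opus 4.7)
The plan is to argue by contradiction using the hyperbolic structure at $K$. Suppose the conclusion fails; then by compactness of $K$ there exist sequences $\rho_n \to \rho_* \in K$ and times $t_n$ with $|t_n|\to\infty$ and $\varphi_{t_n}(\rho_n)\to\rho_*$ such that for every unit vector $\mathbf{w}\in T_{\rho_n}\SNH$,
\[
\operatorname{dist}\bigl(d\varphi_{t_n}\mathbf{w},\; T_{\varphi_{t_n}\rho_n}\mc{F}_{\varphi_{t_n}\rho_n}+\R H_p\bigr) \leq \tfrac{1}{n}.
\]
After passing to a subsequence I will assume $t_n\to+\infty$; the sign $t_n\to-\infty$ is handled identically by reversing time and exchanging the roles of $E_+$ and $E_-$.

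The key ingredients will be: the decomposition $T_\rho \SM = E_+(\rho)\oplus E_-(\rho)\oplus \R H_p(\rho)$ under the Anosov hypothesis, or its analogue under no focal points (together with the smooth extensions $\tilde N_\pm$ and the upper semicontinuity of $\tilde m_\pm$); the exponential expansion $\|d\varphi_t|_{E_-}\|\geq \ca^{-1}e^{t/\ca}$ for $t\to+\infty$, or in the no focal points case the Jacobi-field growth bound already recorded in Proposition~\ref{l:panda}; and the smoothness of $\rho\mapsto T_\rho\mc{F}_\rho$, ensuring $T_{\varphi_{t_n}\rho_n}\mc{F}_{\varphi_{t_n}\rho_n}\to T_{\rho_*}\SNH$ in the Grassmannian.

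The main step is to extract a Grassmannian limit $W_\infty$ of the $(n-1)$-dimensional subspaces $d\varphi_{t_n}(T_{\rho_n}\SNH)$ and analyze its structure. Because $d\varphi_{t_n}$ contracts $E_+$, preserves $\R H_p$, and exponentially expands $E_-$, the image aligns for large $t_n$ with the $E_-$-projection of $T_{\rho_n}\SNH$ blown up into $E_-(\varphi_{t_n}\rho_n)$, plus at most a one-dimensional slow residue along $\R H_p$; in particular, $W_\infty\cap E_-(\rho_*)$ has dimension equal to that of the $E_-$-projection of $T_{\rho_*}\SNH$ along $E_+\oplus\R H_p$. On the other hand, the collapse hypothesis passes to the limit to give $W_\infty\subset T_{\rho_*}\SNH+\R H_p$, hence $W_\infty\cap E_-(\rho_*)\subset N_-(\rho_*)$. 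Comparing the two containments forces the $E_-$-projection of $T_{\rho_*}\SNH$ to coincide with $N_-(\rho_*)$. Applying the same analysis to the reverse flow $d\varphi_{-t_n}$ acting on the limiting configuration (or, in the $t_n\to-\infty$ case, directly) yields the analogous identification for $E_+$. Since $E_+(\rho_*)\cap E_-(\rho_*)=\{0\}$ in the Anosov case, with the parallel algebraic consequence for no focal points via the $\tilde N_\pm$ structure, and $H_p\notin T_{\rho_*}\SNH$, these two identifications force $T_{\rho_*}\SNH\subset N_+(\rho_*)+N_-(\rho_*)$, so $\rho_*\in\mc{S}_H$, contradicting $K\cap\mc{S}_H=\emptyset$.

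The principal obstacle I anticipate is making the Grassmannian convergence and dimension accounting rigorous in the no-focal-points regime, where $E_\pm$ are only continuous and the growth on $E_-$ is Jacobi-field controlled rather than uniformly exponential. The remedy is to work throughout with the smooth extensions $\tilde N_\pm(\rho):=T_\rho\mc{F}_\rho\cap E_\pm(\rho)$ introduced just before the lemma and to exploit the upper semicontinuity $\limsup\tilde m_\pm(\rho_n)\leq \tilde m_\pm(\rho_*)=m_\pm(\rho_*)$ to convert subspace estimates into dimension inequalities. A secondary subtlety is that the lemma demands a single escaping $\mathbf{w}=\mathbf{w}(t,\rho)$ for each pair; once the Grassmannian collapse has been ruled out, $\mathbf{w}$ can be chosen directly from $T_\rho\SNH\setminus(N_+(\rho)+N_-(\rho))$ (nonempty since $\rho\notin\mc{S}_H$), with the Anosov/no-focal-points component responsible for escape selected according to the sign of $t$.
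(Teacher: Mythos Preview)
Your contradiction approach has a genuine gap at the ``reverse flow'' step. After passing to a subsequence with, say, $t_n \to +\infty$, the collapse hypothesis $\operatorname{dist}\bigl(d\varphi_{t_n}\mathbf{w},\, T_{\varphi_{t_n}\rho_n}\mc{F}_{\varphi_{t_n}\rho_n} + \R H_p\bigr) \leq 1/n$ for unit $\mathbf{w} \in T_{\rho_n}\SNH$ gives you no control over $d\varphi_{-t_n}$ acting on $T_{\varphi_{t_n}\rho_n}\mc{F}_{\varphi_{t_n}\rho_n}$: applying $d\varphi_{-t_n}$ to the $1/n$ error multiplies it by $\|d\varphi_{-t_n}\|\sim e^{\Lambda t_n}$, and swapping the roles of $\rho_n$ and $\varphi_{t_n}\rho_n$ would require a collapse statement for vectors in $T_{\varphi_{t_n}\rho_n}\mc{F}_{\varphi_{t_n}\rho_n}$ that you simply do not have. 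Thus from the single direction $t_n\to+\infty$ you can at best constrain the $E_-$ side, and without an $E_+$ constraint you cannot force $\rho_*\in\mc{S}_H$. There is also a smaller issue in the $E_-$ step itself: from $W_\infty\subset T_{\rho_*}\SNH+\R H_p$ you get only $W_\infty\cap E_-(\rho_*)\subset (T_{\rho_*}\SNH+\R H_p)\cap E_-(\rho_*)$, which can strictly contain $N_-(\rho_*)$ whenever $T_{\rho_*}\SNH$ meets $E_-(\rho_*)\oplus\R H_p$ outside $E_-(\rho_*)$.

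The paper's proof avoids this by working constructively with a \emph{single} sign of $t$. The mechanism is a dimension count rather than a Grassmannian limit: take $\mathbf{u}\in T_\rho\SNH\setminus(N_+(\rho)+N_-(\rho))$ and write $\mathbf{u}=\mathbf{u}_++\mathbf{u}_-$ with $\mathbf{u}_-\in E_-(\rho)\setminus N_-(\rho)$. Since $d\varphi_t|_{E_-}$ is an isomorphism, $d\varphi_t\bigl(\Span(\mathbf{u}_-,N_-(\rho))\bigr)\subset E_-(\varphi_t\rho)$ has dimension $1+m_-(\rho)$; by upper semicontinuity $\tilde m_-(\varphi_t\rho)\leq m_-(\rho)$ when $\varphi_t\rho$ is near $\rho$, so this subspace contains a unit vector $d\varphi_t\mathbf{w}_t$ orthogonal to $T_{\varphi_t\rho}\mc{F}_{\varphi_t\rho}$. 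The vector $\mathbf{w}_t=a_t\mathbf{u}_-+\mathbf{e}_-(t)$ (with $\mathbf{e}_-(t)\in N_-(\rho)$) lies in $E_-(\rho)$, not in $T_\rho\SNH$; replacing $a_t\mathbf{u}_-$ by $a_t\mathbf{u}$ gives $\tilde{\mathbf{w}}_t=a_t\mathbf{u}+\mathbf{e}_-(t)\in T_\rho\SNH$, and the correction $a_t\,d\varphi_t\mathbf{u}_+$ is small because $|a_t|\leq\|\mathbf{w}_t\|\to 0$ (expansion of $d\varphi_t|_{E_-}$ in the Anosov case, Jacobi-field growth in the no-focal-points case) while $\|d\varphi_t|_{E_+}\|$ is uniformly bounded. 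No comparison of $E_+$ against $N_+$ is ever needed.
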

%%%%%%%%%%%%%%%%%%%%%%%%%%%%%%%%%%%%%%%%%%%%%%%%%%%%%%%%%%%%%%%%%%%%%%%%%%%%%%%%

%%%%%%%%%%%%%%%%%%%%%%%%%%%%%%%%%%%%%%%%%%%%%%%%%%%%%%%%%%%%%%%%%%%%%%%%%%%%%%%%
\begin{proof}
First note that since $\tilde{m}_{\pm}$ are upper semi-continuous, $K$ is compact, and $K\cap \mc{S}_H$ is empty, there exists $\delta_{_{\! \tilde K}}>0$ so that $d(K,\mc{S}_H)>\delta_{_{\! \tilde K}}.$ Therefore, to prove the lemma we work with  the compact set $\tilde{K}:=\{\rho \in \SNH:\; d(\rho,K)\leq \frac{\delta_{_{\!\tilde K}}}{2}\}$ and insist that $\delta_K<\frac{\delta_{_{\! \tilde K}}}{2}$.

 Let $\rho\in \tilde{K}$. Since
 $T_{\rho}(\SNH)\neq N_+(\rho)+ N_-(\rho)$,
we may choose $ {\bf u}= {\bf u}(\rho)$ such that
 \[
 {\bf u} \in T_{\rho}(\SNH) \setminus ( N_+(\rho)+ N_-(\rho)),\qquad \|{\bf u}\|=1.
 \]
Now, let $\mathbf{u}_+\in E_+(\rho)$ and $\mathbf{u}_-\in E_-(\rho)$ be so that 
\[
\mathbf{u}=\mathbf{u}_++\mathbf{u}_-.
\]
{In particular, ${\bf u}_{\pm}\notin N_\pm(\rho)$.}

{When studying the case $t>t_K$, we will use that ${\bf u}_-$ grows along the positive time flow, while for $t<-t_K$ we will use that ${\bf u}_+$ grows along the negative time flow. Since the arguments are identical, except with time reversed and the roles of ${\bf u}_+$ and ${\bf u}_-$ switched, we only explicitly write that for $t>t_K$.}

{
We claim that there is $C_{_{\!K}}>0$ such that for all $\rho \in \tilde{K}$, we may in addition choose ${\bf u}= {\bf u}(\rho)$ such that 
\begin{equation}
\label{e:size}
\begin{gathered}
{\bf u}_-\in E_-(\rho)\cap (N_-(\rho))^\perp \cap (E_+(\rho)\cap E_-(\rho))^\perp,\\
C_{_{\!K}}^{-1}\|{\bf u}_+\|\leq \|{\bf u }_-\|\leq C_{_{\!K}}\|{\bf u}_+\|.
\end{gathered}
\end{equation}
For this, we set 
$$
\begin{gathered}\bar{N}_\pm(\rho):= N_\pm(\rho)\cap \Big(E_+(\rho)\cap E_-(\rho)\Big)^\perp, \\ U_{\pm}(\rho):=E_\pm(\rho)\cap (N_\pm(\rho))^\perp \cap \Big(E_+(\rho)\cap E_-(\rho)\Big)^\perp.
\end{gathered}
$$
We then observe that 
$$
(\mathbb{R}H_p(\rho))^\perp= U_+(\rho)\oplus \bar{N}_+(\rho)\oplus \Big(E_+(\rho)\cap E_-(\rho)\Big)\oplus \bar{N}_-(\rho)\oplus U_-(\rho)
$$
and decompose a vector ${\bf v}\in (\mathbb{R}H_p)^\perp$ correspondingly as
$$
{\bf v}={\bf v}_{U_+}+{\bf v}_{\bar{N}_+}+{\bf v}_0+{\bf v}_{\bar{N}_-}+{\bf v}_{U_-}.
$$
Suppose the claim in \eqref{e:size} fails. Then, for all $n\in\mathbb{N}$, there are $\rho_n\in \tilde{K}$ such that for all ${\bf v}\in T_{\rho_n}\SNH$, 
$$
n^{-1}\|{\bf v}_{U_+}+{\bf v}_{\bar{N}_+}+{\bf{v}}_0\|> \|{\bf v}_{U_-}+{\bf v}_{\bar{N}_-}\|,\qquad \text{ or }\qquad n\|{\bf v}_{U_+}+{\bf v}_{\bar{N}_+}+{\bf{v}_0}\|<\|{\bf v}_{U_-}+{\bf v}_{\bar{N}_-}\|.
$$
In particular, since ${\bf v}_{\bar{N}_-}\in T_{\rho_n} \SNH$, we have ${\bf{v}}-{\bf{v}}_{\bar{N}_-}\in T_{\rho_n}\SNH$, and hence, for all ${\bf v}\in T_{\rho_n}\SNH$,
$$
n^{-1}\|{\bf v}_{U_+}+{\bf v}_{\bar{N}_+}+{\bf{v}}_0\|> \|{\bf v}_{U_-}\|,\qquad \text{ or }\qquad n\|{{\bf v}_{U_+}}+{\bf v}_{\bar{N}_+}+{\bf{v}_0}\|<\|{\bf v}_{U_-}\|.
$$

Since $\tilde{K}$ is compact, we may assume $\rho_n\to \rho\in \tilde{K}$. Then, for all ${\bf v}\in T_\rho \SNH$, there are ${\bf v}_n\in T_{\rho_n}\SNH$ such that ${\bf v}_n\to {\bf v}$.  Let ${\bf v}\in T_{\rho}\SNH\setminus (N_+(\rho)+N_-(\rho))$ and ${\bf v}_n \to {\bf v}$ as above.

Then, 
$$
n^{-1}\|{\bf v}_{n,U_+}+{\bf v}_{n,\bar{N}_+}+{\bf{v}}_{n,0}\|> \|{\bf v}_{n,U_-}\|,\qquad \text{ or }\qquad n\|{\bf v}_{U_+}+{\bf v}_{n,\bar{N}_+}+{\bf{v}}_{n,0}\|< \|{\bf v}_{n,U_-}\|.
$$
Extracting a subsequence again, we may assume that one of these inequalities holds for all $n$. We consider first the case
$$
n^{-1}\|{\bf v}_{n,U_+}+{\bf v}_{n,\bar{N}_+}+{\bf{v}}_{n,0}\|> \|{\bf v}_{n,U_-}\|.
$$
 Now, since ${\bf v}_n\to {\bf v}$, and $E_+(\rho)$ is continuous,
$$
{\bf v}_{n,U_+}+{\bf v}_{n,\bar{N}_+}+{\bf{v}}_{n,0}\to \tilde{\bf v}_+\in E_+(\rho)
$$
In particular, this implies that ${\bf v}_{n,U_-}\to 0$ and hence ${\bf v}_{n,\bar{N}_-}\to {\bf v}-\tilde{\bf v}_+.$ Using that $\rho \mapsto T_\rho \SNH$ and $\rho\mapsto E_-(\rho)$ are continuous maps, and that ${\bf v}_{n,\bar{N}_-}\in E_-(\rho_n)\cap T_{\rho_n}\SNH$, we have ${\bf v}-\tilde{\bf v}_+\in N_-(\rho)$ and hence also ${\bf v}_+\in N_+(\rho)$. Therefore, ${\bf v}\in N_+(\rho)+N_-(\rho)$, a contradiction.

Next, we consider the other case:
$$n\|{\bf v}_{U_+}+{\bf v}_{n,\bar{N}_+}+{\bf{v}}_{n,0}\|< \|{\bf v}_{n,U_-}\|.$$
Then, since ${\bf v}_n\to {\bf v}$, ${\bf v}_{n,U_-}$ is bounded and hence ${\bf v}_{U_+}+{\bf v}_{n,\bar{N}_+}+{\bf{v}}_{n,0}\to 0$. In particular, ${\bf v}_{n,U_-}+{\bf v}_{n,\bar{N}_-}\to {\bf v}$, so ${\bf v}\in E_-(\rho)$ and hence ${\bf v }\in N_-(\rho)$, a contradiction. Since both cases lead to a contradiction, we have proved the claim~\eqref{e:size}.
}

%Without loss of generality, we assume that $\mathbf{u}_-$ is orthogonal to $N_-(\rho)$ and, since $\rho$ varies in a compact subset of $\SNH\backslash \mc{S}_H$, we may assume 
%that {there exists $C_{_{\!K}}$ uniformly for $\rho\in \tilde{K}$ so that}
%\begin{equation}\label{e:size}
%C_{_{\!K}}^{-1}\|\mathbf{u}_+\|\leq \|\mathbf{u}_-\|\leq C_{_{\!K}}\|\mathbf{u}_+\|.
%\end{equation}
%{To deal with the fact that in the no focal points case we may have $E_+(\rho)\cap E_-(\rho)\neq \{0\}$,  without loss of generality we also  assume that
% \begin{equation}
%\label{e:notStable}
%\inf\{\|\mathbf{u}_-+\mathbf{v}\|:\, \mathbf{v}\in E_+(\rho){\cap %E_-(\rho)}\}=\|\mathbf{u}_-\|.
%\end{equation}}

{Since $d\varphi_t:E_-(\rho) \to  E_-(\varphi_t(\rho))$ and $d\varphi_t : E_+(\rho)\cap E_-(\rho) \to E_+(\varphi_t(\rho))\cap E_-(\varphi_t(\rho)) $     are  isomorphisms,}
 we have
\[
\dim \Span\begin{pmatrix} d\varphi_t(\mathbf{u}_-), &d\varphi_t(N_-(\rho))\end{pmatrix}=1+\dim N_-(\rho).
\]
Also, note that since $\tilde{m}_-$ is upper semicontinuous {and integer valued}, we may choose $\delta>0$ uniform in $\rho \in \SNH$ so that $\dim \tilde{N}_-(q)\leq \dim N_-(\rho)$ for all $q \in B(\rho, \delta)$.
For any $t$ {and $q\in B(\rho, \delta)$} we then have

\begin{equation}\label{E:dimension}
\dim \Span\begin{pmatrix} d\varphi_t(\mathbf{u}_-), &d\varphi_t(N_-(\rho))\end{pmatrix}\geq1+\dim{\tilde N_-(q)}. \medskip
\end{equation}
Next, note that $\Span\!\begin{pmatrix} d\varphi_t(\mathbf{u}_-), &d\varphi_t(N_-(\rho))\end{pmatrix} \subset E_-(\varphi_t(\rho))$.
Suppose now that $\varphi_t(\rho)\in B(\rho,\delta)$ {for some $t$} and note that if {$d\varphi_t(\w) \in E_-(\varphi_t(\rho)) \backslash \tilde{N}_-(\varphi_t(\rho))$, then $d\varphi_t(\w)  \notin T_{\varphi_t(\rho)}\mc{F}_{\!\varphi_t(\rho)}$}. 
In particular, relation \eqref{E:dimension} gives that there exists a linear combination
\[
{\bf w_t}= a_t \,\mathbf{u}_- +  {\bf e}_-(t) \;{\in E_-(\rho)},
\]
with ${\bf e}_-(t) \in N_-(\rho)$, so that {$d\varphi_t{\bf w_t}\in (\tilde{N}_-(\varphi_t(\rho)))^\perp$ with $\|d\varphi_t{\bf w_t}\|=1$.}
%\[
%\left \|  \pi_{t,\rho} (d\varphi_t {\bf w_t}) \right\|=1=\left \|   %d\varphi_t {\bf w_t} \right\|,
%\]
%where $\pi_{t,\rho}: T_{\varphi_t(\rho)}(S^*M) \to W_{t,\rho}$ is the orthogonal projection map onto  a subspace  $W_{t,\rho}$ of  $T_{\varphi_t(\rho)}(S^*M)$ chosen so that $T_{\varphi_t(\rho)}(S^*M)=W_{t,\rho} \oplus T_{\varphi_t(\rho)}\mc{F}_{\!\varphi_t(\rho)}$ {is an orthogonal decomposition}.  
{If we had that ${\bf w_t}$ was a tangent vector in $T_{\rho}(\SNH)$  {and we had control on $\|{\bf w}_t\|$} we would be done proving \eqref{e:noTangent1}. {Note that to say this we are using that $d\varphi_t \w_t \in E_-(\varphi_t(\rho))$ and that $E_-(\varphi_t(\rho)){\perp} \re H_p $.} However, since $\mathbf{u}_-$ is not in {$T_{\rho}\SNH$} we have to modify ${\bf w_t}$.}
Consider the vector
\[
{\bf \tilde{w}_t}= a_t\, \mathbf{u} + {\bf e}_-(t),
\]
and note that ${\bf \tilde{w}_t} \in T_{\rho}(\SNH)$
and
\[
d\varphi_t ( {\bf \tilde{w}_t})=  d\varphi_t({\bf w_t})+a_t\,  d\varphi_t (\mathbf{u}_+).
\]

{Let $\delta_1>0$ be so that $1-\delta_1 \tilde\ca {C_{_{\!K}}}>\frac{1}{2}$. We claim that there is $t_{_{\! K}}>0$,  depending only on $(M,p,K)$,} so that for $t>t_{_{\! K}}$,
\begin{equation}\label{e:a_t}
\| {\bf w_t}\| \leq {\delta_1} \qquad \text{and} \qquad   |a_t|<\delta_1 {\|\mathbf{u}_-\|^{-1}}.
\end{equation}
Note that this yields that for $t$ large enough, $d\varphi_t ( {\bf \tilde{w}_t})$ approaches $d\varphi_t({\bf w_t}) \notin T_{\varphi_{t}(\rho)}\mc{F}_{\!{\varphi_{t}}(\rho)}$.  In particular, the $t$-flowout of the ${\bf \tilde{w}_t}$ direction  in $T_{\rho}(\SNH)$ approaches $E_-(\varphi_t(\rho))$ (see Figure \ref{f:rotation}).
We postpone the proof of \eqref{e:a_t} until the end, and show how to finish the proof assuming it holds.
%%%%%%%%%%%%%%%
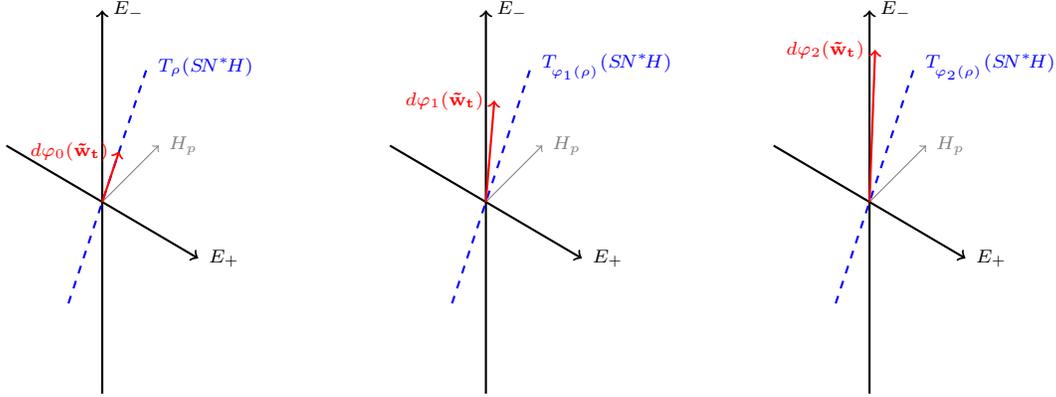
\begin{figure}[h]
\begin{tikzpicture}
\begin{scope}[scale =1.5]
\foreach \t in{0}{
\begin{scope}[shift= {(2.4*\t,0)}]
\draw[thick,->](\t,-1.7)--(\t,1.7)node[right]{\tiny{$E_-$}};
%\draw[thick,->](\t,-1.7)node[below]{\tiny{${t=\t}$}}--(\t,1.7)node[right]{\tiny{$E_-$}};
\draw[gray,->](\t,0)--(\t+.5,.5)node[right]{\color{gray}{\tiny{$H_p$}}};
\draw[thick,dashed,blue](\t-.3,-.9)--(\t+.4,1.2)node[right]{\tiny{\color{blue}$T_\rho (\SNH)$}};
\draw[thick,->](\t-.85,.5)--(\t+.85,-.5)node[right]{\tiny{$E_+$}};
\draw[thick,->,red] ({(\t-.75*.2/(\t+1)+\t+.75*.2/(\t+1))/2},{(-.75*.6*(\t+1)+.75*.6*(\t+1))/2}) --({\t+.75*.2/(\t+1)}, {.75*.6*(\t+1)})node[left]{\tiny{\color{red}$d\varphi_{\t}(\bf \tilde{w}_t)$}};
%\draw[thick,red] ({\t-.75*.2/(\t+1)},{-.75*.6*(\t+1)}) --({\t+.75*.2/(\t+1)}, {.75*.6*(\t+1)})node[left]{\tiny{\color{red}$\varphi_t(B_\rho)$}};
\end{scope}
}

\foreach \t in{1,2}{
\begin{scope}[shift= {(2.4*\t,0)}]
\draw[thick,->](\t,-1.7)--(\t,1.7)node[right]{\tiny{$E_-$}};
%\draw[thick,->](\t,-1.7)node[below]{\tiny{${t=\t}$}}--(\t,1.7)node[right]{\tiny{$E_-$}};
\draw[gray,->](\t,0)--(\t+.5,.5)node[right]{\color{gray}{\tiny{$H_p$}}};
\draw[thick,dashed,blue](\t-.3,-.9)--(\t+.4,1.2)node[right]{\tiny{\color{blue}$T_{_{\!\varphi_{\t}(\rho)}} (\SNH)$}};
\draw[thick,->](\t-.85,.5)--(\t+.85,-.5)node[right]{\tiny{$E_+$}};
\draw[thick,->,red] ({(\t-.75*.2/(\t+1)+\t+.75*.2/(\t+1))/2},{(-.75*.6*(\t+1)+.75*.6*(\t+1))/2}) --({\t+.75*.2/(\t+1)}, {.75*.6*(\t+1)})node[left]{\tiny{\color{red}$d\varphi_{\t}(\bf \tilde{w}_t)$}};
%\draw[thick,red] ({\t-.75*.2/(\t+1)},{-.75*.6*(\t+1)}) --({\t+.75*.2/(\t+1)}, {.75*.6*(\t+1)})node[left]{\tiny{\color{red}$\varphi_t(B_\rho)$}};
\end{scope}
}
\end{scope}
\end{tikzpicture}
\caption{\label{f:rotation} Schematic of the rotation of $\bf \tilde{w}_t$  under the geodesic flow.}
%\caption{\label{f:rotation} Schematic of the rotation of the tangent plane $T_\rho(\SNH)$ under the geodesic flow.}
\end{figure}
%%%%%%%%%%%%%%%

We next observe that there exists $\tilde\ca>0$ so that  if $\w \in E_{\pm}(\rho)$ then $\|d\varphi_t \w\|\leq \tilde\ca\|\w\|$ as $t\to \pm\infty$. Indeed, in the Anosov case $\tilde\ca=\ca$, where $\ca$ is defined in \eqref{e:Bdef}, and in the no focal point case the existence of $\tilde\ca$ is guaranteed  by~\cite[Proposition 2.13, Corollary 2.14]{Eberlein73}. We can therefore conclude from \eqref{e:size}  and \eqref{e:a_t} that 
\[
\|  \pi_{t,\rho} (d\varphi_t  {\bf \tilde{w}_t}) \| \geq \|  \pi_{t,\rho} (d\varphi_t {\bf w_t})\|-\| a_t\,  \pi_{t,\rho} (d\varphi_t {\bf u}_+) \| >1-\delta_1 \tilde\ca {C_{_{\!K}}},
\]
and 
$$
\|\mathbf{\tilde{w}_t}\|=\|\w_t +a_t {\bf u}_+ \|\leq \|\mathbf{w}_t\|+|a_t|\|\mathbf{u}_+\|\leq \delta_1(1+C_{_{\!K}}),
$$
{where $\pi_{t,\rho}$ denotes orthogonal projection onto $E_-(\varphi_t(\rho))\cap (\tilde{N}_-(\varphi_t(\rho)))^\perp.$}
In particular,
\[
\|  \pi_{t,\rho} (d\varphi_t  {\bf \tilde{w}_t}) \| \geq   \frac{1-\delta_1 \tilde\ca {C_{_{\!K}}}}{\delta_1(1+C_{_{\!K}})} \|\mathbf{\tilde{w}_t}\|.
\]
Therefore,  there exist positive constants $c_{_{\! K}}$, $\delta_{_{\! K}}$ and $t_{_{\! K}}$ (uniform for $\rho\in K$) so that  if ${\varphi_{t}}(\rho)\in B(\rho, \delta_{_{\! K}})$ for some $t$ with $|t|>t_{_{\! K}}$, {then there is $\mathbf{w}=\tilde{\mathbf{w}}_{t}\in T_{\rho}(\SNH)$} so that 
\begin{equation}
\label{e:noTangent}
\| d{\varphi_{t}} (\mathbf{w})+\re H_p+T_{{\varphi_{t}}(\rho)}\mc{F}_{\!{\varphi_{t}}(\rho)}\|\geq c_{_{\! K}} \|\mathbf{w}\|.
\end{equation}
This would finish the proof assuming that  the claim in \eqref{e:a_t} holds. 

We proceed to prove \eqref{e:a_t}. We start with the Anosov case.
By the definition of Anosov geodesic flow, 
\[
\| (d\varphi_t|_{E_-})^{-1}\|\leq \ca e^{-t/\ca},\quad t\geq 0.
\]
{Thus, since ${\bf w_t}\in E_-(\rho)$ and  $\left \|   d\varphi_t {\bf w_t} \right\|=1$, we find {$\|{\bf w_t}\|\leq \ca e^{-t/\ca}$}. In particular, since $\mathbf{u}_-$ and $ {\bf e}_-(t) $ are orthogonal,  we have
\[
|a_t|\leq \ca e^{-t/\ca}\|\mathbf{u}_-\|^{-1},\qquad { t\geq 0}.
\]}
This proves the claim  \eqref{e:a_t} in the Anosov flow case after choosing $t_{_{\! K}}>0$ large enough so that  $\ca e^{-t/\ca}\leq \delta_1$.

We next consider the non-focal points case. Define {$\mathcal C_+^\alpha(\rho)\subset T_\rho (S^*\!M)$} to be the conic set of vectors forming an angle larger than or equal to $\alpha>0$ with $E_+(\rho)$. {Let $\alpha_{_{\! K}}>0$ be so that  ${\mathbf{w}_t}\in E_-(\rho)\cap \mathcal C_+^{\alpha_{_{\! K}}}(\rho)$ for all $\rho \in \tilde{K}$}. {By~\cite[Proposition 2.6]{Eberlein73} $(d\pi)_\rho:E_{\pm}(\rho)\oplus H_p(\rho)\to T_{\pi(\rho)}M$ is an isomorphism for each $\rho$. In particular, letting $V(\rho)\subset T_\rho (\SM)$ denote the vertical  vectors, we have that $E_{\pm}(\rho)\cap V(\rho)=\emptyset$ and $V(\rho)\oplus E_+(\rho)\oplus H_p(\rho)=T_{\pi(\rho)}S^*M$. In addition, since $(M,g)$ has no focal points, $\cup_{\rho\in S^*M} E_{\pm}(\rho)$ is closed~\cite[see right before Proposition 2.7]{Eberlein73} and hence there exists $c_{_{ \alpha_{_{\! K}}}}>0$ depending only on $\alpha_{_{\! K}}$ so that 
\[
\mathbf{w}_t={\bf e_+}+{\bf v}
\]
with
\[ 
c_{_{\alpha_{_{\! K}}}}\|{\bf e_+}\|\leq \|\mathbf{w}_t\| \leq  \frac{1}{c_{_{\alpha_{_{\! K}}}}} \|{\bf v}\|.
 \]}
 and ${\bf e_+}\in E_+(\rho)$, ${\bf v}\in {V}(\rho)$. By~\cite[Remark 2.10]{Eberlein73}, for all $R>0$ there exists $T(R)>0$ so that $\|Y(t)\| \geq R \|Y'(0)\|$ for all $t>T(R)$, where $Y(t)$ is any Jacobi field with $Y(0)=0$ and perpendicular to a unit speed geodesic $\gamma$  with $\gamma(0) \in \tilde{K}$.  Since ${\bf v}$ is a vertical vector, we may consider $Y(t)=d\pi \circ d\varphi_t ({\bf v})$, and this implies that $Y'(0)=\bf{K}{\bf v}^\sharp$ ({see Appendix~\ref{s:jacobi} for an explanation of  the connection map $\bf{K}$, and the $\sharp$ operator}). We therefore  have that $\|d\varphi_t {\bf v}\|\geq R\|{\bf v}\|$ for all $t>T(R)$. In particular, then 
$$
\|d\varphi_t\mathbf{w}_t\|=\|d\varphi_t\mathbf{v} + d\varphi_t\mathbf{e}_+ \|  \geq R\|{\bf v}\|-\tilde \ca\|{\bf e_+}\|\geq (Rc_{_{ \alpha_{_{\! K}}}}-c_{_{ \alpha_{_{\! K}}}}^{-1}\tilde \ca)\|\mathbf{w}_t\|.
$$
So, choosing $R(\alpha_{_{\! K}})=c_{_{ \alpha_{_{\! K}}}}^{-1}(\delta_1^{-1}+c_{_{ \alpha_{_{\! K}}}}^{-1}\tilde \ca)$, we have that for $t\geq t_{_{\! K}}:=T(R(\alpha_{_{\! K}}))$, 
$$
1=\|d\varphi_t\mathbf{w}_t\|\geq \delta_1^{-1}\|\mathbf{w}_t\|.
$$
 In particular, for $t\geq t_{_{\! K}}$,  since $\mathbf{u}_-$ is orthogonal to ${\bf e_-}(t)$, we obtain 
$
1=\|d\varphi_t\mathbf{w}_t\|\geq \delta_1^{-1}\|\mathbf{w}_t\|\geq \delta_1^{-1}|a_t|  {\|\mathbf{u}_-\|},
$
completing the proof of the lemma in the case of manifolds without focal points.\\
\end{proof}

{When $(M,g)$ has Anosov geodesic flow, we need to define a notion of angle between a vector and $E_{\pm}(\rho)$.} 
Let  $\pi_{\pm}:T_\rho S^*\!M\to E_{\pm}(\rho)$ be the projection onto $E_{\pm}(\rho)$ along $E_{\mp}(\rho)\oplus H_p(\rho)$ {i.e. if ${\bf{u}}={\bf{v}}_++{\bf{v}}_-+rH_p$ with $r\in \re$, ${\bf{v}}_{\pm}\in E_{\pm}(\rho)$, then $\pi_{\pm}({\bf{u}})={\bf{v}}_{\pm}$.}
For $\rho \in S^*\!M$, define ${\Theta}^{\pm}_\rho:{ (\re H_p(\rho))^\perp}\setminus \{0\}\to [0,\infty]$ by
\begin{equation}\label{e:Theta}
{{\Theta}^{\pm}_\rho}(\mathbf{u}):=\frac{\|\pi_\mp\mathbf{u}\|}{\|\pi_\pm\mathbf{u}\|}.
\end{equation}
Note that ${\Theta}^{\pm}_\rho$ should be thought of as measuring the tangent of the angle from $E_{\pm}(\rho)$, and that {given a compact subset $K$ of $\TM \backslash\{0\}$ there exists $C_{_{\!K}}>0$ so that for all $\rho \in K$,} $t\in \re$, and $\mathbf{u} \in T_\rho S^*\!M$, we have
\begin{equation}
\label{e:angleChange}
\frac{e^{\pm t/C_{_{\!K}}}}{C_{_{\!K}}}\,{\Theta}^{\pm}_\rho(\mathbf{u})\leq {\Theta}^{\pm}_\rho(d\varphi_t\mathbf{u})\leq C_{_{\!K}}e^{\pm C_{_{\!K}}t}\,{\Theta}^{\pm}_\rho(\mathbf{u}).
\end{equation}

{{In what follows we will use the fact that by~\cite[Proposition 3.3]{CG18d} there are {$\mathfrak{D}_n>0$ depending only on $n$}, $\tau_{_{\!\SNH}}>0$ depending only on $\tau_{_{\!\inj H}}$, {and} ${R_0>0}$ depending only on $(n,k,K_H)$ and finitely many derivatives of the curvature and second fundamental form of $H$, so that for ${0<}\tau<\tau_{_{\!\SigH}}$ and ${0<}r<R_0$, there is a $({\mathfrak{D}}_n,\tau,r)$ good cover of $\SNH$.}}
%%%%%%%%%%%%%%%%%%%%%%%%%%%%%%%%%%%%%%%%%%%%%%%%%%%%%%%%%%%%%%%%%%%%%%%%%%%%%%%%
\begin{lemma}\label{l:anosov}
Let $(M,g)$ have Anosov geodesic flow and $H\subset M$ satisfy $\mc{A}_H=\emptyset$. Then, there exist{ $c=c(M,g,H)>0$, $C=C(M,g,H)>2$}, ${I}>0$, $t_0>1$, so that  for all  {$\Lambda>\Lambda_{\text{max}}$} the following holds. 

Let  $T_0\geq t_0$,\; $m=\big\lfloor \frac{\log T_0 - \log t_0} {\log 2}\big\rfloor$, \; $0<\tau_0<\tau_{_{\!\SigH}}$,\; $0<\tau\le \tau_0$, 
 \[0\leq r_1\leq \min\{e^{-C T_0},{R_0}\}, \]
  and $\{\Lambda_{\rho_j}^\tau(r_1)\}_{j=1}^{N_{r_1}}$ be a $({\mathfrak{D}_n},\tau,r_1)$ good cover of $\SNH$. 
  Then, for each  $i\in\{1,\dots, {I}\}$ there are sets of indices $\{\mc{G}_{i,\ell}\}_{\ell=0}^m\subset \{1,\dots, N_{r_1}\}$ and $\mc{B}\subset \{1,\dots, N_{r_1}\}$ so that 
$$\bigcup_{i=1}^{{I}}\bigcup_{\ell=0}^m\mc{G}_{i,\ell}\cup \mc{B}= \{1,\dots, N_{r_1}\},$$
and for every $i\in\{1,\dots, {{I}}\}$ and every $\ell\in \{0, \dots, m\}$
\begin{itemize}
\item $\bigcup_{j\in \mc{G}_{i,\ell}}\Lambda_{\rho_j}^\tau(r_1)$ is $[t_0,2^{-\ell}T_0]$ non-self looping, \\
\item $ |\mc{G}_{i,\ell}|\leq c \, 5^{-\ell}\, r_1^{1-n},$\\
\item $ |\mc{B}|\leq c \,{\Big(\frac{t_0}{T_0}\Big)^{\frac{\log 5}{\log 2}}}\, r_1^{1-n}.$
\end{itemize}
\end{lemma}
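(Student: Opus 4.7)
The plan is to exploit the hypothesis $\mc{A}_H = \emptyset$ to split $\SNH$ into two dynamical regimes, handle each with the tools built in Sections \ref{s:controlLooping ncp} and \ref{s:controlLooping anosov}, and then assemble the pieces into the claimed cover. Since $\mc{A}_H = \mc{M}_H \cap \mc{S}_H$ is empty, each $\rho \in \SNH$ belongs either to $\SNH \setminus \mc{S}_H$ or to $\mc{S}_H \setminus \mc{M}_H$; on the latter set one of $N_\pm(\rho)$ is trivial and $T_\rho \SNH = N_+(\rho) + N_-(\rho)$, so using that $E_+(\rho) \cap E_-(\rho) = \{0\}$ in the Anosov case, $T_\rho \SNH$ lies entirely in $E_+(\rho)$ or in $E_-(\rho)$. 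By upper semicontinuity of $m_\pm$ and compactness of $\SNH$, I cover $\SNH$ by finitely many balls $E_1, \dots, E_{I} \subset \SNH$, each of one of two types: \textbf{Type I} (contracting), where $E_i \subset \mc{S}_H \setminus \mc{M}_H$ and, by \eqref{e:Bdef} combined with $T_\rho \SNH \subset E_\pm$, $\sup_{\rho \in E_i} |\det J_t| \leq C_0 e^{-|t|/C_0}$ for all $t \geq 0$ or all $t \leq 0$; and \textbf{Type II} (transverse), where $E_i \subset \SNH \setminus \mc{S}_H$ and Lemma \ref{P:1} supplies, whenever $\varphi_t(\rho)$ returns to a $\delta_K$-neighborhood of $\rho$ with $|t| \geq t_K$, a vector $\w(t,\rho) \in T_\rho \SNH$ with $d\varphi_t(\w)$ uniformly transverse to $T_{\varphi_t(\rho)}\mc{F}_{\varphi_t(\rho)} + \R H_p$. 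The latter yields a uniformly bounded left inverse for the restriction $d\psi_{(t,\rho)}|_{\R\partial_t \times \R\w}$, so that the hypotheses of Proposition \ref{p:ballCover} hold with $\beta = 0$.

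On each Type I ball, Lemma \ref{l:nonlooping} applied to $f(s) = C_0 e^{-s/C_0}$ shows that $E_i$ is $(\varepsilon_0, \ti_0, \digamma)$-controlled up to time $T_0$ for every $\varepsilon_0 > 0$, with $\ti_0(\varepsilon_0) \sim C_0 \log(1/\varepsilon_0)$. Taking $\varepsilon_0 = \tfrac{1}{5}$ and choosing $t_0$ so that $t_0 \geq \ti_0(\tfrac{1}{5})$, I apply Lemma \ref{l:nonlooping2} with the given $r_1$, $T_0$, $\tau$ and $m = \lfloor (\log T_0 - \log t_0)/\log 2 \rfloor$ to produce $\{\mc{G}_{i,\ell}^{\mathrm I}\}_{\ell=0}^m$ and $\mc{B}_i^{\mathrm I}$ with $\bigcup_{j \in \mc{G}_{i,\ell}^{\mathrm I}} \Lambda_{\rho_j}^\tau(r_1)$ being $[t_0, 2^{-\ell}T_0]$ non-self-looping, $|\mc{G}_{i,\ell}^{\mathrm I}| \leq c\, 5^{-\ell} r_1^{1-n}$, and $|\mc{B}_i^{\mathrm I}| \leq c\, 5^{-(m+1)} r_1^{1-n}$. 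On each Type II ball, the uniform left-inverse estimate lets me apply Proposition \ref{p:ballCover} directly with $\beta = 0$, $r_2 = \alpha_2 e^{-\gamma T_0}$, and $\gamma$ larger than both $3\Lambda$ (to satisfy the hypothesis of the proposition) and $4\Lambda + c$ (to force the bad-set bound $|\mc{B}_i^{\mathrm{II}}| \leq c\, e^{-cT_0} r_1^{1-n}$ from the last bullet of the proposition). The corresponding good indices are $[t_0, T_0]$ non-self-looping and are placed into $\mc{G}_{i,0}^{\mathrm{II}}$, with $\mc{G}_{i,\ell}^{\mathrm{II}} = \emptyset$ for $\ell \geq 1$.

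To conclude, I set $\mc{G}_{i,\ell} := \mc{G}_{i,\ell}^{\mathrm I}$ or $\mc{G}_{i,\ell}^{\mathrm{II}}$ according to type, and let $\mc{B}$ collect all $\mc{B}_i$ together with any indices $j$ whose tube $\Lambda_{\rho_j}^\tau(r_1)$ does not intersect any $E_i$ (there are none, since the $E_i$ cover $\SNH$ and $\{\Lambda_{\rho_j}^\tau(r_1)\}$ is a cover). The count estimates on the $\mc{G}_{i,\ell}$ follow directly from the two applications above, and $I$ depends only on $(M,g,H)$. The main obstacle will be promoting the Type I bad-set bound from the polynomial $5^{-(m+1)} \sim T_0^{-\log_2 5}$ delivered by Lemma \ref{l:nonlooping2} to the exponential $e^{-cT_0}$ claimed in the lemma. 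To achieve this I would run one additional pass of Proposition \ref{p:ballCover} on each Type I residual bad set, now using that $T_\rho \SNH \subset E_\pm(\rho)$ produces, together with $H_p$, a uniformly transverse pair for the defining function $F$ after exponential contraction, yielding an admissible $\w$ with $\|L_{(t,\rho)}\|$ bounded by $c\,e^{\beta |t|}$ for $\beta$ controlled by the hyperbolicity rate. With $r_2 = e^{-\gamma T_0}$ for $\gamma > 4(\Lambda + \beta)$ this produces an exponentially small residual; the delicate point will be to coordinate the non-self-looping intervals so that the good pieces absorbed from this second pass are compatible with the already established $[t_0, 2^{-\ell}T_0]$ structure of $\mc{G}_{i,\ell}$ and do not inflate the $5^{-\ell}$ counts.
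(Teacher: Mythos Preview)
Your decomposition into ``contracting'' and ``transverse'' balls is the right skeleton, and your treatment of the Type~II balls via Proposition~\ref{p:ballCover} with $\beta=0$ is essentially what the paper does. The genuine gap is in your Type~I setup. You assert that the Type~I balls $E_i$ lie inside $\mc{S}_H\setminus\mc{M}_H$ and that $T_\rho\SNH\subset E_{\pm}(\rho)$ there, giving $|\det J_t|\le C_0e^{-|t|/C_0}$ for \emph{all} $t\ge 0$. But $\mc{S}_H\setminus\mc{M}_H$ is in general a closed set with empty interior (the conditions $m_{\pm}=0$ and $m=n-1$ are upper-semicontinuous equalities, not open conditions), so no open ball can be contained in it. On a ball that merely \emph{intersects} $\mc{S}_H$, one only has the approximate inclusion $\Theta^+_\rho\le\varepsilon$ from~\eqref{e:Abounds}--\eqref{e:dimensions}; the small unstable component of $T_\rho\SNH$ then grows under $\varphi_t$, and the contraction bound~\eqref{e:expDecay} survives only up to the finite time $T_{B_{0,i}}$ at which $\Theta^+$ first exceeds $5L\varepsilon$. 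Your application of Lemma~\ref{l:nonlooping} on all of $(0,T_0)$ is therefore not justified.

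The paper repairs this by splitting the interval $[0,T_0]$ at $T_{B_{0,i}}$ for each sub-ball $B_{0,i}$: before that time the contraction holds and Lemma~\ref{l:nonlooping} applies; after that time one exploits the angular mismatch---the flowed direction $d\varphi_t\w$ now has $\Theta^+\ge 4\varepsilon$, whereas any $\mathbf{v}\in T_{\varphi_t(\rho)}\mc{F}_{\varphi_t(\rho)}$ has $\Theta^+\le 2\varepsilon$ by~\eqref{e:theta1}--\eqref{e:theta2}---to manufacture a left inverse for $d\psi$ with $\|L_{(t,\rho)}\|\le ce^{\Lambda|t|}$ (so $\beta=\Lambda$, not $\beta=0$) and invoke Proposition~\ref{p:ballCover} on $[T_{B_{0,i}},T_0]$. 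These two pieces together establish the $(\tfrac15,\ti_2,\digamma)$-control of~\eqref{e:Bcontrol}, after which Lemma~\ref{l:nonlooping2} is applied. Your final paragraph senses that a second pass of Proposition~\ref{p:ballCover} is needed, but misdiagnoses the purpose: it is not to upgrade the $5^{-(m+1)}$ bad-set bound to an exponential one, but to cover the late-time interval where the contraction hypothesis~\eqref{e:forward1} of Lemma~\ref{l:nonlooping} simply fails. Without this mechanism, the control claim in Definition~\ref{d:control} cannot be verified on the Type~I balls and the iteration of Lemma~\ref{l:nonlooping2} does not get off the ground.
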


{We note that if $H_0 \subset M$ is an embedded submanifold, there exists a neighborhood $U$ of $H_0$ (in the $C^\infty$ topology) so that the constants $c=c(M,p,H)$ and $C=C(M,p,H)$ in Lemma \ref{l:anosov} are uniform for  $H\in U$.}

%%%%%%%%%%%%%%%%%%%%%%%%%%%%%%%%%%%%%%%%%%%%%%%%%%%%%%%%%%%%%%%%%%%%%%%%%%%%%%%%

%%%%%%%%%%%%%%%%%%%%%%%%%%%%%%%%%%%%%%%%%%%%%%%%%%%%%%%%%%%%%%%%%%%%%%%%%%%%%%%%
\begin{proof}
{Let
$0\leq r_0\leq \tfrac{1}{C}e^{-\Lambda T_0}r_1$. Then $\{\Lambda_{\rho_j}^\tau(r_1)\}_{j=1}^{N_{r_1}}$ covers $\Lambda_{_{\!\SNH}}^\tau(r_0)$ since $r_0 \leq \tfrac{1}{2}r_1$.} Throughout this proof we will repeatedly use that if  $F:\TM \to \re^{n+1}$ is the defining function for $\SNH$, then there exist $\delta_0,c_0>0$ so that for $q\in \TM $
\begin{equation}\label{e:F}
d(q, \SNH)\leq \delta_0 \quad \Longrightarrow \quad \|dF {\bf v}\|\geq c_0 \inf\big \{\|{\bf v}+{\bf u}\|:\; {\bf u} \in T_{q}\mathcal F_{q}\big\}\quad \forall {\bf v}\in T_{q}(\TM ).
\end{equation}
 In addition, {let $\nu>0$ be so that {$\rho \mapsto E_{\pm}(\rho)\in C^\nu$} and}  define $c_{_{\!H}}>0$ so that
\begin{equation}\label{e:angle}
\sup_{{q_1,q_2} \in \SNH} \Big(\| \tan^{-1}\circ\Theta^{{\pm}}_{{q_1}}\|_{L^\infty(T_{{q_1}}\SNH)}-\| \tan^{-1}\circ\Theta^{{\pm}}_{{q_2}}\|_{L^\infty(T_{{q_2}}\SNH)}\Big)\leq \frac{1}{c_{_{\!H}}}{d(q_1,q_2)^\nu}.
 \end{equation}
{This implies that} that for all $\e>0$, there exists $\delta_\e>0$ so that  for every  ball  $\tilde B \subset \SNH$ of radius $\delta_\e$ we have that 
\begin{equation}\label{e:Abounds}
\sup_{\rho_1,\rho_2\in \tilde B} \;\Big|   
\|\tan^{-1}{\Theta}^{\pm}_{\rho_1}\|_{L^\infty(T_{\rho_1}\SNH)} -
\| \tan^{-1}{\Theta}^{\pm}_{\rho_2}\|_{L^\infty(T_{\rho_2}\SNH)}
 \Big|<\e.
\end{equation}

Also,  since $\mc{A}_H=\emptyset$, we know that for every $\rho \in \mc{S}_H$ we must have that either $m_+(\rho)=0$ or $m_-(\rho)=0$, where we continue to write $m_{{\pm}}(\rho) =\dim N_\pm (\rho)$. 
Therefore, choosing 
\begin{equation}\label{e:epsilon}
\e=\e(M,p,H)<1
\end{equation}
small enough, depending only on $(M,g,H)$, and shrinking $\delta_\e$ if necessary, we may also assume  that
if $ \tilde B\cap \mc{S}_H\neq \emptyset$ then either
\begin{align}
m_-(\rho)&= 0 \;\; \text{and}  \;\; {\Theta}^+_\rho\leq \e \;\;\; \text{for all} \;\;\rho \in \tilde B, \notag\\
&\qquad \qquad \qquad\text{or}\label{e:dimensions}\\
m_+(\rho)&= 0 \;\; \text{and}  \;\;{\Theta}^-_\rho\leq \e \;\;\; \text{for all}\;\; \rho \in \tilde B.\notag
\end{align}
Furthermore, we assume that $\delta_\e \leq \tfrac{2}{9}\big[\e c_{_{\!H}}\big]^{\frac{1}{\nu}}$.

Next, let  $\{B_i\}_{i =1}^{N_\e} \subset \SNH$  be a cover of $\SNH$ with
\[
\SNH \subset \bigcup_{i =1}^{N_\e} B_i, \qquad  \qquad B_i \;\;\text{ball of radius}\; \tfrac{1}{2}\delta_\e.
\]
Let  $\mc{I}_{ \mc{S}_H}:=\{i \in \{1, \dots, N_\e\}: \; B_i \cap \mc{S}_H \neq \emptyset\}$, and define $K=K_\e$ by
\[
K:= \bigcup_{i\in \mc{I}_{\mc{S}_H} } (\SNH \backslash {B_i}).
\]
Since  $K\subset (\SNH \backslash  \mc{S}_H)$ is compact and the geodesic flow is Anosov, by Lemma~\ref{P:1}  there exist positive constants $c_{_{\!K}},t_{_{\!K}},\delta_{_{\!K}}$ so that {$d(K,\mc{S}_H)>\delta_{_{\! K}}$} and, if {$d(\rho ,K)\leq \delta_{_{\!K}}$} and $\varphi_t(\rho) \in \overline{ B(\rho, \delta_{_{\!K}})}$ for some $|t|>t_{_{\!K}}$, then there exists $\w=\w(t,\rho) \in T_{\rho}(\SNH)$ so that 
\begin{equation}\label{e:lb0}
\inf\{\|d\varphi_{t} (\mathbf{\w})+{\bf v}\|:\; {\bf v} \in  {T_{\varphi_{t}(\rho)}}\mc{F}_{\!\varphi_{t}(\rho)} + \R H_p\}\geq  c_{_{\!K}}\|\mathbf{\w}\|.
\end{equation}
We then introduce a cover   $\{D_i\}_{i \in I_K} \subset \SNH$   of $K$ by balls with
\[
K \subset \bigcup_{i \in I_K} D_i, \qquad \qquad D_i \;\;\text{ball of radius}\;{ \tfrac{1}{4}R,}
\]
where  
\[R:=\min\{\delta_{_{\!K}}, \delta_0, \tfrac{1}{2}\delta_\e,{\delta_F}\}\]
and $\delta_F$ is as in~\eqref{e:defFunction}.
Note that $R$ depends only on $(M,p,H,K)$.
It follows that, 
\begin{equation}\label{e:snh}
\SNH \subset  \left( \bigcup_{i \in \mc{I}_{ \mc{S}_H}} B_i \;\; \cup  \;\; \bigcup_{i \in \mc{I}_K} D_i \right)
\end{equation}
where each ball $B_i$ satisfies \eqref{e:Abounds} and \eqref{e:dimensions}, and each ball $D_i$ satisfies \eqref{e:lb0}. Also, 
\[ 
\mc{S}_H\cap D_i =\emptyset \;\;\; \forall  i \in \mc{I}_K \qquad  \text{and}\qquad   \mc{S}_H\cap  B_i \neq \emptyset \;\;\; \forall i \in \mc{I}_{ \mc{S}_H}. 
\]

Since $\SNH$ can be split as in \eqref{e:snh}, we present how to treat $D_i$ with $i \in \mc{S}_H$ and $B_i$ with $i \in \mc{I}_K$ separately.
%%%%%%%%%%%%%%%%%%%%%%%%%%%%%%%%%%%%%%%%
\begin{center}
{\underline{\bf Treatment of $D\in \{D_i\}_{i \in \mc{I}_{K}}$.}}
\end{center}

  Let  $D\in \{D_i\}_{i \in \mc{I}_{K}}$. Note that since  $R\leq \min\{\delta_{_{\!K}}, \delta_0\}$, by \eqref{e:lb0} we know that if $\rho \in D$ and $|t| \geq t_{_{\!K}}$ are so that  $d(\varphi_t(\rho),\rho)< R$, then there exists $\w=\w(t, \rho)\in T_{\rho}(\SNH)$ so that for all $s\in \R$
\begin{align*}
\|dF(d\varphi_t\w+s H_p)\| 
&\geq  c_0 \inf\big \{\|d\varphi_t\w + s H_p+ {\bf u}\|:\;  {\bf u} \in T_{\varphi_t(\rho)}\mathcal F_{\varphi_t(\rho)} \big\}\\
&\geq  c_0 \inf\big \{\|d\varphi_t\w +  {\bf v}\|:\;  {\bf v} \in T_{\varphi_t(\rho)}\mathcal F_{\varphi_t(\rho)} + \R H_p\big\}\\
& \geq  c_0  c_{_{\!K}}\|\w\|,
\end{align*}
 where we used \eqref{e:F} to get the first inequality and \eqref{e:lb0} for the third one. This implies that if $|t|\geq t_{_{\!K}}$  and $\rho \in  D$ are so that $d(\varphi_t(\rho),\rho)< R$, then $d\psi(t,\rho):=d(F \circ \varphi_t)(t,\rho)$ has a left inverse $L_{(t, \rho)}$ when restricted to $\R\partial_t \oplus \R \w$ with $\|L_{(t, \rho)}\| \leq (c_0  c_{_{\!K}})^{-1}$.

Let $\alpha_1, \alpha_2$ be as in Proposition~\ref{p:ballCover}, and note that they only depend on $(M,g, H,{K})$. We aim to apply this proposition with  $A=D$, $B=D$, $\beta=0$, $c={(c_0  c_{_{\!K}})^{-1}}$, {$a=0$, $\tilde{c}=\frac{R}{4}$}. Let $t_1$ satisfy
\begin{equation}\label{e:t0def}
t_1\geq \max\{1,t_{_{\!K}}\}.
\end{equation}
 Note that  $t_1$ depends only on $(M,p,H,{K})$.
 
Next, let $T_0 \geq t_1$. By construction, if $(t,\rho) \in  [t_1, T_0] \times D $ are so that $d(\varphi_t(\rho),D) \leq  \tilde c $,  by \eqref{e:t0def} we have  
\[
d(\varphi_t(\rho),\rho) \leq  d(\varphi_t(\rho),D)+\diam (D) \leq \tilde c+ 2 (\tfrac{1}{4}R)< R.
\]
In this case there exists $\w=\w(t, \rho)\in T_{\rho}(\SNH)$ so that $d\psi(t,\rho)$ has a left inverse $L_{(t, \rho)}$ when restricted to $\R\partial_t \oplus \R \w$ with  $\|L_{(t, \rho)}\| \leq c_0  c_{_{\!K}}\leq c $.

Let $C>0$ be so that 
\begin{equation}\label{e:Cdef}
\frac{1}{C}<{\min}\{\tfrac{1}{2}, \tfrac{1}{3\alpha_1}\} \qquad \text{and} \qquad e^{-CT_0}\leq \min\{\tfrac{1}{8}\alpha_1R,\; \tfrac{1}{2}\alpha_1\alpha_2e^{-{3\Lambda} T_0}\}.
\end{equation} 
Set $r_2:=\frac{2}{\alpha_1}r_1$ and note that by construction, and the assumptions on the pair $(r_0,r_1)$, we have
 $$
 r_1< \alpha_1\, r_2, \qquad r_2 \leq \min\{{\tfrac{1}{4}R},\alpha_2\, e^{-{3\Lambda} T_0}\}, \qquad r_0 < \tfrac{1}{3}\, e^{-\Lambda T_0} r_2.
$$
Also, note that we work with $0<\tau<\tau_0<\tau_{_{\!\SigH}}$, and that by definition $\tau_{_{\!\SigH}}<\tfrac{1}{2}\Tinj$ as requested by Proposition~\ref{p:ballCover}.
We apply Proposition~\ref{p:ballCover} {to the cover $\{\Lambda_{\rho_j}^\tau(r_1)\}_{j \in \mathcal E_{_{\!D}}}$ of $\Lambda_{_{\!D}}^\tau(r_0)$ where}
\begin{equation}
\label{e:ei1}
\mc{E}_{_{\!D}}:=\{j: \Lambda_{\rho_j}^\tau(r_1)\cap \Lambda_{_{\!{D}}}^\tau(r_0)\neq \emptyset\}.
\end{equation}
Then, there is a partition  $\mc{E}_{_{\!D}}=\mc{G}_{_{\!D}}\cup \mc{B}_{_{\!D}}$ with 
\begin{equation}\label{e:badD}
|\mc{B}_{_{\!D}}|\leq {\bf{C}_0} \;\frac{{R}^{n-1}}{{r_1^{n-2}}}\;  T_0e^{{4\Lambda}T_0},
\end{equation}
where {${\bf{C}_0}={\bf{C}_0}(M,g,k,c_0,c_{_{K}})>0$},
and so that
\begin{equation}\label{e:goodD}
\bigcup_{j\in \mc{G}_{_{\!D}}}\Lambda^\tau_{\rho_j}(r_1)\qquad \text{is}\;\;\; [t_1,T_0]\text{ non-self looping}.
\end{equation}

%%%%%%%%%%%%%%%%%%%%%%%%%%%%%%%%%%%%%%%%%%%%%%
\begin{center}
{\underline{\bf Treatment of $B\in \{B_i\}_{i \in \mc{I}_{\mc{S}_H}}$}}
\end{center}
 Let $B\in \{B_i\}_{i \in \mc{I}_{\mc{S}_H}}$. Since \eqref{e:dimensions} is satisfied for all $\rho \in B$, we shall focus on the case where $m_-(\rho)=0$ for all $\rho \in B$; the other being similar after sending $t\mapsto -t$ in the arguments below. 
 
 Suppose $B$ is the ball $B(\rho_{_{\!B}}, \tfrac{1}{2}\delta_\e)$ for some $\rho_{_{\!B}}\in \SNH$ and let 
 $$
 E:=B(\rho_{_{\!B}}, \tfrac{3}{4}\delta_\e)\subset \SNH, \qquad \tilde B:=B(\rho_{_{\!B}}, \delta_\e)\subset \SNH.
 $$
Note that $B  \subset E \subset \tilde B$, and that   ${\Theta}^+_\rho\leq \e$ for all $\rho \in \tilde B$ by \eqref{e:dimensions}.

 We claim that there exist a function $\ti_{2}:[\tfrac{1}{5}, +\infty) \to [1, +\infty)$ that depends only on $(M,p)$,  and a constant $\digamma>0$ {depending on $(M,p,K_{_{\!H}})$},  so that 
 \begin{equation}\label{e:Bcontrol}
 E  \;\; \text{can be}\;\; (\tfrac{1}{5}, \ti_{2}, \digamma)\text{-controlled up to time} \;T_0.
 \end{equation}
If the claim in \eqref{e:Bcontrol} holds, setting $R_0:= \min\{{\tfrac{1}{\digamma}}e^{-{\digamma}\Lambda T_0}, \tfrac{1}{8}\delta_\e\}$  and noting that $d(B, E^c)=\tfrac{1}{4}\delta_\e > R_0$,  we may apply Lemma~\ref{l:nonlooping2} to the ball $E$ with $E_0=B$ and $\e_0=\tfrac{1}{5}$.
Indeed, by possibly enlarging $C>0$ in \eqref{e:Cdef} so that 
\begin{equation}\label{e:Cdef3}
e^{-CT_0}<\tfrac{1}{5\digamma}e^{-({\digamma}+2\Decay)\Lambda T_0} R_0,
\end{equation}
by the assumption that $r_1\leq e^{-CT_0}$ we conclude $0<r_1<{\tfrac{1}{5\digamma}}e^{-({\digamma}+2\Decay)\Lambda T_0} R_0$. 
Therefore, letting
\begin{equation}
\label{e:ei2}
\mc{E}_{_{\!B}}:=\{j: \Lambda_{\rho_j}^\tau(r_1)\cap \Lambda_{_{\!B}}^\tau(r_0)\neq \emptyset\},
\end{equation}
 there exists $C_{_{M,g}}>0$ depending only on $(M,g)$, so that for every integer $0<m<\frac{\log T_0-\log t_0(\frac{1}{5})}{\log 2}$ there are sets $\{\mc{G}_{_{\!B,\ell}}\}_{\ell =0}^m\subset \{1,\dots N_{r_1}\}$, $\mc{B}_{_{\!B}}\subset \{1,\dots N_{r_1}\}$ satisfying
\begin{align}\label{e:Bballs}
 \mc{E}_{_{\!B}}\subset \mc{B}_{_{\!B}}\cup \displaystyle\bigcup_{\ell =0}^m\mc{G}_{_{\!B,\ell}},\qquad 
 \bigcup_{i\in \mc{G}_{_{\!B,\ell}}}\Lambda_{\rho_i}^\tau(r_1)\text{ is } [t_{2}(\tfrac{1}{5}),2^{-\ell}T_0]\text{ non-self looping}  \notag\\
 |\mc{G}_{_{\!B,\ell}}|\leq C_{_{M,p}}\frac{\delta_\e^{n-1}}{5^{\ell}} \frac{1}{r_1^{n-1} },\qquad \text{ and }
 \qquad|\mc{B}_{_{\!B}}|\leq C_{_{M,p}} \frac{\delta_\e^{n-1}}{5^{m+1}} \frac{1}{r_1^{n-1}},
 \end{align}
 for all $\ell \in \{0, \dots, m\}$.
We shall use this construction later in the proof, namely below the ``Constructing the complete cover" title, to build the complete cover.\\

We dedicate the rest of the argument to proving the claim in \eqref{e:Bcontrol}. 
Let $\digamma>0$ satisfy
\begin{equation}\label{e:digamma}
\frac{1}{\digamma} < \min \Big  \{ \frac{\alpha}{4} , \frac{\alpha^2}{4} , \frac{\alpha\,  }{{60\bf{C}_0}}, \frac{[\e c_{_{\!\!H}}]^{\frac{1}{\nu}}}{3} , {\frac{\e^{{\frac{1}{\nu}}}}{C^{{\frac{1}{\nu}}}_{_{\!\Theta}}}} , \frac{1}{11},{\frac{\nu}{2}}    \Big\},
\end{equation}
where $\alpha:=\min\{{\frac{1}{3}},\alpha_1, \alpha_2\}$, %{$C_{_{\!\varphi}}$ is as in~\eqref{e:cphi}}, 
$c_{_{\!\!H}}$ is defined in \eqref{e:angle},   {${\bf{C}_0}$ } is the positive constant  introduced in Proposition~\ref{p:ballCover} (that depends only on $(M,g,H,\e)$ when the left inverse is bounded by  ${{\frac{2C_\varphi}{c_0\, \e }}}$), and {$C_{_{\!\Theta}}$ is so that {for all $\rho_1,\rho_2\in\SNH$}
\begin{equation}
\label{e:thetaChange}
\sup_{\substack{{\w_1}\in T_{{\rho_1}}\SNH\\ \Theta^+(\w_{{1}})\leq \e}}
\inf_{\substack{{\w_2}\in T_{{\rho_2}}\SNH\\ \Theta^+(\w_{{2}})\leq \e}}
|\Theta^+_{\varphi_t(\rho_1)}(d\varphi_t)_{\rho_1}\w_1 
-\Theta^+_{\varphi_t(\rho_2)}(d\varphi_t)_{\rho_2}\w_2\| 
\leq C_{_{\!\Theta}}d(\rho_1,\rho_2)^{\nu} e^{2\Lambda |t|}
\end{equation}
 for all $t \in \re$}.
Next, Let $0<\tau<\tau_0$,  $\e_1\geq \tfrac{1}{5}$,
\[
0< \tilde R_0\leq   \tfrac{1}{ \digamma}e^{- \digamma \Lambda T_0}\qquad  \text{and}\qquad 0<\tilde r_0<\tilde R_0.
\]
 Also, let $\{B_{0,i}\}_{i=1}^N \subset \SNH$ be a collection of balls with centers in $E$ and radii $R_{0,i}=\tilde R_0\geq 0$ so that 
\[
E \subset  \bigcup_{i=1}^NB_{0,i} \subset \tilde B.
\]

Using \eqref{e:angleChange} we let $L\geq 1$  be so that for all $q \in \SNH$ and all ${\bf u} \in T_\rho S^*\!M\backslash \{0\}$ we have 
${{\Theta}^+_{\varphi_s(q)}(d\varphi_s {\bf u} )}\geq \frac{1}{L}{\Theta}^+_q({\bf u} )$ provided  $ s\geq 0.$
Next, for each $i \in \{1, \dots, N\}$  let 
\[T_{_{\! B_{0,i}}}:= \inf_{\rho \in B_{0,i}}  T(\rho) \qquad \text{for}\qquad 
T(\rho):=\inf \big\{t\geq 0: \sup_{\w\in T_{\rho}\SNH} {\Theta}^+_\rho(d\varphi_t\w) > 5L\e\big\},\]
where $\e=\e(M,g,H)$ as defined in \eqref{e:epsilon}.
Note that since ${\Theta}^+_\rho\leq \e$ for  $\rho \in \tilde B$, then $T_{_{\! B_{0,i}}}>0$ for all $i \in \{1, \dots, N\}$. \\

{\noindent \bf Control of $ B_{0,i}$ before time $T_{_{\! B_{0,i}}}$.} 
We claim that  for all $\rho \in B_{0,i}$ and $\w \in T_\rho S^*\!M$
\begin{equation}\label{e:expDecay}
\|d\varphi_t \w\|\leq \ca(1+5L\e)e^{-t/\ca}\|\w\|\quad\qquad  0\leq t< T_{_{\! B_{0,i}}}.
\end{equation}
Indeed, suppose that $0\leq t< T(\rho)$ for some $\rho \in B_{0,i}$. Then, ${\Theta}^+_{\varphi_t(\rho)}(d\varphi_{t}\mathbf{w})\leq 5L\e$ for all $\w \in T_\rho\SNH$ and so, using that $\pi_\pm d\varphi_t=d\varphi_t\pi_{\pm},$
we have
\[
\|d\varphi_{t}{\mathbf{w}}\|\leq \|d\varphi_t\pi_+{\mathbf{w}}\|+\|d\varphi_t\pi_-{\mathbf{w}}\|
\leq (1+5L\e) \|d\varphi_{t}\pi_+{\mathbf{w}}\|
%\leq (1+5L\e)\ca e^{- t/\ca}\|\pi_+\mathbf{u}\|
\leq (1+5L\e)\ca e^{- t/\ca}\|{\mathbf{w}}\|.
\]
From \eqref{e:expDecay} it follows that there exists $C_0>0$, depending only on $(M,g,H)$, so that  
$$
\sup_{\rho \in B_{0,i}}|\det J_t|\leq C_0\, e^{-|t|/C_0} \qquad \text{ for all}\;\; t \in (0, T_{_{\! B_{0,i}}}).
$$

Suppose that  $T_{_{\! B_{0,i}}} > 1$. By Lemma~\ref{l:nonlooping}, for all $\ep_0>0$ there exists $\digamma_{_{\!\!M,g,K_{_{\!H}}}}>0$ and a function $\ti_0:[\e_0, +\infty) \to [1, +\infty)$ depending only on $(M,g,H,\e_0, C_0)$ so that   the set $B_{0,i}$ can be $(\e_0, \ti_0, \digamma_{_{\!\!\!M,p}})$-controlled up to time $T_{_{\! B_{0,i}}}$ in the sense of Definition \ref{d:control}. In addition, by Lemma~\ref{l:nonlooping},  given  $\e_1 >0$  and any $0<r \leq \tfrac{1}{ \digamma} e^{- \digamma \Lambda T_0} \tilde r_0 $,   there exist  balls $\{\tilde B_{1,k}\}_{k}\subset \SNH$ with  radii $R_{1,k}\in [0, \tfrac{1}{4}\tilde R_0]$  so that 
 \begin{equation}\label{e:noloop}
 {\bigcup_{t=\ti_0(\tfrac{1}{5})}^{T_{_{\! B_{0,i}}}} \varphi_t(\Lambda^\tau_{B_{0,i} \backslash\cup_k\tilde{B}_{1,k}}(r))\;\bigcap \;\Lambda_{{\SNH}\backslash\cup_k\tilde{B}_{1,k}}^\tau(r)=\emptyset,}
 \end{equation}
 \begin{equation}\label{e:noloop-radius}
  \sum_k \tilde R_{1,k}^{n-1}\leq  \frac{\e_1}{2} \tilde R_0^{n-1} \qquad \text{and}\qquad {\inf_k}\tilde R_{1,k} \geq e^{-\Decay\Lambda T_0}\tilde R_{0}.
 \end{equation}
 
 In the case in which $T_{_{\! B_{0,i}}} \leq  1$ we will not attempt to control $B_{0,i}$ for times smaller than $T_{_{\! B_{0,i}}}$. Indeed, we will set $t_0=1$, interpret \eqref{e:noloop} and \eqref{e:noloop-radius} as empty statements, and define every ball $\tilde B_{1,k}$ as the empty set.
 
 We now set $\e_0=\frac{1}{10}$ so that $\e_1\geq \frac{1}{5}$.\\
 
 {\noindent \bf Control of $ B_{0,i}$ after time $T_{_{\! B_{0,i}}}$.} Set $A:=\bigcup_{i=1}^N B_{0,i}$. 
Next, suppose that  $\rho \in B_{0,i}$ and $t\geq T_{_{\! B_{0,i}}}$ are so that $d(\varphi_t(\rho),A)\leq \tilde c \,e^{-2\Lambda|t|}$ where
\[\tilde c:=\min\Big\{\tfrac{1}{3}\big[\e c_{_{\!H}}\big]^{\frac{1}{\nu}},  \delta_0, \delta_F \Big\},\]
 with $\delta_F$ defined in~\eqref{e:defFunction},  $\delta_0$ defined in \eqref{e:F}, and $c_{_{\!H}}$ defined in \eqref{e:angle}.     

Since by \eqref{e:digamma} the parameter $\digamma$ is chosen so that $\tfrac{1}{\digamma} \leq \min\{ {\frac{\e^{\frac{1}{\nu}}}{C_{_{\!\Theta}}^{\frac{1}{\nu}}}},\frac{1}{11}\}$ and $\tilde{R}_0<\frac{1}{\digamma}e^{-\digamma\Lambda T_0}$, we have  {$\tilde R_0 \leq  {\frac{\e^{{\frac{1}{\nu}}}}{C_{_{\!\Theta}}^{{\frac{1}{\nu}}}}}\, e^{-{\frac{2}{\nu}} \Lambda T_0}.$} Thus, {using~\eqref{e:thetaChange}}, {$L\geq {1}$}, and that $\rho \in B_{0,i}$, there exists $\w \in T_\rho \SNH$ for which 
\[
{\Theta}^+_{\varphi_{_{\!T_{_{\! B_{0,i}}}}}\!\!(\rho)}(d\varphi_{_{\!T_{_{\! B_{0,i}}}}} \!\!\w) \geq 4L\e.
\]
It then follows by the definition of $L$ that, if $t=T_{_{\! B_{0,i}}} +s$ for some $s> 0$, then
$
 {\Theta}^+_{\varphi_t(\rho)}(d\varphi_t \w)
 = {\Theta}^+_{\varphi_s( \varphi_{_{\!T_{_{\! B_{0,i}}}}}( \rho))}(d\varphi_s (d\varphi_{_{\!T_{_{\! B_{0,i}}}}} \!\!\w))
\geq  \tfrac{1}{L}{\Theta}^+_{\varphi_{_{\!T_{_{\! B_{0,i}}}}}\!\!(\rho)}(d\varphi_{_{\!T_{_{\! B_{0,i}}}}} \!\!\w) \geq  4\e.
$
In particular, 
\begin{equation}\label{e:theta1}
{\Theta}^+_{\varphi_t(\rho)}(d\varphi_t \w +r H_p) \geq 4\e \qquad \text{for all}\;\; r \in \R.
\end{equation}

In addition, we note that 
\begin{equation}\label{e:theta2}
{\Theta}^+_{\varphi_t(\rho)}({\bf v}) \leq 2\e\qquad \text{for all}\;\;{\bf v} \in T_{\varphi_t(\rho)} \mathcal F_{\varphi_t(\rho)}.
\end{equation}
Indeed, this follows from the estimate in \eqref{e:angle} together with the facts that  $\Theta^+_\rho\leq \e$,  $B_{0,i}$ is a ball with radius $\tilde R_0$ and center in $E$, and
\[
d(\varphi_t(\rho),\rho)\leq d(\varphi_t(\rho),A) + \text{diam}(E) +\tilde R_0  \leq  \tilde c \,e^{-2\Lambda|t|} + 2(\tfrac{3}{4}) \delta_\e +\tfrac{1}{\digamma}  \leq [\e c_{_{\!H}}]^{\frac{1}{\nu}}.
\]
We have also used that $\tilde c \leq \tfrac{1}{3} [\e c_{_{\!H}}]^{\frac{1}{\nu}}$, $\delta_\e  \leq \tfrac{2}{9}[\e c_{_{\!H}}]^{\frac{1}{\nu}}$,  and  $\tfrac{1}{\digamma} \leq \frac{1}{3}[\e c_{_{\!H}}]^{\frac{1}{\nu}}$ by \eqref{e:digamma}.

 \renewcommand{\cm}{c_{_{\!M,g}}}
 From \eqref{e:theta1} and \eqref{e:theta2} it follows that for all $r\in \R$ and $(\rho, t) \in B_{0,i}\times [T_{_{\! B_{0,i}}}, \infty)$ with $d(\varphi_t(\rho),A)\leq \tilde c \,e^{-2\Lambda|t|}$ we have
 \[
 \inf \{|{\Theta}^+_{\varphi_t(\rho)}(d\varphi_t \w+ r H_p) -{\Theta}^+_{\varphi_t(\rho)}({\bf v})| : {\bf v} \in T_{\varphi_t(\rho)} \mathcal F_{\varphi_t(\rho)}\} \geq 2\e{\|\w\|}.
 \]
 Moreover, we claim that {there is $c_{_{\!M,g}}>0$ depending only on $(M,g)$ so that}
 \begin{equation} 
 \label{e:aardvark}
 \|d\varphi_t \w+{\bf v}\|  \geq  {\frac{\e\cm}{2 C_{\varphi}}}\,e^{-\Lambda t}\|\w\|,
 \end{equation}
 for all ${\bf v} \in T_{\varphi_t(\rho)} \mathcal F_{\varphi_t(\rho)} \oplus \R H_p$.
 
 {To see this, first observe that by continuity of $E_{\pm}$ and the fact that $E_{+}\cap E_-=\{0\}$, there exists $c_{_{\!M,g}}>0$ depending only on $(M,g)$ so that for all ${\bf{v}}\in T\TM$ }
 {\begin{equation}\label{e:normCompare}
 \cm(\|\pi_+{\bf{v}}\|+\|\pi_-{\bf{v}}\|)\leq \|{\bf{v}}\|\leq \|\pi_+{\bf{v}}\|+\|\pi_-{\bf{v}}\|.
 \end{equation}} 
 Next, suppose that $\|\pi_+{\bf{v}}\|<\frac{3}{2}\|\pi_+d\varphi_t \w\|$. Then, by{~\eqref{e:theta1}, ~\eqref{e:theta2}}, and {~\eqref{e:normCompare}} 
 {\begin{align*}
 \|d\varphi_t \w+{\bf v}\|&\geq \cm( \|\pi_-d\varphi_t \w\|-\|\pi_-{\bf v}\|)\\
 &\geq \cm( 4\e \|\pi_+d\varphi_t \w\|-2\e\|\pi_+{\bf v}\|)\geq \cm\e\|\pi_+d\varphi_t{\bf{w}}\|.
 \end{align*}}
 On the other hand, assuming that $\e \leq \frac{1}{2}$ we have $\|\pi_+{\bf{v}}\|\geq \frac{3}{2}\|\pi_+d\varphi_t \w\|$, then $$
  \|d\varphi_t \w+{\bf v}\|\geq  \cm(\|\pi_+ {\bf v}\|-\|\pi_+d\varphi_t\w\|)\geq \cm{\tfrac{1}{2}}\|\pi_+d\varphi_t \w\|{\geq \cm {\e}\|\pi_+d\varphi_t \w\|}.
 $$
 Also, note that 
 $$
\|\pi_+d\varphi_t\w\|=\|d\varphi_t\pi_+ \w\|\geq {\tfrac{1}{C_\varphi}}e^{-\Lambda|t|}\|\pi_+\w\|,
 $$
 and
 {$$
 \|\w\|\leq \|\pi_+\w\|+\|\pi_-\w\| \leq (1+\Theta_\rho^+(\w))\|\pi_+\w\|\leq (1+\e)\|\pi_+\w\|.
 $$}
 The proof of \eqref{e:aardvark} follows from noticing that $\frac{\e}{1+\e}\geq \frac{\e}{2}$ since  $\e<1$.

 Since $d(\varphi_t(\rho),A)\leq\tilde c \,e^{-2\Lambda|t|}\leq   \delta_0$, we conclude by \eqref{e:F} and  \eqref{e:aardvark} that for all $s \in \R$
  \begin{align*}
 \|dF(d\varphi_t \w+sH_p)\|  
 &\geq  c_0 \inf\{ \|d\varphi_t \w+{\bf v}\|: {\bf v} \in  T_{\varphi_t(\rho)} \mathcal F_{\varphi_t(\rho)} \oplus \R H_p\} \\
 &\geq  { \frac{c_0\,\e\, \cm }{2C_\varphi}} e^{-\Lambda t} \|\w\|.
 \end{align*}
This means that if $\psi=F \circ \varphi_t$, then $d\psi(t,\rho)$ has a left inverse $L_{(t, \rho)}$ when restricted to $\R\partial_t \oplus \R \w$ with $\|L_{(t, \rho)}\| \leq {\frac{2C_\varphi }{c_0\, \e\, {\cm} }}e^{t\Lambda}$.

In particular, for any $t\geq T_{_{\! B_{0,i}}}$ so that $d(\varphi_t(\rho),A)\leq \tilde c\, e^{-2\Lambda|t|},$ the hypotheses of Proposition~\ref{p:ballCover} apply to the set $A$ with $t_0=T_{_{\! B_{0,i}}}$,  $B=B_{0,i}$, $R=\tilde R_0$, $\beta =\Lambda$, and $c=c_0\, C_{_{\!M,g}}\, \e^{{-1}}$, {$a=2\Lambda$}.
 Fix $0<\tilde r_0< \tilde R_0$ and $0<r \leq \tfrac{1}{ \digamma} e^{- \digamma \Lambda T_0} \tilde r_0 $. Let 
\[
\tilde r_2:= \max \Big\{ {6 e^{\Lambda T_0}r}, \; \tfrac{4}{\alpha_1} r,\; \tfrac{4}{\alpha_1}e^{-\Decay\Lambda T_0}\tilde R_0 \Big\},
\]
and note by the definition \eqref{e:digamma} of $\digamma$ we have
\[
\tilde r_2 < \min \Big\{ \tilde R_0,\; \alpha_2 e^{-{5}\Lambda T_0},\; \tfrac{1}{10 {\bf{C}_0} }e^{-10 \Lambda T_0}\Big\}.
\]
This can be done since $T_0>1$ and $e^{-\Decay \Lambda}<  \tfrac{\alpha_1}{4}$  by the definition \eqref{e:D} of $\Decay$.     

Setting $\tilde r_1:= \max\{2r, e^{-\Decay \Lambda T_0}\}$ we have
 \[
 r<\tilde r_1,\qquad 
 \tilde r_1< \alpha_1\, \tilde r_2, \qquad 
 \tilde r_2 \leq \min\{\tilde R_0,\,\alpha_2\, e^{-{{5} \Lambda} T_0}\},\qquad  
 r < {\tfrac{1}{3}} e^{-\Lambda T_0}\tilde  r_2.
 \]
Therefore, we may apply Proposition~\ref{p:ballCover} to the cover $\{\Lambda_{\rho_j}^\tau(\tilde  r_1)\}_{j \in \mathcal E_{_{\!B_{0,i}}}}$ of $\Lambda_{_{\!B_{0,i}}}^\tau(r)$ where
\begin{equation}
\label{e:ei3}
\mc{E}_{_{\!B_{0,i}}}:=\{j: \Lambda_{\rho_j}^\tau(\tilde r_1)\cap \Lambda_{_{\!B_{0,i}}}^\tau(r)\neq \emptyset\}.
\end{equation}
Then, there is a partition  $\mc{E}_{_{\! B_{0,i}}}=\mc{G}_{_{\!B_{0,i}}}\cup \mc{B}_{_{\!B_{0,i}}}$ with 
\begin{equation}\label{e:badballs0i}
|\mc{B}_{_{\!B_{0,i}}}|\leq {\bf{C}_0} \;\tilde r_2\frac{R_0^{n-1}}{\tilde r_1^{n-1}}\;  T_0e^{{8}\Lambda T_0},
\end{equation}
and so that
\begin{equation}\label{e:noloop2}
 \bigcup_{t=T_{_{\! B_{0,i}}}}^{T_0} \varphi_t\Big(\Lambda^\tau_{B_{0,i}}(r) \backslash \bigcup_{j\in \mc{B}_{_{\!B_{0,i}}}}\!\!\!\Lambda^\tau_{\rho_j}(\tilde  r_1) \Big)\;\bigcap \;\Lambda_{A}^\tau(r)=\emptyset.
\end{equation}
Here ${\bf{C}_{0}}$ coincides with the positive constant used in the definition \eqref{e:digamma} of $\digamma$.
Combining \eqref{e:noloop} with \eqref{e:noloop2}, and using that $E \subset A$ and $0<r< \frac{1}{\digamma} e^{-\digamma \Lambda T_0}\tilde r_0$, we obtain
\begin{equation}\label{e:noloop3}
 \bigcup_{t=t_0}^{T_0} \varphi_t\Big(\Lambda^\tau_{B_{0,i} \backslash\cup_k\tilde{B}_{1,k}}(r) \backslash \bigcup_{j\in \mc{B}_{_{\!B_{0,i}}}}\!\!\!\Lambda^\tau_{\rho_j}(\tilde  r_1) \Big)\;\bigcap \;\Lambda_{E\backslash\cup_k\tilde{B}_{1,k}}^\tau(r)=\emptyset,
\end{equation}
In particular, there are balls $\{\tilde{B}_{2,j}\}_j$ with radii $R_{2,j}=\tilde r_1$ so that 
$$
\bigcup_{t=t_0}^{T_0} \varphi_t(\Lambda^\tau_{B_{0,i}\backslash [\cup_{k,j}\tilde{B}_{1,k}\cup \tilde{B}_{2,j}]}(r))\cap \Lambda_{E\backslash\cup_k\tilde{B}_{1,k}}^\tau(r)=\emptyset.
$$
In addition, 
\begin{equation}\label{e:noloop-radius2}
\sum_j R_{2,j}^{n-1}\leq {\bf{C}_0} \tilde r_2R_0^{n-1}\;  T_0e^{{8}\Lambda T_0} \leq \frac{\e_1}{2}R_0^{n-1},
\end{equation}
 where the first inequality is due to \eqref{e:badballs0i} and the second one is a consequence of the fact that  $\tilde r_2<  \tfrac{1}{10 {\bf{C}_0}} e^{-9 \Lambda T_0}$ and $\frac{\e_1}{2} \geq \tfrac{1}{10}$.\\

Repeating this argument with $B_{0,i}$ for every $i\in\{1, \dots, N\}$ we conclude that there exist balls $\tilde B_{\ell}$ of radius $R_{\ell}$ centered in $E$ so that
 \begin{equation}\label{e:conclusion1}
 {\Lambda^\tau_{E\setminus \cup_\ell \tilde B_{\ell}}(r)\;\;\text{ is }\;\;[t_0(\tfrac{1}{5}),T_0]\text{ non-self looping}}.
 \end{equation}
{Note that $R_\ell= \tilde r_1  \in [0, \tfrac{1}{4}\tilde R_0]$ since  $\tilde r_1=\max\{2r, e^{-\Decay \Lambda T_0}\tilde R_0\}$ while $2r\leq \tfrac{2}{\digamma}\tilde r_0 \leq \tfrac{2}{11} \tilde r_0 \leq \tfrac{1}{4} \tilde R_0$ and   $e^{-\Decay \Lambda}< \tfrac{1}{4}$  by the definition \eqref{e:D} of $\Decay$. }
Also, by \eqref{e:noloop-radius} and \eqref{e:noloop-radius2},
 \begin{equation}\label{e:conclusion2}
 \sum_\ell R_\ell^{n-1}\leq \sum_{i=1}^N \Big(\sum_{k}R_{1,k}^{n-1}+\sum_{j}R_{2,j}^{n-1}  \Big)   \leq  \e_1 \sum_{i=1}^N R_0^{n-1}.
 \end{equation}

Finally, since $R_{1,k}\geq e^{-\Decay\Lambda T_0}R_0$ for all $k$ and {$R_{2,j}=\tilde r_1 \geq e^{-\Decay\Lambda T_0}\tilde R_0$} for all $j$,
 \begin{equation}\label{e:conclusion3}
 R_\ell \geq e^{-\Decay\Lambda T_0}R_0.
 \end{equation}
Relations \eqref{e:conclusion1},  \eqref{e:conclusion2} and  \eqref{e:conclusion3} show that $E$ can be $(\tfrac{1}{5}, \digamma)$-controlled up to time $T_0$ as claimed in \eqref{e:Bcontrol}.

\begin{center}
{\underline{\bf Constructing the complete cover}}
\end{center}

We now partition $\{\rho_j\}_{j=1}^{N_{r_1}}$. Let $t_0=\max\{t_1, \ti_2(\tfrac{1}{5})\}$ where $t_1$ is defined in \eqref{e:t0def} and $t_{2}$ is defined in~\eqref{e:Bcontrol}. By \eqref{e:badD} and \eqref{e:goodD}, for each $i\in \mc{I}_K$ we have constructed a partition $\mc{E}_{_{\!D_i}}=\mc{G}_{_{\!D_i}}\cup \mc{B}_{_{\!D_i}}$ of 
$\mc{E}_{_{\!D_i}}=\{j: \Lambda_{\rho_j}^\tau(r_1)\cap \Lambda_{_{\!{D_i}}}^\tau(r_0)\neq \emptyset\}$
 where 
\begin{equation}\label{e:bothD}
|\mc{B}_{_{\!D_i}}|\leq {{\bf{C}_0}}  \;\frac{{R}^{n-1}}{{r_1^{n-2}}}\;  T_0e^{{4\Lambda}T_0}
\quad\text{and}\quad
\bigcup_{j\in \mc{G}_{_{\!D_i}}}\Lambda^\tau_{\rho_j}(r_1)\;\text{is}\; [t_0,T_0]\text{ non-self looping}.
\end{equation}

Moreover, by \eqref{e:Bballs}, for each $i\in \mc{I}_{\mc{S}_H}$ and  $m>0$ integer we have constructed a partition  of 
$\mc{E}_{_{\!B_i}}=\{j: \Lambda_{\rho_j}^\tau(r_1)\cap \Lambda_{_{\!{B_i}}}^\tau(r_0)\neq \emptyset\}$
 by sets $\{\mc{G}_{_{\!B_i,\ell}}\}_{\ell =0}^m\subset \{1,\dots N_{r_1}\}$, $\mc{B}_{_{\!B_i}}\subset \{1,\dots N_{r_1}\}$ satisfying
\begin{align}\label{e:Bboth}
 \mc{E}_{_{\!B_i}}\subset \mc{B}_{_{\!B_i}}\cup \displaystyle\bigcup_{\ell =0}^m\mc{G}_{_{\!B_i,\ell}},\qquad 
 \bigcup_{{j}\in \mc{G}_{_{\!B_i,\ell}}}\Lambda_{\rho_{{j}}}^\tau(r_1)\text{ is } [t_0,2^{-\ell}T_0]\text{ non-self looping},  \notag\\
 |\mc{G}_{_{\!B_i,\ell}}|\leq C_{_{M,p}}\frac{\delta_\e^{n-1}}{5^{\ell}}\frac{1}{r_1^{n-1} }\qquad \text{ and }
 \qquad|\mc{B}_{_{\!B_i}}|\leq C_{_{M,p}} \frac{\delta_\e^{n-1}}{5^{m+1}}\frac{1}{r_1^{n-1} }. 
 \end{align}

Next, define 
\[
m:=\Big\lfloor \frac{\log T_0- \log {t_0}}{\log 2} \Big\rfloor \qquad  \text{and}\qquad \mc{B}:=\bigcup_{i \in I_{K}} \mc{B}_{_{\!D_i}} \cup \bigcup_{i \in \mc{I}_{\mc{S}_H}} \mc{B}_{_{\!B_i}}.
\]
 For each  $i\in \mc{I}_K$ set $\mc{G}_{{i,0}}:=\mc{G}_{_{\!D_i}}$ and {$\mc{G}_{i,\ell}:=\mc{G}_{_{\!B_{i,\ell-1}}}$} for $\ell \geq 1$. Then, there exists {$I<\infty$}, depending only on $(M,H,p)$, so that after relabelling  the indices $i \in I_{K} \cup I_{\mc{S}_H}$ there  are sets   $\{\mc{G}_{i,\ell}:\; 1\leq \ell \leq m, \; 1\leq i\leq I\}$ so that
\begin{gather*}
\bigcup_{i=1}^I\bigcup_{\ell=1}^m \mc{G}_{i,\ell}\cup \mc{B}= \{1,\dots N_{r_1}\},\qquad
 \bigcup_{j\in \mc{G}_{i,\ell}}\Lambda_{\rho_j}^\tau(r_1)\text{ is } [t_0,2^{-\ell}T_0]\text{ non-self looping}.  
 \end{gather*}
In addition, there exists $c>0$, which  may change from line to line, so that 
 \begin{align*}
  |\mc{B}|
  &\leq c\, r_1^{1-n} \Big( |I_{K}| \;r_1{{R}^{n-1}}\;  T_0e^{{4\Lambda}T_0}  +    |I_{\mc{S}_H}|\frac{\delta_\e^{n-1}}{5^{m+1}}\Big)  \\
  & \leq  c\, r_1^{1-n} \Big( r_1  T_0e^{{4\Lambda}T_0}  +  {\Big(\frac{t_0}{T_0}\Big)^{\frac{\log 5}{\log 2}}}\Big).
  \end{align*}
Here, we have used that $|\mc{I}_{K}| \leq  c\,R^{-(n-1)}$ and $|\mc{I}_{\mc{S}_H}| \leq  c\,\delta_\e^{-(n-1)}$. Since $r_1 \leq e^{-CT_0}$ and we may enlarge $C$ so that $C>4\Lambda +1 +\log 5$, we conclude that 
  \[
   |\mc{B}| \leq  c\, {\Big(\frac{t_0}{T_0}\Big)^{\frac{\log 5}{\log 2}}}r_1^{1-n},
   \]
    as claimed.  In addition, note that $|\mc{G}_{_{\!D_i}}|\leq |\mc{E}_{_{\!D_i}}|\leq  c\, R^{n-1}{r_1^{-(n-1)}}$ for each $i \in  \mc{I}_{K}$ . Therefore, since $R\leq 1$ and $\delta_\e\leq 1$, for all $\ell\in \{1, \dots, m\}$ and all $i \in \{1,\dots, L\}$
 \[|\mc{G}_{i,\ell}|\leq  c\, \frac{1}{5^{\ell}}\, r_1^{1-n}.\]
 
{ Finally, we note that by construction  the constants $c=c(M,g,H)$ and $C=C(M,g,H)$  are uniform for for $H$ varying in a small neighborhood of a fixed submanifold $H_0 \subset M$.}
\end{proof}

%%%%%%%%%%%%%%%%%%%%%%%%%%%%%%%%%%%%%%%%%%%%%%%%%%%%%%%%%%%%%%%%%%%%%%%%%%%%%%%%
\begin{lemma}
\label{l:noFocalTubes}
Suppose that $(M,g)$ has no focal points and $\mc{S}_H=\emptyset.$
Then, the conclusions of Lemma \ref{l:anosov} hold.
\end{lemma}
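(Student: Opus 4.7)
The plan is to mimic the "treatment of $D\in\{D_i\}_{i\in\mc{I}_K}$" portion of the proof of Lemma~\ref{l:anosov}; the main observation is that since $\mc{S}_H=\emptyset$, the entire construction near $\mc{S}_H$ (the balls $B_i$ and the invocation of Lemma~\ref{l:nonlooping2}) is vacuous, and only the uniform left-inverse argument is needed. Because Lemma~\ref{P:1} is valid in both the Anosov and the no-focal-points settings, the compactness of $\SNH$ together with $\SNH\cap \mc{S}_H=\emptyset$ lets us take $K:=\SNH$ and obtain constants $c_{_{\!K}},t_{_{\!K}},\delta_{_{\!K}}>0$ such that whenever $\rho\in\SNH$ and $|t|\geq t_{_{\!K}}$ satisfy $\varphi_t(\rho)\in \overline{B(\rho,\delta_{_{\!K}})}$, there exists $\w=\w(t,\rho)\in T_\rho(\SNH)$ with
\[
\inf\big\{\|d\varphi_t\w+\mathbf{v}\|:\mathbf{v}\in T_{\varphi_t(\rho)}\mc{F}_{\!\varphi_t(\rho)}+\re H_p\big\}\geq c_{_{\!K}}\|\w\|.
\]

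Combining this with the defining-function estimate~\eqref{e:F}, the argument in the $D$-case of Lemma~\ref{l:anosov} shows verbatim that $d\psi_{(t,\rho)}$ restricted to $\re\partial_t\oplus \re\w$ admits a left inverse bounded by $(c_0 c_{_{\!K}})^{-1}$, uniformly in $(t,\rho)$. In the language of Proposition~\ref{p:ballCover} this means we may take $a=0$, $\beta=0$, $c=(c_0 c_{_{\!K}})^{-1}$, and $\tilde c=R/4$ where $R:=\min\{\delta_{_{\!K}},\delta_0,\delta_F\}$ depends only on $(M,g,H)$. Cover $\SNH$ by finitely many balls $\{D_i\}_{i=1}^I$ of radius $R/4$, set $t_0:=\max\{1,t_{_{\!K}}\}$, pick $C=C(M,g,H)>0$ satisfying
\[
\frac{1}{C}<\min\Big\{\tfrac{1}{2},\tfrac{1}{3\alpha_1}\Big\},\qquad e^{-CT_0}\leq \min\Big\{\tfrac{1}{8}\alpha_1 R,\ \tfrac{1}{2}\alpha_1\alpha_2 e^{-3\Lambda T_0}\Big\},\qquad C>4\Lambda+1+\log 5,
\]
and apply Proposition~\ref{p:ballCover} with $r_2:=(2/\alpha_1)r_1$ to the sub-cover $\{\Lambda_{\rho_j}^\tau(r_1)\}_{j\in\mc{E}_{D_i}}$ of $\Lambda_{D_i}^\tau(r_0)$, where $\mc{E}_{D_i}:=\{j:\Lambda_{\rho_j}^\tau(r_1)\cap\Lambda_{D_i}^\tau(r_0)\neq\emptyset\}$. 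This yields, for every $i$, a partition $\mc{E}_{D_i}=\mc{G}_{D_i}\cup \mc{B}_{D_i}$ with $\bigcup_{j\in\mc{G}_{D_i}}\Lambda_{\rho_j}^\tau(r_1)$ being $[t_0,T_0]$ non-self looping and
\[
|\mc{B}_{D_i}|\leq {\bf C}_0\,r_1\,\frac{R^{n-1}}{r_1^{n-1}}\,T_0\,e^{4\Lambda T_0}.
\]

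To assemble the cover in the form demanded by Lemma~\ref{l:anosov}, set $\mc{G}_{i,0}:=\mc{G}_{D_i}$, $\mc{G}_{i,\ell}:=\emptyset$ for $1\leq \ell\leq m:=\lfloor(\log T_0-\log t_0)/\log 2\rfloor$, and $\mc{B}:=\bigcup_{i=1}^I\mc{B}_{D_i}$. The $[t_0,T_0]$ non-self looping property implies $[t_0,2^{-\ell}T_0]$ non-self looping for every $\ell\geq 0$. The bound $|\mc{G}_{i,\ell}|\leq c\,5^{-\ell}r_1^{1-n}$ is trivial for $\ell\geq 1$, while for $\ell=0$ it follows from the volumetric bound $|\mc{G}_{i,0}|\leq |\mc{E}_{D_i}|\leq c\,R^{n-1}r_1^{1-n}$. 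Summing $|\mc{B}_{D_i}|$ over the finitely many indices $i$ and invoking $r_1\leq e^{-CT_0}$ with $C>4\Lambda+\log 5$ produces $|\mc{B}|\leq c\,e^{-cT_0}r_1^{1-n}$. Since each of the constants $c_{_{\!K}},t_{_{\!K}},\delta_{_{\!K}},R,I$ depends only on $(M,g,H)$, the final $c,C$ do as well, and the uniformity in $H$ alluded to after Lemma~\ref{l:anosov} is preserved. The potential obstacle\,---\,namely that in the no-focal-points case $E_+\cap E_-$ need not be trivial and there is no uniform exponential contraction on $E_\pm$\,---\,does not arise here, as all its effects were absorbed into Lemma~\ref{P:1} at the level of the left-inverse construction.
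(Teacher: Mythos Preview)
Your proof is correct and follows essentially the same approach as the paper. The paper's own argument is terser, simply taking $K=\SNH$, invoking Lemma~\ref{P:1}, covering $\SNH$ by finitely many balls, and then declaring that ``the remainder of the proof is identical to that in the Anosov case since $\mc{S}_H=\emptyset$''; you have spelled out exactly that $D$-case argument and the assembly of the $\mc{G}_{i,\ell}$ and $\mc{B}$ in the required format.
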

%%%%%%%%%%%%%%%%%%%%%%%%%%%%%%%%%%%%%%%%%%%%%%%%%%%%%%%%%%%%%%%%%%%%%%%%%%%%%%%%
\begin{proof}
Since  $\SNH$ is compact  by Lemma~\ref{P:1}  there exist positive constants $c_{_{\!K}},t_{_{\!K}},\delta_{_{\!K}}$ so that if $\rho \in K$ and $\varphi_t(\rho) \in \overline{ B(\rho, \delta_{_{\!K}})}$ for some $|t|>t_{_{\!K}}$, then there exists $\w=\w(t,\rho) \in T_{\rho}(\SNH)$ so that 
\begin{equation}%\label{e:lb0}
\inf\{\|d\varphi_{t} (\mathbf{\w})+{\bf v}\|:\; {\bf v} \in  {T_{\varphi_{t}(\rho)}}\mc{F}_{\!\varphi_{t}(\rho)} \oplus \R H_p\}\geq  c_{_{\!K}}\|\mathbf{\w}\|.
\end{equation}

Cover $\SNH$ with finitely many balls $\{D_i\}_{i \in I}\subset \SNH$ of radius equal to $\delta_{_{\!K}}$. The remainder of the proof of this lemma is identical to that in the Anosov case since $\mc{S}_H=\emptyset$ implies that $D_i\cap \mc{S}_H=\emptyset$ {for all $i$}.
\end{proof}
%%%%%%%%%%%%%%%%%%%%%%%%%%%%%%%%%%%%%%%%%%%%%%%%%%%%%%%%%%%%%%%%%%%%%%%%%%%%%%%%

%%%%%%%%%%%%%%%%%%%%%%%%%%%%%%%%%%%%%%%%%%%%%%%%%%%%%%%%%%%%%%%%%%%%%%%%%%%%%%%%
\subsection{Proof of Theorem~\ref{T:tangentSpace}}
We first apply Lemma \ref{l:anosov} when $(M,g)$ has Anosov geodesic flow, or Lemma \ref{l:noFocalTubes} when $(M,g)$ has no focal points. Let $c>0$, $C>2$, {$I>0$}, $t_0>1$ be the constants whose existence is given by the lemmas.  Then,  let $\Lambda>\Lambda_{\text{max}}$, $0<\tau_0<\tau_{_{\!\SigH}}$, $0<\tau<\tau_0$, 
\[
0<\e<\tfrac{1}{2}, \qquad 0<a<\tfrac{1-2\e}{\e}, \qquad \tilde c\geq \max\{C, \tfrac{\Lambda_{\text{max}}}{a}\},\qquad  \e \big(1+ \tfrac{\Lambda}{\tilde c} \big)<\delta<\tfrac{1}{2},
\]
\[
T_0(h)= \tfrac{\e}{\tilde c}\log h^{-1},\qquad r_1(h)=h^\e,  \qquad r_0(h)= h^\delta,
\]
 and let $\{\Lambda_{\rho_j}^\tau(h^\e)\}_{j=1}^{N_{h^\e}}$ be {a $({\mathfrak{D}_n},\tau, h^\e)$-good cover of $\SNH$. }
  Then, since $\tilde c \geq C$, Lemmas  \ref{l:anosov} and   \ref{l:noFocalTubes} give that  for each  $i\in\{1,\dots, I\}$, and 
  $$
  m:= \Big\lfloor \frac{\log T_0(h)- \log t_0}{\log 2} \Big\rfloor,
  $$
  there are sets of indices $\{\mc{G}_{i,\ell}\}_{\ell=0}^m\subset \{1,\dots, N_{h^\e}\}$ and $\mc{B}\subset \{1,\dots, N_{h^\e}\}$ so that 
$$
\bigcup_{i=1}^I\bigcup_{\ell=0}^m\mc{G}_{i,\ell}\cup \mc{B}= \{1,\dots, N_{h^\e}\},
$$
and for every $i\in\{1,\dots, I\}$ and every $\ell\in \{0, \dots, m\}$

\[
\bigcup_{j\in \mc{G}_{i,\ell}}\Lambda_{\rho_j}^\tau(h^\e)\;\;\text{is}\;\;[t_0,2^{-\ell}T_0(h)]\;\;\text{ non-self looping},
\] 
\[ 
|\mc{G}_{i,\ell}|\leq c \, 5^{-\ell}\, h^{\e(1-n)},\qquad \qquad  |\mc{B}|\leq c \,{\Big(\frac{\tilde{c}}{\e\log h^{-1}}\Big)^{\frac{\log 5}{\log 2}}}\,  h^{\e(1-n)}.
\]
Next, we apply Theorem~\ref{t:coverToEstimate} with $R(h)=h^\e$, $\alpha=a \e$, $t_\ell(h)=t_0$ for all $\ell$, $T_\ell(h)=2^{-\ell}T_0(h)$ for all $\ell$. Note that $R_0>R(h)\geq 5h^\delta$ for $h$ small enough since $\delta>\e$, and that $\alpha<1-2\e$ as needed. In addition,  $T_\ell(h) \leq 2\alpha T_e(h)$ since $\tilde c \geq  \tfrac{\Lambda_{\text{max}}}{a}$. It follows that {there exists $C>0$, and for all $N>0$ there exists $C_{_{\!N}}$ so that}
 \begin{align*}
&h^{\frac{k-1}{2}}\Big|\int_H w ud\sigma_H\Big|\\
&\qquad \leq C\|w\|_{_{\!\infty}}\Big(\Big[{\Big(\frac{\tilde{c}}{\e\log h^{-1}}\Big)^{\frac{\log 5}{2\log 2}}}+\tfrac{1}{\sqrt{\log h^{-1}}}\sum_\ell (\tfrac{2}{5})^{\frac{\ell}{2}}\Big]\|u\|_{\LM}+\tfrac{\sqrt{\log h^{-1}}}{h}\sum_\ell (\tfrac{1}{10})^{\frac{\ell}{2}}\|Pu\|_{\LM}\Big)\\
&\qquad \quad+Ch^{-1}\|w\|_\infty \|Pu\|_{{\Hs{\frac{k+1}{2}}}} +{C_{_{\!N}}h^N\big(\|u\|_{\LM}+\|Pu\|_{{\Hs{\frac{k+1}{2}}}}\big)}, 
\end{align*}
which gives the desired result after choosing $h_0$ to be small enough.
{We  note that if $H_0 \subset M$,  there is a neighborhood $U$ of $H_0$ (in the $C^\infty$ topology) so that the constants $C$, {$C_{_{\!N}}$} and $h_0$  are uniform over $H\in U$, $w$ taken in a bounded subset of $C_c^\infty$, {and $N$ bounded above}. }
\qed
%%%%%%%%%%%%%%%%%%%%%%%%%%%%%%%%%%%%%%%%%%%%%%%%%%%%%%%%%%%%%%%%%%%%%%%%%%%%%%%%

%%%%%%%%%%%%%%%%%%%%%%%%%%%%%%%%%%%%%%%%%%%%%%%%%%%%%%%%%%%%%%%%%%%%%%%%%%%%%%%%
\subsection{Proof of Theorem~\ref{T:applications}}
We have already proved {Theorem}~{\ref{a1}} in Theorem~\ref{t:noConj1}. For {Theorem}~{\ref{a3}},  {Theorem}~{\ref{a4}}, {Theorem}~{\ref{a5}} we refer the reader to~\cite[Section 5.4]{CG17} where it is shown that either $\mc{A}_H=\emptyset$ in  {Theorem}~{\ref{a3}}, $\mc{S}_H=\emptyset$ in {Theorem}~{\ref{a4}},  and $\mc{A}_H=\emptyset$ in {Theorem}~{\ref{a5}}.  Therefore, Theorem \ref{T:tangentSpace} can be applied to all these setups yielding the desired conclusions.

\begin{proof}[Proof of {Theorem}~{\ref{a2}}]
Let $H$ be a geodesic sphere. Then, $H= \pi(\varphi_s(S_x^*M))$ for some $x\in M$ and $s>0$. 
Next, we observe, using that $(M,g)$ has no conjugate points,  the proof of Theorem~\ref{t:noConj1} (when the submanifold is the point $\{x\}$) yields the existence of a cover  for $S_x^*M$, with some choices of $(R(h), t_\ell(h),T_\ell(h))$, so that Theorem~\ref{t:coverToEstimate} implies the outcome in Theorem~\ref{t:noConj1} (which coincides with that of Theorem~\ref{T:applications}). Then, since $\varphi_s(S_x^*M)=\SNH$, the result follows  from flowing out the cover for time $s$ to obtain a cover for $\SNH$. This cover will have the same desired properties as the original one, but  possibly with $R(h)$ replaced by $m_sR(h)$ for some $m_s>0$ independent of $h$.   The result follows from applying Theorem~\ref{t:coverToEstimate} to the new cover.
\end{proof}

\begin{remark}
This proof in fact shows that there is a certain invariance of estimates under fixed time geodesic flow. That is, if one uses Theorem~\ref{t:coverToEstimate} to conclude an estimate on $H$, then essentially the same estimate will hold on $\pi\varphi_s(\SNH)$ for any $s\in \re$ independent of $h$ provided that $\pi\varphi_s(\SNH)$ is a finite union of submanifolds of codimension $k$ for some $k$.
\end{remark}

\begin{proof}[Proof of {Theorem}~{\ref{a6}}] 
For this part we assume that $(M,g)$ has Anosov geodesic flow, non-positive curvature, and $H$ is a submanifold with codimension $k>1$. We will prove that $\mc{A_H}=\emptyset$, and by Theorem \ref{T:tangentSpace} this will imply the desired conclusion. {In what follows we write $\pi$ for both $\pi:TM \to M$ and $\pi:T^*M \to M$ since it should be clear from context which map is being used.}

We proceed by contradiction. Suppose there exists $\rho \in \mc{A}_H \subset \SNH$.
We  write $\rho^\sharp \in S\!N\!H$ and note
  \[T_{\rho^\sharp } NH=\{\w:  \;\;\exists \,  N\!:\!(-\e,\e)\to N\!H  \; \text{smooth field,\;} N(0)=\rho^\sharp, N'(0)=\w\}.\]    
   Moreover, for $v \in T_{\pi(\rho^\sharp)}H$ and $\w\in T_{\rho^\sharp } N\!H $ with  {$d\pi \w \in T_{\rho^\sharp}H \backslash\{ 0\}$} and $\w=N'(0)$ with $N$ as before,
$$
%\langle {\bf{K}}\w, v \rangle_{_{\!g(\pi(\rho^\sharp))}}=
\langle \tilde{\nabla}_{d\pi \w}N\,,\, v\rangle_{_{\!g(\pi(\rho^\sharp))}} =-\langle \rho^\sharp \,,\, \Pi_H(d\pi \w,v)\rangle_{_{\!g(\pi(\rho^\sharp))}}. 
$$
Here, %$\bf{K}$ denotes the connection map, 
$\tilde{\nabla}$ denotes the Levi--Civita connection on $M$ and $\Pi_H:TH\times TH\to NH$ is the second fundamental form of $H$. 
The %first equality follows directly from the definition of the connection map, while the second 
equality follows from the definition of the second fundamental form, together with the fact that $N$ is a normal vector field.

%Now, let $v\in T_{\pi(\rho)}H$ be a direction of principal curvature $\kappa$. Then, {for all $\w\in T_{\rho^\sharp}S\!N\!H$ with $d\pi \w=v$,}
%\begin{equation}
%\label{e:curvature}
%- \langle \tilde{\nabla}_{d\pi \w}N\,,\, v\rangle_{_{\!g(\pi(\rho^\sharp))}}  =\langle \rho^\sharp, \Pi_H(v, v)\rangle_{_{\!g(\pi(\rho^\sharp))}}
%=\kappa \| v\|_{_{\!g(\pi(\rho^\sharp))}}.
%\end{equation}

We will derive a contradiction from%~\eqref{e:curvature}, together with
the assumption that $T_{\rho}\SNH=N_+(\rho)\oplus N_-(\rho)$, by showing that the stable and unstable manifolds at $\rho^\sharp$ have signed second fundamental forms. In particular, note that $E_{\pm}^\sharp(\rho^\sharp)$ are given by $T\mc{W}_\pm(\rho^\sharp)$ where $\mc{W}_{\pm}(\rho^\sharp)$ are respectively the stable and unstable manifolds through $\rho^\sharp$. Furthermore, these manifolds are  $\mc{W}_{\pm}(\rho^\sharp)= N\!\mathcal{H}_{\pm}$ where $\mathcal{H}_{\pm}\subset M$ are smooth submanifolds given by the stable/unstable horospheres in $M$ so that $\rho^\sharp\in N\!\mathcal{H}_{\pm}$~\cite[Section 4.1]{Ruggiero}. The signed curvature of $\mathcal{H}_{\pm}$ implies that there is $c>0$ so that 
\begin{equation}
\label{e:signedCurve}
\pm \Pi_{\mathcal{H}_{\pm}}\geq c>0.
\end{equation}
We postpone the proof of this fact until the end of the lemma and first derive our contradiction. 

Since  $T_\rho \SNH=N_+(\rho)\oplus N_-(\rho)$, then 
$
T_{\rho^\sharp}S\!N\!H=N^\sharp_+(\rho)\oplus N^\sharp_{-}(\rho).
$
In addition, since $k>1$, {for any $u\in TH$, there exist $\w_1,\w_2\in T_{\rho^\sharp}SNH$ linearly independent with $d\pi \w_i=u$ for $i=1,2$. In particular, since $T_{\rho^\sharp}(S\!N\!H)=N_+^\sharp(\rho)\oplus N_-^\sharp(\rho)$, we have
$
\w_i=\w_{+,i}+\w_{-,i},
$
with $\w_{\pm,i}\in N_{\pm}^\sharp(\rho)$. Thus, $d\pi \w_+=d\pi \w_-$ where $\w_+=\w_{+,1}-\w_{+,2}\in N_+^\sharp(\rho)$ and $\w_-=\w_{-,2}-\w_{-,1}\in N_-^\sharp(\rho)$.
Since $d\pi:E^\sharp_{\pm}(\rho)\to T_{\pi(\rho)}M$ is injective where $\pi:TM\to M$ is the standard projection, $v:=d\pi \w_\pm\neq 0$.
}

Now, since $\w_{\pm}\in T_{\rho^\sharp}(S\!N\!\mathcal{H}_{\pm})$, using~\eqref{e:signedCurve},
$$
- \langle \tilde{\nabla}_{v}N\,,\, v\rangle_{_{\!g(\pi(\rho^\sharp))}}= - \langle \tilde{\nabla}_{d\pi {\bf w_-}}N\,,\, v\rangle_{_{\!g(\pi(\rho^\sharp))}}  = \langle \rho^\sharp,\Pi_{\mathcal{H}_{+}}(v,v)\rangle\geq c\|v\|^2,
$$
and
$$
 \langle \tilde{\nabla}_{v}N\,,\, v\rangle_{_{\!g(\pi(\rho^\sharp))}}= - \langle \tilde{\nabla}_{d\pi {\bf w_+}}N\,,\, v\rangle_{_{\!g(\pi(\rho^\sharp))}}  = \langle \rho^\sharp,\Pi_{\mathcal{H}_{-}}(v,v)\rangle\leq -c\|v\|^2.
$$
{This is a contradiction since $\|v\|>0$. }

We now prove~\eqref{e:signedCurve}. We have by~\cite[Theorem 1, part (6)]{Eberlein73b} that since $(M,g)$ has Anosov flow and non-positive curvature, there are $c,t_0>0$ so that for any perpendicular Jacobi field $Y(t)$ with $Y(0)=0$, and $t\geq t_0$, 
\begin{equation}
\label{e:curvedHorosphere}
\langle Y'(t),Y(t)\rangle \geq c\|Y(t)\|^2.
\end{equation}
By~\cite[Proof of Lemma 4.2]{Ruggiero} the second fundamental form to $\mathcal{H}_{\pm}$ at $\pi(\rho^\sharp)\in \mathcal{H}_{\pm}$ is given by 
$$
\pm \Pi_{\mathcal{H}_{\pm}}=\mp \lim_{r\to \mp\infty} U_r(0)
$$
where $U_r(t)=Y_r'(t)Y_r^{-1}(t)$ and $Y_r(t)$ is a matrix of perpendicular Jacobi fields along $t \mapsto \pi \varphi_t(\rho)$ satisfying
$Y_r(r)=0$ and $Y_r(0)=\Id.$
In particular, by~\eqref{e:curvedHorosphere}, applied to the Jacobi field $\tilde{Y}(t)=Y_r(r-t)$, at $t=r$ gives for $r\geq t_0$,
\begin{align*}
\langle U_r(0)x,x\rangle&=\langle Y_r'(0)x,Y_r(0)x\rangle=-\langle \tilde{Y}'(r)x,\tilde{Y}(r)x\rangle\leq -c\|Y_r(0)x\|^2=-c\|x\|^2.
\end{align*}
Similarly, for $r\leq -t_0$, we apply~\eqref{e:curvedHorosphere} to $\tilde{Y}(t)=Y_r(r+t)$ at $t=|r|$ to obtain
$$
\langle U_r(0)x,x\rangle=\langle\tilde{Y}'(|r|)x,\tilde{Y}(|r|)x\rangle\geq c\|x\|^2
$$
This yields that $\pm \Pi_{\mathcal{H}_{\pm}}=\mp\lim_{r\to \pm \infty}U_r(0)\geq c>0$ as claimed.
\end{proof}

%%%%%%%%%%%%%%%%%%%%%%%%%%%%%%%%%%%%%%%%%%%%%%%%%%%%%%%%%%%%%%%%%%%%%%%%%%%%%%%%

\subsection{Proof of Theorem~\ref{t:surfaces}}
For {Theorem}~{\ref{a3}} we refer the reader to~\cite[Section 5.4]{CG17} where it is shown that $\mc{A}_H=\emptyset$.  Therefore, Theorem \ref{T:tangentSpace} can be applied to this setup yielding the desired conclusions.\smallskip

We proceed to prove Theorem \ref{a7}.   
Fix a geodesic $H\subset {M}$.% and continue to write $c_{_{\!K}}, T, N$ for the positive constants in the integrated curvature assumption \eqref{e:intCurve}.
We prove that  Theorem {\ref{a7}} holds under the following curvature assumption.
Suppose there exist $T>0$, and $c_1, c_2, c_3>0$ so that for all  $\rho_0, \rho_1 \in \SNH$ with $d(\rho_0, \rho_1)=s\leq c_3$, and all $t_0, t_1\geq T$ with $\varphi_{t_0}(\rho_0),\varphi_{t_1}(\rho_1) \in \SNH$, we have
\begin{equation}\label{e:modified curvature assumption}
-\int_{Q_s}K dv_{\tilde g} \geq c_1e^{-c_2/{\sqrt{s}}},
\end{equation}
where $Q_s$ is the quadrilateral domain in the universal cover, $(\tilde M, \tilde g)$, whose sides are the geodesics that join the points,   ${\pi(\rho_0)}, {\pi(\rho_1)}, \pi(\varphi_{t_0}(\rho_0)), \pi(\varphi_{t_1}(\rho_1))$.
At the end of the proof we shall show that the integrated curvature assumption \eqref{e:intCurve} implies the assumption in \eqref{e:modified curvature assumption}.

The first step in the proof is to show  that there exist $r_0>0$ and $c_4>0$ so that the following holds. If  $0<r\leq r_0$ and $\rho_0,\rho_1 \in \SNH$ are such that there are $t_0,t_1  \geq T$ with $|t_0-t_1|<\frac{\Tinj}{2}$ and
$$
d(\varphi_{t_0}(\rho_0),\SNH)<r,\qquad d(\varphi_{t_1}(\rho_1),\SNH)<r,
$$ 
then either 
\begin{equation}\label{e:distance claim}
d(\rho_0,\rho_1)<c_2^2\ln\big(\tfrac{c_4}{r}\big)^{-{2}} \qquad \text{or}\qquad  d(\rho_0,\rho_1)>c_3.
\end{equation}
To prove the claim in \eqref{e:distance claim} suppose that there is $\rho_0\in \SNH$ with $d(\varphi_{t_0}(\rho_0),\SNH)<r$ for some $r>0$. Then, there exists $C=C(M,g,H)\geq 1$ so that  by changing $t_0$ to $\tilde{t_0}$ with $|t_0-\tilde{t}_0|\leq C r$ and $r>0$ small enough, we may assume that $\pi(\varphi_{{\tilde t}_0}(\rho_0))\in H$ and $d(\varphi_{\tilde t_0}(\rho_0),\SNH)<2Cr$. Now, let $\rho_s\in \SNH$, with $d(\rho_0,\rho_s)=s$ and suppose there is $t_s$ with $|t_0-t_s|<\tfrac{\Tinj}{2}$ and $d(\varphi_{t_s}(\rho_s),\SNH)<r$. As before, we can adjust ${t_s}$ to $\tilde t_s$, with $|t_s-\tilde t_s|\leq Cr$, in order to have $\pi(\varphi_{\tilde{t}_s}(\rho_s))\in H$ and $d(\varphi_{\tilde t_s}(\rho_s),\SNH)<2Cr$.  Let
\[\gamma_0(t):=\pi(\varphi_t(\rho_0)), \qquad  \gamma_s(t):=\pi(\varphi_t(\rho_s)).\]

Note that, in the universal cover of $M$, $\tilde{M}$, $\gamma_s$ does not intersect $\gamma_0$ unless $\rho_0=\rho_s$. Indeed, suppose they did intersect at an angle $\beta$. Then, by the Gauss--Bonnet theorem, we would have
$$
0\geq\int_{\Delta_s} K\,dv_{\tilde g}=\beta \geq 0,
$$
where $\Delta_s$ is the triangular region enclosed by $\gamma_0$, $\gamma_s$ and $H$.
In particular, this would give $\beta=0$ and hence $\gamma_s=\gamma_0$ and  $s=0$.

Next, suppose that $\gamma_0$ and $\gamma_s$ do not cross in the universal cover.  Let $\alpha_s$ denote the angle between $\dot{\gamma}_s(\tilde t_s)$ and $H$, and let $\alpha_0$ denote the angle between $\dot{\gamma_0}(\tilde t_0)$ and $H$. This can be done since   $\pi(\varphi_{{\tilde t}_0}(\rho_0))\in H$ and  $\pi(\varphi_{\tilde{t}_s}(\rho_s))\in H$.  Then, by the Gauss--Bonnet theorem, 
$$
\pi-\alpha_0-\alpha_s = -\int_{Q_s} K\, dv_{\tilde g}
$$
where $Q_s$ is the quadrilateral formed by   $\gamma_0$, $\gamma_s$, the copy of $H$ in $\tilde M$ that contains ${\pi(\rho_0)}, {\pi(\rho_s)}$, and the copy of $H$ that contains $\pi(\varphi_{{\tilde t}_0}(\rho_0)), \pi(\varphi_{\tilde{t}_s}(\rho_s))$.
Since   $d(\varphi_{\tilde t_0}(\rho_0),\SNH)\leq 2Cr$,   we have $0<\frac{\pi}{2}-\alpha_0\leq 2Cr$. Hence,
$$
\frac{\pi}{2}-\alpha_s\geq-\int_{Q_s} K\,dv_{\tilde g}-2Cr.
$$

In particular, by the curvature assumption \eqref{e:modified curvature assumption} we have that if $s\leq c_3$, 
$$
\frac{\pi}{2}-\alpha_s\geq c_1e^{-c_2/ {\sqrt{s}}} -2Cr. 
$$
Let $\tilde C=\tilde C(H, M, g)>0$ be so that if $\frac{\pi}{2}-\alpha_s \geq 2 \tilde C r$, then $d(\varphi_{\tilde{t}_s}(\rho_s),\SNH)>2Cr$.
Then, for $ c_2^2\ln(c_4r^{-1})^{-{2}}<s \leq c_3$, with $c_4=c_1/{2}(C+\tilde C)$, we have
$$
\frac{\pi}{2}-\alpha_s >  2\tilde{C}r.
$$
This implies that $d(\varphi_{\tilde{t}_s}(\rho_s),\SNH)>2Cr$, and hence proves \eqref{e:distance claim}.

Let $\tau_0$ be the positive constant given in Theorem \ref{t:coverToEstimate} and $0<r\leq r_0$.
Next, we prove that there exists $C>0$ so that if {$0<r_1< r$},  then for every $0<\tau \leq \tau_0$, $T_0>T$,  and  every $({\mathfrak{D}_n},\tau, r_1)$-{good} cover of $\SNH$ by tubes $\{\Lambda^\tau_{\rho_j}(r_1)\}_{j=1}^{N_{r_1}}$, there is a partition $\{1, \dots, N_{r_1}\}=\mc{B} \cup \mc{G}$   so that  \begin{equation}\label{e: artichoke}
\bigcup_{j\in \mc{G}} \Lambda^\tau_{\rho_j}(r_1) \;\;\text{is}\;\; (T,T_0) \; \text{non-self looping \;\;and\;\;} |\mc{B}|\leq C\frac{T_0}{T}\ln\big(\tfrac{c_4}{r}\big)^{-{2}}r_1^{-1}.
\end{equation}
Note that by splitting $[T,T_0]$ into intervals of length $\tau$ the claim in \eqref{e: artichoke} {is implied by} showing that for each 
 $\tilde t \in [T, T_0]$
\begin{equation}\label{e: artichoke leaf}
\#\Bigg\{\rho_j:\;  \bigcup_{|t-\tilde{t}|<\tfrac{\tau}{2}}\varphi_{t}(\Lambda_{\rho_j}^\tau(r_1))\cap \LambdaH(r_1)\neq \emptyset\Bigg\}\leq C\ln\big(\tfrac{c_4}{r}\big)^{-{2}}r_1^{-1}.
\end{equation}
To prove \eqref{e: artichoke leaf} we start by covering $\SNH$ by balls $\{B_\ell\}_{\ell=1}^L$ of radius $\tfrac{c_3}{2}$.  Fix  $\tilde t\geq T+\frac{\tau}{2}$. It follows from \eqref{e:distance claim} that for each $\ell \in \{1, \dots, L\}$, if
$$
N_\ell:= B_\ell\cap\{\rho:\;  \exists\,{t}\in (\tilde t-\tfrac{\tau}{2}, \tilde t+\tfrac{\tau}{2}),\quad d(\SNH,\varphi_{{t}}(\rho))<r\},
$$
then {there is $\rho_\ell\in N_\ell$ such that 
$$
N_\ell\subset \{\rho \in \SNH:\; d(\rho,\rho_\ell)<c_{_2}^2(\ln (c_4 r^{-1}))^{-2}\}.
$$}
 %Now, let $r_1< \frac{r}{C}e^{-\Lambda t}$. Then, if $d(\rho_1, \rho)<2 r_1$ and $d(\varphi_t(\rho),\SNH)>r$, then $d(\varphi_t(\rho_1),\SNH)>\frac{r}{2}$. In particular, 
 %$$
% \vol\big(\{ \rho\mid d(B_i, \rho)<2r_1,\text{there is } \tilde{t}\text{ satisfying }|t-\tilde{t}|<\tau,\,d(\SNH,\varphi_{\tilde{t}}(\rho))<\frac{r}{2}\}\big)< \Big(\frac{2Cr}{c}\Big)^{1/N}r_1^3.
% $$
In particular, {since $\{\Lambda_{\rho_j}^\tau(r_1)\}_{j=1}^{N_{r_1}}$ is a $(\mathfrak{D}_n,\tau,r_1)$ good cover for $\SNH$ and $r_1<r$} there exists $C_n>0$ so that for each $\ell \in \{1, \dots, L\}$,
\begin{equation*}
 \#\Big\{\rho_j:\; \Lambda_{\rho_j}^\tau(r_1)\cap B_\ell \neq \emptyset,\;\;\bigcup_{|t-\tilde{t}|<\tfrac{\tau}{2}}\varphi_{{t}}(\Lambda_{\rho_j}^\tau(r_1))\cap \LambdaH(r_1)\neq \emptyset\Big\}
 \leq C_n{c_{_2}^2} \ln(\tfrac{c_4}{r})^{-{2}} r_1^{-1}.
\end{equation*}
The claim in \eqref{e: artichoke leaf} follows from taking the union in $\ell$ over all the balls $B_\ell$.

Finally, let  $\e>0$ and $\delta>0$ with  $\e<\delta$.   Also, set $r=h^\e$, $r_1=8h^\delta$ and 
\[T_0= \gamma \log h^{-1}-\beta,  
\qquad 
0<\gamma<\tfrac{\delta -\e}{\Lambda_{\max}}, \qquad\beta<-\tfrac{\log C}{\Lambda_{\max}}.
\]
We have obtained a splitting of $\{1,\dots, N_h\}$ into $\mc{B}\cup \mc{G}$ with the tubes in $\mc{G}$ being $[T, T_0]$ non-self looping and such that
$$
|\mc{B}|\leq C\frac{T_0}{T} (\e\ln c_4{h^{-1}})^{-{2}}h^{-\delta}.
$$
Using this cover in Theorem~\ref{t:coverToEstimate} completes the proof of Theorem~\ref{T:applications} part~\ref{a6} since $\tfrac{T_0}{T}\leq \log h^{-1}$ and hence $h^\delta|\mc{B}|\leq \frac{C}{\log h^{-1}}$ for some $C>0$ and  $h$ small enough.

To see that~\eqref{e:modified curvature assumption} holds, 
let $s\mapsto\rho_s=(x(s), \xi(s)){\in \SNH}$ {be a smooth map}, where $x(s)$ parametrizes $H$ with $|\dot x(s)|_{{g}}=1$ and $\langle \dot \xi(s), \xi(s)\rangle=0$ for all $s$. Next, let $\Gamma(s,t)=\pi(\varphi_t(\rho_s))$ so that $t\mapsto \Gamma(s,t)$ is a geodesic  with $\langle \partial_t\Gamma(s,t),\dot{x}(s)\rangle_g=0$ and $\Gamma(s,0)=x(s)$.

{In particular, if we let 
$$
Y(t)=\partial_s\Gamma(s,t)|_{s=0},
$$
then $Y(t)$ is a {Jacobi} field along $\gamma_0$ with $Y(0)=\dot{x}(0)$ and 
$$\tfrac{D}{dt}Y(0)=\tfrac{D}{ds}\partial_t\Gamma(s,t)\Big|_{(0,0)}{=0}.$$}
{Indeed,} observe that the angle between $\partial_t\Gamma(s,t)|_{t=0}$ and $\dot{x}(s)$ is constant and $|\partial_t\Gamma(s,t)|_g=1$. Therefore, since $x(s)$ is a unit speed geodesic, $\tfrac{D}{ds}\partial_t\Gamma(s,t)|_{t=0}=0$ and hence $\tfrac{D}{dt} Y(0)=0$.

{Now,} let $\gamma_0^\perp(t)$ be a {parallel} vector field along $\gamma_0(t)$ with $\langle \dot{\gamma_0}(t),\gamma_0^\perp(t)\rangle_g=0$ and $|\gamma_0^{\perp}(t)|_g=1$, we then have
$Y(t)=J(t)\gamma_0^{\perp}(t)$ with $J(0)=1$, $J'(0)=0$, and 
$$
J''(t)+R(t)J(t)=0.
$$
Since,  $R(t)\leq 0$ and $J''(t)\geq 0$,
$$
J(t)\geq 1.
$$
In particular,
{
$$\partial_s(\pi\circ\varphi_t(\rho_s))|_{s=0}=d(\pi\circ \varphi_t)|_{\rho_0}\partial_s \rho_s|_{s=0}=Y(t),$$
and hence
$$
d(\pi\circ\varphi_t(\rho_s),\exp_{\pi\circ\varphi_t(\rho_0)}({s}Y(t))\leq C_1e^{2\Lambda t}s^2.
$$
}
{Therefore,} for $t\in [0,4T]$,
$$
d(\gamma_s(t),\exp_{\gamma_0(t)}(sY(t)))\leq C_1e^{8\Lambda T}s^2.
$$
{Since $J(t)\geq 1$,} it follows  that $Q_s$ contains $\Omega_{\tilde{\gamma}}(\tfrac{s}{4})$ for $s<\tfrac{1}{8C_1}e^{-{8\Lambda T}}$ where $\tilde{\gamma}:=\{\gamma_{_{\frac{s}{2}}}(t):t\in [T,2T]\}.$
{Therefore, 
\[
-\int_{Q_s}K dv_{\tilde g} \geq -\int_{\Omega_{\tilde{\gamma}}(\tfrac{s}{4})}K dv_{\tilde g}\geq  c_1e^{-c_2/{\sqrt{s}}},
\]
as claimed.
}
\qed

\ \\

\begin{remark}
We note that the proof of Theorem~\ref{a7} essentially shows that, while horospheres on $M$ may not be positively curved everywhere, their curvature can only vanish at a fixed exponential rate.
\end{remark}

{\begin{remark}\label{r:SXZ16}
This remark explains how Theorem \ref{a7} implies the results of~\cite{SXZ}. Note that the condition in~\cite{SXZ} is that there are $c_1>0$, and $N>0$ such that for every ball $B_s$ in $M$ of radius $s<1$  one has
$
\int_{B_s}K\leq -c_1s^N.
$
This remains true if we replace $M$ by its universal cover, $\tilde{M}$, and implies that $\tilde{M}$ has non-positive curvature. To see that this condition implies those in Theorem \ref{a7}, one needs to check that there  is $c>0$ such that 
$
\int_{\Omega_{\gamma}(s)}K\leq -ce^{-\frac{1}{c\sqrt{s}}}
$
where
$\Omega_{\gamma}(s):=\{ x\in \tilde{M}\mid d(x,\gamma)\leq s\}$. Now, observe that $\Omega_{\gamma(s)}$ contains at least one ball, $B_s$ of radius $s$ and hence, since $\tilde{M}$ has non-positive curvature,
$$
\int_{\Omega_{\gamma}(s)}K\leq \int_{B_s}K\leq -c_1s^N\ll -ce^{-\frac{1}{c\sqrt{s}}},
$$
for some $c>0$.
\end{remark}}

%%%%%%%%%%%%%%%%%%%%%%%%%%%%%%%%%%%%%%%%%%%%%%%%%%%%%%%%%%%%%%
%%%%%%%%%%%%%%%%%%%%%%%%%%%%%%%%%%%%%%%%%%%%%%%%%%%%%%%%%%%%%%
%%%%%%%%%%%%%%%%%%%%%%%%%%%%%%%%%%%%%%%%%%%%%%%%%%%%%%%%%%%%%%
%%%%%%%%%%%%%%%%%%%%%%%%%%%%%%%%%%%%%%%%%%%%%%%%%%%%%%%%%%%%%%

\section{On vanishing of Jacobi of fields}
\label{s:prelim}
\label{s:jacobi}

This section is dedicated to the proof of Proposition~\ref{l:panda} below. The proof of this proposition hinges on showing that given a geodesic $\gamma(t)$, if there is an $r$-dimensional vector space of perpendicular Jacobi fields along the geodesic that vanish at $\gamma(0)$ and that nearly vanish at $\gamma(t_0)$, then there must be $r$ conjugate points to $\gamma(0)$ (counted with multiplicity) near $\gamma(t_0)$.  See Lemma \ref{l:youVanishIVanish} for a precise statement of the required degree of vanishing. There, each $A(t)u_j$ denotes a Jacobi field.

In what follows $\pi:T^*M \to M$ is the natural projection {and  $\varphi_t$ denotes the geodesic flow on $\SM$}.

\begin{proposition}
\label{l:panda}
{Let  $\Lambda>\Lambda_{\max}$.}
There exists $C>0$ so that for any $t_0\in \mathbb{R}$, ${\rho\in \SM}$, and $0<\e<\tfrac{1}{C}e^{-C\Lambda |t_0|}$, the following holds. If there are {no more than $m$} conjugate points to $\pi(\rho)$ (counted with multiplicity) along the geodesic $t\mapsto \pi( \varphi_t(\rho))$ for $t\in (t_0-2\e,t_0+2\e)$,  then there is a subspace  $\mathbf{V_{\!\rho}} \subset T_\rho S^*_xM$   of dimension $n-1-m$  so that for all ${\bf v}\in\mathbf{V_{\!\rho}}$,
$$
{\|{\bf v}\|}\leq C\e^{-1}e^{\Lambda|t_0|}\|d\pi \circ d \varphi_t {\bf v}\|,\qquad t\in (t_0-\e, t_0+\e).
$$
In particular,  $d\pi \circ d\varphi_t: \mathbf{V_{\!\rho}}\to T_{\pi \varphi_t(\rho)}M$ is invertible onto its image with 
$$
\|(d\pi \circ d\varphi_t)^{-1}\|\leq  C\e^{-{1}}e^{\Lambda|t_0|},
$$
 for all $t\in (t_0-\e, t_0+\e)$.
\end{proposition}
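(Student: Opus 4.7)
The plan is to reduce the inequality to an estimate on Jacobi fields via the standard identification of vertical tangent vectors with perpendicular Jacobi fields. For $V \in T_\rho S^*_x M$ let $V^\sharp \in T_{\pi(\rho)}M$ denote the associated vector under the metric/connection sharp map (perpendicular to $\rho^\sharp$), and let $J_V(t)$ be the perpendicular Jacobi field along $\gamma(t) := \pi\varphi_t(\rho)$ with $J_V(0) = 0$ and $(DJ_V/dt)(0) = V^\sharp$. Under this identification $d\pi \circ d\varphi_t V = J_V(t)$, and the Sasaki metric on $T(T^*M)$ provides the Pythagorean-type decomposition
\[
|d\varphi_t V|^2 \;=\; |J_V(t)|^2 \;+\; |(DJ_V/dt)(t)|^2.
\]
The proposition therefore reduces to producing a subspace $\mathbf{V}_\rho \subset T_\rho S^*_x M$ of dimension $n-1-m$ on which $|DJ_V/dt(t)|^2 \leq C\varepsilon^{-2}|J_V(t)|^2$ throughout $(t_0-\varepsilon,t_0+\varepsilon)$.

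To construct $\mathbf{V}_\rho$, introduce for each $s$ the Jacobi evaluation $A_s: T_\rho S^*_x M \to T_{\gamma(s)}M$ defined by $A_s V := J_V(s)$. The multiplicity of the conjugate point to $\pi(\rho)$ at parameter $s$ is exactly $\dim\ker A_s$, so the hypothesis gives $\sum_{s \in(t_0-2\varepsilon,t_0+2\varepsilon)} \dim\ker A_s \leq m$ (the sum being finite since conjugate points along a geodesic are isolated). The linear span $\mathbf{W} := \sum_{s \in(t_0-2\varepsilon,t_0+2\varepsilon)}\ker A_s$ then has dimension at most $m$, and $\mathbf{V}_\rho := \mathbf{W}^\perp$ has dimension at least $n-1-m$ with the property that for any nonzero $V \in \mathbf{V}_\rho$, the Jacobi field $J_V$ does not vanish anywhere on $(t_0-2\varepsilon, t_0+2\varepsilon)$.

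The heart of the argument is then the pointwise bound $|DJ_V/dt(t)| \leq C\varepsilon^{-1}|J_V(t)|$ for $V \in \mathbf{V}_\rho$ and $t \in (t_0-\varepsilon,t_0+\varepsilon)$, using only the non-vanishing of $J_V$ on the larger interval and a uniform bound $K_0$ on the sectional curvatures of $M$. Following the template of \cite[Proposition 2.7]{Eberlein73}, the mechanism is a Riccati-type argument: in the scalar prototype the logarithmic derivative $v := J'/J$ satisfies $v' = -v^2 - R$ with $|R|\leq K_0$, and once $|v|$ exceeds $\sqrt{2K_0}$ the resulting ODE forces a blowup of $v$ (backward in time if $v$ is large positive, forward if large negative) within an interval of length $\lesssim 1/|v|$, producing a zero of $J$ at the blowup time. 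Consequently, if $|J_V'(t^*)|/|J_V(t^*)|$ ever exceeded a large multiple of $\varepsilon^{-1}$ for some $t^* \in (t_0-\varepsilon,t_0+\varepsilon)$, then a zero of $J_V$ would appear inside $(t_0-2\varepsilon,t_0+2\varepsilon)$, contradicting the construction of $\mathbf{V}_\rho$. Combining this bound with the Sasaki decomposition yields the stated inequality, and the assertion on $\|(d\pi\circ d\varphi_t)^{-1}\|$ follows by writing $V = d\varphi_{-t}(d\varphi_tV)$, applying $|V|\leq \|d\varphi_{-t}\|\cdot|d\varphi_tV|$, and substituting into the main inequality. The principal obstacle is running the blowup argument cleanly for the vector-valued Jacobi field rather than a scalar: the matrix Riccati $U(t) := \mathcal{J}'(t)\mathcal{J}(t)^{-1}$ (with $\mathcal{J}$ the matrix Jacobi field satisfying $\mathcal{J}(0)=0$, $\mathcal{J}'(0) = I$) provides the right framework, but some care is required because $\mathcal{J}(t)$ is merely injective on $\mathbf{V}_\rho^\sharp$ rather than invertible on the full space; the cleanest workaround is to analyze the scalars $\phi_V(t) := |J_V(t)|$ directly via the differential inequality $\phi_V'' + K_0\phi_V \geq 0$ together with the Cauchy--Schwarz identity $|DJ_V/dt|^2 = (\phi_V')^2 + \phi_V^2|e_V'|^2$ (where $e_V := J_V/|J_V|$), so that a combined Sturm comparison on the radial component and a separate argument on the angular component both yield the same $\varepsilon^{-1}$ bound.
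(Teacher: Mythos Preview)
Your framework is right up through the construction of $\mathbf{V}_\rho$: the Jacobi-field identification, the Sasaki decomposition $|d\varphi_tV|^2=|J_V|^2+|J_V'|^2$, and taking the orthogonal complement of the span of the kernels over $(t_0-2\varepsilon,t_0+2\varepsilon)$ all match the paper. The gap is in your ``scalar workaround.'' The function $\phi_V=|J_V|$ satisfies only the one-sided inequality $\phi_V''+K_0\phi_V\ge 0$, because the exact identity is $\phi_V''=-\langle R(\dot\gamma,J_V)\dot\gamma,J_V\rangle/|J_V|+\phi_V|e_V'|^2$ and the angular term $\phi_V|e_V'|^2\ge 0$ only helps from below. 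A one-sided bound cannot control $|\phi_V'|/\phi_V$: the function $\phi(t)=e^{\lambda t}$ satisfies $\phi''+K_0\phi\ge 0$ for every $\lambda$, has no zeros, yet $\phi'/\phi=\lambda$ is arbitrary. Equivalently, for $v=\phi_V'/\phi_V$ you get $v'\ge -v^2-K_0$ but the companion inequality $v'\le -v^2+K_0$ (which is what drives the blow-up in the scalar Riccati prototype you invoke) is spoiled by the angular term: $v'\le -v^2+K_0+|e_V'|^2$. So neither the radial Sturm comparison nor the blow-up heuristic actually closes, and you have given no mechanism for bounding $|e_V'|$ independently.

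What makes the estimate work is the Lagrangian structure you are discarding: since all the $J_V$ vanish at the common time $t=0$, the matrix $A(t)$ of Jacobi fields has $W(A,A)\equiv 0$, so $U=A'A^{-1}$ is \emph{symmetric} wherever it exists. The paper passes to the quotient $\mathbb{R}^{n-1}/\mathbf{X}$ (equivalently your $\mathbf{V}_\rho$), on which $\tilde A$ is invertible throughout $(t_0-2\varepsilon,t_0+2\varepsilon)$, and then applies a two-sided comparison for symmetric solutions of the matrix Riccati equation (Lemma~\ref{l:ricatti}) to get $|\langle \tilde U x,x\rangle|\le k\coth(k\varepsilon)\lesssim \varepsilon^{-1}$; symmetry then upgrades this quadratic-form bound to an operator-norm bound, yielding $|J_V'|\le C\varepsilon^{-1}|J_V|$ in one stroke for all $V\in\mathbf{V}_\rho$ and all $t\in(t_0-\varepsilon,t_0+\varepsilon)$. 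Your proposal should either carry out this matrix-Riccati step on the quotient, or supply a genuine argument for the angular piece; the scalar route via $\phi_V$ alone cannot succeed.
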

 The proof of Proposition~\ref{l:panda} can be found at the end of this section.

%%%%%%%%%%%%%%%%%%%%%%%%%%%%%%%%%%%%%%%%%%%%%%%%%%%%%%%%%%%%%%%%%%%%%%%%%%%%%%%%

%%%%%%%%%%%%%%%%%%%%%%%%%%%%%%%%%%%%%%%%%%%%%%%%%%%%%%%%%%%%%%%%%%%%%%%%%%%%%%%%

\subsection{Preliminaries on the Jacobi equation}
 The argument relies on the fact that given  $v\in T_\rho S_xM$ the vector field  $Y(t)=d\pi \circ d T_t (v)$  is  a Jacobi vector field along the geodesic $\gamma(t)$ in $M$ whose initial conditions are given by $\rho$. Here, $T_t$ denotes the geodesic flow on $TM$. Note that \cite[Proposition 1.7]{Eberlein73} gives   $\|d T_t v\|^2=\|Y(t)\|^2+\|Y'(t)\|^2$ where $'$ denotes the covariant derivative of $Y$ along $\gamma$. 

Let $\{E_1(t), \dots, E_{n-1}(t)\}$ be a parallel orthonormal frame along a geodesic $\gamma$ spanning the orthogonal complement of $E_n(t):=\gamma'(t)$.
 Then for $Y(t)=\sum_{i=1}^{n-1}y_i(t)E_i(t)$ a perpendicular vector field along $\gamma$, we identify $Y$ with $t\mapsto (y_1(t),\dots ,y_{n-1}(t))$. The covariant derivative of $Y$ is then given by $t\mapsto (y_1'(t),\dots y_{n-1}'(t))$. Conversely, for each such curve in $\R^{n-1}$, there is a perpendicular vector field along $\gamma$. Now, for $t\in \R$, we define a symmetric $(n-1)\times (n-1)$ matrix $R(t)=(R_{ij}(t))$ where 
\begin{equation}\label{e:Req}
   R_{ij}(t)=\langle {R({E_n(t),E_{i}(t)})E_n(t),E_j(t)}\rangle_{g(\gamma(t))} 
\end{equation}
and $R(X,Y)$ denotes the curvature tensor. Then we consider the Jacobi equation
\begin{equation}
\label{e:matJacobieqn}
Y''(t)+R(t)Y(t)=0.
\end{equation}
 Let  $A(t)\in \mathbb{M}_{n-1\times n-1}$ solve~\eqref{e:matJacobieqn} with
\begin{equation}
\label{e:conjSol}
A(0)=0,\qquad  A'(0)=\Id.
 \end{equation}
 Then, the perpendicular Jacobi fields on $\gamma$ with $Y(0)=0$ and $\|Y'(0)\|=1$, are given by 
 $$Y(t)= A(t)v,$$ with $\|v\|=1$. In particular, $A(t)$ is nonsingular if and only if $\gamma(0)$ is not conjugate to $\gamma(t)$ along $\gamma$ ({at time $t$}). 

Before proceeding further, we relate $d\varphi_t$ to $A(t)$. To do this, we introduce the horizontal and vertical decomposition of $TM$. Let $\pi:TM\to M$ be projection to the base. Then $d\pi:TTM\to TM$ has kernel equal to the \emph{vertical subspace} of $TTM$. We define the \emph{connection map} $${\bf{K}}:TTM\to TM$$ by the following procedure. Let ${V\in TM}$ and  $v\in T_V(TM)$, let $Z:(-\e,\e)\to TM$ be a smooth curve with initial velocity $v$ and position $V$. Let $\alpha=\pi \circ Z:(-\e,\e)\to M$ and define ${\bf{K}}(v)=Z'(0)$ where $Z'(0)$ denotes the covariant derivative of $Z(t)$ along $\alpha$ evaluated at $t=0$. The  kernel of ${\bf{K}}$ is called the \emph{horizontal subspace}. The \emph{Sasaki metric}, $g_s$, on $TM$ is defined for $v,w\in T_VTM$ by
$$
\langle v,w\rangle_{g_s(V)} :=\langle d\pi v,d\pi w\rangle_{g(\pi(V))}+\langle {\bf{K}}v,{\bf{K}}w\rangle_{g(\pi(V))}.
$$
Under the Sasaki metric, $TTM$ decomposes into the orthogonal sum of the horizontal and vertical subspaces. 

Define the map $^\sharp:\TM \to TM$ and its inverse $^\flat:TM\to \TM $ by
$$
g(\rho^\sharp,W)=\rho(W),\qquad V^\flat(W)=g(V,W).
$$
Next, we define a map $^\sharp: T\TM \to TTM$ and its inverse $^\flat:TTM\to T\TM $ as follows. Let $\rho(t):(-\e,\e)\to \TM $ be a smooth curve with initial velocity ${\bf v}\in T_\rho \TM $. Then, 
$$
{\bf v}^\sharp = \frac{d}{dt}\Big|_{t=0}\rho^\sharp(t).
$$
Similarly, let $V(t):(-\e,\e)\to TM$ be a smooth curve with initial velocity $v\in T_{q}TM$. Then, 
$$
v^\flat = \frac{d}{dt}\Big|_{t=0}V^\flat(t).
$$
Using these identifications, we define the \emph{Sasaki metric on $\TM $}, $g_s^*$, by 
$$
\langle {\bf v},{\bf w}\rangle_{g_s^*}=\langle {\bf v}^\sharp,{\bf w}^\sharp\rangle_{g_s}.
$$
Note also that 
$$
d\pi V^\flat=d\pi V.
$$

The {geodesic flow on $TM$, $T_t:TM\to TM$,} is given by 
$$
T_t V :=( \varphi_t V^\flat)^\sharp.
$$ 
Now, if $v\in T_VTM$, then by~\cite[Proposition 1.7]{Eberlein73}
$$
Y_v(t)=d\pi \circ dT_t(v),\qquad Y_v'(t)={\bf{K}} \circ dT_t(v)
$$
where $Y_v(t)$ is the unique solution to~\eqref{e:matJacobieqn} with $Y_v(0)=d\pi v$ and $Y_v'(0)={\bf{K}}v$. In particular, 
$$
|dT_tv|_{g_s}^2=|Y_v(t)|^2+|Y_v'(t)|^2.
$$
Finally, this implies that for ${\bf v}\in T\TM $,
\begin{equation}\label{e:sum}
   | d\varphi_t {\bf v}|_{g_s^*}^2=|Y_{{\bf v}^\sharp}(t)|^2+|Y_{{\bf v}^\sharp}'(t)|^2.  
\end{equation}

%%%%%%%%%%%%%%%%%%%%%%%%%%%%%%%%%%%%%%%%%%%%%%%%%%%%%%%%%%%%%%%%%%%%%%%%%%%%%%%%
\begin{lemma}
\label{l:tanCotan}
For all $x\in M$ and $\rho \in S_x^*M$ the map $^\sharp$ is an isomorphism from $T_\rho S^*_xM$ to the subspace of $T_{\rho^\sharp}SM$ consisting of vertical vectors $v$ such that ${\bf K}v$ is perpendicular to $\gamma'(0)$ where $\gamma(t)=\pi \circ \varphi_t(\rho)$.
\end{lemma}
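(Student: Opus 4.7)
The plan is to unpack the three pieces hidden in the statement: (i) $^\sharp$ maps $T_\rho T^*_xM$ into vertical vectors at $v:=\rho^\sharp\in T_xM\subset TM$, (ii) the connection map $\mathbf{K}$ applied after $^\sharp$ recovers the usual musical isomorphism $T^*_xM\to T_xM$, and (iii) the condition of being tangent to $S^*_xM$ becomes exactly $\mathbf{K}V\perp\gamma'(0)$. Linearity and bijectivity will then be immediate from the fact that $^\sharp:T^*_xM\to T_xM$ is itself a linear isomorphism.

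For (i), I would pick any smooth curve $\rho(t)$ in $T^*_xM$ with $\rho(0)=\rho$ and $\dot\rho(0)=X$. Then $t\mapsto\rho(t)^\sharp$ lives entirely in the single fiber $T_xM\subset TM$, so $\pi\circ\rho^\sharp$ is constant equal to $x$; differentiating yields $d\pi(X^\sharp)=0$, which is exactly the statement that $X^\sharp$ is vertical in $T_vTM$. For (ii), using the same curve in the definition of $\mathbf{K}$ from the paper, $\mathbf{K}(X^\sharp)=Z'(0)$ with $Z(t)=\rho(t)^\sharp$ and covariant derivative taken along the constant curve $\pi\circ Z\equiv x$. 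The covariant derivative along a constant curve is the ordinary time derivative inside the vector space $T_xM$, so $\mathbf{K}(X^\sharp)=\tfrac{d}{dt}\big|_{t=0}\rho(t)^\sharp=X^\sharp$, where on the right we identify the vertical subspace of $T_vTM$ with $T_xM$. In short, $\mathbf{K}\circ {}^\sharp$ restricted to $T_\rho T^*_xM$ is the ordinary musical isomorphism.

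For (iii), $X\in T_\rho T^*_xM$ is tangent to $S^*_xM$ if and only if $\tfrac{d}{dt}\big|_{t=0}|\rho(t)|_{g^*}^2=0$. Writing $|\rho(t)|_{g^*}^2=g(\rho(t)^\sharp,\rho(t)^\sharp)$ and differentiating gives $2g(v,X^\sharp)=2g(v,\mathbf{K}(X^\sharp))$ by (ii). Since $\varphi_t=\exp(tH_{|\xi|_g})$ is the geodesic flow, the standard computation $d\pi\,H_{|\xi|_g}(\rho)=\rho^\sharp$ at points of $S^*M$ identifies $\gamma'(0)=v$. Hence the tangency condition becomes $\mathbf{K}(X^\sharp)\perp\gamma'(0)$ in $(T_xM,g)$.

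Combining the three steps, $^\sharp$ sends the hyperplane $T_\rho S^*_xM\subset T_\rho T^*_xM$ isomorphically onto the hyperplane inside the vertical subspace of $T_vTM$ cut out by the single linear condition $\mathbf{K}V\perp\gamma'(0)$, with inverse given by $^\flat$. No step presents a real obstacle; the only point requiring care is the bookkeeping between vertical vectors in $T_vTM$ and their images in $T_xM$ under $\mathbf{K}$, which is precisely what (ii) pins down.
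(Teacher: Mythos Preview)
Your proof is correct and follows the same overall structure as the paper's: show that $^\sharp$ sends $T_\rho S^*_xM$ into vertical vectors, verify $\mathbf{K}V^\sharp\perp\gamma'(0)$, and then conclude the isomorphism by a dimension/inverse count. The only difference is in the execution of the middle step: the paper fixes geodesic normal coordinates at $x$ with $\rho=dx^1$, writes out a curve $Z(s)$ in $S^*_xM$ explicitly, and reads off $\langle \mathbf{K}V^\sharp,\gamma'(0)\rangle=Z_1'(0)=0$ from the constraint $\sum|Z_i|^2=1$; you instead observe invariantly that $\mathbf{K}\circ{}^\sharp$ on $T_\rho T^*_xM$ is the musical isomorphism (covariant differentiation along a constant curve being ordinary differentiation in $T_xM$) and then differentiate $|\rho(t)|_{g^*}^2$ directly. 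Your route is cleaner and makes the ``if and only if'' in (iii) transparent, whereas the paper only checks one inclusion and closes with a dimension count; but the two arguments are the same in substance.
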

%%%%%%%%%%%%%%%%%%%%%%%%%%%%%%%%%%%%%%%%%%%%%%%%%%%%%%%%%%%%%%%%%%%%%%%%%%%%%%%%

\begin{proof}
Let ${\bf v}\in T_\rho S^*_xM$. Then $d\pi {\bf v}=0$ and in particular ${\bf v}^\sharp$ is vertical. Let $\rho(s):(-\e,\e)\to S^*_xM$ with velocity equal to ${\bf v}$ at $0$ and $\rho(0)=\rho$. Then, using geodesic normal coordinates with $x=0$, and $\rho=dx^1$, we have
 $$\rho(t)=\sum_{i=1}^n\rho_i(t)dx^i$$
 with  $\rho_1(0)=1$, and
 $
 \sum_{i=1}^n|\rho_i(t)|^2=1.
 $
 Therefore, 
 $
 \sum_{i=1}^n 2\rho_i(0)\rho_i'(0)=0,
 $
 and hence, since $\rho_i(0)=0$ for $i=2,\dots n$ and $\rho_1(0)=1$, we have $\rho_1'(0)=0$. 
 Next, since $\pi \circ \rho(s)=x$, we have in geodesic normal coordinates at $x$ that
 $
 \rho(t)^\sharp =\sum_{i=1}^n\rho_i(t)\partial_{x_i}.
 $ 
 In particular, since $\gamma(t)=(t,0,\dots, 0)$,
$$
\langle {\bf{K}}{\bf v}^\sharp, \gamma'(0)\rangle_{g(x)} =\partial_t\langle \rho^\sharp(t), \gamma'(0)\rangle_{g(x)}\big|_{t=0}=\partial_t \rho_1(s)|_{t=0}=\rho_1'(0)=0.
$$
Therefore, ${\bf K}{\bf v}^\sharp$ is perpendicular to $\gamma'(0)$. 

Since $\dim T_\rho S^*_xM=n-1$, the set of vectors in $T_xM$ orthogonal to $\gamma'(0)$ has dimension $n-1$, and $^\sharp$ is an isomorphism, this completes the proof of the lemma. 
\end{proof}
%%%%%%%%%%%%%%%%%%%%%%%%%%%%%%%%%%%%%%%%%%%%%%%%%%%%%%%%%%%%%%%%%%%%%%%%%%%%%%%%

Now, fix $\rho\in S^*M$, and let $\gamma(t):=\pi(\varphi_t(\rho))$. Observe that by Lemma~\ref{l:tanCotan} for ${\bf v}\in T_{\rho}S^*_xM$, $d\pi {\bf v}^\sharp=0$ and $\mathbf{K} {\bf v}^\sharp$ is perpendicular to $\gamma'(0)$. Therefore, 
\begin{equation}
\label{e:flowAndJacobi}
d\pi (d\varphi_t {\bf v})^\sharp= A(t)\mathbf{K}{\bf v}^\sharp,\qquad \qquad \mathbf{K}(d\varphi_t{\bf v})^\sharp= A'(t)\mathbf{K}{\bf v}^\sharp.
\end{equation}

The next lemma shows that if $A(t)v$ is small, then $A'(t)v$ cannot be very small.

\begin{lemma}\label{l:boundA'}
Let $\Lambda>\Lambda_{\max}$. Then there is $c>0$ such that for all $\gamma$ geodesic, $A(t)$ solving \eqref{e:conjSol}, $t_0\in \mathbb{R}$ and ${v}\in (\gamma'(0))^{\perp}$ such that 
$
\|A(t_0)v\|\leq \tfrac{c}{2}e^{-\Lambda|t_0|}\|{v}\|,
$
we have
$$
\|A'(t_0)v\|\geq \tfrac{c}{2}e^{-\Lambda|t_0|}\|{v}\|.
$$
\end{lemma}

\begin{proof}
Let $x=\gamma(0)$, $\rho=(x,\gamma'(0))^\flat$, and ${v}\in (\gamma'(0))^\perp$. Then, by Lemma~\ref{l:tanCotan}, there exists ${\bf v}\in T_\rho S^*_xM$ such that $d\pi {\bf v}^\sharp=0$, and $\mathbf{K}{\bf v}^\sharp=v$. 
In particular, by~\eqref{e:flowAndJacobi}
$$
d\pi (d\varphi_t {\bf v})^\sharp= A(t)v,\qquad \qquad\mathbf{K}(d\varphi_t{\bf v})^\sharp= A'(t)v.
$$

Since there exists $C>0$ such that  $\|(d\varphi_{t})^{-1}\| \leq Ce^{\Lambda |t|}$ for all $t$, the maps $\flat$, $\sharp$ are isomorphisms, {and \eqref{e:sum} holds,} there exists $c>0$ such that
\begin{equation}\label{e:LBA'}
\|A(t) v\| + \|A'(t)v \| \geq ce^{-\Lambda|t|}\|v\|.
\end{equation}
In particular, if 
$
\|A(t)v\|\leq \tfrac{c}{2}e^{-\Lambda|t_0|}\|v\|,
$
 the conclusion holds.
\end{proof}

\subsection{Finding conjugate points}
The goal of this section is to prove that if there is a vector space ${\mc{V}}$ of dimension $r$ such that $\|A(t_0)|_{{\mc{V}}}\|$ is small, then there are at least $r$ conjugate points to $\gamma(0)$ (counted with multiplicity) near the point $\gamma(t_0)$. That is, we show that if there is an $r$-dimensional vector space consisting of perpendicular Jacobi fields along $\gamma(t)$ that vanish at $\gamma(0)$ and nearly vanish at $\gamma(t_0)$, then there are $r$ conjugate points to $\gamma(0)$ (counted with multiplicity) near the point $\gamma(t_0)$.

\begin{lemma}
\label{l:youVanishIVanish}
Let $1\leq r\leq n-1$. There are $c,C>0$ such that the following holds. Let $\gamma$ be a geodesic and $A(t)$ solve \eqref{e:conjSol} and suppose there are $t_0 \in \re$, $\{u_j\}_{j=1}^r \subset (\gamma'(0))^{\perp}$ orthonormal and $\beta_0>0$ such that $$\|A(t_0)u_j\| \leq \beta_0,\qquad \qquad \beta_0 \leq c e^{-{(r+2)}\Lambda|t_0|}.$$
Then, there exist  $t_1, \dots, t_{r} \in \re\backslash\{0\}$ such that 
\begin{equation}
\label{e:lConclusion}\sum_{j=1}^r\dim \ker A(t_j)\geq r  \qquad \text{and}\qquad \max_j|t_j-t_0|< C \beta_0 e^{ \Lambda |t_0|}.\end{equation}
\end{lemma}

To ease notation, for any $t$ such that $A^{-1}(t)$ exists,  we introduce the matrix  
\begin{equation}\label{e:Z}
    U(t):=A'(t)A^{-1}(t),
\end{equation}
and note that $U(t)$ is symmetric for all such $t$ \cite{Eberlein73}. This matrix was also used by Green~\cite{Green} and Eberlein~\cite{Eberlein73,Eberlein73b} in the case of no conjugate points, for which it exists for all $t\neq 0$ and solves a certain Ricatti equation.

Recall that in the Newton iteration algorithm for finding zeros of a function, $f$, one starts with $x_0$ where $f(x_0)$ is small, and searches for the zero by defining the sequence 
$x_{n+1}=x_n-\frac{f'(x_n)}{f(x_n)}$. Under appropriate conditions $x_n\to x_*$ and $f(x_*)=0$.

In this section, we implement a Newton-type algorithm for finding non-zero solutions, $(t_*,v_*)$, of the equation $A(t_*)v_*=0$. The sequence $\{x_n\}_n$ is defined so that the linearization of $f$ at $x_n$ will be zero at $x_{n+1}$. In the same spirit, we start at some time $t_0$ where $\|A(t_0)|_{\mc{V}}\|\ll 1$ for some vector space ${\mc{V}}$ and  look for solutions to
\begin{equation}
\label{e:linearization}
A(t_0)v-\lambda_0 A'(t_0)v=0
\end{equation}
such that $|\lambda_0|\ll 1$ and $v\in {\mc{V}}$. 
Since we can rephrase the problem as solving $(\Id-\lambda U(t))A(t)v=0$, the matrix $U(t)$ will be used to do this. In particular, finding solutions to~\eqref{e:linearization} will amount to finding eigenvalues and eigenvectors for $U$. It is here that the self-adjointness of $U$ plays a crucial role. After this step, we put $t_{1}=t_0-\lambda_0$ and repeat the process as in Newton iteration.

In the next lemma, we show that if $\|A(t_0)|_{{\mc{V}}}\|\ll 1$, for some $r$-dimensional vector space ${\mc{V}}$, then we can find $r$ large eigenvalues of the matrix $U(t_0)$.

\begin{lemma}\label{l:eigenvectorsCanBeFound}
There is $C>0$ such that the following holds. Let $t_0\in \mathbb{R}$ and $\beta>0$ such that 
 $A(t_0)^{-1}$ exists, $C\beta e^{\Lambda|t_0|}<1,$ and there are $\{u_j\}_{j=1}^r \subset (\gamma'(0))^{\perp} \backslash \{0\}$ orthogonal with
$$
\max_{j}\frac{\|A(t_0)u_j\|}{\|u_j\|}\leq \beta.
$$
Then, there exist  eigenvalues $\{\lambda_j^{-1}\}_{j=1}^r$  of $U(t_0)$ with $\max_j|\lambda_j|\leq C\beta e^{\Lambda|\tilde{t}|}$ for all $|\tilde{t}-t_0|\leq 1$.
\end{lemma}
\begin{proof}
First, we  check that 
$A(t_0)u$ is small for all $u\in \operatorname{span}\{u_1,\dots,u_r\}$. This follows since {there exists $C_n>0$ depending only on $n$ such that}
$$
\Big\|A(t_0)\sum_{j=1}^rb_j u_j\Big\|\leq \beta\sum_{j=1}^r|b_j|\leq \beta C_n \Big\|\sum_{j=1}^rb_ju_j\Big\|.
$$
In particular, provided $\beta C_n\leq \frac{c}{2}e^{-\Lambda|t_0|}$, by Lemma~\ref{l:boundA'}, we have
$$
\frac{\|U(t_0)A(t_0)u\|}{\|A(t_0)u\|}=\frac{\|A'(t_0)u\|}{\|A(t_0)u\|}\geq \beta^{-1}C_n^{-1}ce^{-\Lambda|t_0|}.
$$

We now apply the max-min principle to $U(t_0)$ using the fact that $A(t_0)$ applied to $\operatorname{span}\{u_1,\dots,u_r\}$ is an $r$ dimensional vector space. That is, observe that if we order the eigenvalues of $U(t_0)$ as $|\lambda_1|^{-1}\geq |\lambda_2|^{-1}\geq \dots \geq |\lambda_{n-1}|^{-1}$, then,
$$
|\lambda_{k}|^{-2}=\max_{\mc{V}}\Big\{\min\big\{ \tfrac{\|Av\|^2}{\|v\|^2}\,:\, v\in {\mc{V}}\big\}\,:\, \dim {\mc{V}}=k\Big\},
$$
{where the maximum is taken over all subspaces $\mc{V}$ of dimension $k$.}
Taking ${\mc{V}}_r=\operatorname{span}\{ A(t_0)u_1,\dots,  A(t_0)u_r\}$, $\dim {\mc{V}}_r=r$, and 
$$
\min\big\{ \tfrac{\|U(t_0)v\|^2}{\|v\|^2}\,:\, v\in {\mc{V}}_r\big\}\geq \beta^{-2}C_n^{-2}c^2e^{-2\Lambda|t_0|}.
$$
In particular, 
$$
|\lambda_{j}|^{-1}\geq \beta^{-1}C_n^{-1}ce^{-\Lambda|t_0|},\qquad j=1,\dots,r.
$$
The bound can be rewritten as a bound in terms of $\tilde t$ by modifying the constant $C$.
\end{proof}

The next lemma will be used to make steps in the Newton iteration. In particular, starting from time $t_0$, where $U(t_0)$ has large eigenvalues, we find a new time, $t_0-s$, where $U(t_0-s)$ has substantially larger eigenvalues.

\begin{lemma}\label{l:lambdasCanBeFound}
There {are $c,C>0$} such that the following holds. 
Suppose that $A(t_0)^{-1}$ exists, $U(t_0)$ has eigenvalues $\{1/\lambda_j\}_{j=1}^{r}$ with $|\lambda_1|={\max_j |\lambda_j|}$ and orthonormal eigenvectors $\{e_j\}_{j=1}^r$. Let $B\geq 0$ and  $|s|\leq 2|\lambda_1|$ such that 
$$
\max_{j}|s-\lambda_j|\leq B|\lambda_1|^{3} \quad\text{and}\quad C(1+B)|\lambda_1|^3\leq \tfrac{c}{2}e^{-{2}\Lambda|t_0|}.
$$
Then, for $v\in \operatorname{span}\{ A(t_0)^{-1}e_j\}_{j=1}^r$,
$$
\|A(t_0-s)v\|\leq C(1+B)|\lambda_1|^3e^{\Lambda|t_0|}\|v\|.
$$
Moreover, if $A(t_0-s)^{-1}$ exists, $U(t_0-s)$ has eigenvalues $\{1/{\lambda}_j(s)\}_{j=1}^r$ satisfying 
$$|{\lambda}_j(s)|\leq C(1+B)|\lambda_1|^3e^{2\Lambda|\tilde{t}|},\qquad \text{for }\;\; |\tilde{t}-t|\leq 1.$$
\end{lemma}

\begin{proof}
We claim that for all $w\in \operatorname{span}\{e_1,\dots e_r\}$  we have 
\begin{equation}\label{e:max-min}
\frac{\|U(t_0-s)A(t_0-s)A^{-1}(t_0)w\|}{\|A(t_0-s)A^{-1}(t_0)w\|}\geq C^{-1}(1+ B)^{-1}|\lambda_1|^{-3}e^{-2\Lambda|t_0|}.
\end{equation}
This would complete the proof after an application of the max-min principle since $A(t_0-s)A^{-1}(t_0)$ applied to $\operatorname{span}\{e_1,\dots,e_r\}$ is an $r$ dimensional vector space.
Note that \eqref{e:max-min} yields a bound on $|\lambda_j(s)|$ in terms of $t_0$. This can be rewritten as a bound in terms of $\tilde t$ by modifying the constant $C$.

Note that 
$
U(t_0-s)A(t_0-s)A^{-1}(t_0)w=A'(t_0-s)A^{-1}(t_0)w
$ 
for $w\in \operatorname{span}\{ e_1, \dots, e_r\}$.
Therefore, by Lemma~\ref{l:boundA'}, proving \eqref{e:max-min} amounts to  finding an upper bound on its denominator.

Given any $t\in \re$, a Taylor expansion near $s=0$  {combined with \eqref{e:matJacobieqn}} yield that for all $v \in (\gamma'(0))^{\perp}$
\begin{equation}\label{e:taylor}
A(t-s)v= A(t)v - s A'(t) v - \tfrac{s^2}{2} R(t)A(t) v + Q(t,s,v),
\end{equation}
with $\|Q(t,s, v)\|\leq C s^3 e^{\Lambda|t|}\|v\|$ for some $C>0$ depending only on $R$ (c.f. \eqref{e:Req}).

Let $w=\sum_{j=1}^rb_je_j$ for some $\{b_j\}_{j=1}^r \subset\re$ and set $v=A^{-1}(t_0)w$. Then, by \eqref{e:taylor}
\begin{align*}
&A(t_0-s)v
%&=(A(t_0)-\lambda_1A'(t_0))A^{-1}(t_0)\sum_{i=1}^{r} b_i e_i+ \tfrac{1}{2}\lambda_1^2R(t_0)\sum_{i=1}^{r} b_i e_i+Q\Big(t_0,\lambda_t,A^{-1}(t_0)\sum_{i=1}^{r} b_i e_i\Big)\\
=\sum_{j=1}^r \frac{\lambda_j-s}{\lambda_j}b_je_j-\tfrac{1}{2}s^2R(t_0)w+Q(t_0,s,v).
\end{align*}
Next, using that {$|\lambda_1|=\max_{1\leq j\leq r}|\lambda_j|$} and orthogonality, {there is $C>0$ such that}
$$
\frac{1}{|\lambda_1|}\|w\|\leq \Big\|\sum_{j=1}^r\frac{b_j}{\lambda_j }e_j\Big\|
=\big\|U(t_0)w\big\|\leq \|A'(t_0)\|\big\|A^{-1}(t_0)w\big\|\leq {C}e^{\Lambda|t_0|}\|v\|.
$$
Then,
$$
\Big\|\sum_{j=1}^r \frac{\lambda_j-s}{\lambda_j}b_je_j\Big\|^2
{=} \sum_{j=1}^r \frac{|\lambda_j-s|^2}{\lambda_j^2}b^2_j
\leq B^2|\lambda_1|^6\sum_{j=1}^r\frac{b_j^2}{\lambda_j^2}\leq C^2e^{2\Lambda|t_0|}B^2|\lambda_1|^6\|v\|^2.
$$
In particular, together these imply
$
\|A(t_0-s)v\|\leq (C+B)|\lambda_1|^3e^{\Lambda|t_0|}\|v\|.
$
Thus, using Lemma~\ref{l:boundA'}, provided that $C(1+B)|\lambda_1|^3e^{\Lambda|t_0|}\leq\frac{{c}}{2}e^{-\Lambda|t_0|}$,  the claim in \eqref{e:max-min} holds.

\end{proof}

The first step in proving Lemma~\ref{l:youVanishIVanish} is to show that given $u_0$ such that $\|A(t_0)u_0\|\ll \|u_0\|$, we can find $t$ near $t_0$ such that $\ker A(t)\neq \{0\}$. This lemma uses the simplest version of our Newton iteration scheme where we do not keep track of multiplicities.

\begin{lemma}\label{l:algorithm}
There are $c,C>0$ such that the following holds.
  Suppose that there are $t_0\in \re$ and  $u_0   \in (\gamma'(0))^{\perp}\setminus\{0\}$ such that   
\begin{equation}
\label{e:beta}
\|A( t_0)u_0\|\leq \beta\|u_0\|,\qquad 0\leq\beta \leq ce^{-3\Lambda|t_0|}.
\end{equation} 
Then, there exist $t \in \re$ such that 
$$ |t- t_0| \leq C \beta e^{ \Lambda |t_0|}\qquad \text{and}\qquad \dim \ker A(t)\geq 1.$$
\end{lemma}

\begin{proof} We assume by contradiction that 
 $A(s)^{-1}$ exists if $|s-t_0|\leq {C_1}\beta e^{\Lambda|t_0|}$. Then, by Lemma~\ref{l:eigenvectorsCanBeFound}, there is an eigenvector $v_0$ of $U(t_0)$ with eigenvalue $\lambda_0^{-1}$ satisfying
\begin{equation}\label{e:lambda_1}
|\lambda_0| \leq   C\beta  e^{\Lambda|t_0|}.
\end{equation}

Let $t_1:={t_0-\lambda_0}$, $\lambda_{-1}:=\beta^{1/3}e^{-\Lambda|t_0|/3}$, and assume we have found $(t_{k+1},\lambda_k,v_k)$ for  $k=0,\dots, m$ such that $\|v_k\|= 1$,
\begin{equation}
\label{e:inductionStep}
t_{k+1}=t_{k}-\lambda_{k},\,\qquad U(t_{k})v_{k}=\lambda_{k}^{-1} v_{k},\qquad |\lambda_{k}|\leq Ce^{2\Lambda|t_0|}|\lambda_{k-1}|^3.
\end{equation}
By induction, one checks that 
\begin{equation}\label{e:lambdaK}
    |\lambda_k| \leq  \Big(C e^{2\Lambda|t_0|}\Big)^{\sum_{\ell=0}^{k-1}3^{\ell}} \Big(C\beta e^{\Lambda|t_0|} \Big)^{3^{k}},\qquad k=1,\dots,m.
\end{equation}
In particular,
$$
|t_{m+1}-t_0|\leq \sum_{k=0}^m |t_{k+1}-t_k|\leq {2}C\beta {e^{\Lambda|t_0|}}\leq 1.
$$

Next, by Lemma~\ref{l:lambdasCanBeFound} with $t_0={t_{m}}$, $s=\lambda_m$ and $B=0$, there are $(v_{m+1},\lambda_{m+1})$ such that $\|v_{m+1}\|=1$, $U(t_{m+1})v_{m+1}=\lambda_{m+1}^{-1}v_{m+1}$, and
$$
|\lambda_{m+1}|\leq Ce^{2\Lambda|t_0|}|\lambda_m|^3.
$$
Finally, letting $t_{m+2}=t_{m+1}-\lambda_{m+1}$ completes the inductive step.

Therefore, for all $k\geq 0$ there are $(t_k,\lambda_k,v_k)$ satisfying~\eqref{e:inductionStep}. In particular, 
$$
|t_k-t_0|\leq C\beta e^{2\Lambda|t_0|}.
$$
Hence, there exists $t \in \re$ such that $t_k\to t$ and
$
|t-t_0|\leq C\beta e^{2\Lambda|t_0|}.
$ 
Next, note that 
%$$
%\|A(t_k)A^{-1}(t_k)v_k\|=\|v_k\|=|\lambda_k|\|U(t_k)v_k\|\leq \|A'(t_k)\|\|A^{-1}(t_k)v_k\|\leq |\lambda_k| e^{\Lambda|t_k|}\|A^{-1}(t_k)v_k)\|.
%$$
%Therefore,
$$
|\lambda_k|^{-1}=\|U(t_k)v_k\|=\|A'(t_k)A^{-1}(t_k)v_k\|\leq {C}e^{\Lambda|t_k|}\|A^{-1}(t_k)v_k\|\leq  Ce^{\Lambda|t_0|}\|A^{-1}(t_k)v_k\|.
$$
In particular, since $|\lambda_k| \to 0$, we conclude $\|A(t_k)^{-1}v_k\|\to \infty$. On the other hand, by assumption $A(t)$ is invertible and hence, there exists $C>0$ and an open interval $I$ around $t$ such that 
$$
\|A(s)^{-1}\|\leq C<\infty,\qquad s\in I,
$$
which gives a contradiction {if we choose $C_1$ large enough}.

\end{proof}

In the proof of Lemma~\ref{l:youVanishIVanish}, we will induct on the number of times at which $A(t)$ is not invertible in a small neighborhood of the time $t_0$ where $\|A(t_0)|_{{\mc{V}}}\|\ll 1$. To begin, we implement Newton iteration to handle the case when we apriori have at most one such time and control the multiplicity of the conjugate point at that time in terms of $\dim {\mc{V}}$.

\begin{lemma}\label{l:Kernel}
There {are $c,C>0$} such that the following holds.
Let $\beta_0>0$ and $t_0\in \mathbb{R}$ with
$
\beta_0\leq ce^{-3\Lambda|t_0|}.
$
Suppose there exists $t_*$ so that  
$$A(t)\; \text{is invertible for }\;t\neq t_* \;\;\text{with}\;\;  |t-t_0|\leq 2C\beta_0e^{\Lambda|t_0|},$$
and that 
 there are $\{u_j\}_{j=1}^r$ orthogonal such that 
$
\|A(t_0)u_j\|\leq \beta_0 \|u_j\|
$
for $j=1, \dots, r$.
%If $U(t_0)$ has eigenvalues $\{\lambda_j^{-1}\}_{j=1}^{r}$ (counted with multiplicity) satisfying $\max_j|\lambda_j|\leq \beta_0$, 
Then, {$|t-t_*|\leq C\beta_0 e^{\Lambda|t_0|}$} and $$\dim \ker(A(t_*))\geq r.$$
\end{lemma}

\begin{proof} 

{We first show that, by increasing $C$ and decreasing $c$ slightly, we may assume $t_*\neq t_0$. Indeed, suppose the lemma holds {for some  $t_*\neq t_0$}. 

Let $c,C>0$ be the constants found for {the $t_* \neq t_0$ case} and suppose that $\beta_0\leq \tfrac{c}{2}e^{-3\Lambda|t_0|}$, $A(t)$ is invertible for $t\neq t_0$ with $|t-t_0|\leq 3C\beta_0e^{\Lambda|t_0|}$, and there are $\{u_j\}_{j=1}^r$ orthogonal such that $\|A(t_0)u_j\|\leq \beta_0\|u_j\|$ for $j=1,\dots,r$.

Then, let $s_0\in \mathbb{R}$ and $0<c_0<C$ such that 
$$
0<|s_0-t_0|<c_0\beta_0e^{-\Lambda|t_0|}.
$$
Note that $A(s)$ is invertible, and, since
$$
|t-t_0|<|t-s_0|+|s_0-t_0|<|t-s_0|+c_0\beta_0e^{-\Lambda|t_0|},
$$
$A(t)$ is invertible for $t\neq t_*$ with $|s_0-t|<2C\beta_0e^{\Lambda|t_0|}$. Moreover, since $\|A'(t)\|\leq Ce^{\Lambda|t|}$, we have 
$$
\|A(s_0)u_j\|\leq \|A(t_0)u_j\|+Ce^{\Lambda|t_0|}\|u_j\|\leq \beta_0(1+c_0C)\|u_j\|\leq \tfrac{3}{2}\beta_0\|u_j\|
$$
provided we choose $c_0>0$ small enough. Finally, observe that $\tfrac{3}{2}\beta_0\leq \tfrac{3}{4}ce^{-3\Lambda|t_0|}\leq ce^{-3\Lambda|s_0|}$, again provided we choose $c_0>0$ small enough.
To finish the argument for the $t_0=t_*$ case, apply the lemma with $t_0:=s_0$ and $t_*:=t_0$.

We now prove the lemma assuming that $t_0\neq t_*$.
}

{Let $c,C$ be respectively the minimum and maximum of the constants found in Lemmas~\ref{l:eigenvectorsCanBeFound},~\ref{l:lambdasCanBeFound}, and~\ref{l:algorithm}. By Lemma~\ref{l:algorithm}, since $\beta\leq ce^{-3\Lambda|t_0|}$, $|t_*-t_0|\leq C\beta e^{\Lambda|t_0|}$.}

By Lemma~\ref{l:eigenvectorsCanBeFound}, since $C\beta_0e^{\Lambda|t_0|}<1$, $U(t_0)$ has eigenvalues $\{\lambda_{0,j}^{-1}\}_{j=1}^r$ such that 
$$
|\lambda_{0,j}|\leq C\beta_0e^{\Lambda|t_0|}.
$$

 Let $\{e_{0,j}\}_{j=1}^r$ be the eigenvectors of $U(t_0)$ with eigenvalues $\{1/\lambda_{0,j}\}_{j=1}^r$. Here, we set $\lambda_{0,j}=\lambda_{j}$ for all $j=1, \dots, r$. Note that, by {Lemma~\ref{l:lambdasCanBeFound}}, for all $j=1, \dots, r$
$$
\|A(t_0-\lambda_{0,j}) A^{-1}(t_0)e_{0,j}\|\leq C^4\beta_0^3e^{4\Lambda|t_0|}\|A^{-1}(t_0)e_{0,j}\|.
$$
Then, by Lemma~\ref{l:algorithm} there are $t\in\mathbb{R}$ and $w\in (\gamma'(0))^{\perp}\setminus \{0\}$ such that $A(t)w=0$ and
$
 \max_j|t-t_0+\lambda_{0,j}|\leq C^5\beta_0^3e^{5\Lambda|t_0|}.
$
In particular, {since $|t-t_0|\leq 2\beta_0$}, we have must have $t=t_*$ and so
%$$
%|t_*-t_0+\lambda_{1}|\leq C^2\beta_0^3e^{2\Lambda|t_0|}.
%$$
%Hence,
$$
|t_0-t_*|\leq C\beta_0e^{\Lambda|t_0|}+C^5\beta_0^3e^{5\Lambda|t_0|}.
%\qquad 
%\text{and}
%%\qquad
%\max_{j,\ell}|\lambda_{0,j}-\lambda_{0,\ell}|\leq 2 C^5|\beta_0|^3e^{5\Lambda|t_0|}.
$$

Set $\beta_{-1}:=(\beta_0({C}^3(1+2C^2e^{2\Lambda|t_0|}))^{-1}e^{-{4}\Lambda|t_0|})^{1/3}$. Let  $m\geq 0$ and  for $0\leq k\leq m$ suppose that we have found  $(t_k,\{\lambda_{k,j}\}_{j=1}^r,\beta_k)$  such that
\begin{enumerate}
    \item $U(t_k)$ has eigenvalues $\{\lambda_{k,j}^{-1}\}_{j=1}^r$ with $\max_j|\lambda_{k,j}|\leq C\beta_k e^{\Lambda|t_0|},$
    \item $A(t)$ is invertible on ${I(t_k,\beta_k)}\setminus \{t_*\}$,
    \item $0<|t_{k}-t_*|\leq C\beta_ke^{\Lambda|t_0|}+C^5\beta_k^3e^{5\Lambda|t_0|},$  
    \item $\beta_{k}\leq C^{
    {3}}(1+2C^2e^{2\Lambda|t_0|})\beta_{k-1}^3e^{{4}\Lambda|t_0|}$,
    %\item  $
%\max_{j,\ell}|\lambda_{k,\ell}-\lambda_{k,j}|\leq 2C^5\beta_k^3e^{5\Lambda|t_0|}.
%$
\end{enumerate}
{where $$I(t_k,\beta_k):=(t_k-2C\beta_k e^{\Lambda|t_0|},t_k+2{C}\beta_ke^{\Lambda|t_0|}).$$}
Then, for each $0\leq k\leq m$  let $\{e_{k,j}\}_{j=1}^r$ be the eigenvectors of $U(t_k)$ with eigenvalues $\{1/\lambda_{k,j}\}_{j=1}^r$. Note that, by {Lemma~\ref{l:lambdasCanBeFound} with $B=0$,}
$$
\|A(t_k-\lambda_{k,j}) A^{-1}(t_k)e_{k,j}\|\leq C|\lambda_{k,j}|^3e^{\Lambda|t_0|}\|A^{-1}(t_k)e_{k,j}\|.
$$
Thus, by Lemma~\ref{l:algorithm} there are $t\in\mathbb{R}$ and $w\in (\gamma'(0))^{\perp}\setminus \{0\}$ such that $A(t)w=0$ and
$
 |t-t_k+\lambda_{k,j}|\leq C^2|\lambda_{k,j}|^3e^{2\Lambda|t_0|}
$
for $j=1, \dots, r.$
In particular, %{since $|t-t_k|\leq 2C\beta_ke^{\Lambda|t_0|}$,} 
{since $t \in I(t_k.\beta_k)$,} we must have $t=t_*$ and so
\begin{equation}
\label{e:lambdasClose}
\max_j|t_*-t_k+\lambda_{k,j}|\leq C^2e^{2\Lambda|t_0|}|\lambda_{k,j}|^3
\qquad \text{and}\qquad 
\max_{j,\ell}|\lambda_{k,j}-\lambda_{k,\ell}|\leq 2C^5\beta_k^3e^{5\Lambda|t_0|}.
\end{equation}
Next, we define
 $t_{k+1}\in \mathbb{R}$ such that 
\begin{equation}\label{e:s_0t_k}
0<|t_*-t_{k+1}|\leq C^{{2}}|\lambda_{k,1}|^3e^{{2}\Lambda|t_0|},
\end{equation}
where $\lambda_{k,1}$ is chosen so that $\max_j|\lambda_{k,j}|=|\lambda_{k,1}|$.
 Then, with ${s_k}=t_k-t_{k+1}$,
$$
\max_j|{s_k}-\lambda_{k,j}|= \max_j|t_*-t_{k+1}+t_k-t_*-\lambda_{k,j}|\leq 2C^{{2}}e^{{2}\Lambda|t_0|}|\lambda_{k,1}|^3.
$$
Thus, we may apply Lemma~\ref{l:lambdasCanBeFound} {with $B=2C^{{2}}e^{{2}\Lambda|t_0|}$, $s=s_k$, and $t_0=t_k$} to obtain that $U(t_{k+1})$ has eigenvalues $\{1/\lambda_{k+1,j}\}_{j=1}^r$ satisfying
$$
|\lambda_{k+1,j}|\leq {C}(1+2C^2e^{2\Lambda|t_0|})|\lambda_{k,1}|^3e^{{2}\Lambda|t_0|}\leq C\beta_{k+1}e^{\Lambda|t_0|},
$$
{where we set $\beta_{k+1}:=C^{3}(1+2C^2e^{2\Lambda|t_0|})\beta_k^3e^{{4}\Lambda|t_0|}$.}

{Next, we claim that $A$ is invertible on {$I(t_{k+1},\beta_{k+1})\setminus \{t_*\}$}.} Indeed, for $t\in {I(t_{k+1},\beta_{k+1})}$, assumptions (3) and (4) in the induction hypotheses and \eqref{e:s_0t_k} yield, {since $|t-t_{k}| \leq |t-t_{k+1}|+|t_*-t_k|+|t_*-t_{k+1}|$,}
%$$
%|s_0-t_{k+1}|\leq C^2\beta_k^3e^{2\Lambda|t_0|},\qquad |s_0-t_k|\leq \beta_k+C^2\beta_k^3e^{2\Lambda|t_0|},
%$$
$$
\begin{aligned}
|t-t_{k}|%&= |t-t_{k+1}+t_*-t_k-t_*+t_{k+1}|\\
&< 2{C}\beta_{k+1}{e^{\Lambda|t_0|}}+C\beta_ke^{\Lambda|t_0|}+2C^5\beta_k^3e^{5\Lambda|t_0|}
%&\leq 2C^{{4}}(1+2C^2e^{2\Lambda|t_0|})\beta_k^3e^{{5}\Lambda|t_0|}+C\beta_ke^{\Lambda|t_0|}+2C^{{5}}\beta_k^3e^{{5}\Lambda|t_0|}
\leq 2{C}\beta_ke^{{\Lambda|t_0|}}.
\end{aligned}
$$
Therefore, {$I(t_{k+1}.\beta_{k+1})\subset I(t_k.\beta_k)$}
%$$
%(t_{k+1}-2C\beta_{k+1}e^{\Lambda|t_0|},t_{k+1}+2C\beta_{k+1}e^{\Lambda|t_0|})\subset %(t_k-2{C}\beta_k{e^{\Lambda|t_0|}},t_k+2{C}\beta_k{e^{\Lambda|t_0|}}),
%$$
and hence $A$ is invertible on {$I(t_{k+1}.\beta_{k+1})\setminus\{t_*\}$}.%$(t_{k+1}-2\beta_{k+1},t_{k+1}+2\beta_{k+1})\setminus \{t_*\}$. 

Thus, by induction, there are $(t_k,\{\lambda_{k,j}\}_{j=1}^r,\beta_k)$ such that (1)-({4}) above hold. In particular, $\beta_k\to 0$, $t_k\to t_*$, and, {by~\eqref{e:lambdasClose}}, we may choose $\tilde{t}_k {\in I(t_k.\beta_k)}$ such that $A(\tilde{t}_k)$ is invertible and 
$$
\max_j|\tilde{t}_k-t_{k}+\lambda_{k,j}|\leq {2C^2e^{2\Lambda|t_0|}|\lambda_{k,1}|^3}.
$$
Note that  $\tilde{t}_k\to t_*$ and by Lemma~\ref{l:lambdasCanBeFound} {(with $t_0=t_k$, $s= t_k-\tilde t_k$, and $B=2C^2e^{2\Lambda|t_0|}$)}, for $v\in \mc{V}_k:= \operatorname{span}\{A(t_k)^{-1}e_{k,j}\}_{j=1}^r$,
\begin{equation}\label{e:AtoZero}
\begin{aligned}\|A(\tilde{t}_k)v\|&\leq C(1+2C^2e^{2\Lambda|t_0|}){|\lambda_{k,1}|^3}e^{\Lambda|t_0|}\|v\|\\
&\leq {C^4(1+2C^2e^{2\Lambda|t_0|}){\beta_{k}^3}e^{4\Lambda|t_0|}}\|v\|.\\
\end{aligned}
\end{equation}
Choosing any orthonormal basis $\{v_{k,1},\dots, v_{k,r}\}$ for $\mc{V}_k$ we may extract a convergent subsequence $\{v_{k_\ell,j}\}_\ell$ such that 
$$
\lim_{\ell \to \infty}v_{k_\ell,j}= v_j 
$$
for all $j=1, \dots, r$, and where  $\{v_j\}_{j=1}^r \subset (\gamma'(0))^{\perp}$ are orthonormal vectors. Since the map $t\mapsto A(t)$ is continuous, and by \eqref{e:AtoZero}  $\lim_{\ell \to \infty}\|{A(\tilde{t}_{k_\ell})}v_{k_\ell,j}\|=0$ {for all $j=1, \dots, r$}, we conclude
$$
A(t_*)v_j=0,\qquad j=1,\dots,r,
$$
and hence $\dim \ker A(t_*)\geq r$.

\end{proof}

We now prove Lemma~\ref{l:youVanishIVanish}. {We need to address} the fact that Lemma~\ref{l:Kernel} only applies when there is a single time, $t_*$, in an interval proportional to the smallness of $\beta:=\|A(t_0)|_{{\mc{V}}}\|$ such that $A(t_*)$ is not invertible. To explain how to handle this, {we will reduce to the case that there are at most $r-1$} times $t_i$ in a small interval around $t_0$ such that $A(t_i)$ is not invertible. {We will then show that these times can be grouped together into clusters around times $t_{i_\alpha}^\infty$ with corresponding vector spaces $\mc{V}_\alpha^\infty$  such that $\|A(t_{i_\alpha}^\infty)|_{\mc{V}_\alpha^\infty}\|\leq \frac{1}{2}\beta$ and {$\sum_\alpha \dim \mc{V}_{\alpha}^\infty\geq r$}. In other words, by grouping the times appropriately, we can effectively decrease $\beta$. After an induction on the number of times, we will then be able to complete the proof.}

%If these times are well separated relative to $\beta$, then we can apply Lemma~\ref{l:Kernel}. If they are not, however, we note that there \emph{is} some $\tilde{\beta}>0$ such that the times $t_i$ are well-separated relative to $\tilde{\beta}$. Therefore, our main goal in the proof below is to effectively decrease $\beta$ by finding new times, $s_0-\lambda_{0,j}$, in the original interval at which $\|A(s_0-\lambda_{0,j})|_{{\mc{V}}_j}\|\lesssim \beta^3\ll \beta$ with $\sum_j\dim {\mc{V}}_j\geq \dim {\mc{V}}$. We then regroup the times $t_i$ at scale $\sim \beta^3$ {to effectively decrease $\beta$.}

\begin{proof}[{\bf Proof of Lemma~\ref{l:youVanishIVanish}}]
{Let $C$  be the maximum of the constants $C$ found in Lemmas \ref{l:eigenvectorsCanBeFound}, \ref{l:lambdasCanBeFound}, \ref{l:algorithm}, and \ref{l:Kernel}. Similarly, we let $c$ be the minimum of all the constants $c$ given by the same lemmas. To ease the presentation, for $t\in \mathbb{R}$ and $\beta>0$ we again write 
$$
I(t, \beta):=(t-2C\beta e^{\Lambda|t|},t+2C \beta e^{\Lambda|t|}).
$$

 We first reduce to the case that
 there is $0\leq k\leq r-1$ such that 
 \begin{equation}
 \text{$A(t)$ is invertible on 
$
I(t_0,2^k\beta_0)\setminus I(t_0,2^{k-1}\beta_0). 
$}\label{e:PCR}\end{equation}
Suppose there is no such $k$. 
Since $\{I(t_0,2^k\beta_0)\setminus I(t_0,2^{k-1}\beta_0)\}_{k=0}^{r-1}$ are disjoint,
%$$I(t_0,2^k\beta)\setminus I(t_0,2^{k-1}\beta)\cap I(t_0,2^j\beta)\setminus I(t_0,2^{j-1}\beta)=\emptyset,\qquad k\neq j$$
this implies there are $s_0,\dots, s_{r-1}$ distinct such that $s_i\in I(t_0,2^{r-1}\beta_0)$ and $\dim \ker A(s_i)\geq 1$. This implies there exist $\{s_i\}_{i=0}^{r-1}$ with
$$
\sum_{i=0}^{r-1} \dim \ker A(s_i)\geq r,\qquad \max_{i}|t_0-s_i|\leq C 2^r\beta_0 e^{\Lambda|t_0|}
$$
and hence the lemma holds.

 Let $0\leq k\leq r-1$ such that~\eqref{e:PCR} holds and let $\{s_i\}_{i=1}^N\subset I(t_0,2^{k-1}\beta_0)$ distinct such that $A(s_i)$ is not invertible and $A(t)$ is invertible on $I(t_0,2^k\beta_0)\setminus \{s_i\}_{i=1}^N$.
If $N\geq r$, then the proof is complete since $\dim \ker A(s_i)\geq 1$. Therefore, we may assume $N\leq r-1$. {In particular,  with $\beta_1=2^k\beta_0$,  there are $\{t_i\}_{i=0}^{r}\subset I(t_0,\frac{1}{2}\beta_1)$ such that $A(t)$ is invertible on $I(t_0,\beta_1)\setminus \{t_i\}_{i=0}^{r}$. }

By the discussion above it is enough to show that there is $c_r>0$ such that for all $t$, all ${0<\beta}<c_r{e^{-(r+2)\Lambda|t|}}$, and $\{t_i\}_{i=1}^{r-1}\subset I(t,\frac{1}{2}\beta)$, if $A(t)$ is invertible on $I(t,\beta)\setminus\{t_i\}_{i=1}^{r-1}$ and there are $\{u_j\}_{j=1}^r \in \{\gamma'(0)\}^\perp$ orthonormal with $\max_{1\leq j\leq r}\|A(t)u_j\|\leq \beta$, then $\sum_{i=1}^{r-1}\dim \ker A(t_i)\geq r$.

\ \medskip

For $t \in \re$, $\beta>0$, $l\in \mathbb{N}$, $r_0\in \mathbb{N}$,  $\{t_i\}_{i=1}^l\subset \re$ we introduce the following statements:
\begin{itemize}[leftmargin=1em]
\item  $\mc{P}(t,\beta,l,r_0,\{t_i\}_{i=1}^l)$ is the statement: {If $A$ is invertible on $I(t,\beta) \setminus \{ t_i\}_{i=1}^l$ and there are $\{u_j\}_{j=1}^{r_0}$ orthonormal with
${\underset{1\leq j\leq r_0}{\max}}\|A(t)u_j\|\leq \beta,$} then 
$\sum_{i=1}^l \dim \ker A(t_i)\geq {r_0}.$
\item $\mc{P}(t,\beta,l,r_0)$ is the statement:  $\mc{P}(t,\beta,l,r_0,\{t_i\}_{i=1}^l)$ holds for all collections $\{t_i\}_{i=1}^l\subset I(t,\tfrac{1}{2}\beta)$ with $t_i$ distinct.
\end{itemize}

\medskip

The goal is to prove that for all $1\leq l\leq r-1$ {there is $c_l>0$ such that}  {$\mc{P}(t, \beta,l,r)$ holds for all $t$, and $0<\beta<{c_l}e^{-(l+2)\Lambda|t|}$}. We split the proof in two steps. 
%To do this, first note $\mc{P}(t,\beta,1)$ holds {for all $t$ and $\beta<ce^{-3\Lambda |t|}$} by Lemma~\ref{l:Kernel}.
\\

}

\noindent\emph{Step 1.} 
Suppose that $k\geq 0$, $0<\beta< ce^{-(k+2)\Lambda|t|}$, {$r_0>0$,} and {$\{t_i\}_{i=1}^k\subset I(t,\tfrac{1}{2}\beta)$} are {distinct times} such that the hypothesis of $\mc{P}(t,\beta,k,{r_0},\{t_i\}_{i=1}^k)$ hold.

We claim that there exist a collection of indices $\{i_{\alpha}\}_{\alpha=1}^m\subset\{1,\dots k\}$,  indices $\{\ell^{\infty}_{i_{\alpha}}\}_{\alpha=1}^m\subset\{1,\dots, k\}$,  times $\{t_{i_{\alpha}}^{\infty}\}_\alpha\subset I(t,\frac{1}{2}\beta)$,  numbers $\{\beta_{\ell^{\infty}_{i_{\alpha}}}\}_\alpha\subset (0,\frac{1}{2}\beta)$,  non-empty disjoint sets $\mc{I}^\infty_{i_\alpha}\subset\{1,\dots k\}$, sets  $\mc{J}^\infty_{i_\alpha}\subset\{1,\dots, r_0\}$,  and intervals
\begin{equation}\label{e:intersectionEmpty}
U_{i_\alpha}^\infty=I(t_{i_{\alpha}}^{\infty},\beta_{\ell^{\infty}_{i_{\alpha}}}),\qquad \qquad  U_{i_{\alpha_1}}^\infty\cap U_{i_{\alpha_2}}^\infty=\emptyset\quad \;\; \alpha_1\neq \alpha_2,
\end{equation}
satisfying
\begin{equation}\label{e:setsAreAll}
 \{1,\dots, k\}=\bigcup_{\alpha=1}^m {\mc{I}_{i_{\alpha}}^\infty},\qquad  \quad {\{1,\dots, r_0\}=\bigcup_{\alpha=1}^m {\mc{J}_{i_{\alpha}}^\infty}},
\end{equation}
and such that $\{t_i\}_{i\in \mc{I}_{\alpha}^\infty}\subset I(t_{i_{\alpha}}^{\infty},\tfrac{1}{2}\beta_{\ell^{\infty}_{i_{\alpha}}})$,  $A$ is invertible on 
$
U_{i_\alpha}^\infty \setminus \{t_i\}_{i\in \mc{I}_{\alpha}^\infty},
$
and {there is a $|\mc{J}_{i_\alpha}^\infty|$-dimensional subspace $\mc{V}_\alpha^\infty$ such that for all $v\in \mc{V}_{\alpha}^\infty$
$$
\|A(t_{i_{\alpha}}^{\infty})v\|\leq \beta_{\ell^{\infty}_{i_{\alpha}}}\|v\|.
$$
In particular, the assumptions of $\mc{P}(t_{i_\alpha}^\infty,\beta_{\ell^{\infty}_{i_{\alpha}}},|\mc{I}_{i_\alpha}^\infty|, |\mc{J}_{i_\alpha}^\infty|,\{t_{i}\}_{i\in \mc{I}_{i_\alpha}^\infty})$
hold. Thus, by partitioning the times appropriately, we are able to reduce $\beta$ by at least half. 
} 
%Fix $\{t_i\}_{i=1}^k$ and let $\beta_*=\beta_*\big(\{t_i\}_{i=1}^k\big)$ be defined as
%$$\beta_*{:=\sup\{ \beta>0:\mc{P}(t,\beta,k, {\{t_i\}_{i=1}^k})\text{ holds for all } \; t \text{ such that  }\{t_i\}_{i=1}^k\subset I(t,\tfrac{1}{2}\beta)\}}.$$ 
%Note that $\beta_*>0$ by Step 1. Furthermore, note that we may assume
%\begin{equation}\label{e:theProblem}
%\beta_*\leq ce^{-{(r+2)}\Lambda|t|},
%\end{equation}
%since otherwise the lemma is proved. 

%Let $t$ be such that $\{t_i\}_{i=1}^k\subset I(t,\tfrac{1}{2}\beta)$ and set
%$$
%\beta_*(t)=\sup\{ \beta>0:\mc{P}(t,\beta,k, {\{t_i\}_{i=1}^k}) \;\text{holds}  \}
%$$
%$$
%\beta_*=\inf\{\beta_*(t):  \text{$t$ satisfies} \{t_i\}_{i=1}^k\subset %I(t,\tfrac{1}{2}\beta)
%$$

%Let $\e>0$ and set  $\beta_\e:= \beta_*(1+\e)$. We will show that if $\e>0$ is small enough and $t$ is such that $\{t_i\}_{i=1}^k\subset I(t,\tfrac{1}{2}\beta_\e)$, then the statement $\mc{P}(t,\beta_\e, k,\{t_i\}_{i=1}^k)$ holds and hence reach a contradiction {since the  definition of $\beta_*$ would yield  $\beta_*\geq \beta_\e$.}

{
Let $s\in \mathbb{R}$ and $c_0>0$ such that 
\begin{equation}\label{e:s_0-t_0}
|s-t|<c_0\beta {e^{-\Lambda|t|}}, \qquad \qquad A(s)^{-1} \;\;\text{exists}.
\end{equation}
Then, {since $\max_{1\leq j\leq r_0}\|A(t)u_j\|\leq \beta$ and $\|A'(t)\|\leq Ce^{\Lambda|t|}$,} for $c_0$ small enough,
\begin{equation}
\label{e:shiftMeNow}
\max_{1\leq j\leq r_0}\|A(s)u_j\|\leq \beta +{C}|s-t|e^{\Lambda|t|}\leq {\tfrac{3}{2}\beta}.
\end{equation}
In particular, by Lemma~\ref{l:eigenvectorsCanBeFound}, $U(s)$ has eigenvalues $\{\lambda_{j}^{-1}\}_{j=1}^{r_0}$ with orthonormal eigenvectors $\{e_{j}\}_{j=1}^{r_0}$ such that 
\begin{equation}\label{e:lambdaBeta}
{|\lambda_1|=}\max_{1\leq j\leq r_0}|\lambda_{j}|\leq {\tfrac{3}{2}}C\beta e^{\Lambda|t|}.
\end{equation}

Observe that for all $j=1, \dots, r_0$, by Lemma~\ref{l:lambdasCanBeFound} (with $t_0=s$, $B=0$, $s=\lambda_{j}$, {and $\tilde{t}=t$}),
$$
\|A(s-\lambda_{j})A^{-1}(s)e_{j}\|\leq C|\lambda_{1}|^3{e^{\Lambda|{t}|}}\|A^{-1}({s})e_{j}\|.
$$
%{where $\{e_j\}_{j=1}^k$ are orthonormal eigenvectors for $U(s)$.}
Then, we apply Lemma~\ref{l:algorithm} {(with $t_0=s-\lambda_j$)} to obtain $\{\tilde t_{j}\}_{j=1}^{r_0}$ such that $A(\tilde t_{j})$ is not invertible and
\begin{equation}\label{e:elephant}
\max_j|\tilde t_{j}-s+\lambda_{j}|\leq C^2 |\lambda_{1}|^3{e^{2\Lambda|t|}}.
\end{equation}

Next, defining
\begin{equation}\label{e:beta_1}
\beta_{1}:={C}(1+C^2e^{{2\Lambda|t|}})|\lambda_{1}|^3e^{\Lambda|t|},
\end{equation} 
for $i_0\in \{1,\dots, k\}$ let
$
{U}_{i_0}^{\,1}:=I(t_{i_0}, 3\beta_{1}),
$ 
\begin{equation}\label{e:IandJ}
\mc{I}_{i_0}^{\,1}:=\{ i\mid t_i\in 
{U}_{i_0}^{\,1}\},\qquad \mc{J}_{i_0}^{\,1}:=\{j\mid \min_{i\in \mc{I}_{i_0}^{\,1}}|s-\lambda_{j}-t_i|\leq C^2|\lambda_{1}|^3e^{2\Lambda|t|}\}.
\end{equation}

If $\mc{I}_{i_0}^{\,1}=\{i_0\}$, then
 $A$ is invertible on $I(t_{i_0}, \beta_{1})\setminus \{t_{i_0}\}$ and for all $v\in \operatorname{span}\{A^{-1}(s)e_{j}\}_{j\in \mc{J}_{i_0}^1}$, by Lemma~\ref{l:lambdasCanBeFound} (with $t_0=s$, $B=C^2e^{2\Lambda |t|}$, and  $s-t_{i_0}$ in place of $s$)
 $$
 \|A(t_{i_0})v\|\leq \beta_1\|v\|.
 $$
 We then define $\ell^{\infty}_{i_0}=1$, $t^\infty_{i_0}= t_{i_0}$,
$$
U_{i_0}^\infty:=I(t_{i_0}, \beta_{\ell^{\infty}_{i_0}}), \qquad \mc{J}_{i_0}^{\infty}:=\mc{J}_{i_0}^{\,1}, \qquad  \mc{I}_{i_0}^{\infty}:=\mc{I}_{i_0}^{\,1}.$$

If $\{i_0\} \subsetneq \mc{I}_{i_0}^{\,1}$, let $\bar{t}_{i_0}^{\,1}:=\frac{1}{|\mc{I}_{i_0}^{\,1}|}\sum_{i\in \mc{I}_{i_0}^{\,1}}t_i\in I(t,\frac{1}{2}\beta).$
Then, {by Lemma~\ref{l:lambdasCanBeFound} (with $t_0=s$,  $B=C^2e^{2\Lambda|t|}+12Ce^{\Lambda|t|}\frac{\beta_{1}}{|\lambda_{1}|^3}$}, {and  $s-\bar{t}_{i_0}^{\,1}$ in place of $s$}), for all $v\in \operatorname{span}\{A^{-1}(s)e_{j}\}_{j\in \mc{J}_{i_0}^{\,1}},$
$$
\|A(\bar{t}_{i_0}^{\,1})v\|\leq  \beta_{2} \|v\|,
\qquad 
\beta_{2}:=C\Big(1+C^2e^{2\Lambda|t|}+12{Ce^{\Lambda|t|}}\frac{\beta_{1}}{|\lambda_{1}|^3} \Big)|\lambda_{1}|^3e^{\Lambda|t|}.
$$
Next, let 
$
U_{i_0}^{\,2}:=I(\bar{t}_{i_0}^{\,1}, 3\beta_{{2}})
$
and define $\mc{J}_{i_0}^{\,2},\mc{I}_{i_0}^{\,2}$ as in \eqref{e:IandJ}.
Note that $\mc{I}_{i_0}^{\,1} \subset \mc{I}_{i_0}^{\,2}$.

If $\mc{I}_{i_0}^{\,2}=\mc{I}_{i_0}^{\,1}$, then, since $\beta_2\geq 6\beta_1$, $A$ is invertible on $I(\bar{t}_{i_0}^1,\beta_2)\setminus \{t_i\}_{i\in \mc{I}_{i_0}^2}$, and $\{t_i\}_{i\in \mc{I}_{i_0}^2}\subset I(\bar{t}_{i_0}^1,\frac{\beta_2}{2})$.
We let
$\ell^{\infty}_{i_0}=2$, $t^\infty_{i_0}= \bar{t}_{i_0}^{\,1}$,
$$U_{i_0}^\infty:=I(\bar{t}_{i_0}^{\,1}, \beta_{2}), \qquad  \mc{I}_{i_0}^\infty:=\mc{I}_{i_0}^{\,2},\qquad   \mc{J}_{i_0}^{\infty}:=\mc{J}_{i_0}^{\,2}.$$
Otherwise, we continue the process until we find  $\mc{I}_{i_0}^{\ell} = \mc{I}_{i_0}^{\ell-1}$ for some $\ell$ and set $\ell_{i_0}^\infty=\ell$, $t_{i_0}^\infty=\bar{t}_{i_0}^{\ell-1}\in I(t,\frac{1}{2}\beta)$, 
 $$ U_{i_0}^\infty:=I(\bar{t}_{i_0}^{\,\ell-1}, \beta_{\ell}), \qquad  \mc{I}_{i_0}^\infty:=\mc{I}_{i_0}^{\,\ell},\qquad   \mc{J}_{i_0}^{\infty}:=\mc{J}_{i_0}^{\,\ell}.$$
 Note that for all $i_0$, $\ell_{i_0}^\infty\leq k$.

Next, we claim that if $i_1,i_2$ are such that 
 $U_{i_1}^\infty\cap U_{i_2}^\infty\neq \emptyset$, then 
\begin{equation}\label{e:EitherOr}
    \mc{I}_{i_1}^{\infty}\subset \mc{I}_{i_2}^{\infty} \qquad \text{or} \qquad \mc{I}_{i_2}^{\infty}\subset \mc{I}_{i_1}^{\infty}.
\end{equation}
Indeed, suppose $U_{i_1}^\infty\cap U_{i_2}^\infty\neq \emptyset$. 
Without loss, assume $\ell_{i_2}^\infty\geq \ell_{i_1}^\infty$. Then, $\beta_{\ell^\infty_{i_2}}\geq \beta_{\ell_{i_1}^\infty}$, and so
$
U_{{i_{1}}}^\infty \subset I(t_{i_2}^{\infty},3\beta_{\ell_{i_2}^\infty} ).
$
In particular, since 
$
\mc{I}_{i_1}^{\ell_{i_1}^\infty}=\{i \mid t_i\in U_{i_1}^\infty\}$ and 
$
\mc{I}_{i_2}^{\ell_{i_2}^\infty}=\{ i\mid t_i\in U_{i_2}^\infty\},
$
we have 
$
\mc{I}_{i_2}^\infty\supset \mc{I}_{i_1}^\infty,
$
proving the claim in \eqref{e:EitherOr}.

From the claim in \eqref{e:EitherOr} it follows that there exist $1\leq m\leq k$ and $\{i_\alpha\}_{\alpha=1}^m\subset\{1,\dots, k\}$ such that~\eqref{e:intersectionEmpty} and~\eqref{e:setsAreAll} hold.

To prove that $\beta_{\ell}<\frac{1}{2}\beta$, we actually show that for all $\ell$,
\begin{equation}\label{e:claimSoDoneWithThis}
  \beta_{\ell}\leq C^{{2}(\ell-1)}13^{\ell-1}e^{{(2(\ell-1)+1)}\Lambda|t|}(1+C^2e^{2\Lambda|t|})|\lambda_{1}|^3.  
\end{equation}
This implies $\beta_{\ell_{i_\alpha}^\infty}<\frac{1}{2}\beta$ since $\ell_{i_\alpha}^\infty \leq k$, $|\lambda_{1} | \leq 2C \beta e^{\Lambda|t|}$, and we are assuming $\beta<ce^{-(k+2)\Lambda|t|}$.
To see the claim in \eqref{e:claimSoDoneWithThis} first note that, with $\tilde{\beta}_\ell={\beta_\ell}\big({(1+C^2e^{2\Lambda|t|})|\lambda_{1}|^3}\big)^{-1}$, 
$$
\tilde{\beta}_{{\ell+1}}=C(1+12 {Ce^{\Lambda|t|}} \tilde{\beta}_\ell)e^{\Lambda|t|},\qquad \tilde{\beta}_1=e^{\Lambda|t|}.
$$
Therefore, since $\tilde{\beta}_1\geq 1$, {and we may assume $C\geq 1$, } $\tilde{\beta}_\ell\geq \tilde{\beta}_{\ell-1}$, 
and $
\tilde{\beta}_\ell\leq 13C^{{2}}e^{{2}\Lambda|t|}\tilde{\beta}_{\ell-1}.$
Hence, the claim in \eqref{e:claimSoDoneWithThis} follows.}\\

\noindent \emph{Step 2.}
{
The goal is to prove that for $1\leq l \leq r-1$ there is $c_l>0$ such that {$\mc{P}(t,\beta ,l, r)$ holds for all $t$ {and $0<\beta<c_le^{-(l+2)\Lambda|t|}$}} 
since this would yield the lemma. We {continue} by induction.

First, note $\mc{P}(t,{\beta},1,{r_0})$ holds {for all $t$, {$r_0>0$},} and {$0<\beta<ce^{-3\Lambda |t|}$}  by Lemma~\ref{l:Kernel}. Let $2\leq k\leq r-1$.
{For $1\leq l\leq k-1$, we assume there are $c_l>0$ such that $c_l\leq c_{l+1}$, and for all $r_0>0$}
\begin{equation}\label{e:inducHyp}
    \mc{P}(t,{\beta},l,r_0)\;\;\text{ holds for all $t$,\;\; {$0<\beta<c_l e^{-(l+2)\Lambda|t|}$,}\;\; and  $1\leq l\leq k-1$}.
\end{equation}

{Fix ${r_0>0}$,}  $0<\beta<ce^{-(k+2)\Lambda|t|}$, { $\{t_i\}_{i=1}^k\subset I(t,\tfrac{1}{2}\beta)$ distinct}, and suppose that the assumptions of $\mc{P}(t,\beta,k,{r_0},\{t_i\}_{i=1}^k)$ hold. To finish the proof it suffices to show that the conclusions of $\mc{P}(t,\beta,k,{r_0},\{t_i\}_{i=1}^k)$ hold {and hence that~\eqref{e:inducHyp} holds with $k-1$ replaced by $k$.}

Suppose that the conclusions of $\mc{P}(t,\beta,k,{r_0},\{t_i\}_{i=1}^k)$ do not hold: $\sum_{i=1}^k \dim \ker A(t_i)< {r_0}.$
We will arrive at a contradiction after one further induction {in which the idea is to show that the $t_i$'s  cannot be distinct.}
We claim that there are $\{(s_n, \beta_n)\}_{n=0}^\infty$ such that 
\begin{equation}
\begin{gathered}
\label{e:oneFinalInduction}
{\{t_i\}_{i=1}^k\subset I(s_n,\tfrac{1}{2}\beta_n), \text{ the assumptions of }\mc{P}(s_n,\beta_n,k,{r_0},\{t_i\}_{i=1}^k)\text{ hold}}\\
\text{and}\\
s_n\in I(s_{n-1},\tfrac{1}{2}\beta_{n-1}), \qquad \beta_n<\tfrac{1}{2}\beta_{n-1}.
\end{gathered}
\end{equation}
We prove this by induction. Let $(s_0,\beta_0)=(t,\beta)$ and assume we have found $\{(s_n,\beta_n)\}_{n=0}^{N-1}$ such that \eqref{e:oneFinalInduction} holds.
We claim that there are $(s_{_{\!N}},\beta_{_{\!N}})$ such that \eqref{e:oneFinalInduction} holds with $n=N$. 
To see this, we will apply Step 1 above with $(t,\beta)=(s_{_{\!{N-1}}},\beta_{_{\!{N-1}}})$. 
We work with the corresponding sets and intervals built in \eqref{e:intersectionEmpty} and \eqref{e:setsAreAll}. In particular, the hypotheses of $\mc{P}(t_{i_\alpha}^\infty,\beta_{\ell^{\infty}_{i_{\alpha}}},|\mc{I}_{i_\alpha}^\infty|, |\mc{J}_{i_\alpha}^\infty|,\{t_{i}\}_{i\in \mc{I}_{i_\alpha}^\infty})$ hold for $\alpha=1,\dots, m$ and $\{t_i\}_{i\in \mc{I}_{\alpha}^\infty}\subset I(t_{i_{\alpha}}^{\infty},\tfrac{1}{2}\beta_{\ell^{\infty}_{i_{\alpha}}})$.
Next, suppose that $m>1$. Then, $|\mc{I}_{i_\alpha}^\infty|<k$ for all $\alpha$ and hence, by~\eqref{e:inducHyp}  and \eqref{e:setsAreAll}
$$
\sum_{\alpha=1}^m\sum_{i\in \mc{I}_{i_\alpha}}\dim \ker A(t_i)\geq \sum_{\alpha=1}^m|\mc{J}_{i_\alpha}^{\infty}|=r_0
$$
and hence the conclusions of $\mc{P}(t,\beta,r_0,k,\{t_i\}_{i=1}^k)$ hold contradicting our assumption.
Therefore, we may assume $m=1$ and, in particular, $|\mc{I}_{i_1}^\infty|=k$. The assumptions of $\mc{P}(t_{i_1}^\infty,\beta_{\ell_{i_1}^\infty},k, r_0,\{t_i\}_{i=1}^k)$ hold with $\beta_{\ell_{i_1}^\infty}<\frac{1}{2}\beta_{_{\!{N-1}}}$, $t_{i_1}^\infty\in I(s_{_{\!{N-1}}},\tfrac{1}{2}\beta_{_{\!{N-1}}})$, and $\{t_i\}_{i\in \mc{I}_{1}^\infty}\subset I(t_{i_{1}}^{\infty},\tfrac{1}{2}\beta_{\ell^{\infty}_{i_{1}}})$. Defining, $(s_{_{\!N}},\beta_{_{\!N}})=(t_{i_1}^\infty,\beta_{\ell_{i_1}^\infty})$ we have found $(s_{_{\!N}},\beta_{_{\!N}})$ such that \eqref{e:oneFinalInduction} holds with $n=N$.

%Indeed, observe that in this case we must have $m=1$ for $m$ as in \eqref{e:setsAreAll} since if $m>1$, t. In particular, since  $\mc{P}(t,{\beta},l)$ holds for $1\leq l\leq k-1$ and {$\beta<c_le^{-(l+2)\Lambda|t|}$}
%the conclusions of $\mc{P}(t,\beta,k,\{t_i\}_{i=1}^k)$ hold (and this is a contradiction).
%Next, note that having   $m=1$ implies $|\mc{I}_{i_1}^\infty|=k$. In particular, the assumptions of 
%$\mc{P}(t_{i_1}^\infty,\beta_{\ell_{i_1}^\infty},k,\{t_i\}_{i=1}^k)$ hold with $\beta_{\ell_{i_1}^\infty}<\frac{1}{2}\beta_{N-1}$.  

{Since~\eqref{e:oneFinalInduction} holds for all $n$, $\beta_n\leq 2^{-n}\beta$ and hence, using that $\{t_i\}_{i=1}^k\subset I(s_n,\tfrac{1}{2}\beta_n)$, for all $n$, we have that 
$$
\max_{i,j} | t_i-t_j|\leq 2^{-n}\beta Ce^{\Lambda|t|}\to 0
$$
and hence $t_i=t_j$ for all $i=1,\dots, k$, a contradiction to the fact that $t_i$ are distinct.}

}
 %Therefore, the conclusions of $\mc{P}(t,\beta,k,\{t_i\}_{i=1}^k)$ hold and we have finished the proof.
\end{proof}

%%%%%%%%%%%%%%%%%%%%%%%%%%%%%%%%%%%%%%%%%%%%%%%%%%%%%%%%%%%%%%%%%%%%%%%%%%%%%%%%%%%%%%%

\begin{proof}[{\bf Proof of Proposition \ref{l:panda}}]
{
Let $c,C$ be as in Lemma  \ref{l:youVanishIVanish}.
 Let $\gamma$ be a geodesic and $A(t)$ solve \eqref{e:conjSol}. Let $t_*\in (t_0-\e,t_0+\e)$ and $\beta:=\tfrac{1}{C}\e e^{-\Lambda|t_*|}$. Without loss of generality assume that 
 $C$ is large enough that $\beta<c e^{-n \Lambda |t_*|}$.

By assumption, there are no more than $m$ conjugate points {to $\gamma(0)$} in $(t_*-\e,t_*+\e)$. In particular, for all $r>m$ there is no collection of times $t_1, \dots, t_{r} $ with $ \max_j|t_j-t_*|< C \beta e^{ \Lambda |t_*|}$ such that 
$\sum_{j=1}^{r}\dim \ker A(t_j)\geq r $.
By Lemma  \ref{l:youVanishIVanish} this implies that
\begin{equation}\label{e:noImaginationAt11pm}
    \|A(t_*)|_{\mc{V}}\|> \tfrac{1}{C}\e e^{-\Lambda|t_*|}
\end{equation}
for all subspaces $\mc{V} \subset (\gamma(0)')^\perp$ with $\dim \mc{V}>m$.}

We claim there is a subspace ${\mc{V}}$ of dimension $n-1-m$ and $C>0$ such that 
\begin{equation}\label{e:tinyPanda}
\|A({t_*})w\|\geq \tfrac{1}{C}\e e^{-\Lambda|t_*|} \|w\|,\qquad w\in \mc{V}.
\end{equation}

To prove this, suppose there is no such subspace ${\mc{V}}$ or $C>0$. Then, for all ${\delta>0}$ there is $v_\delta\neq 0$ such that 
$$
\|A({t_*})v_\delta\|< \delta\e e^{-\Lambda|t_*|} \|v_\delta\|,
$$
Let ${\mc{V}_0}=\{0\}$, ${\mc{V}}_1=\mathbb{R}v_\delta$, $1\leq k{\leq} m$, and suppose that we have found {$C_j>0$} and $\{{\mc{V}}_j\}_{j=1}^k$ such that ${\mc{V}}_{j-1}\subset \{{\mc{V}}_j\}$, $\dim {\mc{V}}_j=j$, and 
$$\|A({t_*})|_{\mc{V}_j}\|\leq {\delta \e C_j e^{-\Lambda|t_*|}}.$$

Note that  $\dim {{\mc{V}}_j^\perp}=n-1-j$, and hence, since 
${n-1-j}{\geq} n-1-m,
$
by assumption there is $w_k\in {{\mc{V}}_k^\perp}$ such that 
$$
\|A({t_*})w_k\|< \delta\e e^{-\Lambda|t_*|} \|w_k\|.
$$
Now, put ${\mc{V}}_{k+1}={\mc{V}}_k\oplus \mathbb{R}w_k$ and let $v=(v_k,\lambda w_k)\in {\mc{V}}_{k+1}$ with $\lambda \in \re$. Then,
$$
\|Av\|\leq \|Av_k\|+|\lambda|\|Aw_k\|\leq\delta \e e^{-\Lambda|t_*|}(C_k\|v_k\|+|\lambda|\|w_k\|)\leq {\delta \e C_{k+1}} e^{-\Lambda|t_*|}\|v\|,
$$
where in the last inequality we use that $v_k$ and $w_k$ are orthogonal.
In particular,
$$
\|A({t_*})|_{{\mc{V}}_{k+1}}\|\leq {\delta}\e C_{j+1} e^{-\Lambda|t_*|}.
$$

Finally, $\dim {\mc{V}}_{m+1}=m+1$, and 
$$
\|A({t_*})|_{{\mc{V}}_{m+1}}\|\leq \delta \e C_m e^{-\Lambda|t_*|},
$$
which contradicts~\eqref{e:noImaginationAt11pm}, provided  $\delta$ is small enough. This proves the claim in \eqref{e:tinyPanda}.

Now, let ${\mc{V}}$ as in~\eqref{e:tinyPanda}. Then, by Lemma~\ref{l:tanCotan} there is ${\bf{V}_\rho}\subset T_\rho S^*_xM$ of dimension $n-1-m$ such that  
$$d\pi {\bf{V}}_{\rho}^\sharp=0,\qquad {\bf K} {\bf{V}}_{\rho}^\sharp={\mc{V}}.$$

For ${\bf v}\in {\bf{V}}_\rho$, 
$$
d\pi (d\varphi_{t_*} {\bf v})^\sharp = A(t){\bf{K}}{\bf v}^\sharp,
$$
and, since ${\bf{K}}{\bf v}^\sharp \in {\mc{V}}$,~\eqref{e:tinyPanda} implies that  for ${\bf v}\in {\bf{V}}_\rho$
$$
\|d\pi d\varphi_{t_*} {\bf v} \|= \|d\pi (d\varphi_{t_*} {\bf v})^\sharp \|\geq \e e^{-\Lambda|t_*|}\|{\bf K}{\bf v}^\sharp\|/C=\e e^{-\Lambda|t_*|}\|{\bf v}^\sharp\|/C\geq \e e^{-\Lambda|t_*|}\|{\bf v}\|/C.
$$
Modifying the constant $C$, we can replace $|t_*|$ by $|t_0|$ in the previous estimate.
\medskip

%%%%%%%%%%%%%%%%%%%%%%%%%%%%%%%%%%%%%%%%%%%%%%%%%%%%%%%%%%%%%%%%%%%%%%%%%%%%%%%%

\end{proof}
%%%%%%%%%%%%%%%%%%%%%%%%%%%%%%%%%%%%%%%%%%%%%%%%%%%%%%%%%%%%%%%%%%%%%%%%%%%%%%%%
\appendix
\section{}
%%%%%%%%%%%%%%%%%%%%%%%%%%%%%%%%%%%%%%%%%%%%%%%%%%%%%%%%%%%%%%
%%%%%%%%%%%%%%%%%%%%%%%%%%%%%%%%%%%%%%%%%%%%%%%%%%%%%%%%%%%%%%
%%%%%%%%%%%%%%%%%%%%%%%%%%%%%%%%%%%%%%%%%%%%%%%%%%%%%%%%%%%%%%

%%%%%%%%%%%%%%%%%%%%%%%%%%%%%%%%%%%%%%%%%%%%%%%%%%%%%%%%%%%%%%%%%%%%%%%%%%%%%%%%

\subsection{Implicit function theorem with estimates on the size}
\begin{lemma}
\label{l:quantImplicit}
Suppose that $f(x_0,x_1,x_2):{\re^{m_0}}\times\re^{m_1}\times \re^{m_2}\to \re^{m_0}$ so that $f(0,0,0)=0$,
\begin{gather*}
L:=(D_{x_0}f(0,0))^{-1}\text{ exists,} \qquad 
  \sup_{|\alpha|=1}|\partial_{x_i}^\alpha f|\leq \tilde B_i, \qquad  \sup_{|\alpha|=1, |\beta|=1}|\partial_{x_i}^\alpha\partial_{x_0}^\beta f|\leq B_i.
\end{gather*}
Suppose further that $r_0,r_1, r_2>0$ satisfy
{\begin{equation}
\label{e:radiiImplicit}S:=\|L\|\sum_{i=0}^2 m_iB_ir_i<1,\qquad \text{ and }\qquad Sr_0+\|L\| \sum_{i=1}^2 m_i\tilde B_ir_i \leq r_0.
\end{equation}}
Then there exists a neighborhood $U\subset \re^{m_1}\times \re^{m_2}$ a function $x_0:U\to \re^n$ so that 
$$
f(x_0(x_1,x_2),x_1,x_2)=0
$$
and $B(0,r_1)\times B(0,r_2)\subset U.$ 
\end{lemma}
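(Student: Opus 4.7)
The plan is to reduce the statement to a parameter-dependent fixed point problem solved by Banach's contraction mapping theorem. Define
\[
T(x_0; x_1, x_2) := x_0 - L\, f(x_0, x_1, x_2),
\]
so that $x_0$ is a fixed point of $T(\cdot\,; x_1, x_2)$ precisely when $f(x_0, x_1, x_2) = 0$. I would carry out the contraction and stability estimates on the closed ball $\overline{B(0, r_0)}$, uniformly in $(x_1, x_2) \in B(0, r_1) \times B(0, r_2)$.

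First I would verify that $T(\cdot\,; x_1, x_2)$ is a contraction in $x_0$. Since $D_{x_0} T = L\bigl(D_{x_0} f(0,0,0) - D_{x_0} f(x_0, x_1, x_2)\bigr)$, and the mixed second derivatives $\partial_{x_i}^\alpha \partial_{x_0}^\beta f$ are bounded componentwise by $B_i$, applying the fundamental theorem of calculus along the segment from $(0,0,0)$ to $(x_0, x_1, x_2)$ and converting the componentwise bound into an operator bound (this is where the factor $m_i$ enters) gives
\[
\|D_{x_0} f(x_0, x_1, x_2) - D_{x_0} f(0,0,0)\| \leq \sum_{i=0}^{2} m_i B_i r_i.
\]
Therefore $\|D_{x_0} T\| \leq \|L\| \sum_i m_i B_i r_i = S < 1$ by the first hypothesis in~\eqref{e:radiiImplicit}, and the mean value theorem turns this into the desired Lipschitz bound.

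Next I would check that $T(\cdot\,; x_1, x_2)$ maps $\overline{B(0, r_0)}$ into itself. Writing
\[
T(x_0; x_1, x_2) = T(0; x_1, x_2) + \bigl(T(x_0; x_1, x_2) - T(0; x_1, x_2)\bigr),
\]
the contraction estimate from the previous step gives $\|T(x_0; x_1, x_2) - T(0; x_1, x_2)\| \leq S r_0$. For the other piece, $T(0; x_1, x_2) = -L f(0, x_1, x_2)$, and since $f(0,0,0) = 0$ the bounds on $\partial_{x_i} f$ together with the analogous componentwise-to-operator conversion yield $\|L f(0, x_1, x_2)\| \leq \|L\| \sum_{i=1}^{2} m_i \tilde B_i r_i$. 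The second inequality in~\eqref{e:radiiImplicit} then gives $\|T(x_0; x_1, x_2)\| \leq r_0$, as desired.

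Banach's contraction mapping theorem now produces, for each $(x_1, x_2) \in U := B(0, r_1) \times B(0, r_2)$, a unique fixed point $x_0(x_1, x_2) \in \overline{B(0, r_0)}$ satisfying $f(x_0(x_1, x_2), x_1, x_2) = 0$; regularity in $(x_1, x_2)$ follows from the usual argument, using that the contraction constant $S$ is uniform in the parameters. There is no genuine obstacle in the argument; the only thing requiring care is the bookkeeping that turns componentwise partial-derivative bounds into the operator norm estimates needed for the contraction argument, which is precisely why the hypothesis in~\eqref{e:radiiImplicit} involves the sums $\sum m_i B_i r_i$ and $\sum m_i \tilde B_i r_i$ rather than single scalars.
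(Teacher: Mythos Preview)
Your proposal is correct and essentially identical to the paper's proof: the paper defines the same map $G(x_0;x_1,x_2)=x_0-Lf(x_0,x_1,x_2)$, verifies the contraction via $\|D_{x_0}G\|\le S$ from the mixed-derivative bounds, and checks the self-map property using the same splitting $|G(x_0;x_1,x_2)|\le S r_0 + \|L\|\,|f(0,x_1,x_2)|$. Your write-up is slightly more careful about explaining where the $m_i$ factors come from, but otherwise there is nothing to add.
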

%%%%%%%%%%%%%%%%%%%%%%%%%%%%%%%%%%%%%%%%%%%%%%%%%%%%%%%%%%%%%%%%%%%%%%%%%%%%%%%%
\begin{proof}
We employ the usual proof of the implicit function theorem. Let $G:\re^n\to \re^n$ have
$$
G({x_0}; x_1,x_2)={x_0}-Lf({x_0},x_1,x_2).
$$
Our aim is to choose $r_0,r_1>0$ so small that $G$ is a contraction for $x_1\in B(0,r_1)$, ${x_0}\in B(0,r_0)$ and ${x_2}\in B(0,r_2)$. Note that 
Note that
$$
|G({x_0}; x_1,x_2)-G(w; x_1,x_2)|\leq \sup \|D_{x_0}G\||{x_0}-w|
$$
and 
$$
|G({x_0}; x_1,x_2)|\leq \sup\|D_{x_0}G\||{x_0}|+|G(0; x_1,x_2)|.
$$
Therefore, we need to choose $r_i$ small enough that 
\begin{equation}
    \label{e:S1}
S_{_{\! G}}:=\sup\{\|D_{x_0}G\|:\; ({x_0},x_1,x_2)\in B(0,r_0)\times B(0,r_1)\times B(0,r_2)\}<1
\end{equation}
and
\begin{equation} 
\label{e:S2}|G({x_0}; x_1,x_2)|\leq  S_{_{\! G}}r_0+\|L\||f(0,x_1,x_2)|\leq S_{_{\! G}}r_0+\|L\|(m_1\tilde B_1r_1+ m_2\tilde B_2r_2)<r_0.
\end{equation}
Now, 
$$D_{x_0}G=\Id-LD_{x_0}f({x_0},x_1,x_2)$$
and $LD_{x_0}f(0,0,0)=\Id$. Therefore, 
$$\|D_{x_0}G\|\leq \|L\|(m_0B_0r_0+m_1 B_1r_1+m_2B_2r_2)=S<1.$$
In particular, $S_G<S$ and for $r_i$ as in~\eqref{e:radiiImplicit}, we have that~\eqref{e:S1},~\eqref{e:S2} hold. In particular, $G$ is a contraction and the proof is complete. 
\end{proof}
%%%%%%%%%%%%%%%%%%%%%%%%%%%%%%%%%%%%%%%%%%%%%%%%%%%%%%%%%%%%%%%%%%%%%%%%%%%%%%%%

\bibliography{biblio}
\bibliographystyle{alpha}

\end{document}